\newcommand{\vvNumberWithin}{subsection}
\newcommand*{\FIXME}[1]{}
\newcommand*{\Q}{\mathbb{Q}}
\newcommand*{\Z}{\mathbb{Z}}
\newcommand*{\C}{\mathbb{C}}
\DeclareMathOperator{\id}{Id} 
\DeclareMathOperator{\Spec}{Spec}
\DeclareMathOperator{\Hom}{Hom}
\let\hom\relax
\DeclareMathOperator{\hom}{Hom}
\renewcommand{\L}{\mathbb{L}}
\newtheoremstyle{misc}%
     {\topsep}
     {\topsep}
     {}
     {}
     {\itshape}
     {}
     { }
     {}
\newtheoremstyle{newdef}{\topsep}{\topsep}{}{}{\bfseries}{.}{ }{\thmnumber{#2}\thmnote{ #3}}
\newtheorem{master}{Master}[\vvNumberWithin]
\theoremstyle{newdef}
\newtheorem{npara}[master]{NamedParagraph}
\theoremstyle{plain}
\newtheorem{theorem}[master]{Theorem}
\newtheorem*{theorem*}{Theorem}
\newtheorem*{result*}{Result}
\newtheorem{lemma}[master]{Lemma}
\newtheorem*{lemma*}{Lemma}
\newtheorem{corollary}[master]{Corollary}
\newtheorem*{corollary*}{Corollary}
\newtheorem{proposition}[master]{Proposition}
\newtheorem*{proposition*}{Proposition}
\newtheorem{assumption-proposition}[master]{Assumption/Proposition}
\theoremstyle{definition}
\newtheorem{example}[master]{Example}
\newtheorem*{example*}{Example}
\newtheorem*{application*}{Application}
\newtheorem{definition}[master]{Definition}
\newtheorem*{definition*}{Definition}
\newtheorem{remark}[master]{Remark}
\newtheorem*{remark*}{Remark}
\newtheorem{para}[master]{}
\newtheorem*{para*}{}
\newtheorem{notation}[master]{Notation}
\newtheorem*{notation*}{Notation}
\newtheorem*{question*}{Question}
\newtheorem*{problem*}{Problem}
\theoremstyle{remark}
\newtheorem*{exercise*}{Exercise}
\numberwithin{equation}{subsection}
 \renewcommand{\theequation}{%
      \ifnum\value{subsection} > 0
      \ifnum\value{subsubsection} > 0
      \thesubsubsection.\Alph{equation}%
      \else%
      \thesubsection.\Alph{equation}%
      \fi%
      \else%
      \thesection.\Alph{equation}%
      \fi%
    }
\newcommand{\nocontentsline}[3]{}
\newcommand{\tocless}[2]{\let\tempcontentsline=\addcontentsline\let\addcontentsline=\nocontentsline#1{#2}\hspace{-1em}\let\addcontentsline=\tempcontentsline}
\newcommand{\tocnonum}[2]{{\tocless#1{#2} \addcontentsline{toc}{subsection}{#2}}}
\newcommand*{\vvspan}[1]{{\langle #1 \rangle}}
\newcommand*{\vvtspan}[1]{{\langle #1 \rangle}^\infty}
\newcommand*{\iso}{\cong}
\title{On weight truncations in the motivic setting}
\author{Vaibhav~Vaish}
\address{Stat-Math Unit, Indian Statistical Institute, 8th Mile, Mysore Road, Bangalore 560059}
\email{vaibhav\_if@isibang.ac.in}
\begin{document}
\begin{abstract}
	Using punctual gluing of $t$-structures, we construct an analogue of S.\@ Morel's weight truncation functors (for certain weight profiles) in the setting of motivic sheaves. As an application we construct a canonical motivic analogue of the intersection complex for an arbitrary threefold over any field $k$. We are also able to recover certain invariants of singularities motivically.
\end{abstract}
\maketitle
\tableofcontents
\section{Introduction}
\tocless\subsection{} In the conjectural picture of Beilinson \cite{beilinson1987height} one should be able to construct an abelian category of mixed motivic sheaves over any scheme $X$, related through the formalism of Grothendieck's six operations, and such that the relation with algebraic cycle homology theories is as expected. 

	By work of F. Morel and Voevodsky \cite{voevodsky_morel_99}, Jardine \cite{jardine_2000}, Ayoub \cite{ayoub_thesis_1, ayoub_thesis_2}, Cisinski--Deglise \cite{cisinski2012triangulated} and others, one has a tensor triangulated category which acts as the derived category of the abelian category of motivic sheaves, is equipped with the  Grothendieck's six functors (we work with $\Q$-coefficients throughout), and has the correct relationship with the algebraic cycle homology theories. Given a scheme $X$, we denote the corresponding category of motivic sheaves over $X$ as $DM(X)$ (for our purposes we can take the stable $2$-functor which attaches to any scheme $X$ the category of etale motivic sheaves without transfer, $DA(X,\Q)$ of Ayoub \cite[2.1]{ayoub2012relative} or, alternatively, the Beilinson motives, $DM_{B, c}(X,\Q)$ of Cisinski-Deglise \cite{cisinski2012triangulated} -- since we are working with rational coefficients, the two categories are known to be equivalent \cite[16.2.18]{cisinski2012triangulated}). Then the Beilinson's conjectures reduce to the problem of constructing an appropriate $t$-structure on this triangulated category, whose heart is the abelian category of mixed motivic sheaves. 
	
	While this appears beyond reach at the moment, various other structures on the realizations have been lifted to this setting of triangulated categories. For example, Bondarko formalizes the notion of weight structures in \cite{bondarko_weights} and they lift to the relative setting due to \cite{hebert2011structure} or \cite{bondarko2014weights}. In the setting of any mixed realization (in the sense of, say, \cite{saito_formalisme_2006}), the (perverse) $t$-structure and the weight structure can be used to construct a novel $t$-structure due to S.~Morel \cite[\S 3]{morelThesis} (degenerate, with trivial core, also a weight structure). Given that the weight structures on $DM(X)$ have already been constructed, a full construction of the analogue of Morel's $t$-structures on $DM(X)$ would be a significant progress towards construction of the motivic $t$-structure. 

	This article is motivated by the pursuit to construct analogue of certain specific Morel's $t$-structures in the setting of triangulated category of motivic sheaves. More precisely, we would work with the mild generalizations of Morel's $t$-structures constructed in \cite{arvindVaish} which are actually more useful in practice. While the construction cannot be made on all of $DM(X)$ or for  arbitrary choice of weights, the limited construction is good enough to have several interesting applications. 
	
\tocless\subsection{} Let us fix $k$ to be a finite field and $l\ne \text{char}(k)$. For any scheme $X$ of finite type over $k$, we work with the triangulated category of mixed complexes of $l$-adic sheaves over $X$, $D^b(X) = D^b_m(X, \Q_l)$ \cite{BBD} as the realization category of the triangulated category of motivic sheaves. Similar constructions would hold for $k=\C$ with $D^b(X)$ the derived category of mixed Hodge modules of Saito \cite{saito_introduction_1989, saito_mhm_1990} or $k$ a number field with $D^b(X)$ being the mixed category of S.~Morel \cite{morelOverQ}. 

 Let $X$ be a variety over $k$. In \cite[\S 3.1]{arvindVaish} one defines mild generalizations of Morel's $t$-structures \cite[\S 3]{morelThesis} -- for any monotone step function  $D:\{0,\ldots,\dim X\}\rightarrow \Z$ (that is $D$ is monotone, and $|D(r)-D(r-1)|\le 1$), we get a $t$-structure $({^wD^{\le D}}(X), {^wD^{> D}}(X))$ on $D^b(X)$. Here $D$ is to be thought of as a ``weight profile'', and $A\in {^wD^{\le D}}(X)$ is characterized by the fact that for all sufficiently fine stratifications $X=\sqcup_i S_i$, $A|_{S_i}$ is lisse whose cohomology sheaves, $\mathcal H^j(A|_{S_i})$, are local systems and $($weight of $\mathcal H^j(A|_{S_i})) \le (D(\dim S_i) - \dim S_i)$ for all $j$ and all $i$.

Let $\id$ denote the identity function. Of these $D$, those satisfying $\id\le D\le \dim X$ are the most interesting ones. For any such $D$ we get a distinguished complex of sheaves in $D^b(X)$, denoted as $EC^D_X := w_{\le D}IC_X$, where $w_{\le D}$ is the truncation for the negative part of the $t$-structure above and $IC_X$ is the intersection complex of $X$. Then these complexes can be thought of as approximations to $IC_X$ -- for $D=\dim X$ we have a natural identification $EC^{\dim X}_X \cong IC_X$ (see \cite[3.1.4]{morelThesis}).

Our main result then is to construct a motivic analogue for this $t$-structure for specific $D$ on appropriate subcategories of $DM(X)$. These subcategories will be large enough to recover the motivic analogue of $EC^D_X$, denoted here as $EM^D_X$. More specifically:
	
\begin{theorem*}Let $k$ be an arbitrary field and $X$ be a scheme of finite type over $k$.
\begin{itemize} 
	\item (See \ref{tStructure:01}, \ref{application:EMX:01}) Let $D\in \{\id, \id+1\}$. Then there is a $t$-structure on $DM^{coh}(X)\subset DM(X)$ corresponding to $D$, denoted as $({^wDM^{\le D}}(X), {^wDM^{> D}}(X))$. This allows us to construct the motive $EM^D_X\in {^wDM^{\le D}}(X)\subset DM^{coh}(X)$.
	
	\item (See \ref{base:tStructureF}, \ref{application:EMX:F}) Let $\dim X \le 3$. Then there is a $t$-structure on $DM^{coh}_{3, dom}(X)\subset DM^{coh}(X)$ corresponding to $F=\{3\mapsto 3, 2\mapsto 3, 1\mapsto 2, 0\mapsto 2\}$, denoted as $({^wDM^{\le F}}(X), {^wDM^{> F}}(X))$, on $DM^{coh}_{3, dom}(X)$. This allows us to construct the motive $EM^F_X\in {^wDM^{\le F}}(X)\subset DM_{3, dom}^{coh}(X)$.
	
	\item (See \ref{application:weightTruncationsArePreserved}) These constructions play well with realizations -- assume that $k$ is a finite field, $k=\C$ or $k$ is a number field. Assume that for all schemes $X$ we have triangulated realization functors $real_X:DM(X) \rightarrow D^b(X)$ which commute with the four functors of Grothendieck and such that $real_X(\mathbf 1_X)=\Q_X$ ($D^b(X)$  as above, $\Q_X\in D^b(X)$ corresponds to the constant sheaf). Then:
	\[
		real_X(EM^D_X)\cong EC^D_X
	\]
	where either (a) $X$ is arbitrary and $D\in \{\id,\id+1\}$ or (b) $\dim X\le 3$ and $D=F$.
	\end{itemize}
\end{theorem*}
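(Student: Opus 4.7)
I would attack the theorem by induction on $\dim X$, using a stratification $X = \sqcup_i S_i$ by smooth locally closed subschemes, and constructing the $t$-structure by \emph{punctual gluing} in the motivic setting. The input on each stratum $S_i$ of dimension $d_i$ is a weight structure on $DM^{coh}(S_i)$ (available by Bondarko--H\'ebert and Cisinski--D\'eglise) which lets me isolate the motives of ``weight $\le D(d_i) - d_i$''. Gluing these pointwise truncations along the recollement attached to $S_i \hookrightarrow X \hookleftarrow X \setminus S_i$ should produce the desired $t$-structure, provided the gluing compatibility conditions hold; the constraints $D \in \{\id, \id+1\}$ for arbitrary $X$, and the specific $F$ for $\dim X \le 3$, are exactly those profiles where the required $\hom$-vanishing between neighbouring strata follows from the weight data alone. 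The auxiliary subcategories $DM^{coh}(X)$ and $DM^{coh}_{3,dom}(X)$ would be defined precisely so that the recollement functors preserve this gluing data.

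\textbf{Construction of $EM^D_X$.} Once the $t$-structure is in place on the appropriate subcategory, I would first build a motivic intersection complex $IM_X$ by the Deligne-style recipe, iterating weight truncation $w_{\le \id}$ along successive open embeddings of strata starting from $\mathbf 1_U[\dim X]$ on a smooth open $U$. By construction $IM_X \in {}^w DM^{\le \id}(X)$, and setting $EM^D_X := w_{\le D} IM_X$ using the truncation of the newly-constructed $t$-structure gives the sought-after motive. The case $D = \id$ recovers $IM_X$ itself; the substantive content is that for the remaining admissible profiles $D$ the truncation is a well-defined motive in the right subcategory of $DM(X)$.

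\textbf{Compatibility with realization.} For the third bullet, I would leverage the hypothesis that $real_X$ commutes with Grothendieck's four operations, and verify step-by-step that each ingredient of the motivic construction is preserved. First, $real_X$ sends the weight structure on $DM^{coh}(X)$ to the weight filtration on $D^b(X)$: both are generated by Tate twists of direct images of $\mathbf 1$ from smooth projective varieties, and the hypothesis $real_X(\mathbf 1_X) = \Q_X$, together with compatibility with proper pushforward, identifies the generators. It then follows from the universal characterization of weight truncations that $real_X$ commutes with $w_{\le w}$ for every integer $w$. Since both the motivic and realization $t$-structures are assembled from the same recollement and weight data by the same gluing recipe, an induction on the stratification gives $real_X({}^w\tau^{\le D} M) \cong {}^w\tau^{\le D} real_X(M)$ on the relevant subcategory. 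Applying this to $IM_X$, whose realization is $IC_X$ by the same inductive comparison, yields $real_X(EM^D_X) \cong EC^D_X$.

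\textbf{Main obstacle.} The hardest step, I expect, is verifying the punctual-gluing axioms in $DM$, because Morel's construction on $D^b(X)$ uses both a weight structure and the perverse $t$-structure, whereas in $DM(X)$ no motivic perverse $t$-structure is currently available. The trick must be to trade the missing perverse input for additional weight-structure inputs, and this trade-off is precisely what limits admissible weight profiles to $D \in \{\id, \id+1\}$ (or $F$ in dimension three): in these cases the weight structure alone controls the relevant $\hom$-groups between strata. Identifying the correct subcategories $DM^{coh}(X)$ and $DM^{coh}_{3,dom}(X)$ on which the gluing closes up, and checking the compatibility of the weight structure on $DM$ with that on $D^b$ at the level of truncations (not merely on the hearts), is where most of the technical work should lie.
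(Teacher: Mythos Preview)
Your overall architecture---punctual gluing across Zariski points, with the restriction on $D$ arising from what can be checked pointwise---matches the paper's. But two concrete ingredients are wrong, and they are not minor.

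\textbf{Wrong pointwise input.} You propose to use the Bondarko--H\'ebert weight structure on $DM^{coh}(S_i)$ to isolate ``weight $\le D(d_i)-d_i$'' motives at each stratum. This does not work: Bondarko's weight truncations are not functorial (they are only defined up to non-canonical choice), so they do not yield a $t$-structure, and there is no mechanism by which weight data alone produces the required orthogonality $\hom(A,B[n])=0$ for all $n$. The paper's actual pointwise input is Murre's Chow--Kunneth projectors $\pi_0,\pi_1$ (and $\pi_2$ on surfaces), which split $h(Y)=h^{\le i}(Y)\oplus h^{>i}(Y)$ for $Y$ smooth projective over a perfect field. Orthogonality $h^{\le i}(X)\perp h^{>i}(Y)[n]$ is then proved by explicit computation of motivic cohomology $H^{p,q}$ for $q\le 1$ (and a more elaborate case analysis for $i=2$). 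The $t$-structure over $\Spec k$ is built from these generators via an abstract generation lemma, and continuity/spreading-out is used to verify the punctual gluing axioms. Your proposal never names Murre's projectors or motivic cohomology, and without them there is no $t$-structure over a point.

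\textbf{Circular definition of $EM^D_X$.} You propose to first build a motivic $IM_X$ via a Deligne-style iteration of $w_{\le\id}$, and then set $EM^D_X:=w_{\le D}IM_X$. This is backwards. A motivic intersection complex $IM_X$ is not known to exist in general (Wildeshaus' construction applies only in special cases, e.g.\ Baily--Borel compactifications), and the Deligne recipe requires perverse truncations, not weight truncations; iterating $w_{\le\id}$ does not produce $IM_X$. Moreover your claim that ``$D=\id$ recovers $IM_X$ itself'' is false: already in realizations $EC^{\id}_X=w_{\le\id}IC_X$ is typically a proper summand of $IC_X$. The paper instead defines $EM^D_X:=w_{\le D}j_*1_U$ directly for any regular open dense $j:U\hookrightarrow X$, checks this is independent of $U$, and only afterwards (and only when $IM_X$ happens to exist) compares it to $w_{\le D}IM_X$.

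For the realization statement, your plan is in the right spirit but again relies on ``universal characterization of weight truncations,'' which Bondarko's structure does not furnish. The paper argues directly on generators: for $Y$ smooth proper, $real(h^{>i}(Y))$ has cohomology only in degrees $>i$, and $R^j\bar p_*\Q_{\bar Y}$ is pure of weight $j$; combined with spreading out, this places realizations of generators of ${}^wDM^{\le D}$ and ${}^wDM^{>D}$ in the correct halves of Morel's $t$-structure.
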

	Here $DM^{coh}(X)$ is the category of cohomological motives, dual of effective motives, and the subcategory $DM^{coh}_{3, dom}(X)\subset DM^{coh}(X)$ is obtained by imposing additional restrictions on dimensions (see \ref{definieCohMotives} for definitions).
	
	Also note that for the purpose of the last result, it is important to have realization functors to a mixed category (that is with a notion of weights) as discussed above. For example, Betti realization of Ayoub--Zucker \cite{ayoub2010note} is not good enough for our purpose. Such realizations are known for $k$ finite due to \cite{ivorra2007realisation}. Even though realizations for $k\hookrightarrow \C$ are also known due to \cite{ivorra2016perverse} but in that case the compatibility with the four functors of Grothendieck is not known. 

\tocless\subsection{}	While $EM^D_X$ are interesting objects by themselves, in special cases they would also recover the motivic analogue of $IC_X$. For example this happens for $\dim X=3$ and $D=F$, and leads to our main application:
\begin{theorem*}[See \ref{application:ICfor3folds}]
	Let $X$ be any variety with $\dim X=3$. and assume the situation in the previous proposition (that is, assume the existence of good realization functors). Then the object $EM^F_X\in DM(X)$ satisfies:
		\[
			real_X(EM^F_X) \cong EC^F_X \cong IC_X.
		\] 
	In particular, $IM_X:=EM_X^F$ is the motivic analogue of the intersection complex.
\end{theorem*}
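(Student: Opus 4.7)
The plan is to chain the two asserted isomorphisms, of which only the second has real content. First, $\text{real}_X(EM^F_X) \cong EC^F_X$ is precisely case (b) of the realization-compatibility in the preceding theorem (\ref{application:weightTruncationsArePreserved}) specialized to $\dim X \le 3$ and $D = F$, so I would simply invoke it. The remaining, substantive task is to identify $EC^F_X$ with $IC_X$ inside $D^b(X)$, a statement that lives entirely on the realization side.

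To show $EC^F_X \cong IC_X$, my strategy would be to prove that $IC_X$ already lies in ${^wD^{\le F}}(X)$: since $EC^F_X := w_{\le F} IC_X$ is by definition the truncation of $IC_X$ into this subcategory, membership forces the canonical map $EC^F_X \to IC_X$ to be an isomorphism. Testing membership is a stratum-by-stratum check. On a sufficiently fine stratification $X = \sqcup_i S_i$ along which $IC_X$ is lisse, one needs
\[
	\text{weight}\bigl(\mathcal{H}^j(IC_X|_{S_i})\bigr) \le F(\dim S_i) - \dim S_i \qquad \text{for every } i,j,
\]
and I would obtain this by combining two standard facts about the intersection complex: the support condition of the intermediate extension (which gives $\mathcal{H}^j(IC_X|_S) = 0$ for $j \ge -\dim S$ on every proper stratum $S$, and nonvanishing only at $j = -\dim X$ on the open stratum) and purity (which gives weight at most $\dim X + j$ on any stratum, since $IC_X$ is pure of weight $\dim X$).

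For $\dim X = 3$ and the specific profile $F$ one has $F(3)-3 = 0$, $F(2)-2 = 1$, $F(1)-1 = 1$, $F(0)-0 = 2$. Comparing these against $\dim X + j$ with the allowed $j$ on each stratum closes the argument: on a $3$-dimensional stratum only $j = -3$ contributes and purity gives weight $\le 0$; on a $2$-dimensional stratum nonvanishing $j \le -3$ forces weight $\le 0 \le 1$; on a $1$-dimensional stratum nonvanishing $j \le -2$ forces weight $\le 1$; on a $0$-dimensional stratum nonvanishing $j \le -1$ forces weight $\le 2$. These are exactly the bounds required. This is the whole point of the peculiar profile $F$: its jumps are calibrated so that the cumulative weight slack at each codimension step matches precisely the weight that support-plus-purity forces on a threefold.

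I do not expect any essential obstacle; the deep input has already been packaged into the realization compatibility of the preceding theorem, and what remains is the numerical check above. The only point that demands care is that the normalization of ``weight of a cohomology sheaf'' used in the definition of ${^wD^{\le F}}(X)$ from \cite{arvindVaish} matches the one under which $IC_X$ is pure of weight $\dim X$, so that the bound $\dim X + j$ really is directly comparable to $F(\dim S) - \dim S$; this is built into the definitions. Once membership is established, the chain $\text{real}_X(EM^F_X) \cong EC^F_X = w_{\le F} IC_X \cong IC_X$ gives the theorem, and the definition $IM_X := EM^F_X$ is then essentially a baptism.
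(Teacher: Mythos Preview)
Your proposal is correct and follows essentially the same route as the paper. The paper's proof is a two-line invocation of \ref{application:weightTruncationsArePreserved} for the first isomorphism and of Lemma~\ref{application:lemma:truncatesToIC} for the second; that lemma in turn shows $IC_X\in{^wD^{\le D}}(X)$ for any $D\ge D_-$ by citing results from \cite{arvindVaish}, whereas you carry out the underlying support-plus-purity stratum check by hand for $D=F$ and $\dim X=3$ --- same content, just unpacked rather than cited.
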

	As mentioned before, due to Ivorra \cite{ivorra2007realisation} such realization functors are known for $k$ finite.
	
	Note that for $k=\C$ this construction also follows from \cite{muller2012relative}, however unlike their construction, even over $\C$ our construction is canonical enough to admit actions from correspondences on any open $U\subset X$. This is useful, for example, in the case of Shimura threefolds, where it allows one to lift Hecke actions to $IC_X$.
	\-\\

\tocless\subsection{}	Wildeshaus gives an internal characterization of intersection complex in the motivic settings in \cite[Definition 2.10]{wildeshaus_shimura_2012} (which is a slightly weaker notion than the proposed definition in \cite{wildeshaus_ic} but exists unconditionally in more cases, for example, the Baily-Borel compactification of an arbitrary Shimura variety). However under this definition, $IM_X$, even if it exists, is characterized only up to an isomorphism. On the other hand, $EM^D_X$ as constructed here are unique up to a unique isomorphism. Nevertheless, they bear the correct relationship with motivic intersection complex for $D=\id, \id+1$:
\begin{theorem*}[See \ref{application:invariants}]
	Let $k$ be a field, and let $X$ be an algebraic variety over $k$. Let $D$ be $\id$ or  $\id+1$. Then
	\[
		EM^D_X \cong w_{\le D} IM_X.
	\]
	where $IM_X$ is the intersection complex $j_{!*}1_U$ in the sense of \cite[Definition 2.10]{wildeshaus_shimura_2012}. The relationship holds whenever $IM_X$ exists (e.g. for $X$ the Baily-Borel compactification of a locally symmetric Hermitian space by \cite{wildeshaus_shimura_2012}).
\end{theorem*}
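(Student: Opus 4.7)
The strategy is to apply the truncation $w_{\le D}$ to $IM_X$ and to match the result against $EM^D_X$ using a distinguished triangle extracted from Wildeshaus's defining property. The first step is to verify $IM_X \in DM^{coh}(X)$, so that $w_{\le D} IM_X$ is well-defined. By \cite[Definition 2.10]{wildeshaus_shimura_2012}, when it exists, $IM_X$ is a direct summand of a motive built from $j_* \mathbf{1}_U$, $j_! \mathbf{1}_U$, and (where needed) pushforwards from resolutions along $Z = X \setminus U$ --- all of which lie in $DM^{coh}(X)$. Since $DM^{coh}(X)$ is idempotent-complete, $IM_X \in DM^{coh}(X)$ and $w_{\le D} IM_X$ is a well-defined object of ${^wDM^{\le D}}(X)$; moreover, by \ref{application:weightTruncationsArePreserved}, both $w_{\le D} IM_X$ and $EM^D_X$ realize to $EC^D_X$, which gives consistency but not the motivic identification we want.

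Next, I would exhibit a distinguished triangle
\[
    IM_X \longrightarrow \mathcal M \longrightarrow C \xrightarrow{+1}
\]
where $\mathcal M$ is the canonical motive (typically $j_* \mathbf{1}_U$ for a smooth dense open $U$) from which $EM^D_X = w_{\le D}\mathcal M$ is extracted in \ref{application:EMX:01}, and $C$ is supported on $Z$ with strictly positive absolute weight --- this being part of Wildeshaus's defining property of $IM_X$. For $D \in \{\id,\id+1\}$ one has $D(r) - r \le 1$ for all $r < \dim X$; combined with positivity of weights on each closed stratum, this places $C \in {^wDM^{> D}}(X)$. Hence $w_{\le D} C = 0$, and applying $w_{\le D}$ to the triangle yields
\[
    w_{\le D} IM_X \iso w_{\le D} \mathcal M = EM^D_X,
\]
as required.

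\emph{Main obstacle.} The delicate case is $D = \id+1$, where the profile bound $D(r) - r = 1$ meets the lower end of Wildeshaus's ``weight $>0$'' condition on $C$. Ruling out any weight-$1$ component of $C$ that could a priori lie in the $\le D$ part requires the sharper Goresky--MacPherson-type stratum condition implicit in the existence of $IM_X$ --- namely vanishing of $i^* IM_X$ in appropriate perverse degrees on each closed stratum --- which, combined with purity, forces the absolute weights of $C$ to strictly exceed $D(r) - r$ on every stratum of dimension $r$. Once this weight bookkeeping is carefully carried out (and one checks that the intermediate objects remain in $DM^{coh}(X)$ throughout so the truncation triangle makes sense), the rest is a formal application of $w_{\le D}$ to the triangle above.
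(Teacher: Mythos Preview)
Your approach has a genuine gap at the crucial step. You want to show that the cone $C$ of $IM_X \to j_*1_U$ lies in ${^wDM^{>D}}(X)$, but your justification --- ``positivity of weights on each closed stratum'' and ``vanishing of $i^* IM_X$ in appropriate perverse degrees, combined with purity'' --- invokes structures that do not exist in $DM(X)$. There is no perverse $t$-structure on motives, and Wildeshaus's characterization of $j_{!*}1_U$ only says that there is no weight-$0$ summand supported on the boundary; it gives no direct handle on the cone $C$, and in particular does not place $C$ in $DM^{coh}(X)(-1)$ or in any subcategory on which $w_{\le D}$ is known to vanish. Your ``main obstacle'' paragraph correctly flags this as the delicate point, but the proposed resolution is a realization-level argument dressed up as a motivic one.

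The paper sidesteps this entirely. Rather than working with the abstract cone of $IM_X \to j_*1_U$, it passes to an alteration $\pi:Y \to X$ with $Y$ regular, so that $1_U$ becomes a retract of $q_*1_W$ for $W = \pi^{-1}U$. Wildeshaus's analogue of the decomposition theorem then gives $\pi_*1_Y \cong j_{!*}q_*1_W \oplus i_* L_Z$. In the localization triangle
\[
i_*i^!\pi_*1_Y \longrightarrow \pi_*1_Y \longrightarrow j_*q_*1_W \longrightarrow
\]
the first term lies in $DM^{coh}(X)(-1)$ by absolute purity on the regular scheme $Y$ (via \ref{t-Structure:stabilityOfCohomologicalMotives}(iv) and proper base change), so $w_{\le D}$ kills it by \ref{application:wIsBirational}. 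Since $i_*L_Z$ is a summand of that first term, $w_{\le D}(i_*L_Z)=0$ too, whence $w_{\le D}j_{!*}q_*1_W \cong w_{\le D}\pi_*1_Y \cong w_{\le D}j_*q_*1_W$; passing to the direct summand $1_U$ of $q_*1_W$ finishes. The key idea you are missing is that regularity of $Y$ converts the inaccessible weight estimate on $C$ into the concrete, motivically visible statement $i^!\pi_*1_Y \in DM^{coh}(-1)$.
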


	Presently it is a limitation of our approach that this comparison is not applicable for $D=F$. In particular, presently we cannot show that the motivic intersection complex as constructed here satisfies the internal characterization of Wildeshaus. At the very least, it should be possible to show that the motivic intersection complex here gives rise to a Chow motive (that is, is of weight $0$ in the sense of Bondarko \cite{bondarko_weights}), however this would require a more careful calibration of our approach and is left for a future work.
		
\tocless\subsection{} The construction of $EM_X^D$ for $D=\id$ is not new and already appears in \cite{vaish2016motivic} (where it was also denoted as $EM_X$) and in \cite{ayoub2012relative} (where it was denoted as $\mathbb E_X$). This already had interesting applications in the context of Shimura varieties, and we expect the same to hold for $D=\id+1$. 

It is also possible to think of $EM_X^D$ for $D=\id,\id+1$ as invariants of singularities of $X$. For example, we have the following:
\begin{proposition*}[See \ref{application:EMX:01}] Let $\pi:Y\rightarrow X$ be any resolution of singularity. Let $D=\id$ or $D=\id+1$. Then:
	\[
		EM_X^D \cong w_{\le D}\pi_*1_Y.
	\]
\end{proposition*}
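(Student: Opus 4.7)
The aim is to show that $w_{\le D}\pi_*1_Y$ is independent of the chosen resolution $\pi$ and reproduces $EM_X^D$, for $D \in \{\id, \id+1\}$. I begin by noting that $\pi_*1_Y$ lies in $DM^{coh}(X)$: smoothness of $Y$ yields $1_Y \in DM^{coh}(Y)$, and properness of $\pi$ ensures $\pi_*$ preserves $DM^{coh}$. So $w_{\le D}\pi_*1_Y$ is well-defined via the $t$-structure of Theorem~\ref{tStructure:01}. Pick a dense open $j:U\hookrightarrow X$ over which $\pi$ is an isomorphism. Then $(\pi_*1_Y)|_U = 1_U$, which automatically lies in ${^wDM^{\le D}}(U)$ since $D(\dim X)-\dim X \in\{0,1\}$; it follows that $w_{\le D}\pi_*1_Y$ restricts to $1_U$ on $U$, matching $EM_X^D|_U$.

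To promote this generic agreement to a global isomorphism, I would build a natural map $w_{\le D}\pi_*1_Y \to EM_X^D$. When $IM_X$ exists, the composition $\pi_*1_Y \to IM_X$ followed by the identification $EM_X^D \cong w_{\le D}IM_X$ from the previous theorem (\ref{application:invariants}) yields such a map, which one verifies is an isomorphism since its cone consists of positive-weight pieces supported on the singular locus that are truncated away by $w_{\le D}$. In general, I would instead use the punctual-gluing description of the $t$-structure from \cite{arvindVaish} to build the comparison stratum by stratum: each step requires the weight cutoff $D(\dim S) - \dim S \le 1$ to be strictly below the weights of the extra contributions from $\pi^{-1}(S)$ when $S$ is in the singular locus, and this is exactly where the hypothesis $D \le \id+1$ enters.

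The principal obstacle is proving independence from the resolution (together with the uniqueness of the extension above). I would handle this by the standard device: given two resolutions, a common dominating resolution reduces the question to $w_{\le D}f_*1_{Y'} \cong 1_Y$ for a proper birational map $f : Y' \to Y$ between smooth varieties. On the open locus where $f$ is an isomorphism this is trivial, and on the exceptional locus the excess summands of $f_*1_{Y'}$ are pure of weight at least $2$ after normalisation by stratum dimension, so they are killed by $w_{\le D}$ whenever $D \le \id+1$. The case $D=\id$ is essentially established in \cite{vaish2016motivic}; the case $D=\id+1$ is borderline --- the weight inequality is still strict but only just --- which is presumably why the analogue for the weight profile $F$ does not follow by the same argument.
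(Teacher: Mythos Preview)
Your approach is in the right spirit but misses the paper's key simplification. The paper does not build a comparison map stratum by stratum, nor does it pass through $IM_X$ or invoke common dominating resolutions. Instead, it proves a single structural fact (Proposition~\ref{application:wIsBirational}): the functor $w_{\le D}$ for $D\in\{\id,\id+1\}$ factors through the Verdier quotient $DM^{bir}(X) := DM^{coh}(X)/DM^{coh}(X)(-1)$. This reduces to checking $DM^{coh}(X)(-1)\subset {^wDM^{>D}}(X)$, which follows pointwise from the orthogonality $S^{\le 1}(k)\perp DM^{coh}(k)(-r)[m]$ for all $r\ge 1$ (Lemma~\ref{tateTwistsIncreaseWeight}, Corollary~\ref{field:tateTwistsIncreaseWeights}).

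Once $w_{\le D}$ is known to be a birational invariant in this precise sense, the statement follows in one line from Lemma~\ref{application:lemma:birationalIsBirational}: for $j:U\hookrightarrow Y$ any dense regular open with closed complement $i:Z\hookrightarrow Y$, the localization triangle
\[
    \pi_*i_*i^!1_Y \longrightarrow \pi_*1_Y \longrightarrow \pi_*j_*1_U \longrightarrow
\]
has first term in $DM^{coh}(X)(-1)$, because $Y$ is regular and so $i^!1_Y\in DM^{coh}(Z)(-1)$ by \ref{t-Structure:stabilityOfCohomologicalMotives}(iv). Applying $w_{\le D}$ kills that term, yielding $w_{\le D}\pi_*1_Y \cong w_{\le D}(\pi j)_*1_U = EM_X^D$ directly.

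Your weight heuristic (``excess summands are pure of weight at least $2$'') is pointing at exactly this phenomenon, but in the motivic setting there is no pointwise weight filtration on arbitrary objects to make that sentence literally meaningful; the correct substitute is the observation that the cone lies in the Tate-twisted subcategory $DM^{coh}(X)(-1)$. Your route via a common dominating resolution could be completed once this is made precise, but it is strictly more work: the birational-motive factorization handles independence of both $U$ and the resolution in a single stroke, with no maps to construct or compare.
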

	In particular the truncation on the right -- which should be thought of as a piece of the motive of the resolution $Y$ of $X$ -- is independent of the chosen resolution.

	One can also show that $w_{\le D}$ for $D=\id, \id+1$ factors through ``birational motives'' (see \ref{application:wIsBirational}). Then the methods here can be used to construct motivic analogues of several (cohomological) invariants of singularities occurring in literature. For example, one can construct the motivic analogue of boundary complexes of \cite{payne2013boundary}, or show that the motivic $H^1(-)$ of any fiber is independent of resolutions, see \ref{example:boundary}, \ref{example:h1} for details.

\tocless\subsection{} At the heart of this article is the notion of punctual gluing of $t$-structures \cite{vaish2017punctual} which, under certain conditions, allows one to construct a $t$-structure on (the triangulated category of motives on) $X$, given a $t$-structure at (the triangulated category of motives on) each Zariski point $x\in X$. Thus, in presence of punctual gluing, the construction of Morel's $t$-structure on a base $X$ reduces to analogous constructions over $\Spec k$, for $k$ any field. This can be further reduced to the situation when $k$ is perfect using separatedness and continuity. We briefly motivate the constructions in this setting below. 

Let us work in the settings of realizations as before. Assume $\pi:Y\rightarrow \Spec k$ is smooth, proper. Then $H^i(Y)=R^i\pi_*\Q_Y$ is pure of weight $i$. Therefore we have:
\[
	w_{\le i}R\pi_*\Q_Y \cong \tau_{\le i}R\pi_*\Q_Y,
\]
	that is Morel's characterization in terms of weights is reduced to a characterization in terms of degrees. We also have an identification, using decomposition theorem (since $Y$ is smooth, proper):
	\[
		\tau_{\le i}R\pi_*\Q_Y = \bigoplus_{0\le j\le i} H^j(Y)[-j].
	\]
	
	Motivically, one expects a Chow-K\"unneth decomposition (see Murre's conjecture $A$, \cite[1.4]{murre1993conjectural}) on the motive of $Y$ in the category of Chow motives $CHM(k)$, which corresponds to the piece $H^j(Y)[-j]$. While the full Chow-K\"unneth decomposition is not known, one knows motivic $H^0(X)$ and $H^1(X)$ for arbitrary varieties and motivic $H^2(X)$ for surfaces. Therefore it is possible to construct motivic analogue of $w_{\le i}R\pi_*\Q_Y = \tau_{\le i}R\pi_*\Q_Y$ for $i=0, 1$ and $Y$ arbitrary smooth proper, or even $i=2$ and $Y$ smooth proper with $\dim Y\le 2$. There is an embedding of Chow motives $CHM(k)$ into triangulated category of motives $DM(k)$ for $k$ perfect, and hence this truncation can be realized in $DM(k)$.
	
	Let us assume $i=0$ or $i=1$. Then the motive $w_{\le i}\pi_*1_Y$ can be defined by what we saw above. By an explicit version of Brown representability \ref{tFromGenerators}, we will extend this truncation to the full subcategory finitely generated by motives of the form $\pi_*1_Y$, the category of cohomological motives $DM^{coh}(k)$. The technical input for this extension is the explicit computations of motivic cohomology $H^{p,q}_{\mathcal M}(X)$ for $q=0, 1$ which are well known. While $DM^{coh}(k)$ is not the full category $DM(k)$, it does satisfy the formalism of gluing. Therefore using punctual gluing we get a $t$-structure on $DM^{coh}(X)$ corresponding to the weight profile $D=\id+i$.
	
	If $i=2$, we have to additionally restrict the dimensions of $Y$ that occur. Furthermore, to accommodate formalism of gluing (Grothendieck's four functors in particular), one needs to allow arbitrary negative Tate twists of $\pi_*1_Y$ as well. Then we have:
	\begin{align*}
		w_{\le 2}(R\pi_*\Q_Y(-1)) = (w_{\le 0}R\pi_*\Q_Y)(-1)& &w_{\le 2}(R\pi_*\Q_Y(-r)) = 0\text{ for }r\ge 2
	\end{align*}
	 and hence can also be constructed motivically by what we discussed above. This allows us to extend the $t$-structure to a subcategory $DM^{coh}_2(k)\subset DM^{coh}(k)$ (see \ref{definieCohMotives} for precise definition) -- the additional relations on motivic cohomology required for this extension are also well known. Then, by punctual gluing, we get the $t$-structure corresponding to weight profile $F$ (on an appropriate subcategory restricted by dimensions, $DM^{coh}_{3, dom}(X)\subset DM^{coh}(X)$), as required.
	 
	Finally one notes that the categories $DM^{coh}(X)$ and $DM^{coh}_{3,dom}(X)$ contain the object $j_*1_U$ for $j:U\hookrightarrow X$ an immersion onto an open dense subset, $U$ regular. Then the relation $EM_X^D=w_{\le D}j_*1_U$, for $D\in\{\id, \id+1, F\}$ is formal and does not depend on choices. 

\tocnonum\subsection{Outline}
Section \ref{sec:prelims} contains mostly independent subsections which are preliminaries -- most of them are already well known or implicit in the literature. Section \ref{sec:tStructThroughGen} defines a $t$-structures beginning with truncation of generators, while section \ref{sec:gluing} recalls how to glue $t$-structures in the presence of continuity (``punctual gluing''). Section \ref{sec:chow} recalls Chow motives and mostly serves as a matter of fixing notations. Section \ref{sec:murre} recalls Murre's projectors which are the key technical input in our construction. Section \ref{sec:geometry} recalls some facts from geometry (Stein factorization, deJong's alterations) which are useful for us. Section \ref{sec:motives} recalls facts about triangulated category of motivic sheaves which will be the category in which our constructions are made.

In section \ref{sec:morel} we construct analogue of Morel's $t$-structures in the motivic setting. In section \ref{sec:cohMotives} one defines the categories of cohomological motives, the basic arena we will be working in. While most of the results here have already appeared in the literature, we have to redo them with a control on dimension, which is what is required later for constructing the analogue of Morel's truncations useful for recovering the motivic intersection complex of threefolds. In section \ref{sec:overField} one constructs the analogue of (certain) Morel's $t$-structure over a perfect field, the key input being existence of certain Murre's projectors and vanishing of certain motivic cohomology groups. Finally, in section \ref{sec:overBase} one glues these $t$-structures to construct corresponding analogue of Morel's $t$-structure over a base. 

Section \ref{sec:applications} contains the applications. 
In section \ref{sec:invariants} we demonstrate that the truncations $w_{\le\id}$ and $w_{\le\id+1}$ factor through birational motives and use them to construct several invariants, including $EM_X^{\id}$, $EM_X^{\id+1}$. We also demonstrate the expected relationship of these objects with Wildeshaus' intersection complex \cite{wildeshaus_shimura_2012}. Finally, in section \ref{sec:motivicIC} we construct the motivic $IC_X$ for any $3$-fold $X$, and show that the motivic $IC_X$ as well as motives $EM_X^{\id}$, $EM_X^{\id+1}$ have the expected realizations. 

\tocless{\subsection*}{Acknowledgement} Part of this research was supported by the grant under INSPIRE fellowship scheme, DST/INSPIRE/04/2015/000120. I would also like to thank Arvind Nair and Simon Pepin Lehalleur for useful discussions and Indian Statistical Institute, Bangalore for the hospitable atmosphere. I would also like to thank the anonymous referee for the pointed and important suggestions which have improved the readability of this article.

\tocless{\subsection*}{Notation}\hspace{1em} All schemes $X$ will be separated of finite type over a base field. For a scheme $X$, $X_{red}$ denotes the underlying reduced scheme. By $\Spec k\hookrightarrow X$ we mean a Zariski point $x=\Spec k$ in $X$. A locally closed $Z\subset X$ will always be given the reduced induced sub-scheme structure. 

In \S \ref{sec:chow} and \S \ref{sec:motives} we will be considering categories of Chow motives and motivic sheaves respectively, and they will be considered only with $\Q$ coefficients, this is primarily for two reasons: Firstly the Chow-Kunneth factors we consider in \S \ref{sec:murre} are only rationally defined. Secondly the full formalism of Grothendieck's six functors \ref{motives:6functors} is available with integral coefficients (to be precise, for $\Z[1/p]$ coefficients where $p$ is the exponential characteristic of $k$, see \cite{cisinski2015integral})  only for Noetherian equi-characteristic schemes of finite type over $k$ when $k$ is perfect: this would complicate computations for us while using punctual gluing (which involves considering motivic sheaves over generic points) if we do not work rationally.

\section{Preliminaries}\label{sec:prelims}
\subsection{$t$-structures through generators}\label{sec:tStructThroughGen} A $t$-structure on a triangulated category $D$ is a pair of full subcategories $(D^{\le t}, D^{>t})$ satisfying three properties (see \cite{BBD}):
\begin{itemize}
\item (Orthogonality) $\hom (a,b)=0, \forall a\in D^{\le t}, b\in D^{>t}$
\item (Invariance) $D^{\le t}[1]\subset D^{\le t}$, and $D^{> t}[-1]\subset D^{>t}$
\item (Decomposition) $\forall c\in D$, there is a distinguished triangle $a\rightarrow c\rightarrow b\rightarrow $ with $a\in D^{\le t}$ and $b\in D^{>t}$. 
\end{itemize}
The decomposition of $c$ in the last property can be shown to be unique and then $a$ (resp. $b$) is often denoted as $\tau_{\le t}c$ (resp. $\tau_{>t}c$). It is easy to show that $\tau_{\le t}$ (resp. $\tau_{>t}$) is a right (resp. left) adjoint to the fully faithful inclusion $D^{\le t}\hookrightarrow D$ (resp. $D^{>t}\hookrightarrow D$). We will often refer to $D^{\le t}$ as the negative part of the $t$-structure and $D^{>t}$ as the positive part.

We will be frequently interested in constructing $t$-structures where $D^{\le t}$ and $D^{>t}$ are triangulated subcategories of $D$ (in particular the core of the $t$-structure, $D^{\le t}\cap D^{>t}[1]$ by definition, is trivial). In this setting, to construct a $t$-structure is equivalent to give an adjoint:
\begin{proposition}\label{tIsAdjoint}
	Let $D^{\le t}$ be a full triangulated subcategory of a triangulated category $D$. Then to give a $t$ structure $(D^{\le t}, D^{> t})$ on $D$ is to give a right adjoint $\tau_{\le t}$ to the inclusion $D^{\le t}\hookrightarrow D$. In this case, $D^{> t}$ can be identified to the subcategory of objects $x$ in $D$ such that $\tau_{\le t}(x)=0$.
\end{proposition}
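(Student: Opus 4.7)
My plan is to handle the two directions separately. The forward direction is essentially the remark already made before the statement: given a $t$-structure with $D^{\le t}$ a triangulated subcategory, $\tau_{\le t}$ is defined on objects via the (unique) decomposition triangle, and applying $\Hom_D(i(a), -)$ for $a \in D^{\le t}$ to the triangle $i(\tau_{\le t}(c)) \to c \to b \to$, the two outer Hom groups vanish by orthogonality together with the invariance $D^{>t}[-1] \subset D^{>t}$, giving the adjunction isomorphism $\Hom_D(i(a),c) \cong \Hom_{D^{\le t}}(a,\tau_{\le t}(c))$.

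For the converse, suppose $\tau_{\le t}: D \to D^{\le t}$ is right adjoint to the inclusion $i: D^{\le t} \hookrightarrow D$, and set $D^{>t} := \{x \in D : \tau_{\le t}(x) = 0\}$. I would check the three axioms in turn. Orthogonality is immediate from the adjunction: for $a \in D^{\le t}$ and $b \in D^{>t}$, $\Hom_D(i(a),b) \cong \Hom_{D^{\le t}}(a,\tau_{\le t}(b)) = 0$. The invariance $D^{\le t}[1] \subset D^{\le t}$ is the hypothesis; the invariance $D^{>t}[-1] \subset D^{>t}$ reduces to $\tau_{\le t}$ commuting with the shift, which follows by uniqueness of right adjoints, since both $[-1] \circ \tau_{\le t}$ and $\tau_{\le t} \circ [-1]$ are right adjoint to $i \circ [1]$.

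For the decomposition axiom, given $c \in D$ I would extend the counit $\epsilon_c : i\tau_{\le t}(c) \to c$ to a distinguished triangle $i\tau_{\le t}(c) \to c \to b \to$. The first term is in $D^{\le t}$ by construction; the non-formal step is to show $b \in D^{>t}$. Applying $\Hom_D(a, -)$ for any $a \in D^{\le t}$, the adjunction together with shift-compatibility identifies both $\Hom_D(a, i\tau_{\le t}(c)) \to \Hom_D(a,c)$ and its shifted analogue with the identity on $\Hom_{D^{\le t}}(a, \tau_{\le t}(c))$, hence both are isomorphisms. The long exact sequence then forces $\Hom_D(a, b) = 0$ for every $a \in D^{\le t}$; specializing to $a = \tau_{\le t}(b)$ and invoking the adjunction once more yields $\id_{\tau_{\le t}(b)} = 0$, so $\tau_{\le t}(b) = 0$. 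The equality $D^{>t} = \{x : \tau_{\le t}(x) = 0\}$ then holds by construction, and mutual inverseness of the two assignments follows from uniqueness of the decomposition on one side and of right adjoints on the other.

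The one point requiring care, which I expect to be the only non-formal ingredient, is the shift-compatibility of $\tau_{\le t}$: the statement treats $\tau_{\le t}$ purely as a right adjoint, but the triangulated structure must be invoked to close $D^{>t}$ under $[-1]$ and to run the long-exact-sequence argument for the decomposition axiom. Once this compatibility is established, everything else reduces to formal manipulations with the counit and the adjunction isomorphism.
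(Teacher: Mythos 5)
Your proof is correct: the forward direction is exactly the remark preceding the proposition, and the converse is the standard Bousfield-localization-style verification — orthogonality from the adjunction, $D^{>t}[-1]\subset D^{>t}$ from shift-compatibility of the adjoint (or, even more directly, from $a[-1]\in D^{\le t}$), and the decomposition axiom by extending the counit to a triangle and using $\Hom(a,b)=0$ for all $a\in D^{\le t}$, specialized to $a=\tau_{\le t}(b)$, to force $\id_{\tau_{\le t}(b)}=0$. The paper gives no argument of its own here, citing only \cite[2.6]{vaish2017punctual}, and your write-up supplies precisely the expected proof.
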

\begin{proof}
	See \cite[2.6]{vaish2017punctual}, for example.
\end{proof}
\begin{para}
Given a full triangulated subcategory $D^{\le t}\hookrightarrow D$, there are broadly two ways to construct an adjoint. One is to use Brown representability: if $D$ has small direct sums and $D^{\le t}$ is compactly generated then the existence of an adjoint is automatic (see \cite[8.4.4]{neeman2014triangulated}). However this adjoint is unwieldy, and the real hard work is to show that it preserves compact objects (which, in the motivic categories, correspond to the geometric or constructible objects).

The other method is to begin with an explicit set of objects for which the three defining properties of the $t$-structure is known, and use this explicit construction to conclude the same for the smallest triangulated subcategory finitely generated by these objects. This is not any weaker than the method above (see \ref{compactToNonCompact}) and has the added advantage that constructions are somewhat explicit and have good properties with respect to the compact objects. This method is detailed below.
\end{para}
We make the following definitions:
\begin{definition}
	Let $D$ be a pseudo-abelian triangulated category. Let $A, B, H\subset D$ be full subcategories containing $0$ which are isomorphism closed -- that is if $x\cong y$ for some $x\in D$ and $y\in H$ (resp.  $y\in A$, resp. $y\in B$) then $x\in H$ (resp. $x\in A$, resp. in $x\in B$).
	Then we define the following full subcategories:
	\begin{align*}		
		Ext^1(B,A)	&:= \{ d\in D \big|\exists a\rightarrow d\rightarrow b\rightarrow  \text{ distinguished }\text{with }a\in A,b\in B\} \hspace{-30em}\\
	Ext^1(H) &:= Ext^1(H,H)	& 
		Ext^n(H)	&:= Ext^1(Ext^{n-1}(H))\;\forall n\ge 2\\
	Ext(H)&:=\cup_{i\ge 1}{Ext^i(H)} &
		H^\infty	&:= \{\oplus_{i\in I} h_i\big|h_i\in H, I\text{ small }\}\\	
	Ext^\infty (H) &:=Ext(H^\infty)  &
		H[\Z] &:= \{ h[n]\big| h\in H,n\in \Z\}\\
	\mathcal R(H)&:=\{d\in D \big| d\text{ is a retract of some }h\in H\} \hspace{-30em}\\
	\vvspan{H} &:= \mathcal R(Ext(H[\Z])) &
		\vvtspan{H} &:= Ext^\infty(H[\Z])
	\end{align*}
	(Here $H^\infty, Ext^\infty, \vvtspan{H}$ are defined when $D$ has small direct sums). Also say:
	\begin{align*}
		&\text{$A\perp B$ if $\hom_D(a,b)=0, \forall a\in A, b\in B$.} &  &\text{$(A,B)$ \emph{decompose} $H$ if $H\subset Ext^1(B,A)$.}\\
		&\text{$D$ is \emph{finitely generated by} $H$ if $D=\vvspan H $.} & &\text{$D$ is \emph{generated by} $H$ if $D=\vvtspan H $.}
	\end{align*} 
\end{definition}

\begin{remark} It will be useful to define $\mathcal R(H)$ for any pseudo-abelian additive category. 
\end{remark}

\begin{remark}
	In some places in literature (e.g. in \cite{hebert2011structure}), $Ext^\infty(H)$ is defined as the full subcategory $Ext(Ext(H)^\infty)$. The two definitions are equivalent, see \cite[2.10]{vaish2017punctual}.
\end{remark}
\begin{remark}	
 	$\mathcal R(\vvtspan H)=\vvtspan H$ because the image of a projector can be constructed using homotopy colimits (see \cite[1.6.8]{neeman2014triangulated}) and $\vvtspan H $ is closed under countable direct sums and cones, hence under homotopy colimits.
\end{remark}
Then we have the following:
\begin{proposition}\label{tFromGenerators}
	If $A\perp B[n]$ for all $n\in \Z$, and $(A,B)$ decompose $H$, with $\vvspan A, \vvspan B\subset \vvspan H$. Then $(\vvspan A , \vvspan B )$ is a $t$-structure on the triangulated subcategory $\vvspan H $. If in addition objects in $A$ are compact, and $D$ has small direct sums, then $(\vvtspan A , \vvtspan B )$ is a $t$-structure on the triangulated subcategory $\vvtspan H$. 
\end{proposition}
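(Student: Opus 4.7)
The plan is to verify the three $t$-structure axioms directly, bootstrapping from the hypotheses on $A, B, H$ to the full subcategories $\vvspan{A}, \vvspan{B}, \vvspan{H}$ (and likewise $\vvtspan{A}, \vvtspan{B}, \vvtspan{H}$). The compactly-generated case should follow the same blueprint, with one added input at each step to accommodate small coproducts.

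I would first propagate orthogonality. Starting from $\hom(A, B[n]) = 0$ for all $n \in \Z$, I would close under cones (using that $\hom$ is cohomological, so a triangle in either variable yields a long exact sequence that preserves total vanishing), under shifts (which merely reindex $n$), and under retracts (trivially). This gives $\vvspan{A} \perp \vvspan{B}[n]$ for every $n$. For the infinite version, $\hom$ commutes with coproducts in the first variable automatically, and in the second variable thanks to compactness of objects in $A$, so $\vvtspan{A} \perp \vvtspan{B}[n]$ follows as well. Invariance is then immediate from the definitions $\vvspan{A} = \mathcal R(Ext(A[\Z]))$ etc., which are shift-closed by fiat.

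For decomposition I would introduce the subcategory $K \subset \vvspan{H}$ of objects admitting a triangle $a \to c \to b$ with $a \in \vvspan{A}$ and $b \in \vvspan{B}$; this is a subcategory of $\vvspan{H}$ precisely because of the hypothesis $\vvspan{A}, \vvspan{B} \subset \vvspan{H}$. Since $\vvspan{H}$ is the smallest full subcategory containing $H[\Z]$ and closed under shifts, cones, and retracts, it suffices to show $K$ has those closure properties. Containment $H[\Z] \subset K$ and shift-closure are immediate from the hypothesis that $(A,B)$ decomposes $H$. Closure under retracts uses pseudo-abelianness together with orthogonality: since $\hom(a, b) = \hom(a, b[-1]) = 0$, an idempotent on $c$ lifts uniquely to an endomorphism of the entire triangle $a \to c \to b$, and splitting the lifted idempotents in $\vvspan{A}$ and $\vvspan{B}$ then decomposes the retract. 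In the infinite case, closure under coproducts is immediate since a coproduct of decomposition triangles is a decomposition triangle.

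The heart of the argument is cone closure. Given a triangle $x \to y \to z$ with decompositions $a_x \to x \to b_x$ and $a_z \to z \to b_z$, the composition $a_x \to x \to y \to z \to b_z$ lies in $\hom(a_x, b_z) = 0$, so $a_x \to z$ factors through $a_z$, producing a morphism $\phi : a_x \to a_z$ and a commutative square. I would then apply the $3 \times 3$ lemma (two applications of the octahedral axiom) to complete $\phi$ and the original triangle into a nine-term grid
\[
\begin{array}{ccccc}
a_x & \to & a_y & \to & a_z \\
\downarrow & & \downarrow & & \downarrow \\
x & \to & y & \to & z \\
\downarrow & & \downarrow & & \downarrow \\
b_x & \to & b_y & \to & b_z
\end{array}
\]
with all rows and columns distinguished; the middle column is then the required decomposition of $y$, with $a_y \in \vvspan{A}$ read off the top row and $b_y \in \vvspan{B}$ from the bottom row. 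The only real obstacle is this cone step: the lift $\phi$ depends crucially on orthogonality already being established (forcing the logical order above), and one has to run the $3 \times 3$ completion carefully enough to confirm that $a_y$ and $b_y$ really land in $\vvspan{A}$ and $\vvspan{B}$, using that these subcategories are extension-closed and retract-closed by construction.
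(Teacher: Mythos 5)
Your overall strategy is the paper's own (propagate orthogonality through long exact sequences and compactness, handle retracts by lifting the idempotent to the decomposition triangle using $\hom(a,b)=\hom(a,b[-1])=0$, and propagate decomposability through the triangle-building operation by a diagram argument; the paper packages this as \ref{gluing:orthogonality} and \ref{gluing:decomposition} applied to $A[\Z],B[\Z],H[\Z]$, while you phrase it as ``the class of decomposable objects contains $H[\Z]$ and is closed under shifts, cones and retracts''). But the step you yourself identify as the heart of the argument is broken as written. In a distinguished triangle $x\to y\to z\to x[1]$ the consecutive maps $x\to y\to z$ compose to zero, so your composite $a_x\to x\to y\to z\to b_z$ is zero for a trivial reason: no orthogonality is being used, the resulting $\phi\colon a_x\to a_z$ is just (a factorization of) the zero map, and, worse, there is no map $x\to z$ over which the square $a_x\to a_z$ could commute, so you have no input for the $3\times3$ lemma. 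As it stands the cone step does not produce $a_y$ and $b_y$.

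The correct move --- and the one the paper makes in \ref{gluing:decomposition}(2) --- is to use the \emph{connecting} morphism $z\to x[1]$ and the \emph{shifted} orthogonality: the composite $a_z\to z\to x[1]\to b_x[1]$ vanishes because $\hom(a_z,b_x[1])=0$ (this is exactly why that lemma assumes $A\perp B[1]$; you have it since you assume $A\perp B[n]$ for all $n$), so $a_z\to x[1]$ factors through $a_x[1]$, giving $\psi\colon a_z\to a_x[1]$. One then defines $a_y$ by the triangle $a_x\to a_y\to a_z\xrightarrow{\psi}a_x[1]$, so $a_y\in\vvspan A$ by extension-closure, and completes the diagram (the dashed/dotted arrows in the paper's proof) to exhibit $b_y$ as an extension of $b_z$ by $b_x$, hence in $\vvspan B$. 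Equivalently you may phrase cone-closure for a morphism: given $f\colon x\to y$ with both terms decomposed, the vanishing $\hom(a_x,b_y)=0$ lets you lift $a_x\to x\to y$ through $a_y\to y$, and \emph{that} genuine commutative square feeds the $3\times3$ lemma and decomposes the cone of $f$. With either repair the rest of your argument (orthogonality propagation, the retract step, and the coproduct/compactness additions for $(\vvtspan A,\vvtspan B)$) goes through and coincides with the paper's proof.
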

\begin{proof}
	As $A\perp B[n]$ for all $n$, hence $A[n]\perp B[m]$ for all $n, m$ and hence $A[\Z]\perp B[\Z]$. That these are triangulated subcategories is easy and hence invariance is obvious. Orthogonality is \ref{gluing:orthogonality} and decomposition is \ref{gluing:decomposition} applied to $A[\Z]$, $B[\Z]$ and $H[\Z]$. 
%
\end{proof}
\begin{lemma}\label{gluing:orthogonality}
	If $A\perp B$ then $Ext^1(A)\perp B$ and $A\perp Ext^1(B)$. Hence $Ext^n(A)\perp Ext^n(B)$ and $Ext(A)\perp Ext(B)$. If objects in $A$ are compact, then $A^\infty\perp B^\infty$ and so $Ext^\infty(A)\perp Ext^\infty(B)$.
\end{lemma}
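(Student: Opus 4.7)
The plan is a straightforward dévissage: translate each extension into a long exact sequence of Hom groups and iterate. I begin with the two single-step assertions $Ext^1(A) \perp B$ and $A \perp Ext^1(B)$. For the first, pick $d \in Ext^1(A)$ with defining triangle $a_1 \to d \to a_2 \to a_1[1]$ (so $a_1, a_2 \in A$) and $b \in B$. Applying $\hom(-, b)$ gives the exact sequence $\hom(a_2, b) \to \hom(d, b) \to \hom(a_1, b)$, and both flanks vanish by the hypothesis $A \perp B$, so $\hom(d, b) = 0$. The symmetric assertion is proved identically, applying $\hom(a, -)$ to the defining triangle of an element of $Ext^1(B)$.

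To pass from these single-step results to $Ext^n(A) \perp Ext^n(B)$, I argue by induction on $n$. Assuming $Ext^{n-1}(A) \perp Ext^{n-1}(B)$, I apply the first single-step assertion with $A, B$ replaced by $Ext^{n-1}(A), Ext^{n-1}(B)$ to obtain $Ext^1(Ext^{n-1}(A)) \perp Ext^{n-1}(B)$, and then the second to obtain $Ext^1(Ext^{n-1}(A)) \perp Ext^1(Ext^{n-1}(B))$, which is the desired $Ext^n(A) \perp Ext^n(B)$. For $Ext(A) \perp Ext(B)$, I note that since $0$ lies in any $H$ by convention, the triangle $h \to h \to 0 \to h[1]$ exhibits $H \subset Ext^1(H)$, so the filtration $Ext^n(H)$ is increasing; hence any pair $(x, y) \in Ext(A) \times Ext(B)$ lies in some $Ext^N(A) \times Ext^N(B)$ and orthogonality there finishes the argument.

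For the compact statement, I first verify $A^\infty \perp B^\infty$ directly by the computation
\[
\hom\bigl(\oplus_i a_i,\; \oplus_j b_j\bigr) \;=\; \prod_i \hom(a_i, \oplus_j b_j) \;=\; \prod_i \oplus_j \hom(a_i, b_j) \;=\; 0,
\]
where the first equality is the universal property of direct sums, the second uses compactness of each $a_i \in A$, and the third is the hypothesis $A \perp B$. Then $Ext^\infty(A) \perp Ext^\infty(B)$ follows by applying the previous paragraph to the pair $(A^\infty, B^\infty)$, combined with the identity $Ext^\infty(H) = Ext(H^\infty)$ stated in the definitions (and recorded as equivalent to the alternative formulation via the remark following the definitions).

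The argument is entirely formal; the only place the hypotheses are used nontrivially beyond the initial $A \perp B$ is the middle equality in the display, where dropping compactness of objects in $A$ would obstruct the identification of $\hom(a_i, \oplus_j b_j)$ with $\oplus_j \hom(a_i, b_j)$. This is the expected main point to flag, and it accounts for why the compactness hypothesis appears only in the last statement of the lemma.
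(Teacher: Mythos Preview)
Your proof is correct and follows exactly the same approach as the paper's: long exact sequences in $\hom$ for the single-step orthogonality, an obvious induction for $Ext^n$ and $Ext$, and compactness for the $A^\infty \perp B^\infty$ step. You have simply spelled out in full the details that the paper compresses into two sentences.
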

\begin{proof} If $A\perp B$, then $A^\infty \perp B^\infty$ provided objects in $A$ are compact. Therefore, enough to show that $Ext^1(A)\perp B$ (resp. $A\perp Ext^1(A)$) -- other claims follow by an obvious induction. But $Ext^1(A)\perp B$ (resp. $A\perp Ext^1(A)$) if $A\perp B$, writing long exact sequence for $\hom(-,b)$ (resp. $\hom(a, -)$) for any $b\in B$ (resp. $a\in A$). 
\end{proof}
\begin{lemma}\label{gluing:decomposition}
	Let $A, B, H$ be isomorphism closed full triangulated subcategories of $H$ containing $0$ such that $(A, B)$ decompose $H$.
	\begin{enumerate}
		\item Assume $A\perp (B\cup B[-1])$. If $D$ is pseudo-abelian then $(\mathcal R(A), \mathcal R(B))$ decompose $\mathcal R(H)$.
		\item Assume $A\perp B[1]$. Then $(Ext(A),Ext(B))$ decompose $Ext(H)$.
		\item Assume $A\perp B[1]$. If $D$ has small direct sums and all the objects in $A$ are compact then $(Ext^{\infty}(A),Ext^{\infty}(B))$ decompose $Ext^{\infty}(H)$.
	\end{enumerate}
\end{lemma}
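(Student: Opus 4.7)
The plan is to handle the three parts in order, with (1) proved by splitting idempotents against the decomposition triangle, (2) proved by a single octahedral rearrangement followed by induction, and (3) reduced to (2) via compactness.

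For (1), let $d\in \mathcal R(H)$ be a retract of some $h\in H$, and fix the idempotent $p:h\to h$ with image $d$ together with a decomposition triangle $a\xrightarrow{f} h\xrightarrow{g} b\to a[1]$. I would first lift $p$ to a morphism of triangles. Since $\Hom(a,b)=0$ by $A\perp B$, the composition $g\circ p\circ f$ vanishes, so $p\circ f$ lifts through $f$ to some $p_A:a\to a$; uniqueness of this lift is guaranteed by the vanishing $\Hom(a,b[-1])=0$ from $A\perp B[-1]$. Symmetrically $\Hom(a[1],b)=0$ (another form of $A\perp B[-1]$) lets me lift $g\circ p$ uniquely to $p_B:b\to b$ with $p_B\circ g = g\circ p$. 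Uniqueness forces both lifts to be idempotent, since $p_A^2$ and $p_B^2$ are also valid lifts of the same data. In the pseudo-abelian setting, splitting all three idempotents makes the triangle $a\to h\to b$ a direct sum of the triangles formed by the images of $(p_A,p,p_B)$ and of $(1-p_A,1-p,1-p_B)$; the summand corresponding to $p$ is a triangle $\operatorname{im}(p_A)\to d\to \operatorname{im}(p_B)$ with $\operatorname{im}(p_A)\in \mathcal R(A)$ and $\operatorname{im}(p_B)\in \mathcal R(B)$, as required.

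For (2), I would proceed by induction on $n$ to show $Ext^n(H)\subset Ext^1(Ext(B), Ext(A))$. The substantive case is $n=1$: given $h_1\to h\to h_2$ with $h_i\in H$ and decompositions $a_i\to h_i\to b_i$, I first introduce $h':=\operatorname{fib}(h\to h_2\to b_2)$, and apply the octahedral axiom to the composition $h\to h_2\to b_2$ to obtain a triangle $h_1\to h'\to a_2$. The connecting map $\delta:a_2\to h_1[1]$ composed with $h_1[1]\to b_1[1]$ lives in $\Hom(a_2,b_1[1])=0$ by $A\perp B[1]$, so $\delta$ lifts along $a_1[1]\to h_1[1]$ to some $\tilde\delta:a_2\to a_1[1]$. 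Letting $\tilde a'\in Ext^1(a_2,a_1)\subset Ext(A)$ be the extension classified by $\tilde\delta$, the standard morphism-of-triangles construction produces a map $\tilde a'\to h'$ whose cone is the cone of $a_1\to h_1$, namely $b_1\in Ext(B)$. Composing $\tilde a'\to h'\to h$ and applying octahedral one more time yields a triangle $\tilde a'\to h\to b''$ with $b''\in Ext^1(b_2,b_1)\subset Ext(B)$, the desired decomposition. For the inductive step, I would run the same argument replacing $h_1,h_2\in H$ with $e_1,e_2\in Ext^{n-1}(H)\subset Ext^1(Ext(B),Ext(A))$; the required orthogonality $Ext(A)\perp Ext(B)[1]$ follows from $A\perp B[1]$ by Lemma~\ref{gluing:orthogonality}.

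For (3), I would reduce to (2) applied to $(A^\infty, B^\infty, H^\infty)$. Direct sums of decomposition triangles are again triangles when $D$ admits small direct sums, so $(A^\infty,B^\infty)$ decompose $H^\infty$. Compactness of objects in $A$ gives $\Hom(\bigoplus_i a_i, \bigoplus_j b_j[1]) = \prod_i\bigoplus_j \Hom(a_i, b_j[1])=0$, so $A^\infty\perp B^\infty[1]$. Invoking part (2) yields that $(Ext(A^\infty),Ext(B^\infty))$ decomposes $Ext(H^\infty)=Ext^\infty(H)$; by definition $Ext(A^\infty)=Ext^\infty(A)$ and likewise for $B$, closing the argument. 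The main obstacle I anticipate is the careful bookkeeping of the octahedral lifting in (2)---ensuring that the non-unique lift $\tilde\delta$ and the resulting morphism of triangles produce an honest distinguished triangle rather than just an isomorphism class; parts (1) and (3) are comparatively mechanical once (2) is in hand.
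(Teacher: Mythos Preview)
Your proposal is correct and follows essentially the same approach as the paper. Parts (1) and (3) are identical in spirit; in part (2) you unfold the argument as two explicit applications of the octahedral axiom, whereas the paper packages the same step as a single $3\times 3$ diagram completion, but the key input---the vanishing $\Hom(a_2,b_1[1])=0$ used to lift the connecting map---is exactly the same, and your bookkeeping concern is unfounded: the $3\times 3$ lemma guarantees that some choice of the TR3 fill-in $\tilde a'\to h'$ has cone $b_1$.
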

\begin{proof}
	\begin{enumerate}
		\item  Let $h'\in \mathcal R(H)$. Then there is $h\in H$ and a decomposition of identity $h'\xrightarrow \alpha h\xrightarrow\beta h'$ and hence $p:\alpha\circ\beta:h\rightarrow h$ is a projector. There is a triangle: 
		\[
			a\overset f\longrightarrow h \longrightarrow b \longrightarrow 
		\]
		 with $a\in A$ and $b\in B$. Using $A\perp B$ it is easy to see that $p$ gives rise to a morphism $p':a\rightarrow a$. Since $A\perp B[-1]$ this is unique and hence gives rise to a projector $p'$. Let the kernel of $p'$ be $a'$ and that for the projector $1-p,1-p'$ be $h'',a''$ respectively. Then $f$ induces maps $f':a'\rightarrow h'$ and $f'':a''\rightarrow h''$. We get an induced morphism of distinguished triangles:
		\[\xymatrix{
			a'\oplus a''\ar[r]^{f'\oplus f''}\ar[d]		&h'\oplus h''\ar[r]\ar[d]	&b'\oplus b''\ar[d]\ar[r]	&\-	\\		 
		 	a\ar[r] 				&h\ar[r] 			&b\ar[r]	&\-					
		}\]
		 where $b',b''$ are defined as cones of the morphism $f',f''$. First two vertical maps are isomorphisms, hence so is the third. In particular, $b'$ is a summand of $b$, hence in $\mathcal R(B)$. Since $a'$ is in $\mathcal R(A)$ and $a'\rightarrow h'\rightarrow b'\rightarrow$ is distinguished, we are done.
	\item 	Let $h\in Ext^1(H)$. Then there is a distinguished triangle $h_1\rightarrow h\rightarrow h_2\rightarrow h_2[1]$ in $D$ with $h_1, h_2$ in $H$. Since $H\in Ext^1(B,A)$, we have a commutative diagram:
\[
\xymatrix{
a\ar@{..>}[d]\ar@{..>}[r] & h\ar[d]\ar@{..>}[r] & b\ar@{..>}[d]\ar@{..>}[r] & a[1]\ar@{..>}[d]\\
a_2\ar[r]\ar@{-->}[d] & h_2\ar[r]\ar[d] & b_2\ar[r]\ar@{-->}[d] & b[1]\ar@{-->}[d]\\
a_1[1]\ar[r] & h_1[1]\ar[r] & b_1[1]\ar[r] & a[2]
}
\]
	Here (bottom) dashed arrows exist, since $\hom(a_2,b_1[1]) =0$. Defining $a$ by the triangle $a\rightarrow a_2\rightarrow a_1[1]\rightarrow $ and similarly for $b$, we complete the diagram using dotted arrows. Hence $a\in Ext^1(A)$, $b\in Ext^1(B)$, and so $Ext^1(H)$ is decomposed by $(Ext^{1}(A),Ext^{1}(B))$. Claim for $Ext^n$ and $Ext$ follows by induction using \ref{gluing:orthogonality}.
	\item For $Ext^\infty$, we only need to additionally observe that if objects in $A$ are compact $A^\infty \perp B^\infty[1]$. Also, since arbitrary sum of distinguished triangles is distinguished $(A^\infty,B^\infty)$ is a decomposition structure on $H^\infty$. Now we use the the previous part.
	\end{enumerate}
\end{proof}

In particular if we can construct a $t$-structure on $\vvspan H$ invariant under shifts and $H$ consisting of compact objects, we can do so on $\vvtspan H$:
\begin{corollary}\label{compactToNonCompact}
		Let $(A,B)$ form a $t$-structure on $H$ and assume that $A$ consists of compact objects, $A[\Z]\subset A$. Then $(Ext^\infty(A), Ext^\infty(B))$ forms a $t$-structure on $Ext^\infty(H)$.
\end{corollary}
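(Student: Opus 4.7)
The plan is to reduce the statement to the second half of Proposition \ref{tFromGenerators}, which requires four inputs: (a) $A\perp B[n]$ for every $n\in\Z$; (b) that $(A,B)$ decomposes $H$; (c) that $\vvspan{A},\vvspan{B}\subset\vvspan{H}$; and (d) that objects of $A$ are compact (with the ambient $D$ having small direct sums, which is already implicit since $Ext^\infty$ appears in the statement).

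Three of these are either given or immediate. Compactness in (d) is a hypothesis; the decomposition (b) is nothing but the existence-of-decomposition axiom for the $t$-structure $(A,B)$ on $H$; and (c) holds because $A,B\subset H$ and the operation $\vvspan{-}$ is monotone. Only (a) requires a line of argument: the $t$-structure axioms give $A\perp B$, and the upgrade to all integer shifts uses the hypothesis $A[\Z]\subset A$ via $\hom(a,b[n])=\hom(a[-n],b)=0$ for $a\in A$, $b\in B$, since $a[-n]\in A$. Applying Proposition \ref{tFromGenerators} then yields a $t$-structure $(\vvtspan{A},\vvtspan{B})$ on $\vvtspan{H}$.

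The remaining step is to identify each $\vvtspan{X}$ with $Ext^\infty(X)$ for $X\in\{A,B,H\}$. By definition $\vvtspan{X}=Ext^\infty(X[\Z])$, so I need $X[\Z]=X$ in each case. For $X=A$ this is the standing hypothesis. For $X=H$, the very notion of a $t$-structure requires $H$ to be triangulated (see \ref{tIsAdjoint}), hence shift-stable. For $X=B$, I would first invoke the standard characterization of the positive part of a $t$-structure, namely $B=\{x\in H : \hom(a,x)=0\text{ for all }a\in A\}$, which follows formally from orthogonality and the decomposition axiom; shift-invariance of $B$ is then inherited from that of $A$ by the same adjunction computation used in (a).

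The main (and really only) subtlety is the identification for $B$, which requires one fact about $t$-structures slightly beyond the three listed axioms. Once this standard characterization is in hand, the rest of the argument is a routine bookkeeping exercise built on Proposition \ref{tFromGenerators}.
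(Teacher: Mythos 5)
Your proposal is correct and follows the same route the paper intends: the corollary is a direct application of Proposition \ref{tFromGenerators}, with the hypotheses checked exactly as you do (orthogonality under all shifts via $A[\Z]\subset A$, the decomposition axiom, $A,B\subset H$, and compactness). Your extra bookkeeping identifying $Ext^\infty(A)$, $Ext^\infty(B)$, $Ext^\infty(H)$ with $\vvtspan{A}$, $\vvtspan{B}$, $\vvtspan{H}$ — using the standard characterization $B=A^{\perp}$ in $H$ to get shift-stability of $B$ — is precisely the step the paper leaves implicit, and it is handled correctly.
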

In the motivic setting, it is standard to lay down results in the situation of (subcategories) of the form $\vvtspan S$ where $S$ is a compact set. Then the claim that a (shift-invariant) $t$-structure restricts to compact objects is equivalent to saying that we have a $t$-structure on $\vvspan S$ by the previous corollary and the following:
\begin{proposition}\label{compactAreGenerated}
	Assume that $D=\vvtspan S$ and that the objects in $S$ are compact. Then the full subcategory of compact objects in $D$ can be identified with $\vvspan S$.
\end{proposition}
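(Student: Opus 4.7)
The plan is to prove the two inclusions $\vvspan{S} \subseteq D^c \subseteq \vvspan{S}$, where $D^c$ denotes the full subcategory of compact objects in $D$. The first is immediate from the standard closure properties of compact objects: each $s \in S$ is compact by hypothesis, and compact objects in a triangulated category with coproducts are closed under shifts, cones, and retracts. Hence the thick subcategory $D^c$ contains $\mathcal R(Ext(S[\Z])) = \vvspan{S}$.

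For the reverse inclusion, my first step is to recast the hypothesis $D = \vvtspan{S}$ as the statement that $S$ is a set of compact generators of $D$ in the classical sense: if $y \in D$ satisfies $\hom(s[n], y) = 0$ for all $s \in S$ and $n \in \Z$, then $y = 0$. Indeed, since $\hom(-, y)$ converts arbitrary coproducts to products and distinguished triangles to long exact sequences, the class of $x \in D$ with $\hom(x, y) = 0$ is closed under shifts, arbitrary coproducts, and extensions; as it contains $S[\Z]$ by hypothesis, it contains $Ext^\infty(S[\Z]) = \vvtspan{S} = D$, and taking $x = y$ forces $y = 0$.

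With this compact-generation established, the desired inclusion $D^c \subseteq \vvspan{S}$ becomes Neeman's classical theorem: in any compactly generated triangulated category, the subcategory of compact objects coincides with the thick closure of any set of compact generators (see \cite[Lemma 4.4.5]{neeman2014triangulated}). Under the notation introduced earlier in this section, that thick closure is precisely $\mathcal R(Ext(S[\Z])) = \vvspan{S}$, so this concludes the identification.

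The main obstacle, which I would quote rather than rederive, is the geometric heart of that theorem: given a compact $x \in D = Ext^\infty(S[\Z])$, exhibit $x$ as a retract of an object of $Ext(S[\Z])$. Neeman's strategy is to build $x$ as a homotopy colimit of a tower $x_0 \to x_1 \to \cdots$ whose terms all lie in $Ext(S[\Z])$, each stage being a finite approximation permitted by the fact that a map out of a compact object into an infinite coproduct factors through a finite subsum; one then applies compactness of $x$ once more to split the canonical map $x \to \operatorname{hocolim} x_i = x$ through some finite stage $x_n$, realizing $x$ as a retract of $x_n \in Ext(S[\Z])$. The verification that Neeman's ``thick closure'' matches $\vvspan{S}$ in our notation is formal, so the bulk of the work is absorbed into the citation.
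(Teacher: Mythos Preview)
Your proof is correct and takes essentially the same approach as the paper: both reduce to \cite[4.4.5]{neeman2014triangulated}. The paper's proof is a one-line citation of that result, whereas you additionally spell out the easy inclusion $\vvspan{S}\subseteq D^c$, verify that the hypothesis $D=\vvtspan{S}$ implies $S$ is a generating set in Neeman's sense, and sketch the homotopy-colimit argument behind the cited theorem---all of which is sound and useful exposition, but not a different route.
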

\begin{proof}
	This follows immediately from \cite[4.4.5]{neeman2014triangulated}.
\end{proof}

\subsection{Gluing $t$-structures in presence of continuity}\label{sec:gluing}
	In \cite{vaish2017punctual} we laid down a procedure for so called \emph{punctual gluing} which is a mild generalization of the standard gluing of $t$-structures in \cite{BBD}. We briefly recall the same below.

%

We begin with describing the formal situation we work in:
	\begin{definition}[Grothendieck's four functors]\label{gluing:4functors}
		Given a scheme $X$ let $Sub(X)$ be the category of sub-schemes of $X$ (i.e.. schemes $f:Y\rightarrow X$, $f$ an locally-closed immersion as objects and obvious morphisms). In particular, all morphisms in $Sub(X)$ are immersions, of finite type. 
		
		We say \emph{formalism of Grothendieck's four functors} holds if:
		\begin{enumerate}[i.]
			 \item For any $Y$ in $Sub(X)$ we are given a triangulated category $D_Y$. 
			 \item For any morphism $f:Y\rightarrow Z$ in $Sub(X)$, we are given adjoint functors
			 	\begin{align*}
					f^*: D_Y \leftrightarrows D_Z : f_* & &\text{ and }& &
					f_!: D_Y \leftrightarrows D_Z : f^!
				\end{align*}
				such that there are isomorphisms of functors:
				\begin{align*}
					(f g)_* \iso f_*g_* & & (f g)_! \iso f_!g_! & & (f g)^* \iso g^*f^* & & (f g)^! \iso g^!f^!
				\end{align*}
				Also if $1:X\rightarrow X$ is the identity morphism, and $\id:D_X\rightarrow D_X$ denotes the identity natural transformation. Then we must have an isomorphism of functors:
				\begin{align*}
					1_* \iso 1_! \iso 1^* \iso 1^! \iso \id
				\end{align*}
				
			\item We are given natural transformations
				\begin{align*}
					f^!	\rightarrow f^* & & f_!	\rightarrow f_*
				\end{align*}
				where the first is an isomorphism for $f$ an open immersion, and the second an isomorphism for $f$ a closed immersion.
			\item If $i:Z\rightarrow Y$ is a closed immersion with $j:U\rightarrow Y$ the open immersion of the complement, then the functors $(i^*,i_*,i^!),(j_!,j^*,j_*)$ follow the formalism of gluing \ref{gluing:openClosed}.
		\end{enumerate}
	\end{definition}
	\begin{definition}[Formalism of gluing]\label{gluing:openClosed}
		The triangulated functors:
			\begin{align*}
				j_*, j_!: D_U	\leftrightarrows D_X	: j^* & & i_*: D_Z	\leftrightarrows D_X	: i^*, i^!
			\end{align*}
		between triangulated categories $D_U,D_Z,D_X$ are said to satisfy \emph{formalism of gluing} if:
		\begin{enumerate}[i.]
			\item $(j_!, j^*)$, $(j^*, j_*)$, $(i^*, i_*)$ and $(i_*, i^!)$ are adjoints tuples.
			\item $j^*i_*=0$ and hence by adjunction $i^*j_! = 0$ as well as $i^!j_* = 0$.
			\item (Localization)  $\forall A\in D_X$, we have functorial distinguished triangles:
			\begin{align*} j_!j^*A\rightarrow A\rightarrow i_*i^*A\rightarrow & & i_*i^!A\rightarrow A\rightarrow j_* j^*A\rightarrow \end{align*}
			where the morphisms come from adjunction.
			\item We have isomorphism of functors
			\begin{align*} \forall A\in D_U,\;j^*j_*A\xrightarrow\cong A\xrightarrow\cong j^*j_!A & & \forall B\in D_Z,\;i^*i_* B\xrightarrow\cong B\xrightarrow\cong i^!i_*B  \end{align*}
			with the morphisms coming from adjunction.
		\end{enumerate}
	\end{definition}
	We will need $f^!$ and $f^*$ defined not only for locally closed immersions, but also when $f:\Spec K\hookrightarrow X$ denotes any (Zariski) point of $X$ (that is $K$ is the residue field of $x\in X$, and we consider the induced map $\Spec K\rightarrow X$). While, in the situations we intend to apply this, $f^*$ is a given, $f^!$ needs to be defined by hand. We make this precise below:
	\begin{definition}[Extended formalism of gluing]\label{gluing:extended4functors}
		Let $X$ be a scheme and let us assume that Grothendieck's four functors \ref{gluing:4functors} exist. Let $f_Y:Y\rightarrow X$ denote the natural immersion for any $Y\in Sub(X)$. The situation is said to satisfy \emph{extended formalism of gluing} if the following happens:
		
		Assume that for each Zariski point $\Spec k\hookrightarrow X$ we are given a triangulated category $D(k)$. Let $y=\Spec k$ denote the corresponding point in $X$ and let $Y=\bar y$. Let $Z\in Sub(X)$ be such that $y\in Z$ as well. Let $\epsilon_Z:\Spec k\hookrightarrow Z$ denote the natural morphism. Assume that in such a situation we are given a functor
			\begin{align*}
				\epsilon_Z^*:D_Z\rightarrow D(k)&  &\text{ such that }\epsilon_Z^* = \epsilon_{Z'}^{*}\circ f^*\text{ for all }\Spec k\xrightarrow{\epsilon_{Z'}} Z'\xrightarrow f Z\text{ factoring }\epsilon_Z
			\end{align*}

		Since $\bar y = Y$ therefore $Y\subset \bar Z$ and $i:Y\cap Z\hookrightarrow Z$ is a closed immersion. Let $\epsilon_{Y\cap Z}:\Spec k\rightarrow Y\cap Z$ be the natural morphism. Then we define:
		\[
			\epsilon_Z^! := \epsilon_{Y\cap Z}^*\circ i^!: D_Z\rightarrow D_{Y\cap Z}\rightarrow D(k)
		\]
		Notice that if $\Spec k\xrightarrow{\epsilon_{Z'}} Z'\xrightarrow f Z$ factors $\epsilon_Z$, with $i':Y\cap Z'\hookrightarrow Z'$ being the immersion, then $Y\cap Z\subset Y=\bar y$ lies in closure of $Y\cap Z'$, since $y\in Y\cap Z'$. Since $Y'\cap Z\subset Y\cap Z$ is also locally closed, it must be open in $Y\cap Z$. It follows that if $j:Y\cap Z'\rightarrow Y\cap Z$ denote the immersion, $j^*=j^!$, and hence we have a canonical identification of functors: 
		\[
			\epsilon_Z^! = \epsilon_{Y\cap Z}^*\circ i^! = (\epsilon_{Y\cap Z'}^*\circ j^*)\circ i^! = \epsilon_{Y\cap Z'}^*\circ (i\circ j)^! =\epsilon_{Y\cap Z'}^*\circ (f\circ i')^!=\epsilon_{Y\cap Z'}^*\circ i'^!\circ f^!=\epsilon_{Z'}^!\circ f^!
		\]
	\end{definition}
	
	Finally, we make the essential definitions:
	\begin{definition}[Continuity]\label{gluing:continuity}
		Assume the situation in \ref{gluing:extended4functors}.  Let $\epsilon_Y:\Spec k\hookrightarrow X$ be a Zariski point in $X$ with closure $Y$. The situation is said to satisfy \emph{continuity} if for any such $\epsilon_Y$:
		\begin{enumerate}[i.]
			\item (Essentially surjective) Let $a\in D(k)$. 
			Then $\Spec k$ has a neighborhood $\Spec k\xrightarrow{h}U\subset Y$  open dense in $Y$ and an object $\bar a\in D_U$ such that $h^*(\bar a) = a$.
			\item (Full) Let $a, b\in D_Y$. Then for any morphism $\alpha\in \hom(\epsilon_Y^*a,\epsilon_Y^*b)$, there is a $\Spec k\xrightarrow{g} U\xrightarrow{f} Y$ open dense, and a map $\bar\alpha\in \hom(f^*a, f^*b)$, such that $\alpha = g^*\bar\alpha$.
			\item (Faithful) Let $a, b\in D_Y$. Then for any morphism $\bar\alpha\in \hom(a,b)$, such that $\epsilon^*(\bar \alpha)=0$, there is a $\Spec k\xrightarrow{g} U\xrightarrow{f} Y$ open dense, $f^*(\bar\alpha)=0$.
		\end{enumerate}
	\end{definition}
	\begin{definition}[Punctual gluing]\label{gluing:spreadingOut}
		Assume that for each $\Spec k\hookrightarrow X$, we are given a $t$-structure $(D^{\le}(k), D^{>}(k))$ on the category $D(k)$. For any $U\in Sub(X)$, define
		\begin{align*}
			D^{\le}(U)	:=\{a&\in D_U\big| \epsilon^*(a)\in D^{\le}(k)\text{ for }\epsilon:\Spec k\rightarrow U\text{ any point of }U\} \\
			D^{>}(U)	:=\{a&\in D_U\big| \epsilon^!(a)\in D^{>}(k)\text{ for }\epsilon:\Spec k\rightarrow U\text{ any point of }U\}
		\end{align*}
		as full subcategories. In particular if $f:S\hookrightarrow T$ is an immersion, $S, T\in Sub(X)$:
		\begin{align*}
			f^*(D^{\le}(T))\subset D^{\le}(S)& &f^!(D^{>}(T))\subset D^{>}(S)
		\end{align*}
	\end{definition}
	\begin{definition}[Continuity for $t$-structures]\label{gluing:continuityForT}
		Assume the situation of \ref{gluing:spreadingOut}. Let $\epsilon_Y:\Spec k\hookrightarrow X$ be a Zariski point in $X$ with closure $Y$. 
%
		The situation is said to satisfy \emph{continuity for $t$-structures} if for any such $\epsilon_Y$:
		\begin{enumerate}[i.]
			\item (Continuity for the negative part) For any $a\in D^{\le}(k)$, there is a neighborhood $U$ of $\Spec k$ in $Y$, $\Spec k\xrightarrow{f}U\subset Y$ and an object $\bar a\in D^{\le}(U)$ with $f^*\bar a = a$.
			\item (Continuity for the positive part) For any $a\in D^{>}(k)$, there is a neighborhood $U$ of $\Spec k$ in $Y$, $\Spec k\xrightarrow{f}U\subset Y$ and an object $\bar a\in D^{>}(U)$ with $f^!\bar a = a$.
		\end{enumerate}
	\end{definition}
	\begin{remark}
		If $D^{\le }(k)$ resp. $D^{>}(k)$ is finitely generated as $\vvspan S$, then one can verify continuity for positive resp. negative part on objects $a\in S$, since any object in $\vvspan S$ is constructed using extensions, shifts, and retracts from \emph{finitely many} objects in $S$, and by continuity of $D(k)$ the operations can be extended onto the open intersection of the corresponding finitely many open sets.
	\end{remark}
	
	Then the main result of punctual gluing is the following:
	\begin{theorem}[Punctual gluing of $t$-structures]\label{gluing:mainresult}
		Assume the situation in \ref{gluing:continuity} and \ref{gluing:continuityForT} on a Noetherian scheme $X$. Then $(D^{\le}(X), D^{>}(X))$ is a $t$-structure on $D_X$.
	\end{theorem}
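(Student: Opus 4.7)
The plan is Noetherian induction on $X$, combining the classical BBD-style gluing recipe with the continuity axioms \ref{gluing:continuity} and \ref{gluing:continuityForT}. Invariance is immediate pointwise: if $a\in D^{\le}(X)$ then $\epsilon^*(a[1])=\epsilon^*(a)[1]\in D^{\le}(k)$ for every Zariski point $\epsilon:\Spec k\hookrightarrow X$, since $\epsilon^*$ is triangulated and each local $(D^{\le}(k),D^{>}(k))$ is shift-invariant; the analogous statement for $D^{>}(X)$ uses that $\epsilon^!$ is triangulated (being the composition $\epsilon_{Y\cap Z}^*\circ i^!$ of triangulated functors in \ref{gluing:extended4functors}).

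The pivotal compatibility to exploit is that the pointwise definition coincides with BBD gluing along any open-closed decomposition $X=U\sqcup Z$, with $j:U\hookrightarrow X$ open and $i:Z\hookrightarrow X$ closed. Using the factoring identities of \ref{gluing:extended4functors} (namely $\epsilon_X^*\iso\epsilon_U^*j^*$ for $\epsilon\in U$ and $\epsilon_X^*\iso\epsilon_Z^*i^*$ for $\epsilon\in Z$, and similarly $\epsilon_X^!\iso\epsilon_U^!j^!$ with $j^!\iso j^*$ for $\epsilon\in U$, respectively $\epsilon_X^!\iso\epsilon_Z^!i^!$ for $\epsilon\in Z$), one verifies
\[
 a\in D^{\le}(X)\iff j^*a\in D^{\le}(U)\text{ and }i^*a\in D^{\le}(Z),\qquad a\in D^{>}(X)\iff j^*a\in D^{>}(U)\text{ and }i^!a\in D^{>}(Z).
\]
Consequently, once $(D^{\le}(U),D^{>}(U))$ and $(D^{\le}(Z),D^{>}(Z))$ are $t$-structures on their respective categories, the classical BBD recipe \cite{BBD} yields a $t$-structure on $D_X$ that coincides with the pointwise one.

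For the inductive step assume the theorem for every proper closed $Z\subsetneq X$. Given $c\in D_X$, at each generic point $\eta_s$ of $X$ with residue field $k_s$ the local $t$-structure provides a distinguished triangle $a_s\to\epsilon_{\eta_s}^*c\to b_s\to$; by the essentially surjective and full parts of \ref{gluing:continuity} this spreads to a triangle on an open neighborhood of $\eta_s$ in the irreducible component $\overline{\{\eta_s\}}$, and by \ref{gluing:continuityForT} (after further shrinking) its outer terms belong pointwise to $D^{\le}$ and $D^{>}$ on that neighborhood. Taking disjoint unions after deleting intersections among components produces an open dense $V\subset X$ on which $c|_V$ admits the required decomposition in $(D^{\le}(V),D^{>}(V))$. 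Let $Z=X\setminus V$, closed of strictly smaller dimension; by induction $(D^{\le}(Z),D^{>}(Z))$ is a $t$-structure. Then the BBD octahedral construction applied to $j_!a_V\to c$ together with the inductive truncations on $D_Z$ assembles a decomposition triangle for $c$ on $X$ whose pieces lie in $D^{\le}(X),D^{>}(X)$ by the pointwise criterion above.

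Orthogonality is deduced analogously in the inductive step: given $a\in D^{\le}(X)$, $b\in D^{>}(X)$, the localization triangle $j_!j^*a\to a\to i_*i^*a\to$ together with adjunction splits $\hom_{D_X}(a,b)$ into contributions $\hom_{D_V}(j^*a,j^*b)$ and $\hom_{D_Z}(i^*a,i^!b)$; the latter vanishes by induction on $Z$, and the former vanishes after shrinking $V$ by the faithful part of \ref{gluing:continuity} (applied iteratively, each residual closed complement being handled by the induction hypothesis). The main obstacle throughout is the careful handling of $\epsilon^!$ at non-closed points, defined only indirectly via $i^!$ on point-closures; the compatibility $\epsilon_Z^!\iso\epsilon_{Z'}^!\circ f^!$ from \ref{gluing:extended4functors} must be invoked repeatedly to align the $!$-stalk data across strata and to justify the open-closed identifications above. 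Equally delicate is ensuring that \ref{gluing:continuityForT} really produces objects lying in $D^{\le}(V)$ resp. $D^{>}(V)$ at every point of the spread-out open $V$, not merely at the single generic point from which the spreading began.
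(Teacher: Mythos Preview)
The paper does not actually prove this theorem here; its entire proof is the citation ``See \cite[3.7]{vaish2017punctual}''. Your sketch is the standard Noetherian-induction argument (spread the local decomposition from generic points via continuity, then glue \`a la BBD using the induction hypothesis on the closed complement), which is exactly what that reference contains, so in substance you are reproducing the intended proof rather than offering an alternative.

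Two small points of precision are worth tightening. First, when you write that after spreading the triangle its outer terms ``belong pointwise to $D^{\le}$ and $D^{>}$ by \ref{gluing:continuityForT}'', note that \ref{gluing:continuityForT} does not assert that any spread of $a_s\in D^{\le}(k)$ lies in $D^{\le}(U)$; it only guarantees the existence of \emph{some} extension $\bar a'\in D^{\le}(U)$. You must then invoke the full and faithful parts of \ref{gluing:continuity} to conclude that, after further shrinking, the term appearing in your spread-out triangle is isomorphic to $\bar a'$, hence lies in $D^{\le}$. The same remark applies to the positive part (where at the generic point $\epsilon^!=\epsilon^*$, so the comparison goes through). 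Second, in the orthogonality paragraph, the localization sequence gives an exact sequence $\hom_{D_Z}(i^*a,i^!b)\to\hom_{D_X}(a,b)\to\hom_{D_V}(j^*a,j^*b)$, not a splitting; and it is not the whole group $\hom_{D_V}(j^*a,j^*b)$ that vanishes, but rather that for a \emph{given} $f\colon a\to b$ one finds $V'$ with $f|_{V'}=0$, then feeds $Z'=X\setminus V'$ into the induction. With those clarifications your argument is complete.
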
		
	\begin{proof}
	See \cite[3.7]{vaish2017punctual}.
	\end{proof}
\subsection{Chow motives}\label{sec:chow}
	We refer the reader to Scholl's exposition  \cite{scholl1994classical} for a exposition on the classical category of Chow motives. 
	
	In particular given any field $k$, we have a category of Chow motives denoted $CHM(k)$ (resp. effective Chow motives denoted $CHM^{eff}(k)$), whose objects are given by triples $(X,p,n)$ (resp. tuples $(X,p)$) with $X$ a smooth, projective over $k$, $p$ a correspondence such that $p\circ p = p$ and $n\in \Z$. Here $\circ$ denotes the composition of correspondences. Morphisms in this category are given by appropriate correspondences, composing under $\circ$ (see \cite[1.4]{scholl1994classical} for details).
	
		$CHM^{eff}(k)$ resp. $CHM(k)$ are additive, pseudo-abelian categories. There are natural functors:
		\[
			(SmProj/k)^{op} \xrightarrow h CHM^{eff}(k)\hookrightarrow CHM(k)
		\]
		where $h(X)=(X,\Delta_X)$ ($\Delta_X$ being the class of diagonal) while $h(f):=f^*$ is the transpose of the graph of $f:Y\rightarrow X$ in $X\times Y$. Also $CHM^{eff}(k)\hookrightarrow CHM(k)$ is given by $(X,p)\mapsto (X,p,0)$.
\begin{para}\label{chow:duality}
	 $CHM(k)$ is a rigid tensor additive category ({see \cite[1.15]{scholl1994classical}}) under tensor $\otimes$ given by 
	 	\[
			(X,p,m)\otimes (Y,q,n) = (X\times Y,p\times q,m+n)
		\]
		and dual of $(X,p,m)$ for $X$ irreducible of dimension $d$, is:
	 	\[
			(X,p,m)^{\vee} = (X,{^{t}p}, d- m)
		\]
	 
	 In particular internal $\hom$, denoted $\underline\hom$, is right adjoint to $\otimes$  and we have,
	 	\[	\hom(M\otimes N, P) = \hom (M, \underline \Hom(N, P)) = \hom (M, N^\vee\otimes P) \]

	The Lefschetz motive $\mathbb L$ is defined as the object $(\Spec k, \Delta_{\Spec k}, -1)$ in $CHM(k)$. It lives in $CHM^{eff}(k)$ because of the decomposition $h(\mathbb P^1) \cong h(\Spec k)\oplus \mathbb L$ ({see \cite[1.13]{scholl1994classical}}). This object becomes invertible in $CHM(k)$ for the tensor product.
\end{para}

\begin{para}
	Due to Voevodsky \cite{voevodsky}, the category of Chow motives has a full embedding of category of Chow motives in motives over a field. Due to work of Bondarko \cite{bondarko_weights} they can be identified as the full subcategory of objects with Bondarko weight zero. The Chow-K\"unneth decomposition of the Chow motive of a variety, which we discuss in the next section, will be the starting point for our construction of $t$-structures in the triangulated setting. 
\end{para}

	Often, it is easier to construct decompositions of motives over some finite extension of $k$. In such cases following proposition is useful to descend to $k$:
	\begin{proposition}[{Galois descent, see \cite[1.17]{scholl1994classical}}]\label{chow:descent}
		Assume that $X\in SmProj/k$ is purely $d$ dimensional. Let $k'/k$ be a finite Galois extension of degree $m$. Let $X':=X\otimes_k k'\in SmProj/k'$, and let $\beta:X'\times_{k'}X'\rightarrow X\times_k X$ denote the natural projection.
		
		If $p'_1, p'_2, \ldots, p'_r\in CH^d(X'\times_{k'}X')$ are orthogonal idempotents (i.e. $p'_i\circ p'_j =0$ if $i\ne j$ and $p'_i\circ p'_i=p'_i$), which are invariant under action of $Gal(k'/k)$ on $X'\times_{k'}X'=(X\times_k) X\otimes_k k'$, then $p_i=\frac{1}{m}\beta_*p'_i\in CH^d(X\times_k X)$ are also orthogonal idempotents. 
		
		Furthermore if $p'_1, p'_2, \ldots, p'_r$ form a complete system (that is $p'_1 + p'_2 + \cdots + p'_r = \Delta_{X'}\in CH^d(X'\times_{k'}X')$), then  so do $p_1,p_2,\ldots,p_r$ (that is  $p_1 + p_2 + \cdots + p_r = \Delta_{X}\in CH^d(X\times_{k}X)$).	
	\end{proposition}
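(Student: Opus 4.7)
The plan is to show that the passage from the Galois-invariant correspondences on $X'\times_{k'}X'$ to correspondences on $X\times_k X$ via $\tfrac{1}{m}\beta_*$ is a well-behaved descent, inverse to $\beta^*$, and compatible with composition of correspondences. From this the preservation of orthogonal idempotents and of completeness will be formal.

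First, I would record the Galois descent for Chow groups with rational coefficients. Since $\beta : X'\times_{k'} X' \to X\times_k X$ is finite étale of degree $m$ (it is the base change of $\Spec k'\to \Spec k$), one has $\beta_*\beta^* = m\cdot\id$ on $CH^d_\Q$ and $\beta^*\beta_* = \sum_{\sigma\in Gal(k'/k)}\sigma^*$. Hence $\beta^*$ identifies $CH^d(X\times_k X)_\Q$ with the Galois-invariant subspace of $CH^d(X'\times_{k'}X')_\Q$, with inverse given by $\tfrac{1}{m}\beta_*$ restricted to the invariants. In particular, for a Galois-invariant class $p'_i$, setting $p_i=\tfrac{1}{m}\beta_*p'_i$ one computes
\[
\beta^* p_i \;=\; \tfrac{1}{m}\beta^*\beta_* p'_i \;=\; \tfrac{1}{m}\sum_{\sigma\in Gal(k'/k)}\sigma^* p'_i \;=\; p'_i.
\]

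Next I would verify that $\beta^*$ is a ring homomorphism for the composition \eqref{eq:comp} on correspondences, at the level of rational Chow groups. The composition is defined using flat pullback along the projections $p_{XY}, p_{YZ}$ from $X\times Y\times Z$ and proper pushforward along $p_{XZ}$, together with the intersection product. Base-changing the triple product along $k\to k'$ yields a Cartesian square compatible with all three projections; flat pullback commutes with intersection products and with proper pushforward via flat base change, so $\beta^*(f\circ g) = \beta^*f \circ \beta^*g$. Applying this with the two classes $p_i$, $p_j$ together with the identity $\beta^* p_i = p'_i$ above gives
\[
\beta^*(p_i\circ p_j) \;=\; p'_i\circ p'_j \;=\; \delta_{ij}\,p'_i \;=\; \beta^*(\delta_{ij}\,p_i).
\]
By the injectivity of $\beta^*$ on $CH^d(X\times_k X)_\Q$ (which follows from $\tfrac{1}{m}\beta_*\beta^* = \id$), we conclude $p_i\circ p_j = \delta_{ij}\,p_i$, so that $\{p_i\}$ are orthogonal idempotents.

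For the completeness statement, note that $\beta^*\Delta_X = \Delta_{X'}$: indeed $\Delta_{X'}$ is the base change of $\Delta_X$ along $X'\times_{k'}X'\to X\times_k X$, which is Cartesian over $X\to X$. If $\sum_i p'_i = \Delta_{X'}$, then $\beta^*\sum_i p_i = \sum_i p'_i = \Delta_{X'} = \beta^*\Delta_X$, and injectivity of $\beta^*$ yields $\sum_i p_i = \Delta_X$. The main technical point in the argument is the compatibility of $\beta^*$ with correspondence composition, which reduces entirely to flat base change and the projection formula for finite étale morphisms; all other steps are formal consequences of Galois descent of cycles with rational coefficients.
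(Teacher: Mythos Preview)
The paper does not supply its own proof of this proposition; it simply records the statement and cites \cite[1.17]{scholl1994classical}. Your argument is correct and is essentially the standard one: you use that $\beta$ is finite \'etale Galois of degree $m$, so that $\tfrac{1}{m}\beta_*$ and $\beta^*$ are mutually inverse between $CH^d(X\times_k X)_\Q$ and the Galois-invariants in $CH^d(X'\times_{k'}X')_\Q$, and then reduce everything to the observation that $\beta^*$ is a ring homomorphism for composition of correspondences and takes $\Delta_X$ to $\Delta_{X'}$. There is nothing to compare against in the paper itself, and your write-up would serve perfectly well as a self-contained justification of the cited result.
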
		

Finally, following results provide some technical convenience later:

\begin{notation} Since we will be dealing with different categories of Chow motives with varying fields it will be convenient to use the expanded version $h(s:Y\rightarrow \Spec k)$ instead of the notation $h(Y)\in CHM(k)$ where $Y$ is a variety over $\Spec k$ with structure morphism $s$. Similarly, we will also explicitly spell out the morphisms which are implicit in the notation of a Chow motive -- e.g. $(g:X\rightarrow \Spec k, p\in CH^d(X\times_k X), m)$ is the expanded version of $(X, p, m)$ inside $CHM(k)$. \end{notation}
	\begin{proposition}
		Let $L/k$ be a finite separable field extension, inducing $S\rightarrow T$, with $T=\Spec k$, $S=\Spec L$. Then there is a pullback functor $f^*:CHM(k)\rightarrow CHM(L)$:
		\begin{align*}
			&f^*(g:Y\rightarrow \Spec k, p\in CH^d(Y\times_k Y), n) = (f_S:Y_L\rightarrow \Spec L, (p\otimes_k L)\in CH^d(Y_L \times_L Y_L), n)
		\end{align*}
		where $Y_S:=Y\times_k L$ and the natural projection. We also have a natural pushforward functor $f_*:CHM(L)\rightarrow CHM(k)$:
		\[
			f_*(g:X\rightarrow \Spec L, p\in CH^d(X\times_L X), n) = (fg:X\rightarrow \Spec k, i_*p\in CH^d(X\times_k X), n)
		\]
		where $i:X\times_L X\hookrightarrow X\times_k X$ is the natural immersion.
	\end{proposition}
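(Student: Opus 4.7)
Each of the two statements requires verifying two things: that the prescribed cycle on the base-changed correspondence variety is again an idempotent, and that composition and identities in $CHM$ are respected (so that the assignment extends from objects to morphisms, and is functorial).

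For the pullback $f^*$, everything reduces to flat base change. Since $L/k$ is finite separable, $\Spec L \to \Spec k$ is \'etale, hence flat, and base change along it produces a flat morphism $\pi : X_L \times_L X_L \to X \times_k X$ (and analogously on triple products). Flat pullback commutes with proper pushforward along base changes of proper maps, and is a ring homomorphism for the intersection product, so $\pi^*$ is compatible with the correspondence composition in \eqref{eq:comp}. Since $p \otimes_k L = \pi^* p$, this simultaneously gives idempotency of $p \otimes_k L$ and, applied to morphisms, functoriality of $f^*$.

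For the pushforward $f_*$, the crucial structural point is that separability of $L/k$ makes $L \otimes_k L$ an \'etale $L$-algebra. Hence it decomposes as $L \otimes_k L \cong L \times B$, with the first factor corresponding to the multiplication map. Consequently the immersion $i : X \times_L X \hookrightarrow X \times_k X$ is simultaneously open and closed, so $X \times_k X = (X \times_L X) \sqcup Z$, and $i_*$ is merely extension by zero. To verify idempotency of $i_* p$, I would consider the three projections $q_{ab} : X \times_k X \times_k X \to X \times_k X$ and the clopen subschemes $U_{ab} := q_{ab}^{-1}(X \times_L X)$. A direct check on points yields
\[
  U_{12} \cap U_{23} \;=\; X \times_L X \times_L X,
\]
the locus where all three coordinates share a common image in $\Spec L$. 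Each square
\[
\begin{array}{ccc} U_{ab} & \hookrightarrow & X \times_k X \times_k X \\ \downarrow p'_{ab} & & \downarrow q_{ab} \\ X \times_L X & \xrightarrow{i} & X \times_k X \end{array}
\]
is Cartesian (since $i$ is clopen), so flat base change gives $q_{ab}^* i_* p$ as the extension by zero of $(p'_{ab})^* p$. Multiplying $q_{12}^* i_* p$ and $q_{23}^* i_* p$ in $CH^*(X \times_k X \times_k X)$ produces a cycle supported on $X \times_L X \times_L X$ whose restriction is $p_{12}^* p \cdot p_{23}^* p$; pushing forward along $q_{13}$, which restricts to $i \circ p_{13}$ on this locus, and applying the projection formula gives $i_*(p \circ p) = i_* p$. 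The analogous calculation for morphisms $\alpha \in \Hom((Y,q),(X,p))$ yields functoriality of $f_*$.

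The main obstacle, such as it is, is the combinatorial identification $U_{12} \cap U_{23} = X \times_L X \times_L X$ and the correct bookkeeping of which base-change squares are Cartesian; both are transparent once one exploits that separability makes $i$ clopen. Without separability, $i$ would only be closed, and the projection-formula portion of the argument would need to be reworked.
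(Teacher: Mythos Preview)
Your proposal is correct and follows essentially the same approach as the paper. Both arguments hinge on separability making $i:X\times_L Y\hookrightarrow X\times_k Y$ clopen, then verify compatibility with correspondence composition by pulling back along the three projections from the triple product over $k$, observing that the intersection of the relevant clopen loci is exactly the triple product over $L$, and concluding via base change and the projection (Gysin) formula; the paper just carries this out directly for three distinct varieties $X,Y,Z$ rather than treating idempotency and functoriality as separate cases.
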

	\begin{remark*}
		As mentioned before, there is a fully faithful embedding $CHM(k)\hookrightarrow DM(k)$. Then the functors $f^{*}$ resp. $f_{*}$ extend to functors $DM(k)\rightarrow DM(L)$ resp. $DM(L)\rightarrow DM(k)$. This is true more generally due to the formalism of six functors on motivic sheaves (see \ref{sec:motives}), but in the specific case of fields is already due to \cite{voevodsky}. The more by-hand constructions here, which are also well known, give us an explicit handle on these functors which is useful for understanding the Chow-K\"unneth decomposition in the next section. 
	\end{remark*}
	\begin{proof}
		The case for $f^*$ is \cite[1.7]{deninger1991motivic}, we deal with $f_*$. If $X$ and $Y$ are etale proper over $\Spec L$, the immersion $i_{XY}:X\times_L Y\rightarrow X\times_k Y$ is both open and closed. Define $f_*(\alpha) = i_{XY*}(\alpha)$ for a cycle $\alpha\in CH^*(X\times_L Y)$. Let $\beta\in CH^*(Y\times_L Z)$. Consider the diagram:
		\[\begin{tikzcd}
			X\times_L Y\times_L Z\ar[r, hookrightarrow, "u'"]\ar[rd,"t", dotted, hookrightarrow]\ar[d, hookrightarrow, "v'"]	&X\times_k (Y\times_L Z)\ar[r, "s"]\ar[d, hookrightarrow, "v"]	&Y\times_L Z\ar[d, hookrightarrow, "i_{YZ}"]\\
			(X\times_L Y)\times_k Z\ar[r, hookrightarrow, "u"]\ar[d, "r"]	&X\times_k Y\times_k Z\ar[r, "p_{YZ}"]\ar[d, "p_{XY}"]	&Y\times_k Z\\ 
			(X\times_L Y)\ar[r, hookrightarrow, "i_{XY}"]	&X\times_k Y	&\text{with }s\circ u'=q_{YZ}, r\circ v'=q_{XY}
		\end{tikzcd}\]
		where the squares are Cartesian and immersions are both open and closed. Then:
		\begin{align*}
			f_*(\alpha)\circ f_*(\beta) &= p_{XZ*}(p_{XY}^*i_{XY*}\alpha \cdot p_{YZ}^*i_{XY*}\beta) \overset 1= p_{XZ*}(u_*r^*\alpha \cdot v_*s^*\beta) \\
										&\overset 2= p_{XZ*}(t_*(q_{XY}^*\alpha\cdot q_{YZ}^*\beta)) \overset 3= i_{XZ*}q_{XZ*}(q_{XY}^*\alpha\cdot q_{YZ}^*\beta)
										= i_{XZ*}(\alpha\circ \beta) = f_*(\alpha\circ \beta)
		\end{align*}
		(where $q_{XZ}$ resp. $p_{XZ}$ denote the obvious projection over $S$ resp. $T$). Here we use the Gysin formula \cite[6.2]{fulton2013intersection} for the equality $1$, while equality $2$ is obtained by noting that $u', v', u, v$ are all open as well as closed, and hence the intersection takes place in the common component $X\times_L Y\times_L Z$. Equality $3$ holds since $p_{XZ}t=i_{XZ}q_{XZ}$. Other equalities are obvious by definition 
		
		Hence $i_*p$ is a projector above and $f_*$ is a functor as well.
	\end{proof}

\begin{proposition}[Base Change]\label{chow:basechange}
	Let $q:\Spec L\rightarrow \Spec k$ and $p:\Spec K\rightarrow \Spec k$ be finite separable extensions. Then $K\otimes_k L$ is a reduced Artinian ring, \'etale over $k$, and hence can be written as $\oplus_\tau M_\tau$ where $\tau$ varies over a finite indexing set. In particular $M_\tau$ is a separable extension of $k$ with fixed morphisms $q_\tau:\Spec M_\tau \rightarrow \Spec K$ and $p_\tau:\Spec M_\tau \rightarrow\Spec L$.
	Then for any motive $A\in CHM(L)$ we have a natural isomorphism:
	\[
		p^*q_*A \cong \oplus_\tau q_{\tau*}p^*_\tau A
	\]
\end{proposition}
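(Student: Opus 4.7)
The plan is to reduce the claim to an equality of underlying varieties and a compatibility check between the two descriptions of the projector. Write $A=(g:X\to \Spec L, \alpha\in CH^*(X\times_L X), n)$. Then by definition $q_*A = (qg:X\to \Spec k, i_*\alpha, n)$ with $i:X\times_L X \hookrightarrow X\times_k X$, and so
\[
    p^*q_*A = (X_K\to \Spec K,\; (i_*\alpha)\otimes_k K,\; n),
\]
where $X_K := X\times_k K$. On the other hand $p_\tau^*A = (X\times_L M_\tau\to \Spec M_\tau, \alpha\otimes_L M_\tau, n)$ and $q_{\tau*}p_\tau^*A = (X\times_L M_\tau\to \Spec K, j_{\tau*}(\alpha\otimes_L M_\tau), n)$ where $j_\tau:(X\times_L M_\tau)\times_{M_\tau}(X\times_L M_\tau)\hookrightarrow (X\times_L M_\tau)\times_K(X\times_L M_\tau)$.

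The first key observation is that the underlying $K$-varieties coincide, since the decomposition $K\otimes_k L \cong \oplus_\tau M_\tau$ gives
\[
    X_K = X\times_k K \cong X\times_L(L\otimes_k K)\cong \bigsqcup_\tau X\times_L M_\tau,
\]
so the underlying variety of $p^*q_*A$ is precisely the disjoint union of the underlying varieties of the $q_{\tau*}p_\tau^*A$. The next step is to check that the projector matches up on each component. Applying the same computation to $X\times_L X$ produces
\[
    (X\times_L X)\times_k K \cong \bigsqcup_\tau (X\times_L M_\tau)\times_{M_\tau}(X\times_L M_\tau),
\]
and under the open-and-closed immersion $i\otimes_k K:(X\times_L X)\otimes_k K\hookrightarrow X_K\times_K X_K$ the $\tau$-component lands exactly in $(X\times_L M_\tau)\times_K(X\times_L M_\tau)$ as the subscheme cut out by $j_\tau$. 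Therefore writing $\alpha\otimes_k K$ as $\sum_\tau \alpha\otimes_L M_\tau$ along the disjoint decomposition and using the Gysin formula (i.e.\ compatibility of proper pushforward with flat base change along $\Spec K\to \Spec k$, see \cite[1.7]{fulton2013intersection}) gives
\[
    (i_*\alpha)\otimes_k K \;=\; \sum_\tau j_{\tau*}(\alpha\otimes_L M_\tau).
\]
Since this sum is supported on disjoint open-and-closed pieces, it is precisely the projector expressing the direct sum $\oplus_\tau q_{\tau*}p_\tau^*A$ in $CHM(K)$.

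It remains to check naturality in $A$. For a morphism $\beta:A\to A'$ represented by a correspondence in $CH^*(X\times_L X')$, the same base change computation shows that $(i_*\beta)\otimes_k K$ decomposes as $\sum_\tau j_{\tau*}(\beta\otimes_L M_\tau)$ on the disjoint components, which is exactly the block-diagonal morphism $\oplus_\tau q_{\tau*}p_\tau^*\beta$. Thus the isomorphism is functorial.

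The main obstacle is purely bookkeeping: keeping track of the many Gysin pushforwards along the various open-and-closed immersions $i, j_\tau, i\otimes_k K$, and verifying that intersection products commute with the flat base change $-\otimes_k K$ in the way we used. This is precisely the content of \cite[1.7, 6.2]{fulton2013intersection} and proceeds identically to the proof that $f_*$ is a functor carried out in the previous proposition; no new geometric input is required.
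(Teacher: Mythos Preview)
Your proof is correct and follows essentially the same approach as the paper's: both reduce to the decomposition $X\times_k K \cong \bigsqcup_\tau X\times_L M_\tau$ of the underlying variety. The paper's proof is terser, computing only at the level of $h(X)$ and dismissing the general case $(X,\alpha,n)$ as ``similar,'' whereas you actually carry out the projector bookkeeping that the paper leaves implicit.
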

\begin{proof}
For any $X$ smooth over $\Spec L$, we have a diagram:
	\[\begin{tikzcd}[column sep=small]
		X\otimes_L M_\tau\ar[r, equal] &X_\tau \ar[d, "r_\tau"]\ar[r]	&\bigsqcup_\tau X_\tau\ar[d] \ar[r, equal]				&X\otimes_k K \ar[r]\ar[d]													&X\ar[d, "r"]\\	
		&\Spec M_\tau\ar[r]\ar[rrd, "q_\tau" below]	&\bigsqcup_\tau \Spec M_\tau\ar[r, equal]	&\Spec L\otimes_k K\ar[r, "\sqcup_\tau p_\tau"]\ar[d, "\sqcup_\tau q_\tau"]	&\Spec L\ar[d, "q"]\\
		&					&											&\Spec K\ar[r, "p"]															&\Spec k		
	\end{tikzcd}\]
	where the squares are cartesian. Then $X\otimes_k K=\sqcup_\tau X_\tau$, hence 
	\begin{align*}
		p^*&q_*h(r:X\rightarrow \Spec L) 	= p^*h(qr: X\rightarrow \Spec L)  = h(qr\otimes_k K: X\otimes_k K\rightarrow \Spec K) \\
										&=\oplus_\tau h(q_\tau r_\tau:X_\tau \rightarrow \Spec K)
										=\oplus_\tau q_{\tau*}h(r_\tau:X\otimes_L M_\tau \rightarrow \Spec M_\tau)
										=\oplus_\tau q_{\tau*}p_\tau^*h(r:X\rightarrow \Spec L)
	\end{align*}
	as required. The case for a motive $(X, p, n)$ is similar.	
\end{proof}

\subsection{Murre's projectors}\label{sec:murre}	 In \cite{murre1990motive, murre1993conjectural} Murre defines orthogonal projectors $\pi_0(X)$ and $\pi_1(X)$ (the ``trivial'' and the ``Picard'' projector) for the Chow motive of any smooth projective variety $X$ over $k$, and also $\pi_2(X)$ for $X$ a surface. We briefly review the construction of the projectors $\pi_i(X)$ for $i=0, 1, 2$ below for the sake of completeness. We follow the exposition in \cite{scholl1994classical} with minor modifications.
\begin{para}[Construction]\label{chow:murreConstruction}
	The construction can be divided in following steps:
\begin{enumerate} 
	\item First assume that $X$ is geometrically connected.
		\begin{itemize}
			\item $\pi_0(X)$: Let $\zeta\in CH_0(X)$ be a zero cycle of degree $d$. This gives rise to a map $\zeta^*:X\rightarrow \Spec k$, while the structure map $g:X\rightarrow \Spec k$ induces a map $g^*:=h(g):\Spec k\rightarrow X$ in $CHM^{eff}(X)$. It is easy to see that $\zeta^*g^*$ is multiplication by $d$ on $\Spec k$ and hence $\pi_0=\frac{1}{d}g^*\zeta^*$ gives the required projector.
		
			\item $\pi_1(X)$: If $\dim X=0$, define $\pi_1(X)=0$. 
			
			If $\dim X=1$, define $\pi_1(X) = 1 - \pi_0(X) - {^t\pi_0(X)}$. 
			
			Assume $\dim X\ge 2$ and assume that there is a smooth curve $i:C\hookrightarrow X$ obtained by taking successive hyperplane sections ($X$ is assumed to be projective). Let $\zeta$ be the zero cycle obtained by a hyperplane section on $C$. We have natural maps: 
				\begin{equation} \mathbf{Pic}^0(X/k)_{red}\rightarrow \mathbf{Pic}^0(C/k) \cong \mathbf{Alb}^0(C/k) \rightarrow \mathbf{Alb}^0(X/k) \label{eq:isogeny}\end{equation}
			where $\mathbf{Pic}^0(X/k)$ denotes the Picard scheme of $X/k$ (hence $\mathbf{Pic}^0(X/k)_{red}$ denotes the Picard variety of $X/k$) and $\mathbf{Alb}^0(C/k)$ denotes the Albanese.
			The composite $\alpha$ is known to be an isogeny (see \cite{weil1954criteres}, or the discussion in \cite[4.4]{scholl1994classical}), and does not depend on $C$ (once the projective embedding is fixed). 
			One picks a $\beta:\mathbf{Alb}^0(X/k)\rightarrow \mathbf{Pic}^0(X/k)_{red}$ such that $\alpha\circ\beta = [\times n]$. This gives rise to a cycle $\tilde \beta\in CH^1(X\times X)$ (since we are working with $\Q$ coefficients, see \ref{chow:h1toh1}) which satisfies $\tilde \beta\circ \zeta_* = 0$ and $\zeta^*\circ \tilde \beta = 0$. Then one defines
			\begin{align*}
				\pi_1^? &=\frac{1}{n}\tilde\beta\circ i_*\circ i^* &\pi_1	=\pi_1^?\circ (1-\frac{1}{2}{^t\pi_1^?})
			\end{align*}
			and verifies that that $\pi_1$ is a projector, orthogonal to $\pi_0$.
			
			Since such a curve $C$ certainly exists after taking a finite extension $k'/k$ which we can assume to be Galois, the claim follows using Galois descent.
			
			\item $\pi_2(X)$ for $\dim X\le 2$: Following \cite{murre1990motive} we define
			\[
				\pi_2(X) = \begin{cases}
									0																	&\text{ if }\dim X = 0\\
									^t\pi_0(X)	=\Delta_X - \pi_0(X)-\pi_1(X)							&\text{ if }\dim X = 1\\
									\Delta_X - \pi_0(X) - \pi_1(X) - {^t\pi_0(X)} - {^t\pi_1}(X)		&\text{ if }\dim X = 2
							\end{cases}
			\]
			where $\Delta_X$, the diagonal inside $X\times X$, corresponds to the identity projector. 
			$\pi_2(X)$ is a projector in the case $\dim X=2$ using mutual orthogonality in \ref{murre:mutualOrthogonality}. 
			
		\end{itemize}
	\item Now assume $g:X\rightarrow \Spec k$ is connected, but not geometrically so. Then, since $g$ is smooth, the Stein factorization of $g$, $X\xrightarrow p\Spec L\xrightarrow q\Spec k$ has $p$ smooth and geometrically connected, and $q$ etale. Then there is an immersion $i:X\times_L X\rightarrow X\times_k X$ which is both open and closed. We let $\pi_i(X)$ in $CHM(k)$ to be $i_*\pi_i^L(X)$ where $\pi_i^L(X)\in CH^{\dim X}(X\times_L X)$ is a projector in $CHM(L)$, constructed since $p:X\rightarrow \Spec L$ is geometrically connected.
	\item Finally, if $g:X\rightarrow \Spec k$ is not connected, $X=\sqcup_r X_r$, then in $CHM(k)$, we have $h(X)=\oplus h(X_r)$ and we define $\pi_i(X)=\oplus_r \pi_i(X_r)$.
\end{enumerate}
\end{para}	

\begin{remark}
	In the exposition \cite{scholl1994classical} there appear to be two definitions of $\pi_1(X)$ for curves. Let us define $\pi_1(X)$ for $\dim X=1$ as we do above for the case of $\dim X=2$, and denote $p_1(X) = 1 - \pi_0(X) - {^t}\pi_0(X)$. Then one can show that:
	\begin{align*}
		\pi_1^?(X) = p_1(X)& &\pi_1(X) = \frac{1}{2}p_1(X)
	\end{align*}
	To see this note that $i_*=i^*=\id$, $n=1$, and $\tilde \beta\in CH^1(X\times X)$ can be thought of as the cycle $\Delta_X - X\times {e} - {e}\times X = p_1(X)$ where  $e\in C(k)$ is any $k$-rational point (if $C(k)$ is empty, one uses Galois descent). Therefore, in particular, $\pi_1^?(X) = \tilde \beta = p_1(X)$ and since $\pi_1(X) = p_1(X)(1-\frac{1}{2}p_1(X)) = \frac{1}{2}p_1(X)$.
	
	However, other results of \cite{scholl1994classical} remain valid. In particular, we seem to have no problem in using results \cite[3.9, 4.4, and 4.5]{scholl1994classical} for curves.
\end{remark}

	Note that the construction of $\pi_i(X)$ depends on choices. However we have the following:
\begin{proposition}\label{chow:pushforwards} Let $X$ be smooth projective over a field $K$. Let $i\in \{0,1\}$ or $i=2$ and $\dim X=2$.
	Let $q:\Spec K\rightarrow \Spec k$ be finite, separable. For any of choice $\pi_i(X/K)$, the pushforward $q_*(\pi_i(X/K))$, a projector on $X/k$ in $CHM(k)$, can be identified with a choice of $\pi_i(X/k)$. 

Furthermore, given a choice of $\pi_i(X/k)$, we can find a choice of $\pi_i(X/K)$ such that $\pi_i(X/k)$ is a pushforward $q_*(\pi_i(X/K))$.
\end{proposition}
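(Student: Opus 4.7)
The plan is to trace through the recursive construction \ref{chow:murreConstruction} and observe that every case reduces to a construction intrinsic to the Stein factorization of $X$, which is the same whether we view $X$ over $K$ or over $k$. First, the connected components of $X$ as a scheme do not depend on the base, so the decomposition $X=\sqcup_r X_r$ and the componentwise definition of $\pi_i$ coincide in $CHM(K)$ and $CHM(k)$; since $q_*$ is additive, it suffices to treat the case where $X$ is connected.

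So assume $X$ is connected and let $L=\Gamma(X,\mathcal O_X)$, a finite separable field extension of $k$ through which $K$ factors. The Stein factorization of the structure morphism is then $X\xrightarrow{p}\Spec L$ (with $p$ geometrically connected), independently of the base field. Let $i_K:X\times_L X\hookrightarrow X\times_K X$, $i_k:X\times_L X\hookrightarrow X\times_k X$, and $\iota:X\times_K X\hookrightarrow X\times_k X$ be the natural immersions (all open and closed, since $L/K/k$ are etale), so that $i_k=\iota\circ i_K$. By the preceding proposition, $q_*$ on Chow motives is realized on cycles by $\iota_*$. Step (1) of \ref{chow:murreConstruction} produces a projector $\pi_i^L(X)\in CH^{\dim X}(X\times_L X)$ from data purely internal to $X/L$ (a degree $d$ zero cycle $\zeta$; an isogeny $\beta$; and, in the $\pi_1$-case, an auxiliary hyperplane curve $C$ together with a Galois extension $L'/L$ for descent). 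By step (2), $\pi_i(X/K)=i_{K*}\pi_i^L(X)$ and $\pi_i(X/k)=i_{k*}\pi_i^L(X)$, whence $\pi_i(X/k)=\iota_*i_{K*}\pi_i^L(X)=q_*\pi_i(X/K)$. For $i=2$ with $\dim X=2$, the defining formula $\pi_2=\Delta_X-\pi_0-\pi_1-{}^t\pi_0-{}^t\pi_1$ reduces matters to the cases $i\in\{0,1\}$, since $q_*$ is additive, commutes with transposition (as $\iota$ is symmetric under the swap of factors), and sends $\Delta_X^K$ to $\Delta_X^k$ (the diagonal of $X$ in $X\times_k X$ already lies inside $X\times_K X$).

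For the ``furthermore'' clause, any choice of $\pi_i(X/k)$ arises, by the same step (2), from an underlying choice of $\pi_i^L(X)$; reusing this $\pi_i^L(X)$ yields a valid choice $\pi_i(X/K):=i_{K*}\pi_i^L(X)$ with $q_*\pi_i(X/K)=\pi_i(X/k)$. The only point to verify, rather than a genuine obstacle, is that the data entering $\pi_i^L(X)$ are equally admissible in either setting, which is automatic because $X/L$ and the Stein base $L$ are intrinsic to $X$: step (1) takes place entirely inside $CH^*(X\times_L X)$ and makes no reference to whether we started from $K$ or $k$.
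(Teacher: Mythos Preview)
Your proof is correct and follows the same approach as the paper's own proof, which simply observes that the Stein factorization $X\to\Spec L$ is the same whether computed over $K$ or over $k$, and then declares the claims ``obvious from definitions.'' You have spelled out these definitions explicitly (the factorization $i_k=\iota\circ i_K$ and the cycle-level description of $q_*$), which is exactly what the paper leaves implicit; your treatment of $i=2$ via the defining formula is a valid elaboration, though one could equally just note that $\pi_2^L(X)$ itself is intrinsic to $X/L$ and apply the same pushforward argument directly.
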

\begin{proof} 
	If $X\xrightarrow f\Spec K$ is the structure map with $X\xrightarrow g\Spec L\xrightarrow h\Spec K$ the Stein factorization, stein factorization of $X\xrightarrow {pf}\Spec K$ is $X\xrightarrow g\Spec L\xrightarrow {qh}\Spec k$. Now the claims are obvious from definitions.
\end{proof}
	
\begin{proposition}\label{chow:pullbacks} Let $X$ be smooth projective over $k$. Let $i\in \{0,1\}$ or $i=2$ and $\dim X=2$.
	Let $q:\Spec K\rightarrow \Spec k$ be a finite separable extension. For any choice of $\pi_i(X/k)$, the pullback $q^*(\pi_i(X/k)$, a projector on $(X\otimes_k K)/K$ in $CHM(K)$, can be identified with a choice of $\pi_i(X\otimes_k K)$.
\end{proposition}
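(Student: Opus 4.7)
The plan is to trace through the explicit construction of $\pi_i(X/k)$ in \ref{chow:murreConstruction} and verify that each step is compatible with the pullback $q^*$, using the base change formula \ref{chow:basechange} as the main tool.

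First I would reduce to the case $X/k$ geometrically connected. Since both $\pi_i(-/k)$ and $q^*$ distribute over disjoint unions of components, the disconnected case is routine. If $X$ is connected but not geometrically so, with Stein factorization $X \xrightarrow{p} \Spec L \xrightarrow{s} \Spec k$, then $\pi_i(X/k) = s_* \pi_i(X/L)$ for a valid choice $\pi_i(X/L)$. Writing $L \otimes_k K = \bigoplus_\tau M_\tau$, \ref{chow:basechange} gives
\[
q^* \pi_i(X/k) \;=\; \bigoplus_\tau s_{\tau *}\bigl(p_\tau^*\, \pi_i(X/L)\bigr).
\]
Meanwhile $X_K = \sqcup_\tau X_\tau$ with $X_\tau := X \otimes_L M_\tau$ geometrically connected over $M_\tau$, so the disjoint-union and Stein-factorization steps of the construction produce any valid $\pi_i(X_K/K)$ in the form $\bigoplus_\tau s_{\tau*}\,\pi_i(X_\tau/M_\tau)$. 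Hence it suffices to show that $p_\tau^* \pi_i(X/L)$ is a valid choice of $\pi_i(X_\tau/M_\tau)$, i.e., the case in which $X$ is already geometrically connected over the base.

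For geometrically connected $X/k$ I would proceed case by case: for $\pi_0 = \tfrac{1}{d} g^* \zeta^*$, the pullback is $\tfrac{1}{d} g_K^* (q^*\zeta)^*$, and $q^*\zeta$ has degree $d$ on $X_K$ (by flat base change of $g_*\zeta$), giving a valid $\pi_0(X_K/K)$; for $\pi_1$ in the curve case and for $\pi_2$ on surfaces, the projectors are polynomial expressions in $\Delta_X$, the lower $\pi_j$, and their transposes, each manifestly preserved by $q^*$; for $\pi_1$ with $\dim X \ge 2$, the construction's ingredients (a smooth hyperplane-section curve $C \subset X$, a zero cycle on $C$ from a hyperplane, and the isogeny $\beta$ satisfying $\alpha\beta = [\times n]$ between Albanese and Picard varieties) all base change to analogous data on $X_K$, so $q^*\pi_1(X/k)$ is again a valid Murre projector.

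The main obstacle is the Galois descent step in constructing $\pi_1$ when $\dim X \ge 2$ and the curve only exists after passing to a Galois extension $k'/k$: in that case $\pi_1(X/k) = \tfrac{1}{[k':k]} \beta_* \pi_1^{k'}(X)$ via \ref{chow:descent} for a $\mathrm{Gal}(k'/k)$-invariant $\pi_1^{k'}(X)$, and after base change $k' \otimes_k K = \bigoplus_\sigma N_\sigma$ need not be Galois over $K$. The fix is to enlarge to a Galois closure $\tilde K/K$ containing all the $N_\sigma$, so that the curve is defined on $X_{\tilde K}$, and verify via a second application of \ref{chow:basechange} (to the pushforward $\Spec k' \to \Spec k$) that $q^*\pi_1(X/k)$ agrees with the Galois-descent construction of $\pi_1(X_K/K)$ via $\tilde K/K$. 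The remaining verifications are routine compatibilities of cycle-theoretic operations with flat base change.
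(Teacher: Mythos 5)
Your argument is correct and follows essentially the same route as the paper: reduce via disjoint unions and the Stein factorization together with \ref{chow:basechange} (and \ref{chow:pushforwards}) to the geometrically connected case, then check that the ingredients of \ref{chow:murreConstruction} --- the zero cycle, the hyperplane-section curve, and the isogeny between $\mathbf{Alb}^0$ and $\mathbf{Pic}^0(-)_{red}$ --- are stable under base change. The only real difference is your detour through a Galois closure, which is unnecessary: since $k'/k$ is Galois, each factor of $k'\otimes_k K$ is already Galois over $K$ (a compositum with a Galois extension is Galois over the other field), so the Galois-descent step base changes directly, as the paper tacitly uses.
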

\begin{proof}
	Only the case of $\pi_1(X)$ is non-obvious. If $X$ is geometrically connected, and we begin with $C\subset X\otimes_k k'$, then given any $K/k$ we can begin with the generic curve $C\otimes_k K\subset X\otimes_k K\otimes_k k'$. Then, by functoriality of $\mathbf{Pic}^0$ and $\mathbf{Alb}^0$ and since isogeny continues to be an isogeny under pullback, the claim follows easily from the constructions.
	
	Even if $X$ is not geometrically connected, we can take $X\xrightarrow{p} \Spec L\xrightarrow{q} \Spec k$ to be the Stein factorization, and then $X\otimes_k K\rightarrow \Spec L\otimes_k K$ also has geometrically connected fibers. Let $\Spec L\otimes_k K= \oplus_\tau \Spec M_\tau$ and then $X\otimes_k K = \oplus_\tau X_\tau$ where $M_\tau$ are finite separable extensions of $L$ and $K$ and there are maps $X_\tau\rightarrow \Spec M_\tau$. Then the projector $\pi_i(X/L)$ lifts to give a projector $\pi_i(X_\tau/M_\tau)$ since $M_\tau/L$ is finite separable, by the computation for geometrically connected $X$. Now the claim is easy to see using \ref{chow:basechange} and \ref{chow:pushforwards}.
\end{proof}
	We have the following components of Chow-K\"unneth decomposition:
	\begin{proposition}\label{murre:mutualOrthogonality}
		Let $X$ be smooth projective over $k$ purely of dimension $d$. Then
		\[
			\pi_0(X), \pi_1(X), \pi_{2d-1}(X):={^t\pi_1(X)}, \pi_{2d}(X):={^t\pi_0(X)}
		\]
		are mutually orthogonal projectors. Let $h^i(X)$ denote the effective motive $(X,\pi_i(X))$ for $i\in\{0,1,2d-1,2d\}$. Then there are natural isomorphisms:
		\begin{align*}
			h^{2d}(X)	&\cong h^0(X)\otimes \L^{\otimes d} & 
			h^{2d-1}(X)	&\cong h^{1}(X)\otimes \L^{\otimes d-1}
		\end{align*}
	\end{proposition}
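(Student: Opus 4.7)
The plan is to verify the orthogonality relations directly using the explicit cycle formulas from \ref{chow:murreConstruction}, and then to deduce the two isomorphisms from the rigid-tensor structure on $CHM(k)$ (\ref{chow:duality}). Throughout, I would reduce to the case where $X/k$ is smooth projective geometrically connected of dimension $d$: for $X$ with several connected components both assertions are componentwise, while for $X$ connected but not geometrically connected the Stein factorization $X \xrightarrow{p} \Spec L \xrightarrow{q} \Spec k$ makes the projectors on $X/k$ arise from those on $X/L$ under the open-closed immersion $i: X \times_L X \hookrightarrow X \times_k X$, with the isomorphisms descending through $q_*$ using its compatibility with duality and Tate twists for $q$ finite \'etale.

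For mutual orthogonality in the geometrically connected case, write $\pi_0 = \tfrac{1}{d}(\zeta \times X)$ for the chosen zero-cycle $\zeta$ of degree $d$. The relations $\zeta^* \tilde\beta = 0$ and $\tilde\beta \zeta_* = 0$ enforced in the construction of $\pi_1^?$ immediately give $\pi_0 \circ \pi_1 = \pi_1 \circ \pi_0 = 0$ once $\pi_1 = \pi_1^?(1 - \tfrac12 {}^t\pi_1^?)$ is expanded via the intersection-product formula \eqref{eq:comp}. Transposing yields ${}^t\pi_0 \circ {}^t\pi_1 = {}^t\pi_1 \circ {}^t\pi_0 = 0$. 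The vanishings $\pi_0 \circ {}^t\pi_0 = {}^t\pi_0 \circ \pi_0 = 0$ are dimension counts: the relevant intersection $\zeta \times X \times \zeta$ in $X^3$ has $d$-dimensional support but $0$-dimensional image under $p_{13}$, so the pushforward vanishes whenever $d\ge 1$. The remaining cross-terms $\pi_0 \circ {}^t\pi_1$, ${}^t\pi_1 \circ \pi_0$ and their transposes combine both features: the $\pi_0$-factor forces the intersection to lie inside a sub-cycle on which $\tilde\beta$ integrates to zero.

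For the isomorphisms I would use duality. Unwinding the definitions,
\[
  h^0(X)^\vee \otimes \L^{\otimes d} \;=\; (X, {}^t\pi_0, d) \otimes (\Spec k, \Delta, -d) \;=\; (X, {}^t\pi_0, 0) \;=\; h^{2d}(X),
\]
so the first isomorphism reduces to the self-duality $h^0(X)^\vee \cong h^0(X)$. In the geometrically connected case, $\pi_0 = \tfrac{1}{d}\, g^* \zeta^*$ exhibits $h^0(X)$ as a retract of the tensor unit $\mathbf{1} = (\Spec k, \Delta, 0)$; since $\hom(\mathbf{1},\mathbf{1}) = \Q$ has no nontrivial idempotents and $\pi_0 \neq 0$, one gets $h^0(X) \cong \mathbf{1}$, which is self-dual. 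Similarly
\[
  h^1(X)^\vee \otimes \L^{\otimes d} \;=\; (X, {}^t\pi_1, d) \otimes (\Spec k, \Delta, -d) \;=\; h^{2d-1}(X),
\]
so the second isomorphism reduces to $h^1(X)^\vee \cong h^1(X) \otimes \L^{\otimes -1}$. This is Poincar\'e duality for $h^1$: identifying $h^1(X) \cong h^1(A)$ with $A = \mathbf{Pic}^0(X/k)_{red}$ (implicit in the isogeny \eqref{eq:isogeny}), the identification $h^1(A)^\vee \cong h^1(\hat A) \otimes \L^{\otimes -1}$ for abelian varieties, combined with the isogeny $A \sim \hat A$ — which becomes an isomorphism after passing to $\Q$-coefficients in $CHM(k)$ — gives the claim.

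The hard part is the bookkeeping for mutual orthogonality: although each of the twelve vanishings ultimately reduces to either $\tilde\beta \zeta_* = 0$ or a dimension count, expanding $\pi_1 = \pi_1^?(1 - \tfrac12 {}^t\pi_1^?)$ mixes $\pi_1^?$ with ${}^t\pi_1^?$, and each resulting term must be tracked carefully, with special attention to the reversal of order under transposition.
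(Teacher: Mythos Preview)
The paper's own proof is simply a citation to \cite[1.13 and 4.4]{scholl1994classical}, so your proposal is not competing with an argument in the paper but rather attempting to supply one. Your outline follows Scholl's strategy closely, and the reductions (to the geometrically connected case, and of the two isomorphisms to self-duality statements for $h^0$ and $h^1$) are sound.

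There is, however, a genuine gap in the orthogonality argument. You enumerate ten of the twelve vanishings but omit precisely the two hardest: $\pi_1\circ{}^t\pi_1=0$ and ${}^t\pi_1\circ\pi_1=0$. These do \emph{not} reduce to $\tilde\beta\,\zeta_*=0$, $\zeta^*\tilde\beta=0$, or a support/dimension count. Writing $p=\pi_1^?$ and $q={}^tp$, your $\pi_1=p(1-\tfrac12 q)$ gives
\[
\pi_1\circ{}^t\pi_1 \;=\; p\bigl(1-\tfrac12 q\bigr)\bigl(1-\tfrac12 p\bigr)q
\;=\; pq-\tfrac12 p^2q-\tfrac12 pq^2+\tfrac14 pqpq
\;=\; \tfrac14\,pqpq,
\]
using only $p^2=p$, $q^2=q$. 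To kill the remaining term one needs the further identity $qpq=q$ (equivalently $pqp=p$), which in Scholl's setup comes from the specific relation between $\tilde\beta$, $i_*i^*$, and the isogeny $\alpha$ of \eqref{eq:isogeny}; it is exactly this extra input that motivates the correction factor $(1-\tfrac12{}^t\pi_1^?)$ in the first place. Your sketch does not invoke it, and your summary sentence (``each of the twelve vanishings ultimately reduces to either $\tilde\beta\zeta_*=0$ or a dimension count'') is therefore not correct as stated.

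For the isomorphism $h^{2d-1}(X)\cong h^1(X)\otimes\L^{\otimes d-1}$ your route via $h^1(X)\cong h^1(A)$ and abelian-variety duality is a legitimate alternative to Scholl's direct construction (recorded in this paper as \ref{chow:h1toh1:knowingthemap}, where the isomorphism is exhibited by $i_*\circ i^*$ for the hyperplane curve $C$). Your route is arguably more conceptual, but it imports the identification $h^1(X)\cong h^1(\mathbf{Pic}^0(X/k)_{red})$ and Poincar\'e duality for $h^1$ of abelian varieties, neither of which is established in the paper; Scholl's argument stays entirely within the data already assembled in \ref{chow:murreConstruction}.
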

	\begin{proof}
		This is \cite[1.13 and 4.4]{scholl1994classical}.
	\end{proof}

This allows us to make the following definitions:
\begin{npara}[Chow-K\"unneth decomposition]\label{piconstruction} 
	Let $X$ be pure of dimension $d$. Let $\pi_i(X)$ be as in the proposition above, for $i\in \{0,1,2d-1,2d\}$.
	If $\dim X\le 2$, we also have $\pi_2(X)$ by construction in \ref{chow:murreConstruction}. Define:
		\[
			h^i(X):=(X, \pi_i(X)) \in CHM^{eff}(k)
		\]
		for $i\in{0,1,2d-1,2d}$ and any  $X$ or $i=2$ and $\dim X\le 2$. 
		
	Then for an arbitrary $X$ we define:
	\begin{align*}
			\pi_{\le 0}(X)	&:=\pi_0(X)						&h^{\le 0}(X) &:= h^0(X) \\
			\pi_{\le 1}(X) 	&:= \pi_0(X)+\pi_1(X), 			&h^{\le 1}(X) &:= (X,\pi_{\le 1}(X)) = h^0(X)\oplus h^1(X)  	\\
			\pi_{>0}(X) 	&:= \Delta_X - \pi_{\le 0}(X) 	&h^{>0}(X) &:= (X, \pi_{>0}(X)) \\
			\pi_{>1}(X) 	&:= \Delta_X - \pi_{\le 1}(X) 	&h^{>1}(X) &:= (X, \pi_{>1}(X)) 
	\end{align*}
	where $\Delta_X$, the diagonal inside $X\times X$, is the identity projector on $X$.
	
	For $\dim X\le 2$, we also let:
	\begin{align*}
			\pi_{\le 2}(X)	&:=	\pi_0(X) + \pi_1(X) + \pi_2(X)	&h^{\le 2}(X) &:= (X, \pi_{\le 2}(X))=h^0(X)\oplus h^1(X)\oplus h^2(X)\\
			\pi_{> 2}(X)	&:=	\Delta_X - \pi_{\le 2}(X)		&h^{> 2}(X) &:= (X, \pi_{> 2}(X))
	\end{align*}
	
	Finally, if we need to stress the field, we denote it in the subscript. Thus $h^i_K(X)$ denotes that we are thinking of $X$ as a variety over $K$, that is in $CHM(K)$.
\end{npara}
\begin{remark}While it is not made explicit in the original articles of Murre (or Scholl), it follows from the constructions that for $i=0, 1$, $h^{\le i}$ and $h^{>i}$ can be made into functors from the category of smooth projective varieties to the category of (effective) Chow motives, even though there are choices to be made when writing down the specific projectors $\pi_{\le i}, \pi_{>i}$. For $i=0$ this is almost immediate as it can be shown that for a smooth projective variety $X$, $h^{\le 0}(X) = h(\Spec \Gamma(X,\mathcal O_{X}))$. For $h^{\le 1}(X)$ this requires more work, but will be implied from our constructions later.

Similarly, $h^{\le 2}$ (resp. $h^{>2}$) forms a functor on varieties of $\dim \le 2$ and extends to the full subcategory of effective Chow motives consisting of objects of the form $(X, p, r)$ with $r\le 0$ and $\dim X\le 2$.

We will not use this fact (which will follow from our constructions), but we will use a milder statement, namely $h^i(X\sqcup Y)=h^i(X)\oplus h^i(Y)$ for $i=0,1, 2$  (for a corresponding choice of projectors on the other side) -- this is build into the definitions as we have stated here. 

In the interim, we let $h^{\le i}(X)$ (resp. $h^{> i}(X)$, resp. $h^i(X)$) for $i=0,1$ and $i=2$ denote the object obtained by making \emph{some particular} choice in the constructions.
\end{remark}

	The following is now immediate:
	\begin{corollary}
		We have a full Chow-K\"unneth decomposition in dimension $\le 2$:
		\begin{itemize}
			\item If $P$ is of dimension $0$, then $h(P) = h^0(P)$.
			\item If $C$ is a smooth projective curve, then $h(C)=h^0(C)\oplus h^1(C)\oplus h^2(C)$.
			\item If $S$ is a smooth projective curve, then $h(S)=h^0(S)\oplus h^1(S)\oplus h^2(S)\oplus h^3(S)\oplus h^4(S)$.
		\end{itemize}
	\end{corollary}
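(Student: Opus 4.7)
The plan is to reduce each of the three bullets to verifying, inside $CH^{\dim X}(X\times X)$, that the listed projectors $\pi_i(X)$ form a mutually orthogonal system of idempotents whose sum equals the diagonal $\Delta_X$. Once that is checked, the decomposition $h(X)=\bigoplus_i h^i(X)$ follows directly from the definition $h^i(X)=(X,\pi_i(X))$ in \ref{piconstruction}. In all three cases the mutual orthogonality of the four ``extremal'' projectors $\pi_0,\pi_1,\pi_{2d-1},\pi_{2d}$ is already the content of Proposition \ref{murre:mutualOrthogonality}, so only the middle piece $\pi_2$ (when $\dim X\le 2$) needs separate attention.

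For $\dim P=0$ I would unwind the construction of $\pi_0$ in \ref{chow:murreConstruction}: on a connected $P=\Spec k$ both the structure map $g$ and any choice of zero cycle $\zeta$ of degree $1$ may be taken to be the identity, so the formula $\pi_0=\tfrac{1}{d}g^*\zeta^*$ collapses to $\pi_0(P)=\Delta_P$; for a disconnected $P$ one invokes the built-in compatibility $\pi_0(X\sqcup Y)=\pi_0(X)\oplus\pi_0(Y)$. Since $\pi_1(P)=\pi_2(P)=0$ by definition, $h(P)=h^0(P)$.

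For a smooth projective curve $C$, the definition of $\pi_2(C)$ in \ref{piconstruction} is exactly $\pi_2(C):=\Delta_C-\pi_0(C)-\pi_1(C) = {^t\pi_0(C)}$, so the summation $\Delta_C=\pi_0+\pi_1+\pi_2$ holds by fiat; mutual orthogonality of $\pi_0,\pi_1,\pi_2$ then follows from \ref{murre:mutualOrthogonality} with $d=1$ (since there $\pi_{2d}={^t\pi_0}$ is orthogonal to $\pi_0$ and $\pi_1$). For a smooth projective surface $S$, \ref{murre:mutualOrthogonality} with $d=2$ yields mutual orthogonality of $\pi_0,\pi_1,\pi_3={^t\pi_1},\pi_4={^t\pi_0}$, whose sum $e$ is therefore an idempotent; consequently $\pi_2(S):=\Delta_S-e$ is automatically an idempotent orthogonal to $e$, hence orthogonal to each of the four $\pi_i$ summing into $e$. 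This upgrades the list to a complete mutually orthogonal system and gives the desired five-term decomposition $h(S)=h^0(S)\oplus h^1(S)\oplus h^2(S)\oplus h^3(S)\oplus h^4(S)$.

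The only point requiring any argument at all is the formal observation that the ``missing'' middle projector in the surface case is automatically orthogonal to the others; no new geometric input is needed, as everything else is a direct consequence of \ref{murre:mutualOrthogonality} and the defining formulas in \ref{piconstruction}.
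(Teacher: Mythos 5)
Your proposal is correct and follows essentially the same route as the paper, which likewise deduces the curve and surface cases directly from the defining formulas $\pi_2(C)=\Delta_C-\pi_0-\pi_1$ and $\pi_2(S)=\Delta_S-\pi_0-\pi_1-{^t\pi_1}-{^t\pi_0}$ together with the mutual orthogonality of \ref{murre:mutualOrthogonality}; your explicit check that $\Delta-e$ is orthogonal to each summand of $e$ is the small verification the paper leaves implicit. (Only a cosmetic point: a connected $0$-dimensional $P$ is $\Spec L$ for a finite extension $L/k$, not necessarily $\Spec k$, but this case is absorbed by the Stein-factorization step of \ref{chow:murreConstruction} and still gives $\pi_0(P)=\Delta_P$.)
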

	\begin{proof}
		The case of dimension $0$ is trivial, while that of dimension $2$ and $1$ also follow from the way $\pi_2(S)$ and $\pi_1(S)$ is defined.
	\end{proof}
	
	Following results tell us a bit about the morphisms between $h^i(X)$ and $h(X)$ implicit in the Chow-K\"unneth decomposition:
	\begin{proposition}\label{chow:h0ispullback}
		Let $p:X\rightarrow \Spec k$ be a smooth projective variety equidimensional of dimension $d$, and let $X\xrightarrow f \Spec M\rightarrow \Spec k$ be the Stein factorization, with $M$ a finite algebra over $k$. Then $f^*=h(f)$ and $f_*={^th(f)}$ decompose as:
		\begin{align*}
			h(f)	&:h(\Spec M)\overset\cong\rightarrow h^0(X)\hookrightarrow h(X)\hspace{3em} &
			^th(f)	&:h(X) \twoheadrightarrow h^{2d}(X)\overset\cong\rightarrow h(\Spec M)\otimes\L^{\otimes 2d}
		\end{align*}
	\end{proposition}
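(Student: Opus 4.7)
My plan is to reduce successively to the case of $X$ geometrically connected over the base field, where the explicit form of $\pi_0(X)$ makes both factorizations elementary verifications. For the first reduction, I would use the inductive construction of $\pi_0$ in \ref{chow:murreConstruction}(3) together with the additivity noted after \ref{piconstruction}: all the motives and morphisms in the statement split along the connected components of $X$, and the Stein factorization decomposes accordingly. So one may assume $X$ is connected, in which case $M/k$ is a finite separable field extension and $f: X \to \Spec M$ is smooth projective geometrically connected. By the definition $\pi_0(X/k) = i_*\pi_0^M(X)$ from \ref{chow:murreConstruction}(2), via the open-and-closed immersion $i: X \times_M X \hookrightarrow X \times_k X$, this construction matches exactly the action of the pushforward functor $q_*: CHM(M) \to CHM(k)$ for $q: \Spec M \to \Spec k$. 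I would then check that $q_*$ sends each of $h(\Spec M/M)$, $h(X/M)$, $h^0(X/M)$, and the morphism $h(f/M)$ to its respective $k$-counterpart -- the last point by identifying the image of $\Spec M \times_M X$ under the immersion with the graph of $f$ in $\Spec M \times_k X$. Hence the sought factorization over $k$ is the $q_*$-image of the analogous one over $M$, reducing the problem to the case when $X$ is geometrically connected over the base field.

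In that reduced case, with $g: X \to \Spec k$ the structure map, Murre's projector is $\pi_0(X) = \frac{1}{d}\, g^* \zeta^*$ for a zero cycle $\zeta \in CH_0(X)$ of degree $d$, subject to the single normalizing identity $\zeta^* g^* = d\,\Delta_{\Spec k}$. A direct substitution using this identity should yield (a) that $h(g)$ lifts canonically to a morphism $u: h(\Spec k) \to h^0(X)$ whose composition with the inclusion $h^0(X) \hookrightarrow h(X)$ (given by $\pi_0$) recovers $h(g)$, and (b) that the class $v := \frac{1}{d}\,\zeta^*$ defines a morphism $h^0(X) \to h(\Spec k)$ which is a two-sided inverse of $u$, so $u$ is an isomorphism. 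Together these establish the first factorization.

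The second factorization for ${}^t h(f)$ I would obtain by transposing. Transposition is a contravariant involution on correspondences that reverses the order of composition, so the identities established above dualize directly to give the factorization through $h^{2d}(X) = (X, {}^t\pi_0, 0)$; the identification of the target with a Tate twist of $h(\Spec M)$ comes from the isomorphism $h^{2d}(X) \cong h^0(X) \otimes \L^{\otimes d}$ of \ref{murre:mutualOrthogonality} combined with the first factorization. The main obstacle I anticipate is purely notational -- the bookkeeping of the paper's left-to-right composition convention, the direction of transpose, and the normalization of the Tate twist; the substantive mathematical input is the single identity $\zeta^* g^* = d\,\Delta_{\Spec k}$ together with the functoriality of $q_*$.
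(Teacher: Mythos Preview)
Your proposal is correct and follows essentially the same approach as the paper: the paper's proof is a single sentence pointing back to the construction of $\pi_0$ in \ref{chow:murreConstruction}, and what you have written is precisely a careful unpacking of that construction --- the reduction to connected components via (3), the descent to the geometrically connected case via $q_*$ and (2), and the explicit verification using $\pi_0 = \tfrac{1}{d} g^*\zeta^*$ from (1), followed by transposition for the $h^{2d}$ statement. Your anticipation that the only delicate point is the bookkeeping of composition order and Tate-twist normalization is accurate (note in particular that the exponent on $\L$ in the statement should read $d$, consistent with \ref{murre:mutualOrthogonality}).
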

	\begin{proof}
		This is directly through construction, and definition of $h^0(X)$ as a retract in \ref{chow:murreConstruction}.
	\end{proof}
	
	\begin{proposition}\label{chow:h1ispullback}
		Let $X\rightarrow\Spec k$ and $Y\rightarrow \Spec k$ be geometrically connected, smooth, projective. Let $X$ be of pure dimension $d$, and fix a projective embedding of $X, Y$. Consider:
		\[\begin{tikzcd}
			\hom(h^0(X)\otimes\L, h(Y))\oplus \hom(h^1(X), h^1(Y)) \ar[r, hookrightarrow, "\gamma"]	&	\hom(h^{\ge 2d_X-1}(X), h(Y)\otimes \L^{\otimes d-1}) \ar[d, "\beta"]\\
			\hom(h(X), h^0(Y)\otimes\L^{\otimes d-1})\ar[r,"\alpha"]		&	\hom(h(X), h(Y)\otimes \L^{\otimes d-1})
		\end{tikzcd}\]
		where $\alpha$ is induced by the inclusion $h^0(Y)\hookrightarrow h(Y)$, $\beta$ by the surjection $h(X)\twoheadrightarrow h^{\ge 2d_X-1}(X)$ and $\gamma$, by the isomorphisms $h^{2d-1}(X)\cong h^{1}(X)\otimes\L^{\otimes d-1}$ and $h^{2d}(X)\cong h^0(X)\otimes\L^{\otimes d}$. This gives a natural map:
		\begin{align*}
			\theta: CH^1&(X)\oplus CH^1(Y)\oplus \hom(h^1(X), h^1(Y)) \\
									&\cong \hom(h(X), h^0(Y)\otimes\L^{\otimes d-1})\oplus \hom(h^0(X)\otimes\L, h(Y))\oplus \hom(h^1(X), h^1(Y))\\
									&\xrightarrow{\alpha\oplus\beta\circ\gamma} \hom(h(X), h(Y)\otimes \L^{\otimes d-1})\cong CH^1(X\times Y).
		\end{align*}
		This map is an isomorphism.
	\end{proposition}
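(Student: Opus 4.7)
The plan is to recognise $\theta$ as an incarnation of the classical Picard-theoretic decomposition of $CH^1(X\times Y)$. Since we use $\Q$-coefficients throughout, the target $\hom(h(X), h(Y)\otimes\L^{\otimes d-1})$ equals $Corr^{-(d-1)}(X,Y)=CH^1(X\times Y)$. The seesaw principle, together with the structure of the Picard scheme of a product of geometrically connected smooth projective varieties, furnishes a canonical decomposition
\[
    CH^1(X\times Y) \;\cong\; p_X^*\,CH^1(X)\,\oplus\,p_Y^*\,CH^1(Y)\,\oplus\,\hom\bigl(\mathbf{Alb}^0(X), \mathbf{Pic}^0(Y)\bigr)_\Q,
\]
in which the third summand consists of divisorial correspondences --- divisor classes trivial upon restriction to both $X\times\{y_0\}$ and $\{x_0\}\times Y$ for chosen base points. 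The strategy is to identify $\theta$ term-by-term with this decomposition.

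For the first two summands, I would use \ref{chow:h0ispullback} to interpret the inclusion $h^0(Y)\hookrightarrow h(Y)$ as being represented by the cycle $\pi_0(Y)=\frac{1}{d_Y}[Y\times\zeta_Y]$. Unwinding the correspondence composition \eqref{eq:comp} shows that $\alpha$ sends a class $\xi\in CH^1(X)=\hom(h(X), h^0(Y)\otimes\L^{\otimes d-1})$ to $p_X^*\xi\in CH^1(X\times Y)$, under the natural identification $h^0(Y)\cong\mathbf 1$. By a dual computation using $h^{2d}(X)\cong h^0(X)\otimes\L^{\otimes d}$, the $CH^1(Y)$ summand on the source of $\theta$ maps via $\beta\circ\gamma$ onto $p_Y^*\,CH^1(Y)$.

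The main obstacle is matching the third summand. I would first invoke \ref{chow:h1toh1}, which identifies $\hom(h^1(X), h^1(Y))$ with $\hom(\mathbf{Alb}^0(X), \mathbf{Pic}^0(Y))_\Q$; this is precisely the statement set up earlier in the paper to enable this step. Given that identification, one verifies that $\beta\circ\gamma$ restricted to this summand realises the classical embedding of divisorial correspondences inside $CH^1(X\times Y)$, namely the map sending a morphism of abelian varieties to the divisor class of the pullback of the Poincar\'e bundle. The verification goes through the explicit construction of $\pi_1(X)$ via a curve section $i:C\hookrightarrow X$ in \ref{chow:murreConstruction} and the functoriality of the Picard--Albanese isogeny in \eqref{eq:isogeny}. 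To conclude, the mutual orthogonality of the projectors $\pi_0,\pi_1,\pi_{2d-1},\pi_{2d}$ from \ref{murre:mutualOrthogonality} ensures that the three source summands map into pairwise-disjoint factors, giving injectivity; surjectivity is the content of the Picard decomposition displayed above. If $X$ or $Y$ lacks the rational base point needed to write down the decomposition directly, one passes to a finite Galois extension and descends via \ref{chow:descent}.
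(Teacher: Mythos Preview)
Your proposal is correct and follows essentially the same route as the paper. The paper's proof is terse: it observes $h^0(X)=h^0(Y)=\Spec k$ by geometric connectedness, uses \ref{chow:h0ispullback} to identify the first two summands with the pullback maps $p_X^*,\,p_Y^*$, and then invokes Lemma~\ref{chow:h1toh1} (which is exactly the Picard--Albanese decomposition of $CH^1(X\times Y)$ you describe) together with the compatibility check in~\ref{chow:h1toh1:knowingthemap} ensuring the inclusion $\hom(h^1(X),h^1(Y))\hookrightarrow CH^1(X\times Y)$ there agrees with $\beta\circ\gamma$; you have simply unpacked the content of that lemma and remark explicitly.
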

\begin{proof}
	By assumption $X$ and $Y$ are geometrically connected, so $h^0(X)=\Spec k$ and $h^0(Y)=\Spec k$. By \ref{chow:h0ispullback} it follows that the composite map $CH^1(X)\rightarrow CH^1(X\times Y)$ (resp. $CH^1(Y)\rightarrow CH^1(X\times Y)$ is obtained by pullback. Now the claim follows from the following lemma and the remark \ref{chow:h1toh1:knowingthemap}.
\end{proof}	

\begin{lemma}\label{chow:h1toh1}
	Assume $X, X'$ be geometrically connected, smooth over $k$ and assume that they are given with a fixed projective embedding. Let $\zeta\in CH_0(X)$ resp. $\zeta'\in CH_0(X')$ be zero-cycles obtained by taking successive generic hyperplane sections. Then:
	\begin{align*}
		\hom(h^1(X),h^1(X')) &\cong \hom(\mathbf{Pic}^0(X/k)_{red}, \mathbf{Pic}^0(X'/k)_{red})\otimes\Q \\
				&\cong \hom (\mathbf{Alb}^0(X/k),\mathbf{Pic}^0(X'/k)_{red})\otimes \Q \\
				&\cong \{c\in CH^1(X\times X')|\zeta'^*\circ c = 0 =c\circ \zeta_*\}\subset CH^1(X\times X')
	\end{align*}
	This inclusion induces an isomorphism:
	\begin{align*}
		CH^1(X)\oplus CH^1(X') \oplus \hom(h^1(X), h^1(X')) \cong \ CH^1(X\times X')
	\end{align*}
	where the maps from $CH^1(X)$ (resp. $CH^1(X')$) to $CH^1(X\times X')$ are obtained by pullback along natural pullback along projection from $X\times X'$ to $X$ (resp. $X'$).
\end{lemma}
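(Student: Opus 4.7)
The plan is to combine two classical inputs: the see-saw decomposition of $\mathrm{Pic}$ of a product of smooth projective varieties, and the standard identification of $h^1$ (at the level of morphisms in $CHM^{eff}(k)$) with the Hom-group of the associated Albanese/Picard abelian varieties, up to isogeny.

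First I would invoke the see-saw principle (e.g.\ Mumford, \emph{Abelian Varieties}, \S 11): for smooth projective geometrically connected $X, X'$ over $k$ there is a naturally split short exact sequence of abelian groups
\[
0 \longrightarrow \mathrm{Pic}(X) \oplus \mathrm{Pic}(X') \longrightarrow \mathrm{Pic}(X \times X') \longrightarrow \hom(\mathbf{Alb}^0(X), \mathbf{Pic}^0(X')_{red}) \longrightarrow 0,
\]
whose first two maps are pullback along the projections. Tensoring with $\Q$ and using $CH^1 = \mathrm{Pic}\otimes\Q$ immediately gives the final claimed direct-sum decomposition together with the third identification in the displayed chain. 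The isogeny $\alpha$ recalled during the construction of $\pi_1$ (see \ref{chow:murreConstruction} and \eqref{eq:isogeny}) then yields the identification $\hom(\mathbf{Alb}^0(X), \mathbf{Pic}^0(X')_{red})\otimes \Q \cong \hom(\mathbf{Pic}^0(X)_{red}, \mathbf{Pic}^0(X')_{red})\otimes \Q$, giving the second displayed isomorphism.

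Next I would match the ``interesting'' see-saw summand with the orthogonality-defined subgroup $\{c : \zeta'^*\circ c = 0 = c\circ \zeta_*\}$, and, separately, with $\hom(h^1(X), h^1(X'))$. The orthogonality conditions have a transparent geometric meaning: since $\pi_0(X) = \tfrac{1}{d}g^*\zeta^*$ with $d = \deg \zeta$, the relation $c\circ \zeta_* = 0$ says that the restriction of $c$ to a generic fibre of $p_{X'}$ is trivial, which is exactly the condition that $c$ lies outside the image of $p_{X'}^*$; symmetrically for the other condition. On the Chow-motive side, the projector $\pi_1^?(X) = \tfrac{1}{n}\tilde\beta\circ i_*\circ i^*$ has $\tilde\beta$ equal (by construction) to the divisor class associated to the morphism of abelian varieties $\beta$, and the relations $\tilde\beta\circ \zeta_* = 0$ and $\zeta^*\circ \tilde\beta = 0$ recorded in \ref{chow:murreConstruction} translate into $\pi_1(X)\circ c \circ \pi_1(X') = c$ for any $c$ in the orthogonality subgroup. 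Hence the two subgroups of $CH^1(X\times X')$ coincide.

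The direct-sum decomposition is then immediate: the three subgroups $p_X^* CH^1(X)$, $p_{X'}^* CH^1(X')$ and $\hom(h^1(X), h^1(X')) \subset CH^1(X\times X')$ have pairwise trivial intersection (the first two by the projection formula applied to $\zeta, \zeta'$, the third by definition) and together exhaust $CH^1(X\times X')$ by the see-saw splitting. The main obstacle is the book-keeping for identifying the motivic Hom-group with the abelian-variety Hom-group: one must verify that the rational modifications $\pi_1 = \pi_1^?(1 - \tfrac{1}{2}{}^t\pi_1^?)$ and the normalisation by $\tfrac{1}{n}$ do not alter the image subgroup but only its parametrisation, and that the result is independent of the auxiliary curve $C \hookrightarrow X$ used to build $\pi_1$. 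Concretely, one must show that $\tilde\beta$, viewed as a divisor correspondence, realises precisely the abelian-variety morphism $\beta$ up to the expected rational factor, which is where the definition of the isogeny $\alpha$ in \eqref{eq:isogeny} enters.
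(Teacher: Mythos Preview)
Your approach is essentially correct and matches the content of the references the paper cites. The paper's own proof of this lemma consists entirely of two citations to Scholl's \emph{Classical motives}: the chain of isomorphisms is \cite[4.5]{scholl1994classical}, and the direct-sum decomposition is \cite[3.9]{scholl1994classical} together with the remark that the pullback maps $CH^1(X)\to CH^1(X\times X')$ are injective (reducing to the case of a rational point by Galois descent, so that the projection has a section). What you have written is a reasonable unpacking of precisely those two results --- the see-saw decomposition is the engine behind Scholl's 3.9, and the identification of $\hom(h^1,h^1)$ with $\hom(\mathbf{Alb},\mathbf{Pic}^0)\otimes\Q$ via the divisor correspondence $\tilde\beta$ is Scholl's 4.5.

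Two small points of caution. First, your phrase ``$c$ lies outside the image of $p_{X'}^*$'' is not quite right: the orthogonality condition $c\circ\zeta_*=0$ singles out a complementary \emph{subgroup}, not the set-theoretic complement of the image. Second, the passage you flag as ``book-keeping'' --- showing that the modification $\pi_1=\pi_1^?(1-\tfrac12{}^t\pi_1^?)$ does not change the subgroup of $CH^1(X\times X')$ cut out by the orthogonality relations, and that the answer is independent of the auxiliary curve $C$ --- is genuinely the heart of Scholl's 4.5 and not merely clerical; but since you identify this as the main obstacle, your plan is sound.
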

\begin{proof} See \cite[4.5]{scholl1994classical} for the first statement. The second follows from \cite[3.9]{scholl1994classical}, using that the natural pullback map $CH^1(X)\rightarrow CH^1(X\times X')$ (resp. $CH^1(X')\rightarrow CH^1(X\times X')$) are injective (one can assume that $X, X'$ have $k$ points using Galois descent, whence the pullback has a section).
\end{proof}
\begin{para}\label{chow:h1toh1:knowingthemap}
	To complete the argument in \ref{chow:h1ispullback} we need to know the map
	\[	\hom(h^1(X),h^1(X'))\rightarrow CH^1(X\times X')	\]
	in \ref{chow:h1toh1} is the same. This is \cite[4.4]{scholl1994classical}. The composite:
	\[
		h^1(X)\hookrightarrow h(X) \xrightarrow{i_*\circ i^*} h(X)\otimes \mathbb L^{1-d}\twoheadrightarrow h^{2d-1}(X)\otimes \mathbb L^{1-d}
	\]
	is an isomorphism, where $i:C\hookrightarrow X$ is a generic smooth curve obtained through successive hyperplane sections. It does not depend on the specific choice of sections. Using this isomorphism, we get:
	\[
		\hom(h^1(X),h^1(X')) \cong \hom(h^{2d-1}(X')\otimes \mathbb L^{1-d}, h^1(X))\subset \hom(h(X)\otimes \mathbb L^{1-d}, h(X')) = CH^1(X\times X')
	\]
	Here the subset relationship is well defined because we have fixed $\pi_{2d-1}, \pi_1$ by fixing the projective embedding of $X,X'$. It is under this inclusion the isomorphism in \ref{chow:h1toh1} holds. Now the comparison with the map in \ref{chow:h1ispullback} is obvious.
\end{para}
	The following result allows us to cut down dimensions in our calculations:	

\begin{corollary}\label{h1isInCurve}
	For any $X$ geometrically connected, smooth, over $k$ with a fixed projective embedding. Let $C$ be a smooth curve which is a successive generic hyperplane section. Then $h^1(X)$ is a summand of $h^1(C)$.
\end{corollary}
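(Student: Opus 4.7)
The plan is to transfer a direct-summand decomposition from the category of abelian varieties (up to isogeny) to Chow motives, via the functorial identification \ref{chow:h1toh1}. I may assume $d=\dim X\ge 2$, since for $d=1$ one takes $C=X$ and the statement is trivial.

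The first step will be to exhibit $\mathbf{Pic}^0(X/k)_{red}$ as a direct summand of $\mathbf{Pic}^0(C/k)$ in the category of abelian varieties with $\Q$-coefficients. This is already implicit in the construction of $\pi_1(X)$ in \ref{chow:murreConstruction}: the composite
\[
\alpha\colon \mathbf{Pic}^0(X/k)_{red}\xrightarrow{i^*}\mathbf{Pic}^0(C/k)\xrightarrow{\cong}\mathbf{Alb}^0(C/k)\xrightarrow{i_*}\mathbf{Alb}^0(X/k)
\]
is an isogeny, so a choice of $\beta$ with $\alpha\beta=n\cdot\id$ yields the retraction $r=(\beta/n)\circ i_*\circ(\cong)\colon \mathbf{Pic}^0(C/k)\to \mathbf{Pic}^0(X/k)_{red}$ satisfying $r\circ i^*=\id$ after tensoring with $\Q$.

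Next I will invoke \ref{chow:h1toh1}, which supplies the natural isomorphism
\[
\hom_{CHM}(h^1(Y),h^1(Y'))\;\cong\;\hom(\mathbf{Pic}^0(Y/k)_{red},\mathbf{Pic}^0(Y'/k)_{red})\otimes\Q.
\]
Under this identification the morphism $h^1(X)\to h^1(C)$ obtained by restricting $i^*\colon h(X)\to h(C)$ to $h^1(X)$ and projecting onto $h^1(C)$ corresponds to $i^*$ on Picard varieties, and similarly for the retraction. Transporting $r$ across the identification therefore produces a retraction of $h^1(X)\to h^1(C)$ in Chow motives, so $h^1(X)$ is a direct summand of $h^1(C)$.

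The main obstacle is verifying that the identification of \ref{chow:h1toh1} is functorial in both arguments and compatible with composition in the sense needed above. This is essentially what is encoded in \ref{chow:h1toh1:knowingthemap}: the displayed isomorphism $h^1(X)\hookrightarrow h(X)\xrightarrow{i_*\circ i^*}h(X)\otimes\L^{1-d}\twoheadrightarrow h^{2d-1}(X)\otimes\L^{1-d}\cong h^1(X)$ is precisely the motivic translation of the isogeny $\alpha$, and once the comparison is known to respect composition on $\hom(h^1(-),h^1(-))$, the corollary is formal.
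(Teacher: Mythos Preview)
Your proposal is correct and follows essentially the same approach as the paper: both arguments use that the composite $\alpha$ in \eqref{eq:isogeny} is an isogeny to exhibit $\mathbf{Pic}^0(X/k)_{red}$ as a direct summand of $\mathbf{Pic}^0(C/k)$ in the isogeny category, and then invoke \ref{chow:h1toh1} to transport this to $h^1$. The paper's proof is terser and does not spell out the retraction or the compatibility-with-composition concern you raise; that compatibility is implicit in Scholl's identification (it is an equivalence of additive categories, not merely a bijection of Hom-sets), so your extra care there is warranted but not a point of divergence.
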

\begin{proof}
	If one takes $C\hookrightarrow X$, a generic curve obtained by successive hyperplane sections. Then by \cite{weil1954criteres} (see the discussion in \cite[4.3]{scholl1994classical}) the composite:
	\[
		\mathbf{Pic}^0(X/k)_{red}\rightarrow \mathbf{Pic}^0(C/k)_{red} \cong \mathbf{Alb}^0(C/k) \rightarrow \mathbf{Alb}^0(X/k)
	\]
	is an isogeny, hence there is also an isogeny $\mathbf{Alb}^0(X/k)\rightarrow \mathbf{Pic}^0(X/k)_{red}$. Now the claim follows from \ref{chow:h1toh1}.
\end{proof}

\subsection{Geometry}\label{sec:geometry}
In this section we list some results from geometry which are going to be useful later.

Since we are working with rational coefficients, alterations will be an effective substitute for resolution of singularities for us in finite characteristics:
\begin{definition}
	An \emph{alteration} of $X$ is a proper surjective map $\pi:X'\rightarrow X$ which is finite on a non-empty dense open subset of $X$.
\end{definition}

\begin{definition}
	By a \emph{simple normal crossing variety} we mean a variety $Y$ such that if $Y=\cup_{i\in I} Y_i$ into irreducible components, $Y_J:=\cap_{j\in J}Y_j$ is regular for all $\phi\ne J\subset I$. If in addition $Y\subset X$ with $X$ irreducible, and $\dim Y_J = \dim X - \# J$ (whenever $Y_J$ is non-empty), then we say $Y$ is a \emph{simple normal crossing divisor} in $X$.
\end{definition}

\begin{theorem}[deJong's alterations {\cite[Theorem 4.1, Remark 4.2]{jongAlterations}}]\label{geometry:djong:raw} Let $X$ be any irreducible variety over a field $k$ and $Z\subset X$ be a proper closed subset. Then there is an alteration $p:X'\rightarrow X$ together with an open immersion $j:X'\hookrightarrow \bar X'$ such that 
\begin{enumerate}[i)]
	\item $\bar X'$ is regular.
	\item $\bar X'$ is projective over $\Spec k$.
	\item $p^{-1}(Z)\cup (\bar X' - X')$ is a simple normal crossing divisor in $\bar X'$.
	\item There is $k'/k$ finite such that $\bar X'\rightarrow \Spec k$ factors via $\bar X'\rightarrow \Spec k'$ which is smooth.
	\item If in addition $k$ is perfect, $X'$ is smooth over $\Spec k$ and $p$ is generically etale.
\end{enumerate}
\end{theorem}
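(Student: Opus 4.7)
The plan is to appeal to deJong's alteration theorem as a black box for the bulk of the statement and then verify the two extra bullets by standard manipulations. More precisely, \cite[Theorem 4.1]{jongAlterations} directly produces an alteration $p:X'\rightarrow X$ together with a regular projective compactification $j:X'\hookrightarrow \bar X'$ such that $p^{-1}(Z)\cup (\bar X'-X')$ is a simple normal crossing divisor, which settles (i)--(iii). The remaining work is thus to massage the output of deJong so as to also arrange (iv) and (v).

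For (iv) the natural tool is Stein factorization applied to the proper morphism $\bar X'\rightarrow \Spec k$. Since $\bar X'$ is of finite type over $k$, the pushforward $f_*\mathcal O_{\bar X'}$ is a finite $k$-algebra $A$, yielding $\bar X'\rightarrow \Spec A\rightarrow \Spec k$. Because $\bar X'$ is regular, it is in particular reduced, so $A$ is a product of finite field extensions of $k$. Restricting to one connected component of $\bar X'$ (and handling the disjoint union componentwise), we may assume $A=k'$ is a field. Smoothness of $\bar X'$ over $k'$ is then the claim that $\bar X'$ is geometrically regular over its field of constants; this is precisely what deJong's construction produces, since the recursive dévissage by semistable curve fibrations in the proof of \cite[Theorem 4.1]{jongAlterations} operates after finite base change, and the accumulated base field extension is absorbed into $k'$. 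If this is not immediately traceable from the statement one quotes, an alternative is to enlarge $k'$ by the (finite) purely inseparable extension over which $\bar X'$ becomes geometrically reduced, and then use that a regular, geometrically reduced variety over a field is smooth.

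For (v), when $k$ is perfect one has $k'=k$ automatically in the above discussion (since any finite extension $k'/k$ of a perfect field is separable, and the geometric connectedness obstruction is controlled by inseparable extensions), so $\bar X'$ is already smooth over $k$. In particular $X'\subset \bar X'$ is open in a smooth variety, hence smooth. The generic etaleness of $p$ is the content of \cite[Remark 4.2]{jongAlterations}: in the perfect (equivalently, characteristic zero, or perfect positive characteristic) case one can arrange the alteration to be generically etale, because the generic fiber of $p$ is a finite extension of function fields which, over a perfect base, is separable on a dense open.

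The only conceptually nontrivial point is the passage from regularity of $\bar X'$ to smoothness over $k'$ in (iv) when $k$ is imperfect; everything else is either a direct citation or an application of well-known facts (Stein factorization, regular plus geometrically reduced equals smooth, perfect base implies regular equals smooth). I would expect to spend my effort tracking the constant field coming out of deJong's inductive construction and recording it as the field $k'$ in the statement.
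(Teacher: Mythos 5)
The paper offers no proof of this statement at all: it is imported verbatim as \cite[Theorem 4.1, Remark 4.2]{jongAlterations}, and items (iv) and (v) are precisely the content of Remark 4.2 there, so reducing everything to that citation — which is the core of your plan — is exactly what the paper does, and nothing more is needed.

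However, the two auxiliary justifications you add are false, and you should not lean on them if the citation were ever in doubt. First, your fallback for (iv), ``a regular, geometrically reduced variety over a field is smooth,'' is wrong: smoothness over a field is equivalent to \emph{geometric regularity}, which is strictly stronger than geometric reducedness. For instance, for $p$ odd the regular projective curve over $\mathbb{F}_p(t)$ with affine equation $y^2=x^p-t$ is geometrically integral yet not smooth. Consequently (iv) cannot be recovered from (i)--(iii) plus Stein factorization; it genuinely requires tracking de Jong's construction (i.e.\ it is part of Remark 4.2, not a formal consequence). Second, your explanation of generic \'etaleness in (v) — that over a perfect base the extension of function fields is automatically separable — is also wrong: the relative Frobenius $\mathbb{P}^1\rightarrow\mathbb{P}^1$ over $\mathbb{F}_p$ is an alteration whose function field extension is purely inseparable, even though $\mathbb{F}_p$ is perfect. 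Generic \'etaleness for perfect $k$ is something de Jong \emph{arranges} in his construction, which is why the paper (and you, correctly, in your first clause) simply quotes Remark 4.2 for it. Note also that the reduction of the imperfect-field case to this statement is handled separately in the paper, in Corollary \ref{geometry:djong}, by passing to the perfect closure and descending to a finite subextension.
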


The following definition allows us to have effective substitutes for generic smoothness in finite characteristics:
\begin{definition}
	A morphism of finite type $f:Y\rightarrow X$ is said to be \emph{essentially smooth} (resp. \emph{essentially etale}) if it factors as $Y\xrightarrow p X'\xrightarrow q X$ with $p$ smooth (resp. etale) and $q$ a finite universal homeomorphism.
\end{definition}

\begin{corollary}\label{geometry:djong}
 Let $X$ be any irreducible variety over a field $k$ and $Z\subset X$ be a proper closed subset. Then there is an alteration $p:X'\rightarrow X$, and an open immersion $j:X'\hookrightarrow \bar X'$ such that
 \begin{enumerate}[i)]
 \item $\bar X'$ is regular.
 \item $\bar X'$ is projective over $\Spec k$.
 \item $p^{-1}(Z)\cup (\bar X' - X')$ is a simple normal crossing divisor in $\bar X'$.
 \item $\bar X'$ is essentially smooth over $k$.
 \item $p$ is generically essentially etale.
 \end{enumerate}
\end{corollary}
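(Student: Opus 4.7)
The plan is to apply Theorem \ref{geometry:djong:raw} directly and reinterpret its conclusions (iv), (v) in the language of essential smoothness and essential etaleness; conditions (i)--(iii) are identical in the two statements and so carry over verbatim.

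For condition (iv), the raw theorem yields a factorization $\bar X' \to \Spec k' \to \Spec k$ with the first map smooth and $k'/k$ a finite field extension. I would let $k_{pi} \subseteq k'$ denote the maximal purely inseparable subextension of $k$ inside $k'$; then $X'' := \Spec k_{pi}$ maps to $\Spec k$ via a finite universal homeomorphism by construction. The composite $\bar X' \to \Spec k' \to X''$ will be smooth provided $k'/k_{pi}$ is separable, since $\Spec k' \to \Spec k_{pi}$ is then etale and the composition of two smooth morphisms is smooth; this exhibits $\bar X'/k$ as essentially smooth. If the $k'$ coming out of deJong does not already factor this way, my plan is to first pass to a suitable enlargement of $k'$ --- for instance, the normal closure of $k'/k$ --- and pull $\bar X'$ back accordingly, so that the required separable-over-purely-inseparable decomposition holds. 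Condition (v) is handled analogously: the raw theorem gives generic etaleness of $p$ when $k$ is perfect, and applying the same decomposition to the residue field extension at the generic point of $X'$ upgrades this to generic essential etaleness for arbitrary $k$.

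The main technical obstacle I expect is ensuring that the field-theoretic manipulation used to guarantee $k'/k_{pi}$ separable can be performed without disturbing the geometric properties (i)--(iii). This requires verifying standard base-change compatibilities for regularity, projectivity, and simple normal crossings under purely inseparable (or suitably chosen finite) base change of the deJong alteration. These compatibilities are routine, but the bookkeeping is somewhat fussy, and this is the step where care is genuinely required; the rest of the argument is essentially a dictionary translation from the perfect-field case.
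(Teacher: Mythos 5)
There is a genuine gap, and it sits exactly at condition (v). Applying \ref{geometry:djong:raw} directly over an imperfect $k$ gives you no control whatsoever on the generic behaviour of $p$: clause (v) of the raw theorem (generic etaleness) is only available when $k$ is perfect, so all you know is that $k(X')/k(X)$ is some finite extension. To get ``generically essentially etale'' you need, over a dense open of $X$, a factorization through a finite universal homeomorphism, i.e.\ generically a tower $k(X)\subseteq M\subseteq k(X')$ with $M/k(X)$ purely inseparable and $k(X')/M$ separable. Your phrase ``applying the same decomposition to the residue field extension at the generic point'' assumes precisely this purely-inseparable-below, separable-on-top factorization, which for a general finite extension in characteristic $p$ does not exist (the canonical decomposition goes the other way, separable then purely inseparable). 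Moreover, unlike your enlargement of the constant field $k'$, there is no ``enlargement'' available for the function-field extension: fixing it would mean replacing $X'$ by a further alteration and re-proving everything, which is circular. A secondary problem: the normal-closure base change $\bar X'\otimes_{k'}k''$ you use to rescue (iv) preserves (i), (ii) and (iv) (smoothness over $k''$ survives, and a normal extension does split as purely inseparable followed by Galois), but it endangers (iii): the raw theorem only guarantees that the components and intersections of the boundary divisor are \emph{regular}, not smooth over $k'$, and regularity is not stable under inseparable extension of the base field (a regular but non-smooth hypersurface can become non-regular after adjoining $p$-th roots), so SNC-ness of the pulled-back divisor is not justified.

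The paper's proof sidesteps both issues by reversing the order of operations: first base change $X$ to the perfect closure $k^{perf}$ and apply \ref{geometry:djong:raw} there, where clause (v) gives a \emph{generically etale} alteration $X_1'\to X\otimes_k k^{perf}$ with $\bar X_1'$ smooth and projective over $k^{perf}$; then, since everything is of finite type, descend the whole picture to a finite subextension $L\subset k^{perf}$, $L/k$ purely inseparable, obtaining a generically etale alteration $X'\to X\otimes_k L$ with $\bar X'$ smooth over $L$ (so (i)--(iii) hold at this finite stage). Essential smoothness of $\bar X'/k$ is then the factorization smooth over $\Spec L$ followed by the finite universal homeomorphism $\Spec L\to\Spec k$, and generic essential etaleness of the composite $X'\to X\otimes_k L\to X$ is etale-over-a-dense-open followed by the finite universal homeomorphism $X\otimes_k L\to X$. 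In other words, the purely inseparable piece is manufactured globally as a subfield of $k^{perf}$ before deJong is invoked, rather than extracted afterwards from field extensions where it need not exist.
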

\begin{proof}
	First working with $k^{perf}$, the perfect closure, and $X_1 = X\otimes_k k^{perf}$, we can find a generically etale alteration $X'_1\rightarrow X_1$ open inside $\bar X'_1$ with $\bar X'_1$ smooth and projective over $k^{perf}$ by \ref{geometry:djong:raw}. The schemes being of finite type, this descends to $L\subset k^{perf}$ finite over $k$, that is a generically etale alteration $X'\rightarrow X\otimes_k L$ with $X'$ open inside $\bar X'$ which is smooth over $L$, and thus satisfying $i), ii), iii)$. Since $L/k$ is purely inseparable, $\bar X'$ is essentially smooth over $k$ and since  $X\otimes_k L\rightarrow X$ is a finite, universal homeomorphism, $X'$ is a generically essentially etale alteration of $X$.
\end{proof}

Following proposition tells us that essentially smooth schemes over a field are (essentially) regular:
\begin{proposition}
Let $f:X\rightarrow \Spec k$ is essentially smooth for $k$ a field. Then $X_{red}$ is regular.
\end{proposition}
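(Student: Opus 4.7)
The plan is to unwind the definition, base-change along the reduction of the intermediate scheme, and then identify the result with $X_{red}$.

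First, I would write the hypothesis explicitly: $f$ factors as $X\xrightarrow{p}X'\xrightarrow{q}\Spec k$ with $p$ smooth and $q$ finite and a universal homeomorphism. Since $q$ is finite, $X'=\Spec A$ for a finite $k$-algebra $A$; since $q$ is in particular a homeomorphism, $X'$ has a single point and $A$ is local Artinian, with residue field some finite extension $k'/k$. The universal homeomorphism condition then forces $k'/k$ to be purely inseparable (this is the classical characterization of universal homeomorphisms between fields), and consequently $X'_{red}=\Spec k'$.

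Next, I would form the Cartesian square
\[
\begin{tikzcd}
X_0 \ar[r, hook, "i"] \ar[d, "p_0"'] & X \ar[d, "p"] \\
X'_{red} \ar[r, hook] & X'
\end{tikzcd}
\]
The vertical map $p_0$ is smooth as a base change of the smooth map $p$, and the horizontal map $i$ is a closed immersion which is moreover a homeomorphism (being the base change of $X'_{red}\hookrightarrow X'$, which is itself a closed immersion and a universal homeomorphism). Since $X'_{red}=\Spec k'$ is regular and $p_0$ is smooth, $X_0$ is regular, and in particular reduced.

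Finally, $i:X_0\hookrightarrow X$ exhibits $X_0$ as a reduced closed subscheme of $X$ with the same underlying topological space, so by the universal property of the reduction, $X_0=X_{red}$. Hence $X_{red}$ is regular (and in fact smooth over $k'$), as claimed. There is no genuine obstacle; the only two points to keep in mind are the stability of universal homeomorphisms and of smoothness under base change, and the fact that smoothness over a regular base preserves regularity.
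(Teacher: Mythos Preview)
Your proof is correct and follows essentially the same route as the paper's: factor through the finite universal homeomorphism, pass to the reduction of the intermediate Artinian scheme, base-change the smooth morphism, and identify the resulting regular (hence reduced) closed subscheme with $X_{red}$. The only cosmetic difference is that you spell out explicitly why the closed immersion $X_0\hookrightarrow X$ is a homeomorphism, which the paper leaves implicit.
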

\begin{proof}
	Let $X\xrightarrow{p} V\xrightarrow{q} \Spec k$ be the factorization with $p$ smooth, $q$ finite radicial. Hence $V_{red}$ is also a reduced scheme of $\dim 0$ of finite type over $k$, that is it is $\Spec l$ for an Artin ring $l/k$ which must be purely inseparable extension of $k$ since $q$ is a universal homeomorphism. Let $X'=X\times_V V_{red}$. Then $X'/V_{red}$ is smooth since $p$ is, and hence $X'$ is regular since $V_{red}=\Spec l$ for $l$ Artinian. Hence $X'$ is also reduced. Since $V_{red}\hookrightarrow V$ is a closed immersion, so is $X'\rightarrow X$. It follows that $X'=X_{red}$ and is regular as required. 
\end{proof}

Next we focus on the Stein factorization. The standard definition for Stein factorization of a proper morphism $f:Y\rightarrow X$ is the natural factorization of $f$ through the scheme $st(Y):=\mathbf{Spec}(\mathcal O_Y)$ (see {\cite[III.11.5]{hartshorne}}, for example). However for our purposes the following weaker notion is more useful:
	\begin{definition}
		Let $f:Y\rightarrow X$ is a proper morphism. We say that morphisms $f_1:Y\rightarrow st(Y)$, and $f_2:st(Y)\rightarrow X$ are \emph{essentially a Stein factorization} of $f$ if $f=f_2\circ f_1$ such that $f_2$ is finite and $f_1$ has non-empty geometrically connected fibers.
	\end{definition}

	\begin{proposition}[Stein factorizations are essentially preserved under pullback]\label{essential:steinEssentiallyPullsback}
		If $Y\overset{f_1}\longrightarrow Y_1 \overset{f_2}\longrightarrow X$ is essentially a Stein factorization of $f=f_2\circ f_1$ and $g:X'\rightarrow X$ be an arbitrary morphism, then the pullback diagram $Y'\overset{f'_{1}}\longrightarrow Y'_1 \overset{f'_{2}}\longrightarrow X'$ is essentially a Stein factorization of the pullback $f' = f'_{2}\circ f'_{1}$.
	\end{proposition}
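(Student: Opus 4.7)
The statement is essentially that the two defining properties of ``essentially a Stein factorization''---finiteness of $f_2$ and non-empty geometrically connected fibers of $f_1$---are stable under arbitrary base change $g : X' \to X$. The plan is to verify each property separately for the pullback factorization $Y' \xrightarrow{f'_1} Y'_1 \xrightarrow{f'_2} X'$.

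First I would note that the pullback $Y' = Y \times_X X'$ and $Y'_1 = Y_1 \times_X X'$ give maps $f'_1$ and $f'_2$ with $f'_2 \circ f'_1 = f'$ by universal properties of fiber products. The morphism $f'_2$ is finite since it is the base change of the finite morphism $f_2$ along $g$, and finiteness is stable under arbitrary base change.

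The main content is showing $f'_1$ has non-empty geometrically connected fibers. Observe that $f'_1$ is the base change of $f_1$ along the natural projection $p : Y'_1 \to Y_1$. Let $y'_1 \in Y'_1$ be any point with image $y_1 = p(y'_1) \in Y_1$. Then the scheme-theoretic fiber decomposes as
\[
	(f'_1)^{-1}(y'_1) = Y' \times_{Y'_1} \Spec \kappa(y'_1) = f_1^{-1}(y_1) \times_{\Spec \kappa(y_1)} \Spec \kappa(y'_1),
\]
i.e.~the fiber of $f'_1$ at $y'_1$ is the base change of the fiber of $f_1$ at $y_1$ along the field extension $\kappa(y_1) \hookrightarrow \kappa(y'_1)$. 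Since $f_1^{-1}(y_1)$ is non-empty and geometrically connected over $\kappa(y_1)$ by hypothesis, its base change along any field extension remains non-empty and geometrically connected (geometric connectedness over a field is preserved under arbitrary extensions of the ground field, see e.g.~\cite[Tag 0366]{stacksproject}-style standard results).

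The hardest part here is merely bookkeeping---identifying $(f'_1)^{-1}(y'_1)$ correctly as a base change of $f_1^{-1}(y_1)$---since the underlying stability properties (finiteness under base change, geometric connectedness under extension of the ground field) are entirely standard. No new ingredient beyond the definitions and these formal stability facts is needed.
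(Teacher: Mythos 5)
Your proposal is correct and matches the paper's argument, which simply observes that finiteness and the property of having non-empty geometrically connected fibers are both stable under base change; you have merely spelled out the fiberwise bookkeeping (identifying the fiber of $f'_1$ as a base change of the corresponding fiber of $f_1$ along a field extension) that the paper leaves implicit.
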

	\begin{proof}
		Immediate since both finiteness and the property of having non-empty geometrically connected fibers are preserved under pullbacks.
	\end{proof}
	
While the essential Stein factorization is not unique, it is not too different from the standard one, at least motivically. This follows from the following well known proposition:
	
	\begin{proposition}\label{proposition:essentiallySteinIsStein}
		If a proper morphism $f:Y\rightarrow X$ has Stein factorization $Y\overset{f_1}\longrightarrow st(Y)\overset{f_2}\longrightarrow X$, and an essentially Stein factorization $Y\overset{f'_1}\longrightarrow Z\overset{f'_2}\longrightarrow X$, then there is a finite surjective universal homeomorphism $r:st(Y)\rightarrow Z$ such that $f'_2\circ r =f_2$ and $r\circ f_1 =f'_1$. 
	\end{proposition}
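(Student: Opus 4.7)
My plan is to obtain $r$ from the universal property of the standard Stein factorization $st(Y)=\mathbf{Spec}_X(f_*\mathcal O_Y)$, and then to verify that $r$ is finite, surjective, and radicial; since a finite surjective radicial morphism is a universal homeomorphism, this will suffice.

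To begin, observe that $f'_2\colon Z\to X$ being finite presents $Z$ as $\mathbf{Spec}_X(f'_{2*}\mathcal O_Z)$, and the morphism $f'_1\colon Y\to Z$ corresponds to an $\mathcal O_X$-algebra map $f'_{2*}\mathcal O_Z\to f_*\mathcal O_Y$. The universal property of the relative $\mathbf{Spec}$ will then produce a unique $X$-morphism $r\colon st(Y)\to Z$ with $r\circ f_1=f'_1$, which automatically satisfies $f'_2\circ r=f_2$. Finiteness of $r$ is then formal: $f_2=f'_2\circ r$ is finite and $f'_2$ is separated and affine, so $r$ is both proper and affine, hence finite. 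Surjectivity is also formal: $f_1$ and $f'_1$ are surjective (their fibers being nonempty by the Stein and essential Stein hypotheses), and then $r\circ f_1=f'_1$ being surjective forces $r$ to be surjective.

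The substantive step will be to show that $r$ is radicial. Fix a geometric point $\bar z\to Z$; I would like to show that $r^{-1}(\bar z)$ consists of a single topological point. The finite $\bar z$-scheme $r^{-1}(\bar z)$ decomposes as a coproduct of spectra of local Artinian $\bar z$-algebras indexed by its topological points, and pulling $f_1$ back along $r^{-1}(\bar z)\to st(Y)$ gives a decomposition of the underlying topological space
\[
	|Y\times_Z\bar z|\;=\;\coprod_{\bar s\in r^{-1}(\bar z)}|Y\times_{st(Y)}\bar s|
\]
into clopen pieces. Each piece is nonempty (by surjectivity of $f_1$) and connected (because the standard Stein factorization $f_1$ has geometrically connected fibers). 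But $|Y\times_Z\bar z|$ is itself connected by the essential Stein hypothesis on $f'_1$, so the index set $r^{-1}(\bar z)$ must be a singleton with purely inseparable residue extension, as desired.

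The only mildly nontrivial input will be the classical fact that the standard Stein factorization has geometrically connected fibers (a consequence of $f_{1*}\mathcal O_Y=\mathcal O_{st(Y)}$ together with properness); everything else is a formal consequence of the universal property of $\mathbf{Spec}_X$ and standard properties of finite morphisms, so I do not foresee a real obstacle here.
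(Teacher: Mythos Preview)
Your proof is correct and is precisely the ``standard universal property of a Stein factorization'' argument that the paper invokes (the paper does not spell out a proof but defers to \cite[Prop.~1.37, Appendix~A]{vvthesis}). Your construction of $r$ via $\mathbf{Spec}_X$ of the algebra map $f'_{2*}\mathcal O_Z\to f_*\mathcal O_Y$, the cancellation arguments for finiteness and surjectivity, and the connectedness count on geometric fibers for radiciality are all sound; the only external input, that the standard Stein map $f_1$ has geometrically connected fibers, is indeed classical.
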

	\begin{proof}
		This is the standard universal property of a Stein factorization. A short proof can be found in \cite[Prop. 1.37, Appendix A]{vvthesis}.
	\end{proof}
	
	 \begin{remark}\label{v2:essential:steinIsGood}
		We will see later that $r^{*}=r^{!}:DM(Z)\leftrightarrows DM(st(Y)):r_*=r_!$ are equivalences (separatedness of $DM(-)$) with $DM(-)$ being the triangulated category of motivic sheaves. Hence it would be possible to confuse between Stein factorization and an essential Stein factorization in the motivic setting.
	 \end{remark}

	 Finally we need the following standard result, which we state without proof:
	 \begin{proposition}[Spreading out]
	 	Let $X\xrightarrow p \Spec L$ be any morphism of finite type, and assume that $\Spec L\xrightarrow s U$ is the generic point for some irreducible scheme $U$. Then:
		\begin{itemize}
			\item We can find a scheme $\bar X$ and a morphism $\bar p:\bar X\rightarrow  U$, such that $\bar X|_{\Spec L} = X$ and $\bar p|_{\Spec L} = p$.
			\item If $p$ is proper (resp. smooth, resp. quasi-finite, finite, resp. radicial, resp. essentially smooth, resp. has geometrically connected fibers, resp. is of fiber dimension $d$) we can find a $V\subset U$ open dense such that same is satisfied by $\bar p|_V$.
		\end{itemize}
	 \end{proposition}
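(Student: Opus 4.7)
The plan is to invoke the standard spreading out machinery of EGA IV. For the first assertion, observe that $\Spec L$ as the generic point of the irreducible scheme $U$ is the filtered limit $\varprojlim_V V$ over non-empty affine open $V \subset U$, equivalently $L = \varinjlim_V \mathcal O_U(V)$. Since $p:X \to \Spec L$ is of finite type, I cover $X$ by finitely many affines $\Spec(L[T_1,\ldots,T_{n_i}]/I_i)$, each defined by finitely many polynomial relations with finitely many coefficients, all of which lie in $\mathcal O_U(V)$ for some common open $V$. The finitely many gluing isomorphisms on double intersections are similarly described by finitely many elements and relations and descend after possibly shrinking $V$ further. The cocycle conditions, being finitely many equalities, also descend after another shrinking. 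This produces the required $\bar p:\bar X \to V$, and relabeling $V$ as $U$ gives the first part.

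For the second assertion, each listed property is constructible in the sense of EGA IV and hence descends under spreading out, possibly after shrinking $U$. Concretely, properness, quasi-finiteness, finiteness, and being radicial all follow from EGA IV 8.10.5. Smoothness is EGA IV 17.7.8. Having geometrically connected fibers follows from EGA IV 9.7.7, and the statement about fiber dimension $d$ from EGA IV 9.9.1. Essential smoothness, being the conjunction of a smooth morphism and a finite universal homeomorphism through some intermediate $Z$, is handled by first spreading out $Z$ and its two maps, then invoking the smooth and finite-universal-homeomorphism cases separately; the latter is the combination of finiteness, surjectivity, and radicial, all of which spread out by the above.

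The only real subtlety is that different properties may call for different shrinkings of $U$. But only finitely many properties are in play, and finite intersections of non-empty open subsets of the irreducible $U$ remain open and dense. Thus taking the intersection of the witnessing opens furnishes a single $V \subset U$ on which $\bar p|_V$ simultaneously satisfies all of the required properties. The main (minor) obstacle is locating clean references for each individual property in the EGA IV numbering; none of the descent statements themselves is deep given the foundational work, and no hypothesis on $U$ beyond irreducibility (hence Noetherianity is not even needed here, though in our applications $U$ will be of finite type over a field).
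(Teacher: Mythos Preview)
The paper states this proposition without proof, calling it a ``standard result.'' Your proposal is therefore not to be compared against an existing argument but simply evaluated on its own merits, and it is correct in outline: spreading out is exactly the EGA~IV machinery you invoke, and your treatment of essentially smooth by separately spreading out the smooth and the finite-radicial-surjective factors through an intermediate scheme is the right way to handle that compound notion. The specific EGA numbers you cite are in the right neighborhood (one might quibble over whether radicial is literally in 8.10.5 or nearby in 8.10.5(xi)--(xii) depending on the edition), but this is bibliographic housekeeping rather than a mathematical gap.
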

\subsection{Triangulated category of motivic sheaves}\label{sec:motives}
		In this section we review the triangulated category of motivic sheaves with rational coefficients (see \ref{prop:264}). We summarize the limited properties we will use in this setting below:
\begin{para}\label{motives:6functors}\label{motives:comparision}\label{motives:vanishingsForSmooth}
	Associated to any separated scheme $X$ of finite type over any field $k$ one can associate a category $DM(X)$, stable homotopy category of motivic sheaves over $X$, such that following properties hold:
	\begin{enumerate}
		 \item $DM(X)$ is a $\Q$-linear rigid tensor triangulated category which is idempotent complete, and for any morphism $f$ from $Y$ to $X$ we are given adjoint morphisms:
			 	\begin{align*}
					f^*: DM(X) \leftrightarrows DM(Y) : f_* & \text{ for }f\text{ arbitrary}\\
					f_!: DM(Y) \leftrightarrows DM(X) : f^! & \text{ for }f\text{ separated of finite type over }k
				\end{align*}
				with $f^*$ monoidal, and isomorphism of functors
				\begin{align*}
					(f g)_* \iso f_*g_* & & (f g)_! \iso f_!g_! & & (f g)^* \iso g^*f^* & & (f g)^! \iso g^!f^!
				\end{align*}
				whenever both sides make sense. We let $1_X$ denote the unit for the monoidal structure in $DM(X)$. In each $DM(X)$ there is an invertible (for the monoidal structure) object $1_X(1)$ (Tate motive) which is preserved under the four functors. We let $M(r):=M\otimes 1_X(1)^{\otimes r}$ for any $M$ in $DM(X)$ and any $r\in \Z$.
			\item We have a natural transformation:
				\begin{align*}
					f_!	\rightarrow f_*
				\end{align*}
				which is an isomorphism for $f$ proper.
			\item (Localization) If $i:Z\hookrightarrow X$ is a closed immersion and $j:U\hookrightarrow X$ denote the open complement, then the functors $i^*,i_*,i^!,j_!,j^*,j_*$ satisfy the formalism of gluing \ref{gluing:openClosed}.
			\item (Base change) Any cartesian square (for morphisms of finite type) leads to natural isomorphisms as follows:
				\[\begin{array}{lcl}
					\begin{tikzcd}
						Y'\ar[r, "f'"]\ar[d, "g'"]	&X'\ar[d, "g"]		\\
						Y\ar[r, "f"]				&X	
					\end{tikzcd}
							& \Rightarrow 
							 \begin{array}{l}
								f^*g_! \overset\iso\longrightarrow g'_!{f'}^{*} \\
								g'_*{f'}^{!} \overset\iso\longrightarrow f^!g_{*}
							  \end{array}
				\end{array}\]
				For $g$ proper, identifying $g_!$ with $g_*$ yields so called \emph{proper base change}, and in this case the isomorphism holds without finite type assumption. For $f$ smooth, purity below allows us to replace $f^!$ with $f^*$ and yields so called \emph{smooth base change}.
			\item (Purity)
					If $f:Y\rightarrow X$ is quasi-projective, smooth, of relative dimension $d$ then for any object $M\in DM(X)$, we have functorial isomorphisms:
				\[
					f^!(M)(-d)[-2d]\iso f^*(M) \text{ where } d=\dim Y-\dim X
				\]
				 (Absolute purity) If $i:Z\hookrightarrow X$ denote a closed immersion of everywhere co-dimension $c$ with $Z,X$ regular then we have an isomorphism:
				\[
					i^! 1_X \iso  1_Z(-c) [-2c]
				\]
				where $1_Y$ is the unit of the monoidal structure in $DM(Y)$.
		\item (Separated-ness) If $f:Y \rightarrow X$ is a finite surjective universal homeomorphism then $f^*$ induces an equivalence $DM(X)\rightarrow DM(Y)$ and hence $(f^*\iso f^!, f_*\iso f_!)$ form inverses up to equivalence.
		\item (Embedding of Chow motives) Let $k$ be perfect. There are natural functors
		\begin{align*}
			(SmProj/k)^{op}\overset h \rightarrow CHM^{eff}(k) \hookrightarrow CHM(k)\overset M\hookrightarrow	DM(\Spec k)
		\end{align*}
		where $M$ is a fully faithful embedding preserving tensor products.
		
		By abuse of notation we will denote the extended functor from $(SmProj/k)^{op}$ (resp. $CHM^{eff}(k)$, resp. $CHM(k)$) to $DM(k)$, all as $M$. Then we have also have a functorial identification:
		\[
			M(X) = p_*1_X \in DM(k)\text{ for all }(p:X\rightarrow k)\in SmProj/k.
		\]
		and in particular $M(\mathbb L)=1_k(-1)[-2]$. For any map $f:Y\rightarrow X$ between irreducible schemes in $SmProj(k)$ with $d=\dim Y-\dim X$. Then we have 
		\begin{align*}
			M(f^*)&: p_*1_X\longrightarrow p_*f_*f^*1_X \cong p_*f_*1_Y \\
			M(f_*)&: p_*f_*1_Y \cong p_*f_*f^*1_X \cong p_*f_!f^!1_X(-d)[-2d] \longrightarrow p_*1_X(-d)[-2d] 
		\end{align*}
		where the maps on the right hand side are coming from the natural maps of adjunctions and using that $f$ is proper, and purity.
		\item (Comparison with (motivic) cohomology)  For any $\pi:X\rightarrow \Spec k$ with $X$ regular and connected and $k$ perfect. Then we have:
		\[
			\Hom_{DM(\Spec k)}( 1_k,\pi_* 1_X(q)[p]) 	=  H^{p,q}(X) = \begin{cases}
																						0 & p>2q\text{ or }p>q+\dim X\\
																						0 & q\le1\text{ and }(p,q)\notin\{(0,0),(1,1),(2,1)\}\\
																						\mathbb{Z}(X)\otimes\Q & (p,q)=(0,0)\\
																						\mathcal{O}^{*}(X)\otimes\Q & (p,q)=(1,1)\\
																						Pic(X)\otimes\Q & (p,q)=(2,1)
																		\end{cases}
		\]
		where $H^{p,q}(X)$ denotes the motivic cohomology of $X$ with rational coefficients.
		\item (Continuity) Assume $\{X_i\}_{i\in I}$ is a pro-object in category of schemes where $I$ is a small co-filtering category, such that the transition maps $g_{j\rightarrow i}:X_j\rightarrow X_i$ are affine for all maps $j\rightarrow i$ in $I$. Let $X=\lim _{i\in I}X_i$ in the category of schemes (which exists because of the assumption on $g_{j\rightarrow i}$) and let $f_i:X\rightarrow X_i$ denote the natural map. Let $i_0$ be any object in $I$, and let $I/i_0$ be the category over $i_0$: 
		\begin{align*}
			 \forall A\in DM(X) &\;\;\;\;\; \exists j\in I, j\in A' \text{ s.t. }A\iso f_j^*(A')& &\\
			\forall A, B\in DM(X_{i_0}) &\;\;\;\; \lim_{\underset{j\in I/i_0}\longleftarrow}\Hom_{DM(X_i)}(g^*_{j\rightarrow i_0}A,g^*_{j\rightarrow i_0}B) = \Hom_{DM(X)}(f_{i_0}^*A,f_{i_0}^*B)
		\end{align*}
		\end{enumerate}
\end{para}
\begin{para}\label{motives:DAandDM}
	One choice for such a construction is the stable category of motivic sheaves without transfers as constructed by Ayoub in \cite{ayoub_thesis_1, ayoub_thesis_2}. This is the category $\mathbb{SH}_\mathfrak{M}^T(X)$ of \cite[4.5.21]{ayoub_thesis_2} with $\mathfrak{M}$ being the complex of $\Q$-vector spaces (and one works with the topology \'etale topology), also denoted as $DA(X)$ in the discussion \cite[2.1]{ayoub2012relative}. The second choice for the construction is the compact objects in the category of motivic sheaves with transfers, the Beilinson motives $DM_{B,c}(X)$ as described in the article \cite{cisinski2012triangulated}. Note that the two constructions are known to be equivalent \cite[16.2.18]{cisinski2012triangulated}.
\end{para}
\begin{remark}\label{remark:qp}
		Constructions of \cite{ayoub_thesis_1, ayoub_thesis_2} require that all schemes are assumed to be quasi-projective (see \cite[\S 1.3.5]{ayoub_thesis_1}) over the base. Since such objects are stable under immersions, base change, etc., and also because Jong's alterations can be chosen to be quasi-projective \cite{jongAlterations}, this does not cause any problem in the proofs with this additional restriction.
\end{remark}
Our assumptions are then valid in both these situations: 
\begin{proposition} \label{prop:264}
	The assumption \ref{motives:6functors} holds if we work either with $DA(X)$ (and restrict to quasi-projective $X$) or $DM_{B,c}(X)$ of \ref{motives:DAandDM}.
\end{proposition}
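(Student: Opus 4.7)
The plan is essentially a verification rather than a substantive proof: each of the nine enumerated items in \ref{motives:6functors} is a known theorem about both $DA(X)$ and $DM_{B,c}(X)$, and the argument proceeds by citing the relevant constructions in each setting. I would organize the verification into three blocks.

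First, the formal six functor package (items 1--4) — the rigid $\Q$-linear tensor triangulated structure, the adjoint pairs $(f^*, f_*)$ and $(f_!, f^!)$ with their compatibility isomorphisms under composition, the natural transformation $f_! \to f_*$ (an isomorphism for proper $f$), the gluing recollement attached to a closed/open decomposition, and base change (proper and smooth) — are all built into the construction of the two categories. For $DA(X)$ this is the content of Ayoub's thesis \cite{ayoub_thesis_1, ayoub_thesis_2}, and for $DM_{B,c}(X)$ this is \cite{cisinski2012triangulated}. No argument is required beyond pointing to the right theorems; the Tate twist $1_X(1)$ is invertible by construction and commutes with the four functors.

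Second, the geometric inputs (items 5--6). Relative purity $f^! \simeq f^*(d)[2d]$ for $f$ smooth of relative dimension $d$ is part of the construction in both settings via the Thom transformation. Absolute purity in regular situations is proved for $DM_{B,c}$ in \cite{cisinski2012triangulated}, and transfers to $DA$ under the quasi-projective restriction of \ref{remark:qp}. Separatedness, namely invariance of $DM(-)$ under finite surjective universal homeomorphisms, is in \cite{ayoub2012relative} for $DA$ and in \cite{cisinski2012triangulated} for $DM_{B,c}$.

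Finally, the comparison block (items 7--9). The fully faithful embedding $CHM(k) \hookrightarrow DM(\Spec k)$ together with the identification $M(X) = p_*1_X$ (and hence $M(\mathbb{L}) = 1_k(-1)[-2]$) is proved for $DM_{B,c}$ in \cite{cisinski2012triangulated}; for $DA(k,\Q)$ one invokes the equivalence with $DM_{B,c}(k)$ over a perfect base established in the same reference. The formulas for $M(f^*)$ and $M(f_*)$ are unwound directly from the defining adjunctions together with relative purity. The vanishings of $H^{p,q}(X)$ for $q \le 1$ come from the explicit description of the motivic complexes $\Q(0)$ and $\Q(1)$ (as the constant complex and the shifted Bloch--Suslin complex, respectively) and are included in \cite{cisinski2012triangulated}. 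Continuity along co-filtered affine limits is proved in \cite{ayoub_thesis_2} for $DA$ and in \cite{cisinski2012triangulated} for $DM_{B,c}$. The main obstacle, to the extent there is one, is bookkeeping: one must verify that the description of the embedding of Chow motives on morphisms, as stated in item (7), really matches the description used in \S\ref{sec:chow}, which amounts to chasing adjunction units/counits through the comparison functor.
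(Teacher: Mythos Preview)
Your proposal is correct and follows essentially the same approach as the paper: both proofs are verifications by citation, running through the nine items and pointing to the relevant theorems in Ayoub's thesis and \cite{cisinski2012triangulated}. The only differences are organizational (you group by theme, the paper treats $DA$ and $DM_{B,c}$ in sequence) and in a few specific references---e.g.\ the paper cites \cite{ayoubrealisation} directly for absolute purity and separatedness in $DA$ rather than transferring from $DM_{B,c}$, and invokes the equivalence of $DA(k)$ with Voevodsky's $DM(k)$ (via \cite{ayoub2012relative}) rather than with $DM_{B,c}(k)$ for the embedding of Chow motives---but these are interchangeable citation choices, not substantive divergences.
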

\begin{proof}
	For $DA(X)$ \cite[4.5]{ayoub_thesis_2} shows that $DA(-)$ gives rise to an homotopy-stable algebraic derivator in the sense of \cite[2.4.13]{ayoub_thesis_1}. The axiom DerAlg 5 allows one to invoke the results of \cite[1.4]{ayoub_thesis_1}. In particular, \cite[1.4.2]{ayoub_thesis_1}  gives the existence of the four functors $f^*, f^!, f_*, f_!$ satisfying $(1), (2), (4)$. Purity $(5)$ follows from \cite[1.6.19]{ayoub_thesis_1} (also see \cite[2.14]{ayoubrealisation}) , while absolute purity is shown in \cite[Cor. 7.5]{ayoubrealisation}. Localization $(3)$ is \cite[4.5.47]{ayoub_thesis_2}. For separatedness $(6)$ by adjointness it is enough to show that $f^*$ is an equivalence which follows from \cite[3.9]{ayoubrealisation}. Property $(9)$ is shown in \cite[Prop. 2.30]{ayoubrealisation}. 
	Property $(7), (8)$ follow from the fact that there are equivalences $a^{tr}:DA(k)\cong DM(k):o^{tr}$, for $k$ perfect where the right hand side is the corresponding construction with transfers due to \cite{voevodsky} (see \cite[2.4]{ayoub2012relative}) for which we know embedding of Chow motives $(7)$ by \cite[2.1.4]{voevodsky} (after composing with the duality on $CHM^{eff}(k)$, $DM(k)$) and the computations of motivic cohomology $(8)$ by \cite[3.5, 4.2, 19.3]{weibel_mazza_notes} (or see \cite[2.2.6]{vaish2016motivic} for details).

	For the Beilinson motives $DM_{B,c}(X)$ all the properties of \ref{motives:6functors} except $(6), (7)$ and $(8)$ are already discussed in the introduction of \cite{cisinski2012triangulated}, section $C$ -- properties $(1)$ to $(5)$ and $(9)$ are part of Grothendieck's six functor formalism as defined in \cite[A.5.1]  {cisinski2012triangulated} (continuity is an extension of usual six functor formalism as mentioned in their list) and hold for $DM_{B,c}(X)$ by \cite[C.Theorem 3]{cisinski2012triangulated}. Separatedness $(6)$ can be deduced from \cite[Prop. 2.1.9]{cisinski2012triangulated}. Property $(7)$ follows from the fact that over a perfect field their construction reduces to Voevodsky's original construction \cite{voevodsky} (see \cite[11.1.14]{cisinski2012triangulated}), where this is known by \cite[2.1.4]{voevodsky} (after composing with the duality on $CHM^{eff}(k)$, $DM(k)$). Alternatively, Chow motives can be identified as the heart of Bondarko's weight structure due to \cite{fangzhou2016borel} giving us $(7)$. Property $(8)$ can be deduced from the corresponding computation in Voevodsky's category, see \cite[3.5, 4.2, 19.3]{weibel_mazza_notes} (this is also present in \cite[11.2.3]{cisinski2012triangulated}).
\end{proof}
\section{Morel's weight truncations in the motivic setting}\label{sec:morel}
\subsection{Cohomological Motives}\label{sec:cohMotives}
The analogue of Morel's constructions will be constructed on cohomological motives -- these are the cohomological analogues of (that is, dual of) effective motives. The key properties of the category of cohomological motives are 
\begin{itemize}
	\item It is (finitely) generated by motives of the form $\pi_*1_X$ for $\pi$ (essentially) smooth, projective.
	\item It includes all motives of the form $\pi_*1_X$ for $\pi$ arbitrary.
	\item It is stable under $f^*, f_*, f^!, f_!$ for $f$ immersions, and $f^*$ for $f$ arbitrary, hence satisfying the formalism of gluing and extended gluing.
\end{itemize}
These properties are standard and have been well covered in literature, see e.g. \cite[\S 3.1]{ayoub2012relative} or \cite[\S 3]{vaish2016motivic}. The reader who is only interested in the construction of the analogue of weight truncations $w_{\le \id}$ and $w_{\le \id+1}$ and is otherwise familiar with the cohomological motives may want to skip the section completely. 

However for construction of the intersection complex of a three-fold, we need to work with the category (finitely) generated by $\pi_*1_X$ with $\pi$ (essentially) smooth projective but restrictions on dimension of $X$. The proofs are entirely analogous to the proofs for the category for full cohomological motives, and just involves keeping track of dimensions of various schemes that occur in the construction. This is what we do below.

We throughout assume the situation of \ref{motives:6functors}. We begin with the following definitions:
\begin{definition}\label{definieCohMotives}
	Let $Y$ be a Noetherian scheme of finite type over $k$.  We define the following collections of objects of $DM(Y)$ (also identified with the corresponding full subcategories):
	\begin{align*}
		S^{dom,coh}_d(X) &= \{ p_*1_X(-r) \big| p:Y\rightarrow X\text{ projective}, Y\text{ connected regular},\hspace{-40em}\\
						&\;\;\;\;\;\;\;\;\;\;\;\;\;\;\;\;\;\;\;\;\;\; Y\text{ essentially smooth over }k, \dim Y\le d, r\ge 0, p\text{ dominant}\}\hspace{-40em}\\		
		S^{sm,coh}_d(X) &= \{ p_*1_X(-r) \big| p:Y\rightarrow X\text{ projective}, Y\text{ regular},\hspace{-40em}\\
						&\;\;\;\;\;\;\;\;\;\;\;\;\;\;\;\;\;\;\;\;\;\; Y\text{ essentially smooth over }k, \dim Y\le d, r\ge 0\}\hspace{-40em}\\
		S^{coh}_d(X)	&= \{ p_*1_X(-r) \big| p:Y\rightarrow X\text{ arbitrary}, \dim Y\le d, r\ge 0\}\hspace{-40em}\\
		S^{sm, coh}(X)	&= \{ p_*1_X \big| p:Y\rightarrow X\text{ projective}, Y\text{ regular, essentially smooth over }k\}\hspace{-40em}\\
		DM_{d,dom}^{coh}(X)	&= \vvspan{S^{dom, coh}_d(X)\cup S^{sm, coh}_{d-1}(X)}	&DM_d^{coh}(X)	&= \vvspan{S^{sm, coh}_d(X)}		\\				
		DM^{coh}(X)&=\vvspan{S^{sm, coh}(X)}
	\end{align*}
	We have inclusions:
	\begin{align*}
		S^{dom, coh}_d(X)\subset S^{sm, coh}_d(X)\subset S_d^{coh}(X)& &S^{sm, coh}(X)=\bigcup_d S^{sm, coh}_d(X)
	\end{align*}
	
	$DM^{coh}(X)$ is called as the \emph{category of cohomological motives} over $X$. 
	
	$DM_d^{coh}(X)$ and $DM^{coh}_{d,dom}(X)$ will allow us to control dimensions of varieties that occur -- while $DM_d^{coh}(X)$ restricts to schemes with dimensions $\le d$ (but allowing arbitrary, negative Tate twists), $DM_{d+1,dom}^{coh}(X)$ will be essentially the same, except that it will allow dominant varieties of one higher dimension. These categories satisfy reasonable stability properties under the four functors, and will be useful for constructing motivic intersection complexes of threefolds.
\end{definition}

We can replace projectivity in the definition of $S^{sm, coh}_d(X)$ by properness, and essential smoothness is inessential, the following lemma is \cite[3.1]{hebert2011structure}, for convenience we include a proof:
\begin{lemma}[Motivic Chow's lemma]\label{motivicChowLemma}
	Let $p:Y\rightarrow X$ be proper, with $Y$ regular. Then there is a map $q:Y'\rightarrow Y$ with $p\circ q:Y'\rightarrow X$ projective, $\dim Y'=\dim Y$, $Y'$ regular, essentially smooth over $k$ such that $p_*1_Y$ is a retract of $(p\circ q)_*1_{Y'}$.
\end{lemma}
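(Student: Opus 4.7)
The plan is to combine classical Chow's lemma with de Jong's alteration \ref{geometry:djong} to produce the required $Y'$, and then obtain the retract via a motivic trace argument; regularity of $Y$ together with the rational coefficients is what makes the last step succeed.

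For the geometric construction, first apply classical Chow's lemma to the proper morphism $p$ to obtain $q_1 : Y_1 \rightarrow Y$ proper birational with $p \circ q_1 : Y_1 \rightarrow X$ projective. Then apply \ref{geometry:djong} to $Y_1$ (taking the singular locus of $Y_1$ as the distinguished closed subset) to produce a generically essentially etale alteration $q_2 : Y' \rightarrow Y_1$ with $q_2$ projective and $Y'$ regular and essentially smooth over $k$. Setting $q := q_1 \circ q_2 : Y' \rightarrow Y$, one then has $p \circ q$ projective (composition of projectives), $Y'$ with the required geometric properties, and generic finiteness of both $q_1$ (birational) and $q_2$ (alteration) forces $\dim Y' = \dim Y$.

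For the retract of $p_* 1_Y$ inside $(p \circ q)_* 1_{Y'}$ it suffices (by applying $p_*$) to produce a retract of $1_Y$ inside $q_* 1_{Y'}$. Choose $U \subset Y$ open dense over which $q$ restricts to a finite essentially etale morphism $q_U: q^{-1}(U) \rightarrow U$ of some degree $n > 0$; such a $U$ exists because $q$ is proper and generically quasi-finite. Using separatedness \ref{motives:6functors}(6) to absorb the finite universal homeomorphism factor in the factorization of $q_U$, the classical etale trace furnishes over $U$ a retract of the unit $1_U \rightarrow q_{U*}1_{q^{-1}(U)}$: the composition with the unit equals $n \cdot \mathrm{id}$, which is a unit rationally.

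The main obstacle is extending this local retract from $U$ to a global retract of $1_Y$ inside $q_* 1_{Y'}$. I would handle this through the adjunction identification $\Hom(q_* 1_{Y'}, 1_Y) \cong \Hom(1_{Y'}, q^! 1_Y)$: on the open $q^{-1}(U) \subset Y'$ where $q$ is finite essentially etale, purity and separatedness imply $q^! 1_Y \cong 1_{Y'}$ there, so the etale trace over $U$ determines a distinguished element after restriction, which one then extends using the localization triangle for $U \hookrightarrow Y$ together with regularity of $Y$. An alternative route is to appeal to the relative Chow weight structure of Hebert--Bondarko (\cite{hebert2011structure}, \cite{bondarko2014weights}) under which $1_Y$ and $q_* 1_{Y'}$ lie in the Karoubian heart over $Y$, so that a rational splitting over a dense open automatically propagates globally. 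Either approach yields the required global splitting, and applying $p_*$ then finishes the proof.
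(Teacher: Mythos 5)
Your geometric construction is essentially equivalent to the paper's and is fine: the paper compactifies $X$, extends $p$ to $\bar p:\bar Y\rightarrow\bar X$ and applies \ref{geometry:djong} to $\bar Y$, getting projectivity of $p\circ q$ from projectivity of $Y_1$ over $k$ and separatedness of the target; your route via classical Chow's lemma followed by \ref{geometry:djong} (applied componentwise, since \ref{geometry:djong} is stated for irreducible varieties) yields the same $Y'$ with the same properties. The genuine gap is in the retract step, which you flag as the main obstacle but do not actually close. Knowing $q^!1_Y\cong 1_{Y'}$ only over the \'etale locus and then ``extending using the localization triangle'' does not work as stated: the long exact sequence coming from the localization triangle shows that lifting a morphism defined over $q^{-1}(U)$ to all of $Y'$ is obstructed by a term of the form $\Hom(1_{Y'}, i'_*i'^!q^!1_Y[1])$, and nothing you invoke (regularity of $Y$ included) makes that obstruction vanish. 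The missing idea is the paper's \emph{global} computation of $q^!1_Y$: since $p\circ q$ is projective and $p$ is separated, $q$ itself is projective, hence factors as a closed immersion $Y'\hookrightarrow Y\times\mathbb P^n$ followed by the smooth projection; purity together with absolute purity (this is where regularity of \emph{both} $Y$ and $Y'$ and the equality $\dim Y'=\dim Y$ enter, so that the twists cancel) give $q^!1_Y\cong 1_{Y'}$ on all of $Y'$. With that, the counit $q_*q^!1_Y\rightarrow 1_Y$ is already the desired global $\beta$, and no extension problem arises.

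Second, even granting a global $\beta$ restricting to the trace over $U$, you still must argue that the composite $\beta\circ\alpha\in\End(1_Y)$ is invertible, knowing only its restriction to the dense open $U$ is $n\cdot\id$. The paper handles this by identifying $\End(1_Y)\cong CH^0(Y)$ (using $Y$ regular and \ref{motives:6functors}(8)) and observing that restriction $CH^0(Y)\rightarrow CH^0(U)$ is an isomorphism for $U$ open dense; only then does the generic-point/Galois-descent computation of the trace finish the proof. Neither of your two routes addresses this passage from $U$ back to $Y$. Your alternative via the Hebert--Bondarko weight structure could plausibly be completed (surjectivity of restriction on Hom-groups between weight-zero objects amounts to the localization sequence for relative Chow groups), but as written it is an assertion, and it too still needs the $\End(1_Y)\cong CH^0(Y)\cong CH^0(U)$ step to conclude that the extended splitting is actually a splitting globally.
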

\begin{proof}
Complete $X\hookrightarrow \bar X$ which is proper over $k$, and extend $p$ to $\bar p:\bar Y\rightarrow \bar X$ proper.
{\flushleft
\begin{minipage}{0.8\textwidth}
	Let $q_1:Y_1\rightarrow \bar Y$ be an alteration as in \ref{geometry:djong}. Then $Y_1$ is projective over $k$. Since $\bar X/k$ is separated, $\bar p\circ q_1:Y_1\rightarrow \bar X$ is projective. Restricting to $X\subset \bar X$ and defining $Y'=q_1^{-1}(Y)=(p\circ q_1)^{-1}(X)$, we see that $q=q_1|_Y:Y'\rightarrow Y$ is an alteration, $p\circ q = (p\circ q_1)|_X$ is projective and $Y'\subset Y_1$ is open hence essentially smooth over $k$ and regular. Furthermore $q$ is generically essentially etale since $q_1$ is. Also $q:Y'\rightarrow Y$ is projective since $p\circ q$ is and $p$ is separated. 
\end{minipage}%
\begin{minipage}{0.2\textwidth}
		\hfill {\begin{tikzcd}
			Y'\ar[d, "q"]\ar[r, hookrightarrow]	&Y_1\ar[d, "q_1"]\\
			Y\ar[d, "p"]\ar[r, hookrightarrow]	&\bar Y\ar[d, "\bar p"]	\\
			X\ar[r, hookrightarrow]				&\bar X
		\end{tikzcd}}
\end{minipage}%
}

	Hence $q$ factors as $q:Y'\overset{i}\hookrightarrow Y\times \mathbb P^n\xrightarrow s Y$ with $i$ closed immersion and $s$ smooth. Therefore $q^! 1_Y = i^!s^! 1_Y = i^!1_{\mathbb P^n\times Y}(c)[2c] = 1_{Y'}$ using purity and absolute purity (as $Y, Y'$ are regular).
	
	Hence there are natural maps:
	\[
		1_Y \xrightarrow{\alpha} q_*q^*1_Y = q_*1_{Y'} = q_*q^!1_{Y} \xrightarrow{\beta} 1_Y
	\]
	We can replace $X$ by it's irreducible components, and hence can assume $X, Y, Y'$ are connected. It would be enough to show that $\beta\circ\alpha = n\cdot\id$ for some $n\ne 0$. 
	
	Since $q$ is generically, essentially etale, there is a $j:U\hookrightarrow Y$ open dense, such that $q|_U$ is essentially etale. Notice that 
	$\hom(1_Y, 1_Y)\cong \hom(1_k, p_*1_Y) = CH^0(Y)$ since $Y$ is regular (we have already assumed this for $k$ perfect in \ref{motives:6functors}, more generally this computation for perfect fields implies the same for all fields, see \cite[2.2.6]{vaish2016motivic}), and we have a natural restriction map:
	\[\begin{tikzcd}[row sep=small]
		\- 		&CH^0(Y)\ar[r] \ar[d,equal]	\ar[l, phantom, "{j^*= [Y]\mapsto[Y\cap U]:\hspace{5em}}"]				& CH^0(U)\ar[d, equal]\\
				&\hom(1_Y, 1_Y)\ar[r, "j^*"] 					& \hom(1_U, 1_U)
		\end{tikzcd}\]
	obtained by restriction of cycles, and hence is an isomorphism, since $U$ is open dense. Therefore it is enough to show that $j^*\beta\circ j^*\alpha = n\cdot \id$ for some $n\ne 0$, and by smooth base change, this is the natural map:
	\[
		1_U\rightarrow q_*q^*1_U = q_!q^!1_U \rightarrow 1_U
	\]
	which is $n\cdot \id$ for some $n\ne 0$. By separatedness, it is enough to prove this for $q$ etale.
	By continuity, shrinking $U$ further if needed, it is enough to show this when $U$ is replaced by it's generic point $\eta=\Spec K$, and $Y_\eta=\Spec L$ with $q:\Spec L\rightarrow K$ a finite separable map, and restricting to each connected component, we can assume that $L/K$ is separable field extension.
	
	By Galois descent, we can replace $q$ by $\bar q: L\otimes_K \bar L\rightarrow \bar L$ where $\bar L$ is normal closure of $L$. But $L\otimes_K \bar L\cong \oplus_\tau \bar L_\tau$ with $\tau$ varying over embeddings $L\hookrightarrow \bar L$ over $K$ and $\bar q|_{\bar L_\tau}:\bar L_\tau \cong \bar L$ are Galois isomorphisms. The claim is now obvious.
\end{proof}

It is also possible to do away with assumptions of regularity and properness in $S^{sm,coh}_d(X)$. We will first need the following:
\begin{lemma}\label{sncIsCoh}
		Let $X$ be regular irreducible of $\dim X\le d$ and $i:Z\hookrightarrow X$ be a closed immersion, $Z$ not equal to $X$, such that $Z$ is a variety with normal crossing (i.e. $Z$ is reduced, irreducible components of $Z$ are regular, and for all positive integers $r$, $r$-fold intersections of the irreducible components are also regular). Let $j:U\rightarrow X$ denote the open complement.
		Then $i^! 1_X\in DM_{d-1}^{coh}(Z)$ and $j_* 1_U\in DM_{d, dom}^{coh}(X)\subset DM_{d, dom}^{coh}(X)$.
	\end{lemma}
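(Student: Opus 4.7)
The plan is to prove both claims simultaneously by induction on the number $n$ of irreducible components of $Z=Z_1\cup\cdots\cup Z_n$, reducing everything to the study of the local cohomology complex $\Gamma_Z 1_X := i_*i^!1_X$. Since $i_*\colon DM(Z)\hookrightarrow DM(X)$ is fully faithful (localization), the statement $i^!1_X\in DM_{d-1}^{coh}(Z)$ is equivalent to $\Gamma_Z 1_X$ lying in $\vvspan{i_*(S_{d-1}^{sm,coh}(Z))}\subset\vvspan{S_{d-1}^{sm,coh}(X)}$ whose generators are the $(i_W^X)_*1_W(-c)[-2c]$ for $W$ a regular subvariety of $Z$ of dimension $\le d-1$ (to be furnished by absolute purity). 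Granted this, the localization triangle
\[
	i_*i^!1_X \longrightarrow 1_X \longrightarrow j_*1_U \longrightarrow [1]
\]
combined with $1_X\in S_d^{dom,coh}(X)\subset DM_{d,dom}^{coh}(X)$ (using $X$ irreducible regular of dimension $\le d$, essentially smooth over $k$ in the ambient setup) yields the second claim.

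The base case $n=1$ is immediate: $Z$ is regular of codimension $c\ge 1$ in regular $X$, so absolute purity (\ref{motives:vanishingsForSmooth}) gives $i^!1_X\cong 1_Z(-c)[-2c]$, which is in $S_{d-1}^{sm,coh}(Z)$ (noting $\dim Z\le d-1$ because $Z\subsetneq X$ with $X$ irreducible). For the inductive step, I split $Z=Z'\cup Z_n$ with $Z'=Z_1\cup\cdots\cup Z_{n-1}$ and use the Mayer--Vietoris triangle for local cohomology along closed subsets:
\[
	\Gamma_{Z'\cap Z_n}1_X \longrightarrow \Gamma_{Z'}1_X\oplus\Gamma_{Z_n}1_X \longrightarrow \Gamma_Z 1_X \longrightarrow [1].
\]
The key observation is that the SNC hypothesis on $Z$ is inherited: $Z'$ is SNC with $n-1$ components, $Z_n$ is a single regular component, and $Z'\cap Z_n=\bigcup_{\alpha<n}(Z_\alpha\cap Z_n)$ is SNC since each pairwise intersection $Z_\alpha\cap Z_n$ and the higher intersections $Z_J\cap Z_n$ are regular by assumption on $Z$. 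Hence the inductive hypothesis applies to each of the three left-hand terms, and the generators produced factor as closed immersions through $Z$, as required.

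To justify the Mayer--Vietoris triangle I pass to the complementary opens $V_\alpha=X\setminus Z_\alpha$. Then $V_1\cap V_2=X\setminus(Z_1\cup Z_2)=U$ and $V_1\cup V_2=X\setminus(Z_1\cap Z_2)$, and the standard Mayer--Vietoris
\[
	(j_{V_1\cup V_2})_*1_{V_1\cup V_2} \longrightarrow (j_{V_1})_*1_{V_1}\oplus (j_{V_2})_*1_{V_2} \longrightarrow (j_U)_*1_U \longrightarrow [1]
\]
combined with the three localization triangles $\Gamma_W 1_X\to 1_X\to (j_W)_*1_{X\setminus W}\to$ (for $W=Z_1\cup Z_2$, $Z_1\cap Z_2$, $Z_i$) yields the triangle above by an octahedral diagram chase. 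Iterating the two-component case in the obvious way handles $Z'\cup Z_n$ when $Z'$ itself has many components.

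The main obstacle is bookkeeping: one must verify at every stage of the recursion that the iterated intersections are genuine (the SNC hypothesis guarantees this), that they have dimension $\le d-1$ (automatic since they lie in any single component $Z_\alpha$), and that the generators obtained from absolute purity live in $i_*(S_{d-1}^{sm,coh}(Z))$ rather than merely in $S_{d-1}^{sm,coh}(X)$ -- i.e., that the closed immersions $W\hookrightarrow X$ factor through $Z$, which is true by construction since every $W$ is a component of an intersection $Z_J\subset Z$. The essential smoothness of the strata over $k$, needed to place $1_{Z_J}$ into the generating sets, is inherited from that of $X$ in the contexts (e.g. de Jong's alterations, \ref{geometry:djong}) where this lemma is applied.
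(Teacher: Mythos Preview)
Your proof is correct and takes a genuinely different route from the paper's. Both arguments reduce the claim about $j_*1_U$ to the claim about $i^!1_X$ via the localization triangle, but they organize the induction for $i^!1_X$ differently.

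The paper inducts on $k_Z$, the maximum number of irreducible components of $Z$ that meet simultaneously. In the inductive step it lets $Z_1\subset Z$ be the union of all pairwise intersections (so $k_{Z_1}<k_Z$) and applies localization on $Z$ for the pair $(Z_1, Z\setminus Z_1)$. The open part $Z\setminus Z_1$ is a disjoint union of regular opens $U_i\subset Z'_i$ in the individual components, on which absolute purity applies directly; one then needs a second inductive appeal to handle each $s_{i*}1_{U_i}$ inside $Z'_i$, since $Z'_i\setminus U_i$ again has smaller $k$-invariant. This is a ``peel off the singular locus'' argument.

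Your approach inducts on the number $n$ of components and uses the closed Mayer--Vietoris triangle for $\Gamma_{Z'\cup Z_n}$ in terms of $\Gamma_{Z'}$, $\Gamma_{Z_n}$, and $\Gamma_{Z'\cap Z_n}$. The crucial point, which you identify correctly, is that $Z'\cap Z_n=\bigcup_{\alpha<n}(Z_\alpha\cap Z_n)$ is again SNC in $X$ with at most $n-1$ components, so all three terms fall to the induction hypothesis in one stroke. This is cleaner: it avoids the secondary induction inside each component and the explicit stratification bookkeeping. The cost is that you must invoke (or derive, as you sketch via open Mayer--Vietoris and localization) the closed Mayer--Vietoris triangle, which the paper's axioms \ref{motives:6functors} do not list explicitly but which is a standard consequence of localization. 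Your remark about essential smoothness of the strata being inherited from the ambient $X$ in the intended applications is also a point the paper leaves implicit.
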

	\begin{proof}
		We induct on $k_Z$ which is defined to be the maximum of integers $k$ such that some $k$ distinct irreducible components of $Z$ have a non-empty intersection. We can assume that $X$ is irreducible and that $Z$ is a proper subset.		

		(\emph{Base case}) If $k_Z=1$, no two irreducible components of $Z$ intersect and hence $Z$ is regular. Therefore {absolute purity} implies that $i^! 1_X \iso  1_Z[-c](-2c)$ where $Z$ is of codimension $c$. Since $Z$ is regular and $\dim Z< \dim X\le d$ and identity map is proper while $DM^{coh}_{d-1}(Z)$ is closed under shifts and negative Tate twists, this lies in $DM_{d-1}^{coh}(Z)$ as required.

		Further, since $i$ is proper, clearly $i_* DM_{d-1}^{coh}(Z) \subset DM_{d-1}^{coh}(X)\subset DM_{d,dom}^{coh}(X)$. Hence the localization triangle
		\[
			i_*i^! 1_X	\rightarrow  1_X	\rightarrow j_* 1_U \rightarrow 
		\]
		gives that the last term $j_* 1_U \in DM_{d,dom}^{coh}(X)$ since the middle term is.
		
		(\emph{Induction step}) Note that $j_* 1_U\in DM^{coh}_{d,dom}(X)$ if $i^! 1_X \in DM^{coh}_{d-1}(Z)$ using the localization triangle just as above. Hence it is enough to prove the later.
		
		Assume $k_Z\ge 2$. Let $i_1:Z_1\hookrightarrow Z$ be the union of $2-fold$ intersections of irreducible components of $Z$ and $j:U=Z-Z_1\hookrightarrow Z$ denote the open complement.Then $Z_1$ is a variety with normal crossing, with $k_{Z_1}< k_Z$. The localization triangle for $i^!\mathbf 1_X$ on $Z$ gives:
		\[
			i_{1*}i_{1}^!i^{!} 1_X \rightarrow i^! 1_X	\rightarrow j_*j^!i^! 1_X	\rightarrow 
		\]
		$i_{1}^!i^{!} 1_X = (i\circ i_1)^! 1_X \in DM_{d-1}^{coh}(Z_1)$ by induction hypothesis, and hence first term is in $DM_{d-1}^{coh}(Z)$. Hence it is enough to show that last term is.
		
		{\flushleft
		\begin{minipage}{0.6\textwidth}	
		 Now let $j':W=X-Z_1\hookrightarrow X$ be the open immersion. Thus $W\cap Z = U$. Let $i':U\hookrightarrow W$ denote the closed immersion. Then $j^!i^!  1_X= i^{'!}j^{'!} 1_X = i^{'!} 1_W$ (since $j^{'!}=j^{'*}$). Hence, it is enough to show that $j_*i^{'!} 1_W \in DM_{d-1}^{coh}(Z)$. Now since $U,W$ are regular, this is same as $j_* 1_U(-d)[-2d]$ using {absolute purity}. Hence it is enough to show that $j_* 1_U \in DM^{coh}_{d-1}(Z)$, since the latter is stable by negative Tate twists.
		 \end{minipage}\begin{minipage}{0.4\textwidth}\hfill
		 	\begin{tikzcd}[column sep = small]
									&U_i\ar[d, "r_i", hookrightarrow]\ar[rr, hookrightarrow, "\circ" description, "s_i"]	& &Z'_i\ar[d, hookrightarrow, "t_i"]\\
				Z-Z_1\ar[r, equal]	&U\ar[rr, hookrightarrow, "\circ" description, "j"]\ar[d, hookrightarrow, "i'"]			& &Z\ar[d, hookrightarrow, "i"]\ar[rd, hookleftarrow, "i_1"]\\
				X-Z_1\ar[r,equal]	&W\ar[rr, hookrightarrow, "\circ" description, "j'"]									& &X\ar[r, hookleftarrow]				&Z_1\\
			\end{tikzcd}
		\end{minipage}}

		 Now if $Z=\cup_i Z'_i$ in irreducible components, $U=\sqcup_i U_i$, a disjoint union. Let $U_i\overset{s_i}\hookrightarrow Z'_i\overset{t_i}\hookrightarrow Z$ and $r_i:U_i\hookrightarrow U$ be the natural immersions, then $1_U=\oplus r_{i*}1_{U_i}$ and $jr_i=t_i s_i$ hence $j_*1_U = \oplus_i t_{i*}s_{i*}1_{U_i}$. Since $t_i$ is proper, it is enough to show that $s_{i*}1_{U_i}\in DM_{d-1}^{coh}(Z'_i)$. But $Z'_i-U_i = \cup_{j\ne i} Z'_i\cap Z'_j\subset Z_1$ and $k_{Z'_i-U_i}\le k_{Z_1}<k_Z$, hence the induction hypothesis allows us to draw the required conclusion.
	\end{proof}

	Now it is possible to work with $f$ arbitrary for definition of $S^{coh}(X)$:
	\begin{lemma}\label{arbitraryY}
		For any morphism $f:Y\rightarrow X$, we have that $f_* 1_Y(-c)\in DM_d^{coh}(X)$, where $d=\dim Y$ and $c\ge 0$. If $Y$ is irreducible and $f$ dominant, $f_* 1_Y(-c)\in DM_{d,dom}^{coh}(X)$
	\end{lemma}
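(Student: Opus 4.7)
The plan is to induct on $d=\dim Y$, after several preliminary reductions. By the projection formula $f_*1_Y(-c)\cong (f_*1_Y)(-c)$ and the fact that $DM_d^{coh}(X)$ and $DM_{d,dom}^{coh}(X)$ are closed under negative Tate twists by construction, we reduce to $c=0$. Separatedness of motives applied to the universal homeomorphism $Y_{\mathrm{red}}\to Y$ reduces to $Y$ reduced. If $Y$ has several irreducible components, let $Y_1$ be one of them and $Y'$ the union of the others; then $Y_1\cap Y'$ is a proper closed subscheme of the irreducible $Y_1$, hence of dimension $<d$. Combining the localization triangle $j_!1_{Y\setminus Y_1}\to 1_Y\to (i_{Y_1})_*1_{Y_1}\to$ on $Y$ with the triangle $j_!1_{Y\setminus Y_1}\to (i_{Y'})_*1_{Y'}\to (i_{Y_1\cap Y'})_*1_{Y_1\cap Y'}\to$ obtained by pushing the localization sequence on $Y'$ forward along $i_{Y'}:Y'\hookrightarrow Y$ (and using $(i_{Y'})_!=(i_{Y'})_*$), a joint induction on $d$ and on the number of components reduces to the case where $Y$ is irreducible of dimension $d$.

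In this case, apply Nagata's compactification to factor $f=\bar f\circ j_Y$ with $j_Y:Y\hookrightarrow \bar Y$ open and $\bar f:\bar Y\to X$ proper, choosing $\bar Y$ irreducible of dimension $d$. It suffices to establish: \emph{(A)} $(j_Y)_*1_Y\in DM_d^{coh}(\bar Y)$ (respectively $DM_{d,dom}^{coh}(\bar Y)$); and \emph{(B)} the proper pushforward $\bar f_*$ carries $DM_d^{coh}(\bar Y)$ into $DM_d^{coh}(X)$, and similarly with ``dom'' throughout. Claim (B) is immediate on generators: any $p_*1_Z(-r)\in S^{sm,coh}_d(\bar Y)$ gives a proper morphism $\bar fp:Z\to X$ with $Z$ regular, so motivic Chow's lemma \ref{motivicChowLemma} furnishes a projective refinement $\bar fpr:Z'\to X$ whose pushforward retracts onto $(\bar fp)_*1_Z$, placing the latter in $DM_d^{coh}(X)$. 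The dominance version uses that motivic Chow (via de Jong inside its proof) can be arranged to preserve irreducibility and hence dominance.

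For Claim (A), apply Corollary \ref{geometry:djong} to $(\bar Y,\bar Y\setminus Y)$ to obtain a proper surjective generically essentially etale alteration $p:U\to \bar Y$ with $U$ irreducible, together with an open immersion $U\hookrightarrow W$ into a projective regular essentially smooth $W/k$ such that $(W\setminus U)\cup p^{-1}(\bar Y\setminus Y)$ is a simple normal crossing divisor in $W$. Set $W'=p^{-1}(Y)$, $\pi=p|_{W'}:W'\to Y$, and let $j'_W:W'\hookrightarrow U$ be the open immersion. The complement $U\setminus W'$ is a variety with normal crossings in the regular $U$, so Lemma \ref{sncIsCoh} yields $(j'_W)_*1_{W'}\in DM_{d,dom}^{coh}(U)$, and Claim (B) applied to $p$ gives $p_*(j'_W)_*1_{W'}=(j_Y)_*\pi_*1_{W'}\in DM_{d,dom}^{coh}(\bar Y)$. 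Choose a dense open $V\subset Y$ on which $\pi$ restricts to a finite essentially etale morphism $\pi_V:W'_V\to V$ of some degree $n$; with $\Q$-coefficients the trace splits the unit $1_V\to (\pi_V)_*1_{W'_V}$, exhibiting $1_V$ as a retract. Writing $j_V:V\hookrightarrow Y$ and $V^c=Y\setminus V$ (of dimension $<d$), proper base change identifies $j_{V!}(\pi_V)_*1_{W'_V}=j_{V!}j_V^*\pi_*1_{W'}$, which sits in the localization triangle
\[
j_{V!}j_V^*\pi_*1_{W'}\to \pi_*1_{W'}\to (i_{V^c})_*(\pi_{V^c})_*1_{W'_{V^c}}\to.
\]

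Applying $(j_Y)_*$ to the latter triangle, the middle term lies in $DM_{d,dom}^{coh}(\bar Y)$ by the paragraph above, while the right term is a pushforward from $W'_{V^c}$ of dimension $<d$ to $\bar Y$ and hence lies in $DM_{d-1}^{coh}(\bar Y)$ by the induction hypothesis. It follows that $(j_Y)_*j_{V!}j_V^*\pi_*1_{W'}\in DM_{d,dom}^{coh}(\bar Y)$, and $(j_Y)_*j_{V!}1_V$, being a retract of this object, lies in $DM_{d,dom}^{coh}(\bar Y)$ as well. Finally, applying $(j_Y)_*$ to $j_{V!}1_V\to 1_Y\to (i_{V^c})_*1_{V^c}\to$ on $Y$ and using the induction hypothesis once more on the dimension-$<d$ scheme $V^c$ yields $(j_Y)_*1_Y\in DM_{d,dom}^{coh}(\bar Y)$ (the non-dominant version follows by the same argument, replacing $DM_{d,dom}^{coh}$ by $DM_d^{coh}$ throughout). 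Combining with Claim (B) for $\bar f$ completes the induction. The main obstacle is the trace-retract step: the splitting of $1_V\to(\pi_V)_*1_{W'_V}$ exists only on the generic open $V$, and propagating it globally to $Y$ requires absorbing the lower-dimensional boundary contributions over $V^c$ and $W'_{V^c}$ through the two localization triangles above, which is precisely where the induction hypothesis is exploited.
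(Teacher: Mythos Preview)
Your proof is correct and follows essentially the same approach as the paper's: induction on $\dim Y$, reduction to the irreducible case, Nagata compactification, de Jong's alteration making the boundary a simple normal crossing divisor so that Lemma~\ref{sncIsCoh} applies, the retract of $1$ in the pushforward along a generically finite essentially \'etale cover, and two localization triangles to absorb the lower-dimensional pieces via the induction hypothesis. The only organizational difference is that you isolate the stability of $DM_d^{coh}$ (and $DM_{d,dom}^{coh}$) under proper pushforward as an explicit Claim~(B), proved via motivic Chow's lemma~\ref{motivicChowLemma}; the paper uses this same fact implicitly when it asserts that $\bar f_*\bar p_*j''_*1_{Y'}\in DM_{d,dom}^{coh}(X)$ from $\bar p,\bar f$ being proper dominant.
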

	\begin{proof}
		We can replace $X$ by scheme theoretic image $\overline{f(Y)}$, since $DM^{coh}_d$ is preserved under pushforwards for proper maps and hence we can assume $\dim X\le \dim Y=d$ and $f$ dominant. Since $DM^{coh}_d(X)$ is stable under twists by nonpositive integers, we can assume $c=0$. We induct on the dimension and number of irreducible components of $Y$. 
		
		(\emph{Base Case}): If $\dim Y=0$, $Y$ is Artinian. But then $Y_{red}$ is regular and separatedness implies that $f_* 1_Y = f_{red*} 1_{Y_{red}}$ where $f_{red}$ is the composite $f_{red}:Y_{red}\rightarrow X$.
		
		 (\emph{Induction Step}): Compactify $f$ to $\bar f:\bar Y\rightarrow X$. Let $W=\bar Y-Y$. 
		 By \ref{geometry:djong} there is an alteration $\bar p:\bar Y'\rightarrow \bar Y$ (that is $p$ is proper, generically finite, $\bar Y'$ smooth) such that $\bar p^{-1}(W)$ is simple normal crossing. Let $Y'=\bar p^{-1}Y$ and let $p=\bar p|_{Y'}$. Since $p$ is generically essentially etale, there  is a $j:U\hookrightarrow Y$ which is regular open dense such that with $U'=p^{-1}U$, $1_U$ is a summand of $p|_{U'}(1_{U'})$. Let us denote the inclusion $U'\hookrightarrow Y'$ by $j'$. Let $i:Z=Y-U\hookrightarrow Y$ and $i':Z'=Y'-U'\hookrightarrow Y'$ with reduced induced structure.
			\[\begin{tikzcd}
					U'\ar[r, "\circ" description, "j'", hookrightarrow]\ar[d,"p|_{U'}"]	&Y'\ar[d,"p"]\ar[r, "\circ" description, "j''", hookrightarrow]	&\bar Y'\ar[d, "\bar p"]\ar[r, hookleftarrow]	&W'\ar[d]	&U'\ar[r, "\circ" description, "j'", hookrightarrow]\ar[d,"p|_{U'}"]	&Y'\ar[d,"p"]\ar[r, hookleftarrow, "i'"] &Z'\ar[d]\\
					U\ar[r, "\circ" description, "j", hookrightarrow]	&Y\ar[r, "\circ" description, hookrightarrow]\ar[rd, "f" below]	&\bar Y\ar[r, hookleftarrow]\ar[d, "\bar f"]	&W &U\ar[r, "\circ" description, "j", hookrightarrow]	&Y\ar[r, hookleftarrow, "i"]	&Z\\
					&	&X
			\end{tikzcd}\]
		 Now we have a triangle:
		 \[
		 	f_* j_! 1_U\rightarrow  f_* 1_Y\rightarrow f_*i_*  1_Z \rightarrow
		 \]
		 Last term is in $DM^{coh}_{d-1}(X)$ (hence in $DM^{coh}_{d, dom}(X)$) by induction since $f$ is dominant and hence $\dim Z<\dim Y=d$, and hence it is enough to show that the first term is in $DM^{coh}_{d, dom}(X)$.
		 
		 Since  $1_U$ is a summand of $(p|_{U'})_* 1_{U'}$ it is enough to show that $f_*j_!(p|_{U'})_* 1_{U'}\in DM_d^{coh}(X)$. But $j_!(p|_{U'})_* = p_*j'_!$, and it is enough to show that $f_*p_*j'_! 1_{U'}\in DM^{coh}_d(X)$. But then we have a localization triangle:
		 \[
		 	f_* p_*j'_!  1_{U'}\rightarrow f_*p_* 1_{Y'}\rightarrow f_*p_*i'_*  1_{Z'} \rightarrow
		 \]
		 The last term is in $DM^{coh}_{d-1}(X)$ by induction hypothesis (hence in $DM^{coh}_{d, dom}(X)$) since $\dim Z'<\dim Y'=d$. Therefore it is enough to show that middle term is in $DM_{d, dom}^{coh}(X)$. But if $j'':Y'\hookrightarrow \bar Y'$ is an open immersion, then $f_*p_* 1_{Y'} = \bar f_*\bar p_* j''_* 1_{Y'}$. But since $\bar Y'- Y'$ is a simple normal crossing divisor therefore $j''_* 1_{Y'}\in DM_{d, dom}^{coh}(\bar Y')$ by \ref{sncIsCoh}. Since $\bar p, \bar f$ are proper, dominant hence $\bar f_*\bar p_* j''_* 1_{Y'}\in DM_{d,dom}^{coh}(X)\subset DM_{d,dom}^{coh}(X)$ as required.
\end{proof}

\begin{corollary}\label{t-Structure:cohomologicalMotivesHaveArbitraryY}
	We have the inclusions, for all $d\ge 0$:
	\begin{align*}
		S^{coh}_d(X) \subset DM^{coh}_d(X)	\subset DM^{coh}(X) = \bigcup_{d\ge 0}{DM^{coh}_d(X)},& &DM^{coh}_{d,dom}(X)\subset DM^{coh}_d(X)
	\end{align*}
\end{corollary}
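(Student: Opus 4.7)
The plan is to verify each of the four assertions by unpacking the definitions and invoking the preceding lemmas; only the first assertion carries any real content and it has already been absorbed into Lemma \ref{arbitraryY}.

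First, for the inclusion $S^{coh}_d(X) \subset DM^{coh}_d(X)$: any generator of $S^{coh}_d(X)$ is of the form $p_*\mathbf{1}_Y(-r)$ with $p: Y \to X$ arbitrary, $\dim Y \le d$ and $r \ge 0$, so this is exactly what Lemma \ref{arbitraryY} delivers. This is the only step requiring geometric input (de Jong alterations, together with the normal crossing computation of Lemma \ref{sncIsCoh}), and that work is already complete. Next, $DM^{coh}_d(X) \subset DM^{coh}(X)$ is immediate from the monotonicity of $\vvspan{-}$, since $S^{sm,coh}_d(X) \subset S^{sm,coh}(X)$ by definition.

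For the equality $DM^{coh}(X) = \bigcup_d DM^{coh}_d(X)$, the inclusion $\supset$ has just been noted. For $\subset$, I would unwind the definition $\vvspan{H} = \mathcal{R}(Ext(H[\Z]))$ from Section \ref{sec:tStructThroughGen}: any object in $\vvspan{S^{sm,coh}(X)}$ is obtained from \emph{finitely many} elements of $S^{sm,coh}(X) = \bigcup_d S^{sm,coh}_d(X)$ by iterated shifts, extensions, and retracts. Since the family $\{S^{sm,coh}_d(X)\}_d$ is increasing in $d$, all those finitely many generators simultaneously land in a single $S^{sm,coh}_d(X)$ for $d$ sufficiently large, and hence the object lies in $DM^{coh}_d(X)$ as required.

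Finally, $DM^{coh}_{d,dom}(X) \subset DM^{coh}_d(X)$ reduces to the containment of generating sets $S^{dom,coh}_d(X) \cup S^{sm,coh}_{d-1}(X) \subset S^{sm,coh}_d(X)$, combined with monotonicity of $\vvspan{-}$. Both inclusions of generating sets are immediate: $S^{dom,coh}_d(X) \subset S^{sm,coh}_d(X)$ by dropping the connectedness/dominance requirements on $Y$, and $S^{sm,coh}_{d-1}(X) \subset S^{sm,coh}_d(X)$ by relaxing the dimension bound. Thus the entire corollary is formal given Lemma \ref{arbitraryY}, and no further obstacle needs to be overcome.
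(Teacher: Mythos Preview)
Your argument for the inclusion $DM^{coh}_d(X) \subset DM^{coh}(X)$ has a gap. You claim that $S^{sm,coh}_d(X) \subset S^{sm,coh}(X)$ ``by definition'', but this is false: by Definition~\ref{definieCohMotives} the elements of $S^{sm,coh}_d(X)$ are of the form $p_*1_Y(-r)$ with $r\ge 0$ arbitrary, whereas $S^{sm,coh}(X)$ consists only of objects $p_*1_Y$ with \emph{no} Tate twist. So the inclusion of generating sets you invoke does not hold, and monotonicity of $\vvspan{-}$ does not directly apply.

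The paper handles this with a short projective-bundle argument: for $p:Y\to X$ with $Y$ regular and essentially smooth, the composite $q:\mathbb P^r\times_k Y\to Y\to X$ is again projective with regular essentially smooth source, and $p_*1_Y(-r)[-2r]$ occurs as a summand of $q_*1_{\mathbb P^r\times_k Y}\in S^{sm,coh}(X)$. Hence $p_*1_Y(-r)\in DM^{coh}(X)$, which is what is needed. Once this is in place, your remaining steps (the first inclusion via Lemma~\ref{arbitraryY}, the equality via finiteness of the generating procedure, and the last inclusion via the obvious containment of generating sets) are correct and match the paper's reasoning.
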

\begin{proof}
	By \ref{arbitraryY}, we conclude that $S^{coh}_d(X)\subset DM^{coh}_d(X)$. Let $q:\mathbb P^r \times_k Y\rightarrow X$ be via second projection. Then $q_*1_{\mathbb P^r\times_k X}$ has $p_*1_X(-r)[-2r]$ as summand. In particular the second inclusion holds. In particular $\bigcup_{d\ge 0}{DM^{coh}_d(X)}\subset DM^{coh}(X)$, but $S^{sm, coh}\subset \cup_{d\ge 0}S_d^{sm, coh}$, and hence we also have the reverse inclusion. Last inclusion is obvious.
\end{proof}

This allows us to prove stability properties of $DM^{coh}_d(X)$ which are relevant for formalism of gluing:
	\begin{proposition}[Stability of cohomological motives]\label{t-Structure:stabilityOfCohomologicalMotives}
		Let $f:Y\rightarrow X$ be a morphism of schemes and fix a $d\ge 0$. Then:
		\begin{enumerate}[i.]
			\item For $f$ dominant (in particular an open dense immersion), $f_*( DM^{coh}_{d, dom}(Y)) \subset  DM^{coh}_{d, dom}(X)$.
				\\For $f$ arbitrary $f_*( DM^{coh}_d(Y)) \subset  DM^{coh}_d(X)$.
				\\In particular, for $f$ arbitrary, $f_*(DM^{coh}(Y))\subset DM^{coh}(X)$.
			\item For $f$ quasi-finite, $\overline{f(Y)}\ne X$, $X$ irreducible, $f^*( DM^{coh}_{d+1,dom}(X)) \subset  DM^{coh}_{d}(Y)$
				\\For $f$ quasi-finite, $f^*( DM^{coh}_{d+1,dom}(X)) \subset  DM^{coh}_{d+1,dom}(Y)$
				\\For $f$ quasi-finite (in particular an immersion), $f^*( DM^{coh}_d(X)) \subset  DM^{coh}_d(Y)$
				\\For $f$ arbitrary, $f^*( DM^{coh}(X)) \subset  DM^{coh}(Y)$
			\item For $f$ an open dense immersion, $f_!( DM^{coh}_{d, dom}(Y)) \subset  DM^{coh}_{d, dom}(X)$.
				\\For $f$ quasi-finite (e.g. an immersion), $f_!( DM^{coh}_d(Y)) \subset  DM^{coh}_d(X)$.
				\\In particular for $f$ quasi-finite (e.g. an immersion), $f_!( DM^{coh}(Y)) \subset  DM^{coh}(X)$.
			\item For $f$ a closed immersion $Y\ne X$, $X$ irreducible, $f^!( DM^{coh}_{d,dom}(X)) \subset  DM^{coh}_{d-1}(Y)(-1)$
				\\For $f$ an immersion, $f^!( DM^{coh}_d(X)) \subset  DM^{coh}_d(Y)$.
				\\In particular for $f$ an immersion $f^!( DM^{coh}(X)) \subset  DM^{coh}(Y)$.
		\end{enumerate}
	\end{proposition}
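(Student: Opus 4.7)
The plan is to verify each of the four containments on generators $p_*1_Z(-r)$ (together with their shifts and retracts) of $DM^{coh}_d$ and $DM^{coh}_{d,dom}$, leveraging Corollary \ref{t-Structure:cohomologicalMotivesHaveArbitraryY} together with Lemma \ref{arbitraryY} as the workhorse. I would prove the parts in order (i), (ii), (iii), (iv), with the last two bootstrapped from (i), (ii) via localization triangles.

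For (i), a generator $p_*1_Z(-r)\in DM^{coh}_d(Y)$ with $p\colon Z\to Y$ projective and $Z$ regular essentially smooth of dimension $\le d$ maps under $f_*$ to $(fp)_*1_Z(-r)$, which lies in $DM^{coh}_d(X)$ by Lemma \ref{arbitraryY}; when both $f$ and $p$ are dominant so is their composite, placing the image in $DM^{coh}_{d,dom}(X)$. For (ii), proper base change against the projective $p$ yields $f^*p_*1_Z(-r)=p'_*1_{Z'}(-r)$ with $Z'=Z\times_X Y$ and $p'\colon Z'\to Y$, again within the scope of Lemma \ref{arbitraryY}. The dimension bounds use $\dim Z'\le \dim Z$ for quasi-finite $f$, while the sharper statement $f^*(DM^{coh}_{d+1,dom}(X))\subset DM^{coh}_d(Y)$ when $\overline{f(Y)}\ne X$ and $X$ irreducible exploits that dominant-type generators of the source have $Z$ connected (hence irreducible by regularity), so dominance of $p$ forces $\dim p^{-1}(\overline{f(Y)})<\dim Z$ and thus $\dim Z'\le d$; generators of the non-dominant type already have $\dim Z\le d$.

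Part (iii) proceeds via the triangle
\[
j_!p_*1_Z(-r)\to(jp)_*1_Z(-r)\to i_*i^*(jp)_*1_Z(-r)\to
\]
for an open immersion $j\colon U\hookrightarrow X$ with closed complement $i$ (reducing first to the dense case by replacing $X$ with $\overline{j(U)}$); the middle term lies in $DM^{coh}_{d,dom}(X)$ by (i) and the right term in $DM^{coh}_{d-1}(X)\subset DM^{coh}_{d,dom}(X)$ by (ii) followed by (i), hence so does the left. For general quasi-finite $f$, Nagata compactification factors $f$ as an open immersion followed by a finite map, reducing to the open case and (i). For (iv), open immersions follow from (ii) via $j^!=j^*$; for a closed immersion $i\colon Y\hookrightarrow X$ with open complement $j'$, the triangle $i_*i^!A\to A\to j'_*j'^*A\to$ keeps $i_*i^!A$ in $DM^{coh}_d(X)$ by (i) and (ii), whence $i^*$ applied (again by (ii)) yields $i^!A\in DM^{coh}_d(Y)$.

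The trickiest part will be the dominant refinement of (iv), which demands the extra Tate twist $i^!(DM^{coh}_{d,dom}(X))\subset DM^{coh}_{d-1}(Y)(-1)$. For a generator $p_*1_Z(-r)$ with $p$ projective dominant and $Z$ connected regular essentially smooth, proper base change gives $i^!p_*1_Z(-r)=p'_*i'^!1_Z(-r)$, where $i'\colon Z'=p^{-1}(Y)\hookrightarrow Z$ is a closed immersion of codimension $\ge 1$ (by dominance of $p$ on irreducible $X$ and $Y\ne X$). To extract the $(-1)$, I would pull back further along an alteration from Corollary \ref{geometry:djong} rendering $Z'$ a simple normal crossing subscheme of a regular ambient space, then invoke Lemma \ref{sncIsCoh} whose base case supplies the Tate twist via absolute purity in codimension $\ge 1$; Lemma \ref{motivicChowLemma} allows one to descend through the alteration by the standard retract argument. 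The generators of non-dominant type ($\dim Z\le d-1$) will be handled by essentially the same $f^!$-via-localization-triangle recipe, absorbing the $(-1)$ into the shift once one confirms that the induced closed immersion on the ambient regular scheme has positive codimension.
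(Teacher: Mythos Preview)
Your plan for parts (i) and (ii) is exactly what the paper does. For (iii), the paper instead compactifies $p$ to a proper $\bar p:\bar Z\to X$ and applies the localization triangle to $\bar p_*1_{\bar Z}$; your variant using $(jp)_*1_Z$ directly is equally valid and a shade cleaner. One terminological slip: the factorization of a quasi-finite map as open-immersion-then-finite is Zariski's main theorem, not Nagata.

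The real divergence is in (iv). For the non-refined statement your localization argument (show $i_*i^!A\in DM^{coh}_d(X)$, then apply $i^*$) is correct and arguably more economical than the paper's. The paper instead uses base change $i^!q_*1_Z\cong q'_*i'^!1_Z$ and then proves the sharper claim $i'^!1_Z\in DM^{coh}_e(Z_Y)(e-d)$, with $e=\dim Z_Y$ and $d=\dim Z$, by stratifying $Z_Y$ into regular locally closed pieces and inducting on the number of strata: at each step one peels off an open regular stratum and invokes absolute purity directly for its inclusion into the regular $Z$. This single stratification argument handles both the non-refined and the dominant-refined statements at once, since when $q$ is dominant and $Y\ne X$ one has $e\le d-1$, so $e-d\le -1$ produces the required $(-1)$ twist automatically. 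Your alteration route to the dominant refinement would also work, but it needs you to observe that the conclusion of Lemma~\ref{sncIsCoh} can be strengthened to carry a global $(-1)$ twist (true by the same induction) and to pass through the retract argument of Lemma~\ref{motivicChowLemma}; the paper's stratification avoids both detours.
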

	\begin{proof}
		Since the four functors preserves shifts, triangles and projectors, it is enough to consider objects in the generating sets on left hand side.						

		($i.$): For $p:Z\rightarrow X$, $f_*p_* 1_Z(-c) \iso (fp)_* 1_Z(-c)$, now use \ref{t-Structure:cohomologicalMotivesHaveArbitraryY} and \ref{arbitraryY}.
		
		($ii.$): For $p:Z\rightarrow X$ proper, $f^*p_* 1_Z(-c) \iso p'_* 1_{Z_Y}(-c)$ using proper base change where $p':Z_Y\rightarrow Y$ denotes the fiber product. Now use \ref{t-Structure:cohomologicalMotivesHaveArbitraryY}, noting that $\dim Z_Y \le \dim Z$ if $f$ was quasi-finite to get the third and the fourth inclusion. Furthermore, if $Z\rightarrow X$ is dominant, $X$ irreducible and $\overline{f(Y)}\ne X$ then $\bar Z_Y\subset Z$ is closed proper, hence $\dim Z_Y \le \dim Z-1$ giving the first inclusion. In any case, if $f$ is dominant the second inclusion follows noting that $Z_Y\rightarrow Y$ is also dominant if $p$ was, and we use \ref{arbitraryY}.
		
		($iii.$): Can assume $i.,ii.$ as well. Using $i.$ and Zariski's main theorem, enough to assume that $f$ an open immersion, since for a finite map $f_!\iso f_*$. For $f$ an open immersion, let $g:X-Y \rightarrow X$ denote inclusion of the closed complement (with reduced induced structure). Hence $g$ is quasi-finite (with $\overline{g(X-Y)}\ne X$ in the first case).
		
		Let $p:Z\rightarrow Y$ be any proper (resp. proper, dominant) morphism with $\dim Z\le d$. Compactify $p$ to a proper (resp. proper, dominant) morphism $\bar p:\bar Z\rightarrow X$. 
		Then by localization property, we have a triangle:
		\[
			f_!f^*\bar p_* 1_{\bar Z}(-c) \rightarrow \bar p_* 1_{\bar Z}(-c) \rightarrow g_*g^*\bar p_* 1_{\bar Z}(-c) \rightarrow 
		\]
		The third term lies in $DM^{coh}_d(X)$ (resp. $DM^{coh}_{d,dom}(X)$) using $i.,ii.$. The second term lies in $DM^{coh}_d(X)$ (resp. $DM^{coh}_{d,dom}(X)$ since $\bar p$ is dominant) by \ref{t-Structure:cohomologicalMotivesHaveArbitraryY}. Hence so does the first term. But the first term is same as $f_!p_* 1_Z(-c)$ using base change, as required and we are done.

			{\flushleft
				\begin{minipage}{0.7\textwidth}
					($iv.$): Can assume $i., ii.$. If $f$ is an open immersion $f^!=f^*$, so assume $f$ is a closed immersion. Let $g:W=X-Y\rightarrow X$ denote the open immersion of the complement. Let $q:Z\rightarrow X$ be a proper (resp. proper, dominant) map and let $q':Z_Y\rightarrow Y$ be it's pullback. Let $s_i:S_i\hookrightarrow Z_Y$ be a stratification of $Z_Y$ into regular sub-varieties. We claim that $f^{'!}1_Z\in DM^{coh}_e(Z_Y)(e-d)$ with $e=\dim Z_Y\le \dim Z=d$. 
				\end{minipage}\hspace{0.03\textwidth}\begin{minipage}{0.3\textwidth}\hfill
					\begin{tikzcd}[row sep=small]
						S_i\ar[r, hookrightarrow, "s_i"]	&Z_Y\ar[r, "f'", hookrightarrow]\ar[dd, "q'"]	&Z\ar[dd, "q"]	& & &\\
						\-\\
															&Y\ar[r, "f", hookrightarrow]					&X				& & &\\
						S_0\ar[r, hookrightarrow, "\circ" description, "j=s_0"]	&Z_Y\ar[r, hookleftarrow, "i"]	&W'									
					\end{tikzcd}
				\end{minipage}}
		
		To see this, induct on the number of strata $S_i$. If the number is one, $Z_Y$ is regular. It follows that $f^{'!}1_Z = 1_{Z_Y}(e-d)[2e-2d]$ in this case, and hence is in $DM^{coh}_{e}(Z_Y)(e-d)$.
		
		Otherwise, let $S_0$ be the open strata, and $W'$ be the complement. Then we have a triangle:
		\[
			i_*i^!f^{'!}1_Z	\rightarrow f^{'!}1_Z	\rightarrow j_*j^!f^{'!}1_Z	\rightarrow
		\]
		First and last term live in $DM^{coh}_e(Z_Y)(e-d)$ by induction hypothesis noting that $i_*, j_*$ preserve Tate twists and $DM^{coh}_e(-)$, $\dim W'\le \dim Z_Y = e$, and the category $DM^{coh}_e(-)$ is stable under negative Tate twists.		
	
		Now the result follows noting that $DM^{coh}_{e}(Z_Y)(e-d)\subset DM^{coh}_{d}(Z_Y)$ in general (as $e\le d$, $e-d\le 0$), and $DM^{coh}_{e}(Z_Y)(e-d)\subset DM^{coh}_{d-1}(Z_Y)(-1)$ if $q$ is dominant and $Y\ne X$ (as $e\le d-1$, $e-d\le -1$).
	\end{proof}
	
\begin{proposition}[Formalism of gluing, continuity]\label{coh:continuity}
	Let $d\ge 0$. Then extended formalism of gluing \ref{gluing:extended4functors} and continuity \ref{gluing:continuity} for any $X$, for either of the following three cases:
		\begin{itemize}
			\item $D_Y=DM^{coh}(Y)$ and $D(K)=DM^{coh}(K)\cong DM^{coh}(K^{perf})$
			\item $D_Y=DM_d^{coh}(Y)$ and $D(K) = DM^{coh}_{d-n}(K^{perf})$.
			\item Assume $X$ irreducible. Define:
				\begin{align*}
					D_Y&=\begin{cases} DM^{coh}_{d+1,dom}(X)	&Y\subset X\text{ open}\\
										DM^{coh}_d(Y)			&\text{ otherwise}.
						\end{cases}	\\ D(K)&=	\begin{cases}
													DM^{coh}_{d+1-n}(K^{perf})	&\Spec K\hookrightarrow X\text{ generic point}\\
													DM^{coh}_{d-n}(K^{perf})	&\text{otherwise}	
												\end{cases}
				\end{align*}
		\end{itemize}
		where $Y\in Sub(X)$ and $K/k$ a field extension of transcendence degree $n$.

%
%
\end{proposition}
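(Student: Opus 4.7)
The plan is to verify extended formalism of gluing (Definition \ref{gluing:extended4functors}) and continuity (Definition \ref{gluing:continuity}) separately for each of the three cases, leveraging the stability results from Proposition \ref{t-Structure:stabilityOfCohomologicalMotives} together with the continuity of the ambient category $DM$ in \ref{motives:6functors}(9). Since each $D_Y$ is a full triangulated subcategory of $DM(Y)$, the compositional identities, adjunction properties, localization triangles, and the abstract gluing axioms are inherited automatically from the corresponding structure on $DM(-)$. What remains is to verify that the four functors preserve these subcategories along immersions $f$ in $Sub(X)$, and that the point restrictions $\epsilon_Z^{*}$ land in the prescribed $D(K)$.

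Stability of the four functors is immediate from Proposition \ref{t-Structure:stabilityOfCohomologicalMotives} in the first two cases. In the third case, I would do a short case analysis for an immersion $f:Y\hookrightarrow Z$ in $Sub(X)$ with $X$ irreducible: (a) if $Y$ and $Z$ are both open in $X$, then $f$ is open dense and parts (i)--(iv) preserve the dominant condition; (b) if $Z$ is open in $X$ but $Y\subsetneq Z$ is closed, then the first bullets of (ii) and (iv) yield $f^{*}, f^{!}\colon DM^{coh}_{d+1,dom}(Z)\to DM^{coh}_d(Y)$, while $f_{*}=f_{!}$ from $DM^{coh}_d(Y)$ lands in $DM^{coh}_{d+1,dom}(Z)$ via the inclusion $DM^{coh}_d(Z)\subset DM^{coh}_{d+1,dom}(Z)$; (c) if neither is open in $X$, the third case reduces to the second. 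For the point pullback $\epsilon_Z^{*}:D_Z\to D(K)$ associated to $\epsilon_Z:\Spec K\hookrightarrow Z$, I would define it as the $DM$-pullback along $\epsilon_Z$ composed with the equivalence $DM(K)\cong DM(K^{perf})$ coming from separatedness \ref{motives:6functors}(6); compatibility with factorizations is formal.

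To check that $\epsilon_Z^{*}$ lands in $D(K)$, it suffices to check on generators. For a generator $p_{*}1_W(-r)\in D_Z$, proper base change yields $\epsilon_Z^{*}(p_{*}1_W(-r)) = (p_K)_{*}1_{W_K}(-r)$ where $W_K = W\times_Z\Spec K^{perf}$ is smooth projective and $\dim W_K\le \dim W - \dim \overline{\xi}$ whenever the point $\xi\in Z$ corresponding to $\epsilon_Z$ lies in $p(W)$. In case two this gives $\dim W_K\le d-n$, where $n$ is the transcendence degree of $K/k$ (equivalently $\dim\overline{\xi}$). In case three, if $\xi$ is the generic point of $Z$ (so $n=\dim Z$), then dominance of $p$ forces $\xi\in p(W)$ and hence $\dim W_K\le d+1-n$; if $\xi$ is not generic (so $n<\dim Z$), then dominance forces $p^{-1}(\overline{\xi})\subsetneq W$ with dimension $\le d$, giving $\dim W_K\le d-n$, as required.

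Finally, continuity \ref{gluing:continuity} is verified as follows. Fullness and faithfulness are immediate from continuity of $DM$ \ref{motives:6functors}(9) combined with the fullness of the embedding $D_Y\hookrightarrow DM(Y)$. For essential surjectivity, following the principle in the remark after Definition \ref{gluing:continuityForT}, it suffices to handle the generators: any object in $\vvspan{S}$ is built from finitely many generators via extensions, shifts, and retracts, and fullness (just established) allows one to spread the intervening morphisms and idempotents to a common open. Given a smooth projective $p:W\to \Spec K^{perf}$, I would descend to $W'\to \Spec K$ via separatedness and apply the spreading-out statement at the end of Section \ref{sec:geometry} to obtain a smooth projective $\bar p:\bar W\to U$ over an open $U\subset Y$ containing $\xi$. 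The hardest part will be the dimension-and-dominance bookkeeping in the third case: $\dim\bar W = \dim W + n$, and $\bar p$ is automatically dominant over $U$ since its fiber at $\xi$ is $W\ne\emptyset$, so the spread lies in $S^{sm,coh}_d(U)$ or $S^{dom,coh}_{d+1}(U)$ precisely as dictated by the position of $\xi$ and the identity of $D_U$.
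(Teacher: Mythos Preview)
Your proposal is correct and follows essentially the same route as the paper: inherit the gluing axioms from $DM(-)$, verify stability of the four functors on the subcategories via Proposition~\ref{t-Structure:stabilityOfCohomologicalMotives}, check that pointwise pullbacks land in the right $D(K)$ by proper base change plus a fiber--dimension count, and deduce continuity from continuity of $DM$ together with spreading out on generators.

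There is one minor tactical difference worth noting in the essential surjectivity step. You propose to descend a smooth projective $W/K^{perf}$ to $K$ and then spread it out as a \emph{smooth projective} family $\bar W\to U$. Strictly speaking this only yields $\bar W\to U$ projective with $\bar W$ \emph{essentially} smooth over $k$ (the descent from $K^{perf}$ to $K$ goes through a finite purely inseparable extension, and the resulting $\bar W\to U$ factors through a finite radicial cover $U'\to U$), and one must shrink further to arrange $\bar W$ regular. The paper sidesteps this entirely: it spreads out an object of $S^{coh}_{d-n}(K)$ to an \emph{arbitrary} proper map $\bar p:\bar X\to U$ with fiber dimension $\le d-n$ and then invokes Corollary~\ref{t-Structure:cohomologicalMotivesHaveArbitraryY} (that is, Lemma~\ref{arbitraryY}) to conclude $\bar p_*1_{\bar X}\in DM^{coh}_d(U)$, with dominance automatic in the third case. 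Your argument works after the small correction, but the paper's is cleaner because it avoids tracking regularity and essential smoothness of the spread.
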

\begin{proof}
		Note that $DM^{coh}_d(K)\xrightarrow{\cong}DM^{coh}_d(K^{perf})$ by using separatedness and continuity, and noting the pullback preserves dimensions (conversely, any variety over $K^{perf}$ is defined over a finite purely inseparable extension $L/K$).

		{(Formalism of gluing)}: The above stability properties imply the stability of $DM^{coh}(-)$ resp. $DM_d^{coh}(-)$ under Grothendieck's four functors for immersions \ref{gluing:4functors}, and hence the first two cases satisfies formalism of gluing \ref{gluing:openClosed} since the categories $DM(Y)$ do. The last case is the same, noting that (pullback under) closed immersions takes $DM^{coh}_{d+1,dom}(-)$ to $DM^{coh}_{d}(-)$ and pullback and pushforwards for open immersions preserves $DM^{coh}_{d+1,dom}(-)$.

	(Extended pullbacks): The functor $f^*:DM(X)\rightarrow DM(Y)$ is defined for maps $f:Y\rightarrow X$ of not necessarily finite type, we need to show it restricts to the categories $D(-)$. Let $a\in S^{sm, coh}_d(Y)$. Therefore $a=p_*1_Z$ for $Z\rightarrow Y$ of $\dim Z\le d$ and we assume $p$ to be proper. Let $\epsilon=\Spec K\in Y$ be of transcendence degree $n$. Then if $\epsilon\in p(Z)$, and the fiber over $\epsilon$, $Z_\epsilon$, has dimension $r$, $\dim Z\ge r+n$. Therefore $r\le d-n$, in particular $\epsilon^*(a) = p_*1_{Z_\epsilon}\in DM^{coh}_{d-n}(K)$ by proper base change. If $\epsilon\ne p(Z)$, $\epsilon^*a=0$. Hence in either case we are done. 
	
	For the last bit, one notices that if $Z\rightarrow Y$ was proper dominant with $Z$ irreducible and $\dim Y=d+1$ then $Z_\epsilon\subset Z_W\ne Z$ for $W$ being the closure $\bar \epsilon$. Hence $\dim Z_\epsilon \le \dim Z_W-n \le \dim Y-1-n=d-n$ for $K$ not generic, as required.
	
	(Essentially surjective): Let $a\in S^{coh}_{d-n}(K)$ for some $\epsilon=\Spec K\in X$. Let $Y=\bar \epsilon$. Hence $\dim Y=n$ is the transcendence degree $n$ of $K/k$. Now $a=p_*1_X(-r)$ for some $p:X\rightarrow \Spec K$ proper, with $\dim X\le d-n$. Then, by spreading out, we can find a $U\subset Y$ open dense and a proper map $\bar p: \bar X\rightarrow U$ such that fiber dimensions are $d-n$, and $\epsilon^*\bar p = p$ by proper base change. Define $\bar a = \bar p_*1_{\bar X}$. 
	Then $\epsilon^*\bar a= a$ and $\bar a\in DM^{coh}_d(Y)$ by \ref{t-Structure:cohomologicalMotivesHaveArbitraryY}, so we are done. 
	
	Now if $a\in DM^{coh}_{d-n}(K)$ is arbitrary, it is obtained in finitely many steps by taking shifts and cones. Taking shifts easily extends to a neighbourhood. Taking cones also extends because for $c=Cone(f:a\rightarrow b)$, we can work on the intersection of neighborhoods $U$ where $a$ and $b$ extend to, and then, restricting $U$ still further if needed, $f$ extends to $U$ as well by continuity on $DM(X)$.
	
	The same argument holds in the last case, where, when $\Spec K$ denotes the generic point, we work with varieties of $d+1-n$ instead of $d$ and note that the variety we get by spreading out is necessarily dominant.
	
	(Full, Faithful): These properties directly follow from continuity of $DM(-)$.
\end{proof}
\subsection{Weight truncations over a field} \label{sec:overField}
\emph{In this subsection $k$ will denote a \emph{perfect} field.} 

\begin{para}\label{fields:defineCats} Fix a perfect field $k$. Let $DM(k)$ denote the rigid symmetric monoidal triangulated category of motives over a field $k$ with $\Q$ coefficients as in \S \ref{sec:motives}. Recall that we have natural functors:
\[
	(SmProj/k)^{op}\xrightarrow{h} CHM^{eff}(k) \overset M\hookrightarrow DM(k)
\]
with $M$ being a fully faithful embedding. By abuse of notation we will often let $M(A)$ be denoted by $A$ for any Chow motive $A$. (Hence, in particular, for $X$ smooth projective over $\Spec k$, $h(X)$ denotes $M(X)$ when there is no confusion).

For $i=0, 1$, we make the following definitions:
\begin{align*}
	S^{'\le i}(k)	:=& \{h^{\le i}(X)\in DM(k)\big| X\text{ is smooth, projective, connected over }k\} \\
	S^{\le i}(k)	:=& \{h^{\le i}(X)\in DM(k)\big| X\text{ is smooth, projective, connected over }k, \dim X\le i\} \\
	S^{> i}(k)		:=& \{h^{> i}(X)\in DM(k)\big| X\text{ is smooth, projective, connected over }k\}
\end{align*}
and for $i=2$, we define:
\begin{align*}
	S_a^{\le 2}(k):=&	\{h^{\le 2}(X)\in DM(k)\big| X\text{ is smooth, projective, connected over }k, \dim X\le 2\} \\
	S_b^{\le 2}(k):=&	\{h^{\le 0}(X)(-1)\in DM(k)\big| X\text{ is smooth, projective, connected over }k, \dim X\le 2\}  \\
	S_a^{> 2}(k):=&	\{h^{> 2}(X)\in DM(k)\big| X\text{ is smooth, projective, connected over }k, \dim X\le 2\}  \\
	S_b^{> 2}(k):=&	\{h^{> 0}(X)(-1)\in DM(k)\big| X\text{ is smooth, projective, connected over }k, \dim X\le 2\}  \\
	S_c^{> 2}(k):=&	\{h(X)(-r)\in DM(k)\big| X\text{ is smooth, projective, connected over }k, \dim X\le 2,r\ge 2\}  \\
	S^{\le 2}(k):=&S_a^{\le 2}(k)\cup S_b^{\le 2}(k) \;\;\;\;\;\;\;\;\;\;\;\;\;\;\; S^{> 2}(k):=S_a^{> 2}(k)\cup S_b^{> 2}(k)\cup S_c^{> 2}(k)
\end{align*}
and finally define:
\begin{align*}
	{^wDM^{\le i}}(k) :=& \vvspan{S^{\le i}(k)} & 	{^wDM^{> i}}(k) :=& \vvspan{S^{> i}(k)}	& \text{for }i=0,1\\
	{^wDM_2^{\le 2}}(k) :=& \vvspan{S^{\le 2}(k)} & 	{^wDM_2^{> 2}}(k) :=& \vvspan{S^{> 2}(k)}	
\end{align*}
where the generation is inside the triangulated category $DM(k)$.
\end{para}

\begin{lemma}\label{fields:dimLessThaniIsGoodEnough}
	For $i=0, 1$ we have $\vvspan{S^{'\le i}} = \vvspan{S^{\le i}}$
\end{lemma}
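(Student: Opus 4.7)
The plan is first to note that the inclusion $\vvspan{S^{\le i}} \subseteq \vvspan{S^{'\le i}}$ is immediate from $S^{\le i}(k) \subseteq S^{'\le i}(k)$, so the real content is the reverse: I need to show that for every smooth projective connected $X/k$, the object $h^{\le i}(X)$ lies in $\vvspan{S^{\le i}(k)}$.

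For $i=0$, this should be almost immediate from \ref{chow:h0ispullback}: the Stein factorization gives $h^{\le 0}(X) = h^0(X) \cong h(\Spec L)$, where $L = H^0(X, \mathcal{O}_X)$ is a finite separable extension of $k$ (using $k$ perfect so that $L/k$ is \'etale). Since $\Spec L$ is smooth, projective, connected and of dimension $0$ over $k$, one has $h(\Spec L) = h^{\le 0}(\Spec L) \in S^{\le 0}(k)$, which finishes the case.

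For $i=1$, I would decompose $h^{\le 1}(X) = h^0(X) \oplus h^1(X)$. The $h^0(X)$ summand lies in $\vvspan{S^{\le 0}(k)} \subseteq \vvspan{S^{\le 1}(k)}$ by the $i=0$ case, noting that for a dimension $0$ point $\Spec L$ the projector $\pi_1$ vanishes by convention, so $h^{\le 0}(\Spec L) = h^{\le 1}(\Spec L)$. For $h^1(X)$, first assume $X$ is geometrically connected. If $\dim X = 0$, then $h^1(X) = 0$; otherwise, by \ref{h1isInCurve} a successive generic hyperplane section produces a smooth projective curve $C \hookrightarrow X$ such that $h^1(X)$ is a direct summand of $h^1(C)$. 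Decomposing $C = \sqcup_j C_j$ into connected components and writing $h^1(C) = \bigoplus_j h^1(C_j)$, each $h^1(C_j)$ is a direct summand of $h^{\le 1}(C_j) \in S^{\le 1}(k)$, so $h^1(X) \in \mathcal{R}(S^{\le 1}(k)) \subseteq \vvspan{S^{\le 1}(k)}$.

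If $X$ is connected but not geometrically connected, I would use its Stein factorization $X \to \Spec L \to \Spec k$ with $L/k$ finite separable and $X/L$ geometrically connected, and apply the previous step over $L$ to obtain $h^{\le 1}(X/L) \in \vvspan{S^{\le 1}(L)}$. Pushing forward along $q:\Spec L \to \Spec k$ (a triangulated functor preserving direct sums and retracts), together with \ref{chow:pushforwards} and the compatibility of $q_*$ with the embedding $CHM \hookrightarrow DM$, yields $h^{\le 1}(X/k) \cong q_*\, h^{\le 1}(X/L)$. Moreover, any smooth projective connected $Y/L$ of dimension $\le 1$ is also smooth projective connected over $k$ of dimension $\le 1$ (as $L/k$ is \'etale), and \ref{chow:pushforwards} identifies $q_*\, h^{\le 1}(Y/L)$ with a choice of $h^{\le 1}(Y/k) \in S^{\le 1}(k)$. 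Hence $q_*\vvspan{S^{\le 1}(L)} \subseteq \vvspan{S^{\le 1}(k)}$, completing the argument. The only real subtlety, which I would need to handle carefully, is keeping the various choices of Murre projectors on the two sides of the extension $L/k$ consistent via \ref{chow:pushforwards}.
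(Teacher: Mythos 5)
Your overall strategy is essentially the paper's: the $i=0$ case via the Stein factorization and \ref{chow:h0ispullback}, and the $i=1$ case by reducing $h^1(X)$ to a direct summand of $h^1(C)$ for a curve $C$ via \ref{h1isInCurve}; your handling of connected but not geometrically connected $X$ by pushing forward along the Stein factorization using \ref{chow:pushforwards} is a reasonable unpacking of what the paper leaves implicit in the definition of the projectors.

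There is, however, one genuine gap. In the geometrically connected case with $\dim X\ge 2$ you invoke \ref{h1isInCurve} as if a smooth projective curve $C\hookrightarrow X$ obtained by successive hyperplane sections exists over $k$ itself; that corollary presupposes such a ``generic'' $C$, and over a finite field --- a perfect field which is one of the paper's principal cases --- every hyperplane section defined over $k$ may be singular, so the step can fail as written. This is exactly why the construction of $\pi_1$ in \ref{chow:murreConstruction}, and the paper's proof of this lemma, first pass to a finite Galois extension $k'/k$ over which $C$ exists and then descend (via \ref{chow:descent}, or equivalently via \ref{chow:pullbacks} and \ref{chow:pushforwards}): with $q:\Spec k'\to \Spec k$ one has $q^*h^1(X)\cong h^1(X\otimes_k k')$, the motive $h^1(X)$ is a direct summand of $q_*q^*h^1(X)$ because $q$ is \'etale and we work with $\Q$-coefficients, and $q_*h^1(C/k')$ is a summand of $q_*h^{\le 1}(C/k')$, which is identified with a choice of $h^{\le 1}(C/k)\in S^{\le 1}(k)$ since $C$, viewed over $k$, is still a smooth projective connected curve. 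This is precisely the pushforward device you already use for the Stein factorization, so the repair is immediate, but without it your argument does not cover finite base fields.
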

\begin{proof}
	It is enough to show that $h^{\le i}(X)\in \vvspan{S^{'\le i}}$, for $i=0, 1$. Since $h^0(X)=h^{\le 0}(X)$ is a summand of $h^{\le 1}(X)$, $\vvspan{S^{'\le 0}}\subset \vvspan{S^{'\le i}}$ it is enough to show that $h^i(X)\in \vvspan{S^{'\le i}}$, for $i=0,1$. For $i=0$, note that $h^0(X) = h(\Spec L)$ if $X\rightarrow \Spec L\rightarrow \Spec k$ is the Stein factorization and hence is in $\vvspan{S^{'\le 0}}$. For $h^1(X)$, the claim follows from \ref{h1isInCurve}, using Galois descent to go to a field extension.
\end{proof}

It is on the category $DM^{coh}(k)$ (resp. $DM_2^{coh}(k)$), we will be able to extend the functor $h^{\le i}$ for $i\in\{0,1\}$ (resp. $i=2$). The extended functor will be the motivic analogue of the Morel's weight truncations \cite{morelThesis} over a field and hence will be denoted as $w_{\le i}$. We first prove the key proposition in case $i\in\{0, 1\}$:
\begin{proposition}\label{orthogonality}
	Let $A\in S^{\le i}$, $B\in S^{>i}$ for $i=0,1$. Then 
		\[	
			\hom(A,B[m])=0\text{ for all }m\in \Z.
		\]
\end{proposition}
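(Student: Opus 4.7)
I would first reduce to the case where both $X$ and $Y$ are geometrically connected over $k$. For $X$ smooth projective connected (resp.\ $Y$), the Stein factorization $X \to \Spec L \to \Spec k$ with $L/k$ finite separable expresses $h^i(X) = q_* h^i_L(X)$ (and similarly for $\pi_{>i}$), and the projectors are compatible with \'etale pushforward by Proposition~\ref{chow:pushforwards} and with pullback by~\ref{chow:pullbacks} and~\ref{chow:basechange}. The adjunction $q_* \dashv q^! = q^*$ for $q$ \'etale then reduces the $\hom$-group in $DM(k)$ to one in $DM(L)$ for a sufficiently large finite extension $L$, over which we may assume $X$ and $Y$ are both geometrically connected.

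For the case $i=0$, $A = h(\Spec L)$ and $B = h^{>0}(Y)$. By adjunction and proper base change,
\[
	\hom_{DM(k)}(q_*1_L,\, M(Y)[m]) \cong \hom_{DM(L)}(1_L,\, M(Y_L)[m]) = H^{m,0}(Y_L),
\]
which vanishes for $m \neq 0$ by the motivic cohomology vanishings in~\ref{motives:vanishingsForSmooth}, and equals $CH^0(Y_L) \otimes \Q$ for $m=0$. The latter group is spanned by connected components of $Y_L$, and each such morphism factors through $h^0(Y)$ by Proposition~\ref{chow:h0ispullback} (via the Stein factorization of $Y$). Composing with $\pi_{>0}(Y)$ then kills it by the mutual orthogonality~\ref{murre:mutualOrthogonality}.

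For the case $i=1$, $A = h^0(X) \oplus h^1(X)$; the $h^0(X)$-summand is handled by the previous case (since $h^{>1}(Y)$ is a direct summand of $h^{>0}(Y)$). For the $h^1(X)$-summand we may assume $\dim X = 1$. By adjunction and purity, $\hom(M(X), M(Y)[m]) \cong H^{m+2,1}(X\times Y)$, which vanishes unless $m \in \{-1, 0\}$ by~\ref{motives:vanishingsForSmooth}. For $m=0$, by Lemma~\ref{chow:h1ispullback} this group is $CH^1(X \times Y) = CH^1(X) \oplus CH^1(Y) \oplus \hom(h^1(X), h^1(Y))$, whose three summands consist of morphisms factoring respectively as $h(X) \twoheadrightarrow h^2(X) \to h(Y)$, $h(X) \to h^0(Y) \hookrightarrow h(Y)$, and $h(X) \twoheadrightarrow h^1(X) \to h^1(Y) \hookrightarrow h(Y)$; applying $\pi_{>1}(Y) \circ (-) \circ \pi_1(X)$ annihilates each by $\pi_1 \circ \pi_2 = \pi_{>1} \circ \pi_0 = \pi_{>1} \circ \pi_1 = 0$. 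For $m=-1$, $H^{1,1}(X\times Y) = \mathcal{O}^*(X\times Y) \otimes \Q = k^* \otimes \Q$ (as $X \times_k Y$ is geometrically connected and proper), which matches $H^{1,1}(\Spec k)$ pulled back through the $h^0(X) \otimes h^0(Y) = 1_k$ summand of the partial Kunneth decomposition $M(X) \otimes M(Y) = \bigoplus_{i,j} h^i(X) \otimes h^j(Y)$. Consequently every other Kunneth summand has vanishing $H^{1,1}$; using the duality $h^1(X)^\vee = h^1(X)(1)[2]$ for a curve, $\hom(h^1(X), h^{>1}(Y)[-1])$ identifies with $H^{1,1}(h^1(X) \otimes h^{>1}(Y)) = 0$.

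The main technical obstacle is the bookkeeping: tracking Tate twists and shifts under duality and purity when moving between $\hom$-groups and motivic cohomology, and verifying the compatibility of the partial Chow--Kunneth decomposition with the Kunneth isomorphism so that orthogonality of projectors translates into the desired vanishing on each summand.
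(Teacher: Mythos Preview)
Your proof is correct and follows essentially the same approach as the paper: both cases reduce to computing $H^{m,0}$ and $H^{m+2,1}$ of products via the motivic cohomology vanishing in \ref{motives:vanishingsForSmooth}, then handle the surviving degrees using the decomposition of $CH^1(X\times Y)$ from \ref{chow:h1ispullback} (for $m=0$) and the computation of $\mathcal{O}^*$ on a proper geometrically connected variety (for $m=-1$). The only organizational differences are that you front-load the reduction to geometrically connected $X,Y$ (the paper does this only where needed, in the $i=1$, $m=0$ case), and for $m=-1$ you argue via the full K\"unneth decomposition of $M(X)\otimes M(Y)$ rather than the paper's split exact sequence coming from $h^{\le 1}(C)\to h(C)\to h^2(C)$; both unwind to the same $\mathcal{O}^*$ identification.
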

\begin{proof}	
	\emph{Case $i=0$}: We have to show that $\hom(h^0(X), h^{>0}(Y)[m]) = 0$. It is enough to show that the inclusion 
		\[
			\hom(h^0(X), h^0(Y)[m])\overset{p}\hookrightarrow \hom(h^0(X), h(Y)[m])
		\]
	is an isomorphism. Notice that $h^0(X) = \Spec k'$ for some finite extension $k'/k$ by construction of $h^0(X)$, and hence $h^0(X)^\vee\cong h^0(X)=h(\Spec k')$. Therefore we have:
	\[
		\hom(h^0(X),h(Z)[m])=\hom( 1_k, h(Z\otimes_k k')[m])=H^{m,0}(Z\otimes_k k')
	\]
which vanishes for $m\ne 0$. Hence $p$ is an isomorphism for $m\ne 0$. For $m=0$, let $Y\rightarrow \Spec k''\rightarrow \Spec k$ be the Stein factorization. Then the morphism reduces to:
	\[
		CH^0(\Spec k''\otimes_k k')\hookrightarrow CH^0(Y\otimes_k k')
	\]
	which is obtained by pullback via projection $Y\otimes_k k'\rightarrow \Spec k''\otimes_k k'$ using \ref{chow:h0ispullback}. This is an isomorphism as $Y$ is geometrically connected over $k''$.
	
	\emph{Case $i=1$}: It is enough to show that $\hom(h^1(X), h^{>1}(Y)[m]) = 0$, since the case for $i=0$ already gives vanishing of $\hom(h^0(X), h^{>1}(Y)[m])$.
	
	By \ref{h1isInCurve} $h^1(X)$ is a summand of $h^1(C)$ for some $C$ smooth projective curve, and hence we can work with $X=C$. We have a (split) distinguished triangle:
	\begin{align*}
		h^{\le 1}C\rightarrow C \rightarrow h^2(C)\overset 0 \rightarrow \text{ with }h^2(C)\cong h^0(C)(-1)[-2]
	\end{align*}
	
	Hence we get a split long exact sequence:
	\begin{align}\label{eq:long}
		\overset 0\rightarrow\hom(h^{2}(C),h(Y)[m])\rightarrow\hom(h(C),h(Y)[m])
				\rightarrow\hom(h^{\le 1}(C),h(Y)[m])\overset 0 \rightarrow
	\end{align}
	
	Let $C\rightarrow \Spec k'\rightarrow \Spec k$ be the Stein factorization. Therefore \FIXME{internal reference}:
	\begin{align*}
		\hom(h^2(C),h(Y)[m]) 	&= \hom(h^0(C)(-1)[-2], h(Y)[m]) \\
								&= \hom( 1_k, h(Y\otimes_k k')[m+2](1)) = H^{m+2,1}(Y\otimes_k k') \\
		\hom(h(C),h(Y)[m]) &= \hom( 1_k, h(Y\times_k C)[m+2](1)) = H^{m+2,1}(Y\times_k C)
	\end{align*}
	By computations of motivic cohomology, the only non-vanishing terms occur in the cases $m=0, m=-1$. Hence $\hom(h^{\le 1}(C), h(Y)[m])=0$ for $m\ne 0, -1$. For $m=-1$ note that $H^{1,1}(Z)=\mathcal O^*(Z)\otimes\Q$. Therefore the corresponding short exact sequence becomes:
	\[
		0\rightarrow \mathcal O^*(Y\otimes_k k')\otimes\Q\rightarrow \mathcal O^*(Y\times_k C)\otimes\Q\rightarrow \hom(h^{\le 1}(C), h(Y)[-1])\rightarrow 0
	\]
	But the first map is an isomorphism: since $C$ is projective with $\Spec k'$ as the Stein factorization, $p:Y\times_k C\rightarrow Y\otimes_k k'$ is smooth with geometrically connected fibers, and hence $Rp_*\mathcal O(Y\times_k C) = \mathcal O(Y\otimes_k k')$. Hence $\hom(h^{\le 1}(C), h(Y)[-1])=0$. Since $h^{>1}(Y)$ is a summand of $h(Y)$, we conclude that $\hom(h^{\le 1}(C), h^{>1}(Y)[m]) = 0, m\ne 0$. 
	
	For the case $m=0$, let $Y\xrightarrow p\Spec k''\rightarrow \Spec k$ be the Stein factorization. We fix a $l\supset k'', k'$, and let $s:\Spec l\rightarrow \Spec k$ denote the natural map. Let subscript $l, k$ denote the field over which we are working (thus $h_k(-) = h(-)$, for example). We have $h_l(C\otimes_k l) = s^*h(C)$ and therefore by \ref{chow:pullbacks}, we can let $h^{\le 1}_l(C\otimes_k l) =s^*(h^{\le 1}(C))$. We fix a choice of $h^{>1}(Y)$. Then, using \ref{chow:pullbacks} and \ref{chow:pushforwards} we have $h^{>1}_k(Y\otimes_k l) = s_*h^{>1}_l(Y\otimes_k l) = s_*s^*h^{>1}(Y)$. Therefore:
	\begin{align*}
		\hom_{DM(l)}(h_l^{\le 1}(C\otimes_k l), h_l^{>1}(Y\otimes_k l) 	&= \hom_{DM(l)}(s^*h^{\le 1}(C), h_l^{>1}(Y\otimes_k l)) \\
																		&= \hom_{DM(k)}(h^{\le 1}(C), s_*s^*h^{>1}(Y))\supset \hom_{DM(k)}(h^{\le 1}(C), h^{>1}(Y))
	\end{align*}
	since $h^{>1}(Y)$ is a summand of $s_*s^*h^{>1}(Y)$ as $s$ is etale. Now connected components of $C\otimes_k l$, resp. $Y\otimes_k l$ are geometrically connected over $l$, and replacing $k$ by $l$ and $C$ (resp. $Y$) by a connected component of $C\otimes_k l$ (resp. $Y\otimes_k l$) we can assume that $C, Y$ are geometrically connected.
		
	Then we have a natural diagram:
	\[\begin{tikzcd}
				&0\ar[d]											&0\ar[d]											&0\ar[d]\\
		0\ar[r]	&\hom(h^2(C),h^{\le 1}(Y))\ar[r, ]\ar[d]			&\hom(h(C),h^{\le 1}(Y))\ar[r]\ar[d, "\beta"]		&\hom(h^{\le 1} (C), h^{\le 1}(Y))\ar[d]\ar[r]&0\\				
		0\ar[r]	&\hom(h^2(C),h(Y))\ar[r, "\alpha"]					&\hom(h(C),h(Y))\ar[r]								&\hom(h^{\le 1} (C), h(Y))\ar[d]\ar[r]&0\\
				&													&													&\hom(h^{\le 1} (C), h^{>1}(Y))\ar[d]\\
				&													&													&0
	\end{tikzcd}\]
	We want to show that the term on the bottom right is $0$. It is enough to show that $\alpha\oplus \beta$ is surjective, using snake lemma. Let $d=\dim Y$. This reduces to \ref{chow:h1ispullback} noting that:
	\begin{align*}
		h(Y) &= h(Y)^\vee\otimes\L^{\otimes d}	&	
		h(C) &= h(C)^\vee\otimes \L &
		h^2(C)	&= h^0(C)\otimes \L	\\
		h^2(C)^\vee &= h^0(C)\otimes \L^{-1} &
		h^{\le 1}(C)^\vee &= h^{>1}(C)\otimes \L^{-1} &
		h^{\le 1}(Y)^\vee 	&= h^{>2d-1}(Y)\otimes \L^{-d}
	\end{align*}
	and using \ref{chow:duality}.
\end{proof}

We now prove some observations which will be useful later, in particular to prove orthogonality in the case $i=2$, as well as later:

\begin{lemma}\label{tateTwistsIncreaseWeight}
	Let $M\in DM^{coh}(k)$ and $A=h^{\le 1}(C)\in S^{' \le 1}(k)$ with $\dim C\le 1$. Then 
	\[
		A\perp M(-r)[m], \forall r\ge 1, \forall m.
	\]
\end{lemma}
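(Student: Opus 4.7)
The plan is to follow closely the argument for case $i = 1$ of Proposition \ref{orthogonality}, with the additional Tate twist $(-r)$ (for $r \ge 1$) lowering the motivic weight enough to land in the vanishing range of \ref{motives:comparision}. I first plan to reduce to a single generator. The class of $N \in DM(k)$ satisfying $\hom(A, N(-r)[m]) = 0$ for all $r \ge 1, m \in \Z$ is stable under shifts, cones, direct sums, and retracts. Since $DM^{coh}(k) = \vvspan{S^{sm,coh}(k)}$, it suffices to test the claim on $M = h(Y) = p_* 1_Y$ with $Y$ regular, essentially smooth, projective over $k$. Perfection of $k$ combined with regularity of $Y$ upgrades essential smoothness to smoothness, and decomposing $Y$ into connected components lets me assume $Y$ is smooth projective connected.

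If $\dim C = 0$, then $h^{\le 1}(C) = h^0(C) = h(\Spec k')$ for some finite separable $k'/k$, and
\[
\hom(A, h(Y)(-r)[m]) = H^{m, -r}(Y \otimes_k k')
\]
vanishes for $r \ge 1$ because the weight $-r < 0$ puts the bidegree outside $\{(0,0), (1,1), (2,1)\}$.

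For $\dim C = 1$, I would use the direct sum decomposition $h(C) = h^{\le 1}(C) \oplus h^2(C)$ together with $h^2(C) \cong h^0(C)(-1)[-2]$ and $h^0(C) = h(\Spec k')$, $\Spec k'$ being the Stein factorization of $C \to \Spec k$. This identifies $\hom(h^{\le 1}(C), N)$ with the cokernel of the natural map $\hom(h^2(C), N) \to \hom(h(C), N)$. Invoking rigidity ($h(C)^\vee \cong h(C)(1)[2]$ for the smooth projective curve $C$), for $N = h(Y)(-r)[m]$ both terms rewrite as motivic cohomology groups $H^{m+2, 1-r}(Y \otimes_k k')$ and $H^{m+2, 1-r}(C \times_k Y)$ respectively. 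For $r \ge 2$ the weight $1 - r \le -1$ kills both, making the cokernel trivially zero. For $r = 1$ the weight is $0$, and both groups vanish except at bidegree $(0,0)$, i.e.\ $m = -2$, where they become $\Q^{\pi_0(Y \otimes_k k')}$ and $\Q^{\pi_0(C \times_k Y)}$ respectively.

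The main (and essentially only) delicate step will be verifying that the map between these two $H^{0,0}$'s is an isomorphism, forcing the cokernel to vanish. This will follow from the observation that the projection $C \times_k Y \to Y \otimes_k k'$ has geometrically connected fibers, because $C \to \Spec k'$ does by the defining property of the Stein factorization, and hence pullback induces a bijection on $\pi_0$. This mirrors exactly the corresponding step in Proposition \ref{orthogonality} case $i = 1$ and should present no new difficulty.
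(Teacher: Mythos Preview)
Your proof is correct and follows essentially the same approach as the paper's own argument: reduce to generators $M=h(Y)$, use $h(C)^\vee \cong h(C)(1)[2]$ to rewrite as motivic cohomology in weight $1-r$, observe vanishing except at $(r,m)=(1,-2)$, and then identify the remaining map as the pullback $CH^0(Y\otimes_k k')\to CH^0(C\times_k Y)$, which is an isomorphism because $C\to\Spec k'$ has geometrically connected fibers. The only cosmetic difference is that for $\dim C=0$ the paper cites Proposition \ref{orthogonality} together with $\L=h^2(\mathbb P^1)$ rather than recomputing $H^{m,-r}$ directly as you do; your route is arguably more self-contained.
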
 
\begin{proof}
	If $\dim C=0$, $A=h^0(C)$, and the claim follows using \ref{orthogonality} and noting that $\L=1_k(-1)[-2]=h^2(\mathbb P^1)$. Let $f:C\rightarrow \Spec k$ be any curve with Stein factorization $C\xrightarrow{g} \Spec L\xrightarrow{h} \Spec k$. Since $DM^{coh}(k) = \vvspan{S^{sm, coh}(k)}$, it will be enough to assume $M=p_*1_X = h(X)$ with $X$ smooth, projective over $k$. Noting that $h(C)^\vee = h(C)(1)[2]$, we get an isomorphism:
	\[
		\hom(h(C), h(X)(-r)[m]) = \hom(1_k, h(C\times_k X)(-r+1)[m+2]) = H^{m+2, -r+1}(X)
	\]
	If $r>1$, this is $0$, and if $r=1$, this is $0$ if $m\ne -2$. Hence $\hom(h^{\le 1}(C), M(-r)[m]) = 0$, being a summand, except possibly for $r=1, m=-2$. In this case, we have
	\[\begin{tikzcd}[row sep=small]
		\hom(h^{2}(C), h(X)(-1)[-2])\ar[r, hookrightarrow]\ar[d, equal] 	&\hom(h(C), h(X)(-1)[-2])\ar[r, twoheadrightarrow]\ar[d, equal] &\hom(h^{\le 1}(C), h(X)(-1)[-2]) \\
		CH^0(X\otimes_k L)									&CH^0(X\times_k C)
	\end{tikzcd}\]
	a long exact sequence and it is enough to show that the first map is an isomorphism. But the first map is induced via the natural projection: $g:C\rightarrow \Spec L$ by \ref{chow:h0ispullback}, and the pullback map $CH^0(X\otimes_k L)\rightarrow CH^0(X\times_k C)$ is an isomorphism since $C\rightarrow \Spec L$ is smooth with geometrically connected fibers.
\end{proof}

\begin{proposition}
	Let $A\in S^{\le 2}(k), B\in S^{>2}(k)$. Then $\hom(A, B[m])=0$ for all $m\in \Z$.
\end{proposition}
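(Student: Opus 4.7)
The plan is to split the claim into the six sub-orthogonalities indexed by the two decomposition classes $S^{\le 2}(k) = S_a^{\le 2}(k) \cup S_b^{\le 2}(k)$ and the three classes $S^{>2}(k) = S_a^{>2}(k) \cup S_b^{>2}(k) \cup S_c^{>2}(k)$, reducing each to Proposition \ref{orthogonality}, Lemma \ref{tateTwistsIncreaseWeight}, or a direct cycle computation. The three pairings with source $h^{\le 0}(X)(-1) \in S_b^{\le 2}$ are straightforward: cancelling twists against $S_b^{>2}$ leaves $\hom(h^0(X), h^{>0}(Y)[m]) = 0$ by Proposition \ref{orthogonality} at $i = 0$; absorbing one twist against $S_c^{>2}$ leaves $\hom(h^0(X), h(Y)(-(r-1))[m])$ with $r - 1 \ge 1$, handled by Lemma \ref{tateTwistsIncreaseWeight}; and for $S_a^{>2}$ I dualise using $(h^{>2}(Y))^\vee = h^{\le 1}(Y)(2)[4]$ (for $\dim Y = 2$, otherwise $h^{>2}(Y) = 0$) together with the self-duality of the zero-dimensional motive $h^0(X)$ to obtain $\hom(h^{\le 1}(Y), h^0(X)(-1)[m - 4])$, again zero by Lemma \ref{tateTwistsIncreaseWeight}.

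For $A = h^{\le 2}(X) \in S_a^{\le 2}$ I decompose $A = h^{\le 1}(X) \oplus h^2(X)$. The first summand lies in $\vvspan{S^{\le 1}(k)}$ by Lemma \ref{fields:dimLessThaniIsGoodEnough}, and every generator of $S^{>2}(k)$ carries a negative Tate twist (directly for $S_b^{>2}$ and $S_c^{>2}$, and via $h^{>2}(Y) = h^1(Y)(-1)[-2] \oplus h^0(Y)(-2)[-4]$ for $S_a^{>2}$), so Lemma \ref{tateTwistsIncreaseWeight} annihilates the $h^{\le 1}(X)$ contribution universally. Against $h^{>2}(Y)[m]$ the remaining $h^2(X)$ summand (for $\dim X = 2$) is also handled by duality: $h^2(X)^\vee = h^2(X)(2)[4]$ combined with $(h^{>2}(Y))^\vee = h^{\le 1}(Y)(2)[4]$ produces $\hom(h^{\le 1}(Y), h^2(X)[m]) \subset \hom(h^{\le 1}(Y), h^{>1}(X)[m]) = 0$ by Proposition \ref{orthogonality} at $i = 1$. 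For targets inside $h(Y)(-r)[m]$ I instead compute the ambient
\[
\hom(h(X), h(Y)(-r)[m]) = H^{2d_X + m,\, d_X - r}(X \times Y)
\]
and track the Chow--K\"unneth projectors on the source. When $r \ge 2$ ($S_c^{>2}$) the Tate weight $q = d_X - r = 2 - r \le 0$ forces this ambient to vanish except at $r = 2, m = -4$, where it is $\Q^{\pi_0(X \times Y)}$ and is fully captured by the $h^4(X) = h^0(X)(-2)[-4] \subset h^{>2}(X)$ contribution (using $\pi_0(X \times_k Y) = \pi_0(\Spec L_X \otimes_k Y)$ from the geometrically connected fibres of the Stein map $X \to \Spec L_X$); hence $\pi_{\le 2}(X)$ sees nothing and $S_c^{>2}$ is done.

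The technically most delicate case is $S_b^{>2}$, i.e., $r = 1$ and $\dim X = 2$. Here the ambient $H^{4 + m, 1}(X \times Y)$ is nonzero only at $m \in \{-3, -2\}$, equal to $\mathcal{O}^*(X \times Y) \otimes \Q$ and $Pic(X \times Y) \otimes \Q$ respectively. At $m = -3$, for $X, Y$ projective and geometrically connected we have $\mathcal{O}^*(X \times Y) \otimes \Q = k^* \otimes \Q = \hom(h(X), h^0(Y)(-1)[-3])$, so restricting target to $h^{>0}(Y)$ kills everything. At $m = -2$ I apply Proposition \ref{chow:h1ispullback} to decompose $Pic(X \times Y) \otimes \Q = CH^1(X) \oplus CH^1(Y) \oplus \hom(h^1(X), h^1(Y))$ with explicit factorisations: $CH^1(X)$ factors through $h^0(Y)$ on the target, $CH^1(Y)$ through $h^4(X)$ on the source, and $\hom(h^1(X), h^1(Y))$ through $h^3(X)$ on the source. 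Restricting target to $h^{>0}(Y)$ annihilates the first piece, and the source projector $\pi_{\le 2}(X)$ annihilates the other two; Galois descent reduces the non-geometrically-connected setting to this one. The chief obstacle throughout is this final $m = -2$ middle--middle computation, where the absence of an available weight structure on $DM(k)$ forces reliance on the concrete cycle-level decomposition of $CH^1(X \times Y)$ from \ref{chow:h1ispullback} rather than a formal orthogonality.
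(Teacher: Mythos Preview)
Your proof is correct and uses the same essential ingredients as the paper's proof: Proposition~\ref{orthogonality}, Lemma~\ref{tateTwistsIncreaseWeight}, and most importantly Proposition~\ref{chow:h1ispullback} for the delicate $m=-2$ case with $\dim X=2$. The paper organizes the argument as a table over $(p,q,d_X,d_Y)$, dualizing systematically via $\hom(A,B[m])=\hom(B^\vee,A^\vee[m])$; you instead split $h^{\le 2}(X)=h^{\le 1}(X)\oplus h^2(X)$ and observe that every $B\in S^{>2}(k)$ lies (up to shift and decomposition) in $DM^{coh}(k)(-1)$, so Lemma~\ref{tateTwistsIncreaseWeight} kills the $h^{\le 1}(X)$ contribution uniformly. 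This is a cleaner organizing principle and cuts down the case list, though it leaves the $\dim X\le 1$ handling of the $h^2(X)$ summand slightly implicit (it reduces to the $S_b^{\le 2}$ cases since $h^2(X)=h^0(X)(-1)[-2]$ for curves and vanishes for points). The hardest case $(a,b,2,2)$ is handled identically in both proofs: both reduce to geometrically connected $X,Y$ and invoke the decomposition of $CH^1(X\times Y)$ from Proposition~\ref{chow:h1ispullback}.
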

\begin{proof}
	By \ref{chow:h0ispullback}, and \ref{chow:h1ispullback}, we can conclude:
	\begin{align*}
		A=h^{\le 2}(X) \in S_a^{\le 2}(k)			&\Rightarrow 	\begin{cases}	A = A^\vee = h(X) = h^0(X)	 						&\dim X = 0\\
																					A = h(X), A^\vee = h(X)(1)[2]						&\dim X = 1\\
																					A^\vee = h^{> 1}(X)(2)[4]							&\dim X = 2
																	\end{cases}\\
		A=h^{\le 0}(X)(-1)\in S_b^{\le 2}(k)		&\Rightarrow 					A=h^0(X)(-1), A^\vee = h^{0}(X)(1)\\
		B=h^{> 2}(Y)\in S_a^{> 2}(k)				&\Rightarrow 	\begin{cases} 	B = 0											&\dim Y < 2\\
																					B^\vee = h^{\le 1}(Y)(2)[4]						&\dim Y = 2
																	\end{cases}\\
		B=h^{>0}(Y)(-1)\in S_b^{> 2}(k)				&\Rightarrow	\begin{cases}	B = 0											&\dim Y = 0\\
																					B^\vee = h^{\le 1}(Y)(2)[2]						&\dim Y = 1
																	\end{cases}\\
		B=h(Y)(-r)\in S_c^{> 2}(k), r\ge 2			&\Rightarrow					B^\vee = h(Y)(r+\dim Y)[2\dim Y]	
	\end{align*}
	By duality in $DM(k)$, we have the equalities:
	\[
		\hom(A, B[m]) = \hom(1, A^\vee\otimes B[m]) = \hom(B^\vee, A^\vee[m])
	\]
	Then, the vanishing is proved through a case by case analysis, and the reasons are summarised in the table below. 
	
	\emph{Notation:} We let $A\in S^{\le 2}_p(k)$ for $p\in \{a,b\}$ be a summand of $h(X)$ and $B\in S^{>2}_q(k)$ for $q\in \{a, b, c\}$ be a summand of $h(Y)$ as above. We assume $\dim X=d_X$ and $\dim Y=d_Y$. The place holder $\ast$ will stand for all acceptable values:\\
		\begin{tabular}{r|c|c|p{32em}}
		\hline
		$(p,q)$ 	&$d_X$	&$d_Y$ 	&Reason of vanishing of $\hom(A,B[\ast])$ for $A\in S_p^{\le 2}(k), B\in S_q^{>2}(k)$\\
		\hline
		$(*, a)$	&*			&0, 1		&$B=0$\\
		$(a, a)$	&0			&2			&$\hom(h^0(X), h^{>2}(Y)[\ast])\subset \hom(h^0(X), h^{>0}(Y)[\ast])=0$ by \ref{orthogonality}.\\
		$(a, a)$	&1			&2			&$\hom(B^\vee, A^\vee[\ast])= \hom(h^{\le 1}(Y), h(X)(-1)[\ast])=0$ by \ref{tateTwistsIncreaseWeight}.\\
		$(a, a)$	&2			&2			&$\hom(B^\vee, A^\vee[\ast])= \hom(h^{\le 1}(Y), h^{>1}(X)[\ast])=0$ by \ref{orthogonality}.\\
		$(b, a)$	&*			&2			&$\hom(B^\vee, A^\vee[\ast])= \hom(h^{\le 1}(Y), h^0(X)(-1)[\ast])=0$ by \ref{tateTwistsIncreaseWeight}.\\
		\hline		
		$(*, b)$	&*			&0			&$B=0$\\
		$(a, b)$	&0			&1			&$\hom(B^\vee, A^\vee[\ast])=\hom(h^{\le 1}(Y), h^0(X)(-2)[\ast])=0$ by \ref{tateTwistsIncreaseWeight}.\\
		$(a, b)$	&1			&1			&$\hom(B^\vee, A^\vee[\ast])= \hom(h^{\le 1}(Y), h(X)(-1)[\ast])=0$ by \ref{tateTwistsIncreaseWeight}.\\
		$(a, b)$	&2			&1			&$\hom(B^\vee, A^\vee[\ast])= \hom(h^{\le 1}(Y), h^{>1}(X)[\ast])=0$ by \ref{orthogonality}.\\
		$(a, b)$ 	&0			&2			&$\hom(1_k, B\otimes A^\vee[\ast])\subset \hom(1_k, h(Y\times_k X)(-1)[\ast])=0$ by \ref{tateTwistsIncreaseWeight}.\\
		$(a, b)$ 	&1			&2			&$\hom(1_k, B\otimes A^\vee[\ast])\subset \hom(1_k, h^{>0}(Y)\otimes h(X)(-r+2)[\ast])=0$ by below.\\		
		$(a, b)$	&2			&2			&$\hom(h^{\le 2}(X), h^{>0}(Y)(-1))=0$ by below.\\
		$(b, b)$	&*			&*			&$\hom(h^0(X)(-1), h^{>0}(Y)(-1)) = 0$ by \ref{orthogonality}\\
		\hline
		$(a, c)$	&0, 1		&*			&$\hom(1_k, B\otimes A^\vee[\ast]) \subset \hom(1_k, h(Y\times_k X)(-r+d_X)[\ast])= 0$ as $r>d_X$\\
		$(a, c)$	&2			&*			&$\hom(1_k, B\otimes A^\vee[\ast]) \subset \hom(1_k, h^{>0}(X)\otimes h(Y)(-r+2)[\ast]) =0$ by below.\\
		$(b, c)$	&*			&*			&$\hom(1_k, B\otimes A^\vee[m]) \subset \hom(1_k, h(Y\times X)(-r+1)[\ast])= 0$ as $r>1$.\\
		\hline
	\end{tabular}\\
	
	Here the only non-trivial cases are $(p,q,d_X, d_Y)\in \{(a,b,1,2), (a,b,2,2), (a, c, 2, \ast)\}$. For the cases $(p,q,d_X, d_Y) = (a,b,1,2)$ or $(a, c, 2, \ast)$ it is enough to show that 
	\[
		\hom(1_k, h^{>0}(X)\otimes h(Y)(-s)[m]) = 0\text{ for all }s \ge 0, \text{ for all }m
	\]
	This is a summand of $H^{m,-s}(X\times_k Y)$ and hence vanishes unless $s=m=0$. Let $X\xrightarrow g \Spec L\rightarrow \Spec k$ be the Stein factorization. In this case we have a split exact sequence:
	\[\begin{tikzcd}[row sep=small]
		\hom(1_k, h^0(X)\otimes h(Y))\ar[r, hookrightarrow]\ar[d, equal] 	&\hom(1_k, h(X)\otimes h(Y))\ar[r, twoheadrightarrow]\ar[d, equal] &\hom(1, h^{>0}(X)\otimes h(Y)) \\
		CH^0(Y\otimes_k L)									&CH^0(Y\times_k X)
	\end{tikzcd}\]
	and it is enough to show that the first map is an isomorphism. But the first map is induced via the natural projection: $g:X\rightarrow \Spec L$ by \ref{chow:h0ispullback}, and the pullback map $CH^0(Y\otimes_k L)\rightarrow CH^0(Y\times_k X)$ is an isomorphism since $X\rightarrow \Spec L$ has geometrically connected fibers. Note that $d_Y$ plays no role here.
	
	For $(p,q,d_X,d_Y)=(a,b,2,2)$ we have to show that $\hom(h^{\le 2}(Y), h^{>0}(Y)(-1)[m]) = 0$. But we have:
	\begin{align*}
		\hom&(h^{\le 2}(X), h^{>0}(Y)(-1)[m]) = \hom(1_k, h^{>1}(X)\otimes h^{>0}(Y)(1)[m+4]) \\
								 &\subset \hom(1_k, h(X)\otimes h^{>0}(Y)(1)[m+4]) \subset \hom(1_k, h(X)\otimes h(Y)(1)[m+4]) = H^{m+4,1}(X\times_k Y)
	\end{align*}
	Therefore, this vanishes for $m\ne -3, -2$. For $m=-3$, we have a split exact sequence:
	\[\begin{tikzcd}[row sep=small, column sep=small]
		\hom(1_k, h(X)\otimes h^0(Y)(1)[1])\ar[r, hookrightarrow]\ar[d, equal] 	&\hom(1_k, h(X)\otimes h(Y)(1)[1])\ar[r, twoheadrightarrow]\ar[d, equal] &\hom(1_k, h(X)\otimes h^{>0}(Y)(1)[1]) \\
		\mathcal O^*(X\otimes_k L')\otimes\Q												&\mathcal O^*(X\times_k Y)\otimes\Q
	\end{tikzcd}\]
	where $Y\xrightarrow g\Spec L'\rightarrow \Spec k$ is the Stein factorization, and the first map, induced by pullback along $g$ by \ref{chow:h0ispullback}, is an isomorphism since $X\times_k Y\rightarrow X\otimes_k L'$ is smooth with geometrically connected fibers. 
	
	Now assume $m=-2$. Assume $X\rightarrow \Spec L\rightarrow \Spec k$, resp. $X\rightarrow \Spec L'\rightarrow \Spec k$ are the Stein factorizations and let $M\supset L', L$. 
Let $s:\Spec M\rightarrow \Spec k$ denote the natural map. Let subscript $M, k$ denote the field over which we are working (thus $h_k(-) = h(-)$, for example). We have $h_M(X\otimes_k M) = s^*h(X)$ and therefore by \ref{chow:pullbacks}, we can let $h^{\le 2}_M(X\otimes_k M) =s^*(h^{\le 2}(M))$. We also fix a choice of $h^{>0}(Y)$. Then, using propositions \ref{chow:pullbacks} and \ref{chow:pushforwards} we have the equalities $h^{>0}_k(Y\otimes_k M) = s_*h^{>0}_M(Y\otimes_k M) = s_*s^*h^{>0}(Y)$. Therefore:
	\begin{align*}
		\hom_{DM(M)}&(h_M^{\le 2}(X\otimes_k M), h_M^{>0}(Y\otimes_k M)(-1)[-2]) \\
										&= \hom_{DM(M)}(s^*h^{\le 2}(X), h_M^{>0}(Y\otimes_k M)(-1)[-2]) \\
																		&= \hom_{DM(k)}(h^{\le 2}(X), s_*s^*h^{>0}(Y)(-1)[-2])
																		\supset \hom_{DM(k)}(h^{\le 2}(X), h^{>0}(Y)(-1)[-2])
	\end{align*}
	since $h^{>0}(Y)$ is a summand of $s_*s^*h^{0}(Y)$ as $s$ is etale. Now connected components of $X\otimes_k M$, resp. $Y\otimes_k M$ are geometrically connected, and replacing $k$ by $M$ and $X$ (resp. $Y$) by a connected component of $X\otimes_k M$ (resp. $Y\otimes_k M$) we can assume that $X, Y$ are geometrically connected.	
	
	Then we have a diagram:
	\[\begin{tikzcd}[column sep=small]
		\hom(h^{> 2}(X), h^{0}(Y)(-1)[-2])\ar[r, hookrightarrow]\ar[d, hookrightarrow] &\hom(h^{> 2}(X), h(Y)(-1)[-2])\ar[r, twoheadrightarrow]\ar[d, hookrightarrow, "\beta"] &\hom(h^{> 2}(X), h^{>0}(Y)(-1)[-2])\ar[d, hookrightarrow]\\
		\hom(h(X), h^{0}(Y)(-1)[-2])\ar[r, hookrightarrow, "\alpha"]			&\hom(h(X), h(Y)(-1)[-2])\ar[r, twoheadrightarrow]				&\hom(h(X), h^{>0}(Y)(-1)[-2])\ar[d, twoheadrightarrow]	\\
			&	&\hom(h^{\le 2}(X), h^{>0}(Y)(-1)[-2])
	\end{tikzcd}\]
	where the two rows and the last column are split exact sequences. We want to show that the term in bottom right vanishes, hence it is enough to show that $\alpha\oplus\beta$ is surjective. This follows from \ref{chow:h1ispullback} since $\dim X=2$.
\end{proof}

\begin{remark}\label{field:twistOfG1IsG3}
	We emphasize that in the proof above for $(p,q,d_X,d_Y)=(a,b,1,2)$ or $(a,c,1,*)$, $d_Y$ is irrelevant, and we can even have $d_Y>2$. This will be useful later.
\end{remark}

\begin{corollary}\label{field:isTstructure}
	$({^wDM}^{\le i}(k), {^wDM}^{> i}(k))$ is a $t$-structure on $DM^{coh}(k)$ for $i=0, 1$.
	Also, $({^wDM_2}^{\le 2}(k), {^wDM_2}^{> 2}(k))$ is a $t$-structure on $DM_2^{coh}(k)$.
\end{corollary}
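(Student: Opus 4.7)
The plan is to apply Proposition \ref{tFromGenerators} with suitable choices of generating sets $A$, $B$, $H$. In all three cases, orthogonality is already in hand from the two preceding propositions, so what remains is to exhibit a decomposition of the generators of the ambient category and to check that $\vvspan{A}$ and $\vvspan{B}$ are contained in $\vvspan{H}$. The Chow--Kunneth splittings constructed in \ref{piconstruction} will supply the decomposition directly; no genuine new computation should be required.

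For $i=0,1$, I would take $A = S^{'\le i}(k)$, $B = S^{>i}(k)$, and $H = S^{sm,coh}(k)$, all enlarged by $0$. Orthogonality $A \perp B[n]$ for all $n$ is exactly Proposition \ref{orthogonality} (the proof there works with $h^{\le i}(X)$ for arbitrary smooth projective connected $X$, i.e.\ with $S^{'\le i}$). For decomposition, the split distinguished triangle $h^{\le i}(X) \to h(X) \to h^{>i}(X) \to$ coming from the orthogonal projectors $\pi_{\le i}$, $\pi_{>i}$ exhibits every $h(X) \in H$ as an object of $Ext^1(B,A)$. Each generator of $A$ (resp.\ $B$) is a direct summand of some $h(X) \in H$, so $\vvspan{A}, \vvspan{B} \subset \vvspan{H} = DM^{coh}(k)$. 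Proposition \ref{tFromGenerators} then produces the $t$-structure $(\vvspan{A}, \vvspan{B})$ on $\vvspan{H}$. Finally, Lemma \ref{fields:dimLessThaniIsGoodEnough} identifies $\vvspan{A} = \vvspan{S^{'\le i}} = \vvspan{S^{\le i}} = {^wDM^{\le i}}(k)$, and $\vvspan{B} = {^wDM^{>i}}(k)$ by definition.

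For the case $i=2$ on $DM_2^{coh}(k)$, I would take $A = S^{\le 2}(k) = S_a^{\le 2} \cup S_b^{\le 2}$, $B = S^{>2}(k) = S_a^{>2} \cup S_b^{>2} \cup S_c^{>2}$, and $H = S^{sm,coh}_2(k)$, so that $\vvspan{H} = DM_2^{coh}(k)$ by definition. Orthogonality is the preceding proposition. For decomposition of a generator $h(X)(-r)$ of $H$ (with $\dim X \le 2$, $r\ge 0$), I split into three subcases: for $r=0$ use $h(X) = h^{\le 2}(X) \oplus h^{>2}(X)$ with $h^{\le 2}(X) \in S_a^{\le 2}$, $h^{>2}(X) \in S_a^{>2}$; for $r=1$ tensor the splitting $h(X) = h^0(X) \oplus h^{>0}(X)$ with $1_k(-1)$ to get $h(X)(-1) = h^0(X)(-1) \oplus h^{>0}(X)(-1)$ with factors in $S_b^{\le 2}$, $S_b^{>2}$ respectively; for $r\ge 2$ the object $h(X)(-r)$ lies in $S_c^{>2} \subset B$ so the trivial triangle $0 \to h(X)(-r) \to h(X)(-r) \to$ works. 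Containment $\vvspan{A}, \vvspan{B} \subset \vvspan{H}$ is immediate since every generator of $A$ or $B$ is a direct summand of $h(X)(-r)$ for some smooth projective connected $X$ with $\dim X \le 2$ and $r\ge 0$. Applying Proposition \ref{tFromGenerators} yields the required $t$-structure.

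The main obstacle is not in this final step at all but is already concentrated in the orthogonality statements (especially the case analysis for $i=2$, which I would not expect to be streamlined further), and in Lemma \ref{fields:dimLessThaniIsGoodEnough} that lets us pass between the dim-restricted and unrestricted versions of the $\le i$ generators. Once those are in hand, the present corollary is essentially a formal invocation of \ref{tFromGenerators}.
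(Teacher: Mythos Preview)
Your proposal is correct and takes exactly the same approach as the paper, which simply says ``Use \ref{tFromGenerators}.'' You have carefully spelled out the choices of $A$, $B$, $H$ and the verifications (orthogonality, decomposition, containment of spans) that the paper leaves implicit, including the minor but necessary observation that \ref{orthogonality} applies to $S^{'\le i}$ and that \ref{fields:dimLessThaniIsGoodEnough} identifies $\vvspan{S^{'\le i}}$ with $\vvspan{S^{\le i}}$.
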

\begin{proof}
	Use \ref{tFromGenerators}.
\end{proof}

\begin{definition}
	For $i\in \{0,1,2\}$, we let $(w_{\le i}, w_{>i})$ denote the truncations for the corresponding $t$-structure above.
\end{definition}

\begin{corollary}\label{field:tateTwistsIncreaseWeights}
	$DM^{coh}(k)(-r)\subset {^wDM^{>i}}(k)$ for $i=0,1$ and $r\ge 1$.
\end{corollary}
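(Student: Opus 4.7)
The plan is to reduce the statement to Lemma~\ref{tateTwistsIncreaseWeight} via the standard adjoint characterization of the positive part of a $t$-structure. Given $M\in DM^{coh}(k)$ and $r\ge 1$, I will first invoke Corollary~\ref{field:isTstructure} and Proposition~\ref{tIsAdjoint}: since $({^wDM^{\le i}}(k), {^wDM^{>i}}(k))$ is a $t$-structure on $DM^{coh}(k)$ with trivial core, the membership $M(-r)\in {^wDM^{>i}}(k)$ is equivalent to $w_{\le i}(M(-r)) = 0$. By adjointness of $w_{\le i}$ with the inclusion ${^wDM^{\le i}}(k)\hookrightarrow DM^{coh}(k)$, one has $\hom(w_{\le i}(M(-r)), w_{\le i}(M(-r)))\cong \hom(w_{\le i}(M(-r)), M(-r))$, so that this identity morphism is forced to vanish as soon as $\hom(a, M(-r)) = 0$ for every $a\in {^wDM^{\le i}}(k)$. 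Hence it suffices to verify this stronger orthogonality.

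Next, I will reduce to generators. Since ${^wDM^{\le i}}(k) = \vvspan{S^{\le i}(k)}$ is built from $S^{\le i}(k)$ by iterating shifts, extensions, and retracts, and the contravariant functor $\hom(-, M(-r))$ is cohomological (sends distinguished triangles to long exact sequences) and preserves retracts, it is enough to verify $\hom(a, M(-r)[n])=0$ for every $a\in S^{\le i}(k)$ and every $n\in \Z$. For $i\in \{0,1\}$, the set $S^{\le i}(k)$ consists of objects $h^{\le i}(X)$ with $X$ smooth, projective, connected over $k$ and $\dim X\le i\le 1$; in the case $i=0$, $h^{\le 0}(X) = h^0(X)$ is a direct summand of $h^{\le 1}(X)$, so in either case each generator $a$ is a retract of some $h^{\le 1}(C)$ with $\dim C\le 1$.

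The required vanishing is then precisely the content of Lemma~\ref{tateTwistsIncreaseWeight}, which asserts $h^{\le 1}(C)\perp M(-r)[m]$ for all $r\ge 1$, all $m\in \Z$, all $M\in DM^{coh}(k)$, and all smooth projective connected $C$ with $\dim C\le 1$. The retract property propagates this to every $a\in S^{\le i}(k)$, completing the argument. All the real work is already packaged inside Lemma~\ref{tateTwistsIncreaseWeight}; this corollary is essentially a repackaging of that lemma into the $t$-structure language, and the only step deserving care is the adjoint characterization of the positive part in the first paragraph—there is no serious obstacle.
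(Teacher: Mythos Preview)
Your proof is correct and follows essentially the same route as the paper: both arguments reduce membership in ${^wDM^{>i}}(k)$ to the orthogonality ${^wDM^{\le i}}(k)\perp DM^{coh}(k)(-r)$, then pass to generators in $S^{\le i}(k)$ and invoke Lemma~\ref{tateTwistsIncreaseWeight}. Your version is simply more explicit about the adjoint characterization of the positive part, whereas the paper states the orthogonality reduction in one line and also notes the inclusion ${^wDM^{\le 0}}(k)\subset {^wDM^{\le 1}}(k)$ to handle both cases $i=0,1$ simultaneously.
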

\begin{proof}
	It is enough to prove that ${^wDM^{\le 0}}(k)\subset {^wDM^{\le 1}}(k)\perp DM^{coh}(k)(-r)$ for $r\ge 1$. By \ref{gluing:decomposition}, it is enough to prove the inclusion $S^{\le 0}(k)\subset {^wDM^{\le 1}}(k)$ and the orthogonality $S^{'\le 1}(k)\perp DM^{coh}(k)(-r)$. But the inclusion is clear since $h^{\le 0}(X)$ is a retract of $h^{\le 1}(X)$ and the orthogonality follows from \ref{tateTwistsIncreaseWeight}. 
\end{proof}

\subsection{Weight truncations over a base}\label{sec:overBase} In the previous section analogue of S.~Morel's $t$-structures were constructed on appropriate subcategories of $DM(k)$ for $k$ a perfect field. In this section we use punctual gluing to relativize the situation using \ref{gluing:mainresult} and construct the corresponding $t$-structures over an arbitrary base $S$.

We begin by defining a situation which will figure repeatedly in the arguments below:
\begin{para}\label{complicatedDiagram} Fix a scheme $S$, a point $\epsilon:\Spec K\rightarrow S$ in $S$, and a point $\delta:\Spec L\rightarrow \bar \epsilon$ (where $\bar \epsilon$ denotes the closure of $\epsilon$ with reduced induced structure). 

We will be interested in objects of $DM(U)$ (below, cohomological motive of $\bar X$) on some neighborhood $U$ of $\Spec K$, which restricts to a given object in $DM(K^{perf})$ (below, $h(X')$) and to understand the restriction of the same to $DM(L^{perf})$ whenever $\Spec L$ is a point in $U$. 

In particular we have the following diagram, once we fix $X'/\Spec K^{perf}$ smooth, projective:

	\[\hspace{-0.1\linewidth}\begin{tikzcd}
		X'\ar[d, "\circ" description, "f'"]\ar[rd,rightarrow]  &	 & & & & &X'_1\ar[d, "\circ" description, "f'_1"]\ar[dllll, "\theta_X" above, bend right=10]\\ 
		Z'\ar[d, "\bullet" description, "q'"]\ar[rd, rightarrow] &X\ar[d, "\circ" description, "f"]\ar[r, hookrightarrow] &\bar X\ar[d, "\circ" description, "\bar f"]\ar[rr, hookleftarrow, "l_X"]& &X_Y\ar[d, "\circ" description, "f_Y"]\ar[r, hookleftarrow, "v''"]&X_1\ar[d, "\circ" description, "f_1"]\ar[ru, leftarrow, "t''"]&Z'_1\ar[d, "\bullet" description, "q'_1"]\\
		\Spec K^{perf}\ar[rd]\ar[ddr, "r" below]		 &Z\ar[d, "\bullet" description, "q"]\ar[r, hookrightarrow]	   &\bar Z\ar[d, "\bullet" description, "\bar q"]\ar[rr, hookleftarrow, "l_Z"]& &Z_Y\ar[d, "\bullet" description, "q_Y"]\ar[r, hookleftarrow, "v'"] &Z_1\ar[d, "\bullet" description, "q_1"]\ar[ru, leftarrow, "t'_1"]&\Spec L^{perf}\\
			&\Spec K'\ar[d, "\ast" description, "s"]\ar[r, hookrightarrow]&U'\ar[r, "i'", hookleftarrow]\ar[d, "\ast" description, "\bar s"] &Y'\ar[rrd, hookleftarrow, "\delta'"]\ar[d, "\ast" description, "s_Y"]\ar[r, "\ast" description, "l", hookleftarrow]	   &Y'_{red}\ar[r,hookleftarrow, "v"] &V'_{red}\ar[d, "\ast" description, "s_1"]\ar[ru, leftarrow, "t_1"]\\
	\begin{minipage}{0.4\linewidth}
		$-\circ\rightarrow$ smooth\\
		$-\bullet\rightarrow$ etale\\
		$-\ast\rightarrow$ finite radicial
	\end{minipage}\hspace{-0.2\linewidth}
			&\Spec K\ar[r, hookrightarrow, "\epsilon"]&U\ar[r, "i", hookleftarrow]	&Y\ar[rrd, hookleftarrow, "\delta"]	&	   &V'\ar[d, "\ast" description, "s'" left]\\
			& &\- & & &\Spec L\ar[ruuu, leftarrow, "r_1" right]	
	\end{tikzcd}\]
	
		The diagram is obtained as follows (beginning with $X'/\Spec K^{perf}$ smooth, projective):
		\begin{itemize}
			\item $X'\xrightarrow{f'}Z'\xrightarrow{q'} \Spec K^{perf}$ is the Stein factorization. Hence $f'$ is smooth $q'$ etale.
			\item The situation descends to a finite extension $K'/K$ giving $X, Z, f, q$ with $f, q$ smooth.
			\item $\bar X, \bar Z, U' \bar f, \bar q, \bar s$ obtained by spreading out in the closure $\bar \epsilon$. Hence can assume $\bar f, \bar q, \bar s$ proper. Restricting $U$ can assume it is regular. Restricting $U$ further, can assume $U'$ is regular as well since $\bar s$ is finite. Hence $\bar X$, $\bar Z$ are regular.
			\item $Y\subset \bar \delta$ is regular, open, dense neighborhood of $\Spec L$ inside the closure $\bar \delta$ with the reduced induced sub-structure.
			\item $Y'$, resp. $V'$ is the pullback of $U'$ to $Y$ resp. $\Spec L$. 
			\item $V'_{red}$ and $Y'_{red}$ are the schemes with reduced structure. Shrinking $Y$ can assume $Y'_{red}$ is regular since $s_Y$ is finite.
			\item $V'_{red}\rightarrow \Spec L$ is finite radicial, hence $V'_{red}=\Spec L'$ with $L'/L$ finite purely inseparable. Hence $r_1$ factors through $s'\circ s_1$.
			\item Remaining objects are obtained via base change to $Y'_{red}, V'_{red}$ and $\Spec L^{perf}$. By construction, all of them are regular.
		\end{itemize}
		
		We also use the notation $p=q\circ f$, $\bar p = \bar q\circ \bar f$, etc. We let $\bar g:=\bar s\circ \bar p$, and $\bar h=\bar s\circ \bar q$.
\end{para}
\begin{lemma}\label{complicatedDiagram:lemma}
	Assume we are in the situation of \ref{complicatedDiagram}. Let $d=\dim Y'-\dim U'$, and $e=\dim X$. Then we have a natural isomorphism of functors:
	\begin{align*}
		\alpha(\bar g):r_1^*\delta^*i^*\bar g_*1_{\bar X} &\xrightarrow\cong p'_{1*}1_{X'_1} & \beta(\bar g):r_1^*\delta^*i^!\bar g_*1_{\bar X} &\xrightarrow\cong p'_{1*}1_{X'_1}(-d)[-2d]
	\end{align*}
	such the the following diagrams, where the vertical arrows are obtained using natural maps of adjunction $1_{\bar Z}\rightarrow \bar f_*1_{\bar Z}$ (resp. for $Z'_1, X'_1$) and $\bar f_*\bar f^!1_{\bar Z}\rightarrow 1_{\bar Z}$ (resp. for $Z'_1, X'_1)$, commutes:
	\[\begin{tikzcd}[column sep=tiny]
		r_1^*\delta^*i^*\bar g_*1_{\bar X}\ar[rr, "\alpha(\bar g)"] & &p'_{1*}1_{X'_1} 	 
		&r_1^*\delta^*i^*\bar g_*1_{\bar X}\ar[rr, "\alpha(\bar g)"] & &p'_{1*}1_{X'_1} 
		&r_1^*\delta^*i^!\bar g_*1_{\bar X}\ar[rr, "\beta(\bar g)"] & &p'_{1*}1_{X'_1}(-d)[-2d]\\
		r_1^*\delta^*i^*\bar h_*1_{\bar Z}\ar[u]\ar[rr, "\alpha(\bar h)"] & &q'_{1*}1_{Z'_1}\ar[u] 	 
		&r_1^*\delta^*i^*\bar h_*1_{\bar Z}\ar[u, leftarrow](-e)[-2e]\ar[rr, "\alpha(\bar h)"] & &q'_{1*}1_{Z'_1}(-e)[-2e]\ar[u, leftarrow] 
		&r_1^*\delta^*i^!\bar h_*1_{\bar Z}\ar[u]\ar[rr, "\beta(\bar h)"] & &q'_{1*}1_{Z'_1}(-d)[-2d]\ar[u]
	\end{tikzcd}\]
	(where we use the fact that $\bar f^!1_{\bar Z} = \bar f^*1_{\bar Z}(e)[2e]=1_{\bar X}(e)[2e]$ to get the middle diagram).
\begin{proof}
	We have natural isomorphism of functors:
	\begin{align*}
		&r_1^*\delta^*i^*\bar g_* \cong r_1^*\delta^*i^*\bar s_* \bar p_* \cong r_1^*s'_*{\delta'}^*{i'}^*\bar p_* \cong r_1^*s'_*s_{1*}s_1^*{\delta'}^*{i'}^*\bar p_* 
			\cong r_1^*s'_*s_{1*}v^*l^*{i'}^*\bar p_*
			\cong r_1^*s'_*s_{1*} q_{1*}f_{1*}{v''}^*l_X^*
	\end{align*}
	and also an isomorphism of functors:
	\begin{align}\label{eq:internaliso}
		&r_1^*s'_*s_{1*} q_{1*}f_{1*}{v''}^*l_X^* \cong t_1^*s_1^*{s'}^*s'_*s_{1*}q_{1*}f_{1*}{v''}^*l_X^* \cong t_1^*q_{1*}f_{1*}{v''}^*l_X^* \cong q'_{1*}f'_{1*}{t''}^*{v''}^*l_X^* \cong p'_{1*} \theta_X^*
	\end{align}
	by using separatedness and proper base change. We call the composite of the two as $\alpha_{X}$. 
	
	Applying $\alpha_X$ to $1_{\bar X}$, we get the map $\alpha(\bar g)$. Applying $\alpha_Z$ (obtained similarly, by replacing $X$s by $Z$s in the diagram and letting $f$s to be identity) to the diagram $1_{\bar Z}\rightarrow \bar p_*1_{\bar X}$ and using proper base change gives the first commutative diagram, and applying $\alpha_Z$ to the diagram $\bar p_*1_{\bar X}\rightarrow 1_{\bar Z}(-e)[-2e]$ gives the second commutative diagram in the same fashion.
	
	To obtain $\beta(\bar p)$ note that we have a natural isomorphism of functors:
	\begin{align*}
		r_1^*\delta^*i^!\bar g_* &\cong r_1^*\delta^*i^!\bar s_* \bar p_* \cong r_1^*\delta^*s_{Y*}{i'}^! \bar p_* 
			\cong r_1^*s'_*{\delta'}^*{i'}^!\bar p_* \cong r_1^*s'_*s_{1*}s_1^*{\delta'}^*{i'}^!\bar p_* \\
			&\cong r_1^*s'_*s_{1*}v^*l^*{i'}^!\bar p_* \cong r_1^*s'_*s_{1*}v^*l^!{i'}^!\bar p_* 
			\cong r_1^*s'_*s_{1*}v^*q_{Y*}f_{Y*} l_X^!
			\cong r_1^*s'_*s_{1*} q_{1*}f_{1*}{v''}^*l_X^!
	\end{align*}	
	Applying to $1_{\bar X}$ we get an isomorphism:
	\begin{align*}
		r_1^*\delta^*i^!\bar g_*1_{\bar X} \cong r_1^*s'_*s_{1*} q_{1*}f_{1*}{v''}^*l_X^! 1_{\bar X} \cong r_1^*s'_*s_{1*} q_{1*}f_{1*}{v''}^*l_X^* 1_{\bar X}(-d)[-2d]
	\end{align*}
	where we use that $X_1, \bar X$ are regular for the last isomorphism (purity). Combining with the isomorphism \eqref{eq:internaliso} applied to $1_{\bar X}(-d)[-2d]$ we get the isomorphism:
	\[
		r_1^*\delta^*i^!\bar g_*1_X \cong p'_{1*} \theta_X^*1_{\bar X} (-d)[-2d]\cong p'_{1*}1_{X'_1}(-d)[-2d]
	\]
	and this is what we call as $\beta(\bar p)$. The commutative diagram follows by noting that the isomorphisms here are natural -- this is standard for all isomorphisms, except possibly for the purity isomorphism, however in that case the compatibility is a consequence of \cite[Prop. 1.7]{lehalleur2015motivic}.
\end{proof}
\end{lemma}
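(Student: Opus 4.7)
The plan is to build $\alpha(\bar g)$ and $\beta(\bar g)$ by chaining together the base-change and purity isomorphisms supplied by the six-functor formalism \ref{motives:6functors} applied to the cartesian/pullback squares in the diagram of \ref{complicatedDiagram}. Concretely, I will migrate $i^*$ (respectively $i^!$) and $\delta^*, r_1^*$ upward through the tower of proper pushforwards by repeated use of proper base change, collapsing the finite radicial maps $s', s_1$ via separatedness (item 6 of \ref{motives:6functors}), and in the case of $\beta$ inserting one application of absolute purity for the closed immersion $l_X$. Finally, I would check the three commutative diagrams by invoking naturality of every isomorphism appearing in these chains.

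For $\alpha(\bar g)$, decompose $\bar g_* = \bar s_*\bar p_*$. Proper base change along the square with $i, \bar s$ gives $\delta^* i^* \bar s_* \cong s'_* (\delta')^*(i')^*$, and a further base change along the square for $\bar p, \delta'$ produces $(\delta')^*(i')^*\bar p_* \cong q_{Y*}f_{Y*}(v'')^*l_X^*$ after identifying $(\delta')^*$ with the composite $v^* l^*$. Separatedness for the finite radicial $s_1, s'$ collapses $r_1^* s'_* s_{1*}$ to $t_1^*$, and a final proper base change over $\Spec L^{perf}$ identifies $t_1^* q_{1*}f_{1*}$ with $p'_{1*}\theta_X^*$. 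Applying the composite to $1_{\bar X}$ and using $\theta_X^* 1_{\bar X} = 1_{X'_1}$ yields the required $\alpha(\bar g)$.

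For $\beta(\bar g)$, the construction is parallel, the only essential novelty being the presence of $i^!$. I use the base-change isomorphism $i^!\bar s_* \cong s_{Y*}(i')^!$ (valid since $\bar s$ is proper), then convert the remaining $(i')^!$ across $\bar p_*$ via the chain of base changes, using $l^! \cong l^*$ and $v^! = v^*$ from separatedness and openness respectively. At the top of the chain $l_X^!$ then acts on $1_{\bar X}$; since $\bar X$ and $X_Y$ are both regular (the former by construction of \ref{complicatedDiagram}, the latter because $l_X$ is the base change of a smooth map with regular target $Y'_{red}$), absolute purity from \ref{motives:6functors}(5) gives $l_X^! 1_{\bar X} \cong l_X^* 1_{\bar X}(-d)[-2d] = 1_{X_Y}(-d)[-2d]$, with $d$ the relative dimension indicated. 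The chain then proceeds as for $\alpha$, carrying the Tate twist and shift through to the final answer.

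The three diagrams in the lemma then follow by chasing the naturality of each step: the maps $1_{\bar Z} \to \bar f_* 1_{\bar X}$ and $\bar f_* \bar f^! 1_{\bar Z} \to 1_{\bar Z}$ transport through base-change and separatedness isomorphisms in a functorial manner, so the first two diagrams reduce to naturality of base change. The third diagram additionally requires that the absolute purity isomorphism commute with the adjunction unit $\bar f_* \bar f^! \to \mathrm{id}$ and with base change along $l_X$; this compatibility is the content of \cite[Prop.~1.7]{lehalleur2015motivic}. The main obstacle is not any single step — each is a formal consequence of the six-functor axioms — but the careful bookkeeping needed to verify that all squares commute and that the purity isomorphism is used in a manner compatible with the surrounding natural transformations.
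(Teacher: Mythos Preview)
Your approach is essentially identical to the paper's: both construct $\alpha$ and $\beta$ by chaining proper base change and separatedness through the diagram of \ref{complicatedDiagram}, invoke absolute purity for $l_X$ (using regularity of $\bar X$ and its pullback) to produce the Tate twist, and cite \cite[Prop.~1.7]{lehalleur2015motivic} for the compatibility of purity with the adjunction maps in the third square. One small slip in your bookkeeping: $v:V'_{red}\hookrightarrow Y'_{red}$ is not an open immersion (it lies over the point $\delta:\Spec L\hookrightarrow Y$), so you do not have $v^! = v^*$; this is harmless, however, since the argument never needs $v^!$ --- one keeps $v^*$ throughout and commutes it past $q_{Y*}f_{Y*}$ by proper base change, exactly as the paper does.
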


Now we are in the position to construct the analogue of certain S.\@ Morel's $t$-structure:
\begin{proposition}\label{main:requisitesFor01}
	Fix a scheme $S$. For any $Y\subset S$ a subscheme, define $D_Y := D^{coh}(Y)$, and for any $x=\Spec K\in S$ define $D(K):= DM^{coh}(\Spec K^{perf})$, where $K^{perf}$ denotes the perfect closure of $K$. Then this setup satisfies extended formalism of gluing \ref{gluing:extended4functors} and continuity \ref{gluing:continuity}.
	
	Fix $i\in \{0, 1\}$. Define $D^{\le}(K):= {^wDM^{\le i}}(K^{perf})$ and $D^{>}(K):= {^wDM^{> i}}(K^{perf})$ (see \ref{field:isTstructure} for notation). Then this setup satisfies the formalism for continuity of $t$-structures \ref{gluing:continuityForT}. 
\end{proposition}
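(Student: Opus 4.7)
The first assertion, regarding the extended formalism of gluing and continuity for $D_Y = DM^{coh}(Y)$ and $D(K) = DM^{coh}(K^{perf})$, is exactly the first bullet of Proposition \ref{coh:continuity}. For the second, by the remark following \ref{gluing:continuityForT}, it suffices to verify continuity for the negative (resp.\ positive) part on the generating sets $S^{\le i}$ (resp.\ $S^{>i}$). I would work throughout in the spreading-out setup of \ref{complicatedDiagram}: given $X'/\Spec K^{perf}$ smooth projective connected with Stein factorization $X' \xrightarrow{f'} Z' \xrightarrow{q'} \Spec K^{perf}$, descend to some finite $K'/K$ and spread out over a neighborhood $U$ of $\Spec K$ in $Y = \bar\epsilon$ to obtain $\bar f \colon \bar X \to \bar Z$ smooth projective and $\bar q \colon \bar Z \to U$ finite. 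The main computational tool is \ref{complicatedDiagram:lemma}, which identifies $r_1^*\delta^*i^*$ and $r_1^*\delta^*i^!$ applied to $\bar p_* 1_{\bar X}$ and $\bar q_* 1_{\bar Z}$ with pushforwards of units on $\Spec L^{perf}$, possibly with Tate twists.

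For the negative part with $i=0$, a generator is $h(Z')$, and I would take $\bar a := \bar q_* 1_{\bar Z}$; the lemma (applied with $Z$ replacing $X$ and $f = \id$) gives $r_1^*\delta^*i^* \bar a \cong h(Z'_1)$, which lies in ${^wDM^{\le 0}}(L^{perf})$ since $Z'_1/L^{perf}$ is etale. For $i=1$, \ref{fields:dimLessThaniIsGoodEnough} reduces to the case $\dim X' \le 1$; in the curve case, purity gives $\bar f^! 1_{\bar Z} = 1_{\bar X}(1)[2]$ and the counit of adjunction produces a natural Gysin map $\bar f_* 1_{\bar X} \to 1_{\bar Z}(-1)[-2]$, so I would take $\bar a$ to be the fiber of $\bar p_* 1_{\bar X} \to \bar q_* 1_{\bar Z}(-1)[-2]$. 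The middle compatibility square of \ref{complicatedDiagram:lemma} then identifies $r_1^*\delta^*i^* \bar a$ with the fiber of $h(X'_1) \to h^2(X'_1)$, namely $h^{\le 1}(X'_1) \in {^wDM^{\le 1}}(L^{perf})$.

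For the positive part with $i=0$, I would set $\bar b := Cone(\bar q_* 1_{\bar Z} \to \bar p_* 1_{\bar X})$ via the unit of adjunction $1_{\bar Z} \to \bar f_* 1_{\bar X}$. The third compatibility square of \ref{complicatedDiagram:lemma} identifies $r_1^*\delta^*i^! \bar b$ with $h^{>0}(X'_1)(-d)[-2d]$, where $d = \dim Y' - \dim U' \le 0$. At the original point ($\delta = \epsilon$) we have $d = 0$ and recover $h^{>0}(X')$; at any other point $-d \ge 1$, and by Corollary \ref{field:tateTwistsIncreaseWeights} (negative Tate twists land in the positive part) we get $h^{>0}(X'_1)(-d)[-2d] \in {^wDM^{>0}}(L^{perf})$. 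For $i = 1$, the projector $\pi_{\le 1}(X') \in CH^{\dim X'}(X' \times X')$ of \ref{chow:murreConstruction} is built from algebraic data---a degree-$d$ zero cycle for $\pi_0$, a smooth hyperplane curve $C' \subset X'$ together with the Picard-Albanese isogeny machinery for $\pi_1$---all of which spread out to a neighborhood of $\epsilon$ (possibly after a Galois extension, applying \ref{chow:descent} to descend). This defines an idempotent correspondence on $\bar X \times_U \bar X$ cutting out a retract $\bar a_{\le 1}$ of $\bar p_* 1_{\bar X}$, and setting $\bar b := Cone(\bar a_{\le 1} \to \bar p_* 1_{\bar X})$ the lemma gives $r_1^*\delta^*i^! \bar b \cong h^{>1}(X'_1)(-d)[-2d] \in {^wDM^{>1}}(L^{perf})$ by the same Tate-twist argument.

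The main technical obstacle will be the positive $i=1$ case: making precise how the Murre projector $\pi_{\le 1}(X')$ spreads to an idempotent endomorphism of $\bar p_* 1_{\bar X}$ in $DM^{coh}(U)$, and checking that its value at $\epsilon$ recovers $\pi_{\le 1}(X')$. The other three cases reduce to direct applications of Lemma \ref{complicatedDiagram:lemma} together with the Tate-twist stability in Corollary \ref{field:tateTwistsIncreaseWeights}.
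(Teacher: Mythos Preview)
Your treatment of the first assertion and of the negative part (both $i=0$ and $i=1$) matches the paper's proof essentially verbatim: invoke \ref{coh:continuity}, then for $i=0$ spread out $h(Z')$ and for $i=1$ reduce to curves via \ref{fields:dimLessThaniIsGoodEnough} and spread out the fiber of the Gysin map, using the commutative squares of \ref{complicatedDiagram:lemma} to identify the restrictions.

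For the positive part, however, your route diverges from the paper's, and the paper's is considerably simpler. You try to spread out the construction of $h^{>i}(X')$ explicitly: for $i=0$ as the cone of $h^0(X')\hookrightarrow h(X')$, and for $i=1$ by spreading out the Murre projector $\pi_{\le 1}(X')$ itself (zero-cycle, hyperplane curve, Picard--Albanese isogeny). You then want to identify $\delta^!\bar b$ with $h^{>i}(X'_1)(-d)[-2d]$ at every point. The paper does something much cheaper and uniform in $i$: set $\bar b:=\bar p_*1_{\bar X}$, note that $a=h^{>i}(X')$ is merely a \emph{retract} of $b=h(X')$, and use continuity (fullness of $\epsilon^*$, \ref{gluing:continuity}) to spread the idempotent abstractly, obtaining some $\bar a$ a retract of $\bar b$ with $\epsilon^*\bar a=a$. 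One then never needs to know what $\delta^!\bar a$ is for $\delta\ne\epsilon$: it is a retract of $\delta^!\bar b\cong p'_{1*}1_{X'_1}(-d)[-2d]$, and since $d\ge 1$ this lies in $DM^{coh}(L^{perf})(-1)\subset{^wDM^{>1}}(L^{perf})\subset{^wDM^{>0}}(L^{perf})$ by \ref{field:tateTwistsIncreaseWeights}. The ``main technical obstacle'' you identify---making the spread-out Murre projector restrict correctly at every $\delta$---simply does not arise in the paper's argument. Your approach is not wrong, but it does real work (spreading out the geometric data behind $\pi_1$, checking compatibility with \ref{chow:pullbacks} at each fiber) that the paper sidesteps entirely by exploiting that a negative Tate twist already forces membership in the positive part.
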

\begin{proof}
	For $\delta:\Spec L\rightarrow Y$, and $r_1:\Spec L^{perf}\rightarrow \Spec L$ the pullback map $D_Y\rightarrow D(K)$ is by definition the  $r_1^*\epsilon^*:DM^{coh}(Y)\rightarrow DM^{coh}(K^{perf})$ (this would be the functor ``$\delta^*$'' of \ref{gluing:extended4functors}, but to avoid confusion here $\delta^*$ will always denote the pullback in motivic sheaves). Let $\epsilon:\Spec K\rightarrow S$ be fixed.

	\emph{Extended formalism of gluing and continuity:} This is \ref{coh:continuity}.
	
	\emph{Continuity for $D^{\le}(-)$:}

	\emph{Case $i=0$}. Let $a\in D^{\le}(K)={^wDM^{\le 0}}(K^{perf})$. By \ref{fields:dimLessThaniIsGoodEnough} we can assume that $a\in S^{'\le 0}(K^{perf})$, therefore $a = h^{\le 0}(Z')=h(Z')$ for $p':Z'\rightarrow K^{perf}$ finite, smooth. Let the diagram be as in \ref{complicatedDiagram} with $X'=Z'$. Let $\bar a = \bar s_*\bar q_{1*}$. Then:
	\[
		r_1^*\delta^*\bar a = p'_{1*}1_{Z'_1} \in {^wDM^{\le 0}(L^{perf})}
	\]
	using \ref{complicatedDiagram:lemma}, since $Z_1'/L^{perf}$ is finite. For $\delta = \epsilon$ we also get $r^*\epsilon^*\bar a = p'_*1_{Z'} = a$ hence this verifies the condition for $i=0$.
	
	\emph{Case $i=1$}. Let $a\in D^{\le}(K)={^wDM^{\le 0}}(K^{perf})$. By \ref{fields:dimLessThaniIsGoodEnough} we can assume that $a\in S^{'\le i}(K^{perf})$, therefore $a = h^{\le 1}(X')$ for $p':X'\rightarrow K^{perf}$ a smooth projective curve. Let the situation be extended as in \ref{complicatedDiagram}.
	
	Now notice that by \ref{chow:h0ispullback} the map $h(X) \rightarrow h^2(X)$ is induced by the adjunction $h(X')\xrightarrow{h(f')} h(Z')(-1)[-2]$, and hence $a$ can be regarded as third term in the triangle:
	\begin{align*}
		a\rightarrow h(X')=p'_*1_{X'} \rightarrow h(Z')=q'_*1_{Z'}(-1)[-2]\rightarrow.
	\end{align*}

	Now let $\bar a$ be defined as the third term of the triangle:
	\[
		\bar a \rightarrow \bar g_*1_{\bar X}	\rightarrow \bar h_*1_{\bar Z}(-1)[-2]\rightarrow
	\]
	where the last morphism is obtained via the adjunction $\bar f_*\bar f^!1_{\bar Z}\rightarrow 1_{\bar Z}$ and noting that $f^!1_{\bar Z}=1_{\bar X}(1)[2]$ since $f$ is smooth of relative dimension $1$. 

	Applying $r_1^*\delta^*$, and using \ref{complicatedDiagram:lemma} we get a morphism of triangles:
	\[\begin{tikzcd}
		r_1^*\delta^* \bar a\ar[r]\ar[d]	&r_1^*\delta^*\bar g_*1_{\bar X}\ar[d,"\alpha(\bar g)", "\cong" left] \ar[r] 	&r_1^*\alpha^*\bar h_*1_{\bar Z}(-1)[-2]\ar[r]\ar[d, "\alpha(\bar h)", "\cong" left]	&\-\\ 
		w\ar[r]								&p'_{1*}1_{X'_1}\ar[r]											&q'_{1*}1_{Z'_1}(-1)[-2]\ar[r]	&\-
	\end{tikzcd}\]
	and hence the first map is an isomorphism. It follows that the $r_1^*\delta^*\bar a\cong w$, but using \ref{chow:h0ispullback}, it follows that $w=h^{\le 1}(X'_1)$. In particular, for $\delta = \epsilon$, we get that $r^*\epsilon^*\bar a = h^{\le 1}(X') = a$ and we are done.
	
%
%
	\emph{Continuity for $D^{>}(-)$:} Let $a\in D^{>}(K)={^wDM^{>i}}(K^{perf})$. 

We can assume that $a\in S^{>i}(K^{perf})$, in particular $a=h^{>i}(Z')$, a retract of $b=h(X')=p'_*1_{X'}$. Let $\delta^!:=r_1^*\delta^*i^!$. Now, the closure $\bar \epsilon$ is an irreducible subvariety, and hence for $\bar \delta\ne \bar \epsilon$ we have that $\dim (U)-\dim (Y)=d>0$. Hence in this case, it follows that if $\bar b = \bar p_*1_{\bar X}$:
\begin{align*}
	r_1^*\delta^*\bar b &\cong b &
	\delta^!(\bar b) &\cong p'_{1*}1_{X'_1}(-d)[-2d]
\end{align*}
	using \ref{complicatedDiagram:lemma}. Since $a$ is a retract of $b$, by continuity, restricting $U$ if needed, we can find an $\bar a$ a retract of $\bar b$ such that $r^*\epsilon^* \bar a = a$. But then $\delta^!(\bar a)$ is a retract of $\delta^!(\bar b)$ which is in $DM^{coh}(L^{perf})(-d)\subset {^wDM^{>1}}(L^{perf})\subset {^wDM^{>0}}(L^{perf})$ for $\delta \ne \epsilon$, as required.

\end{proof}
\begin{corollary}\label{tStructure:01}
	Let $i\in \{0, 1\}$. Define
	\begin{align*}
		^wDM^{\le \id+i}(X) &= \{M\in DM^{coh}(X)| s^*(M) \in DM^{\le i}(K)\text{ for all }s:\Spec K\rightarrow X, K\text{ perfect }\} \\
		^wDM^{> \id+i}(X) &= \{M\in DM^{coh}(X)| s^!(M) \in DM^{>i}(K)\text{ for all }s:\Spec K\rightarrow X, K\text{ perfect }\}
	\end{align*}
	Then $({^wDM^{\le \id+i}}(X), {^wDM^{> \id+i}}(X))$ forms a $t$-structure on $DM^{coh}(X)$.
\end{corollary}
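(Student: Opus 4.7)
The plan is to deduce this corollary directly from the punctual gluing theorem \ref{gluing:mainresult}, using the hypotheses verified in Proposition \ref{main:requisitesFor01}.

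First, I would observe that Proposition \ref{main:requisitesFor01} is engineered to supply precisely the input required by \ref{gluing:mainresult}: the assignment $D_Y = DM^{coh}(Y)$ with $D(K) = DM^{coh}(K^{perf})$ at each Zariski point satisfies the extended formalism of gluing \ref{gluing:extended4functors} and continuity \ref{gluing:continuity}, while the fiberwise candidate $t$-structures $(D^{\le}(K),D^{>}(K)) = ({^wDM^{\le i}}(K^{perf}), {^wDM^{>i}}(K^{perf}))$ — which are bona fide $t$-structures over perfect fields by \ref{field:isTstructure} — satisfy continuity for $t$-structures \ref{gluing:continuityForT}. A direct application of \ref{gluing:mainresult} therefore produces a $t$-structure on $DM^{coh}(X)$, described by \ref{gluing:spreadingOut} as the pair of full subcategories consisting of objects whose $\epsilon^*$ (resp. $\epsilon^!$) lies in ${^wDM^{\le i}}(K^{perf})$ (resp. ${^wDM^{>i}}(K^{perf})$) at every Zariski point $\epsilon: \Spec K \hookrightarrow X$, with $\epsilon^*$ and $\epsilon^!$ interpreted via the extended formalism (i.e., post-composed with $r_1^*: DM^{coh}(K) \to DM^{coh}(K^{perf})$).

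The only remaining bookkeeping — which I expect to be the main, though minor, obstacle — is to reconcile this Zariski-point-based description with the definition in the statement, which quantifies over \emph{all} $s: \Spec K \to X$ with $K$ perfect, not only points. For this direction I would argue that any such $s$ factors as $\Spec K \to \Spec \kappa(x)^{perf} \to X$, where $x$ is the Zariski point hit by $s$; the second arrow is the point $\epsilon_x$ (composed with passage to perfect closure) already covered by the gluing description, while the first is a filtered limit of essentially smooth extensions of perfect fields. It then suffices to verify that pullback along such pro-essentially-smooth maps preserves the subcategories ${^wDM^{\le i}}$ and ${^wDM^{>i}}$. Since these are defined as $\vvspan{-}$ of generators of the form $h^{\le i}(Y)$ respectively $h^{>i}(Y)$, and since \ref{chow:pullbacks} guarantees that the relevant Chow–Kunneth projectors pull back to Chow–Kunneth projectors under finite separable extensions, the generators are preserved; continuity of $DM$ then extends the stability to arbitrary limits. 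Consequently, the two descriptions of ${^wDM^{\le \id+i}}(X)$ and ${^wDM^{>\id+i}}(X)$ cut out the same subcategories, completing the proof.
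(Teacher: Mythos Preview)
Your proposal is correct and matches the paper's approach exactly: the paper's proof is the single sentence ``This follows immediately from previous proposition and \ref{gluing:mainresult}.'' Your second paragraph is unnecessary bookkeeping arising from an over-literal reading of the quantifier; in the paper's conventions (see the Notation paragraph and the setup of \ref{main:requisitesFor01}), ``$s:\Spec K\rightarrow X$, $K$ perfect'' means precisely the composite of a Zariski point with passage to the perfect closure of its residue field, so the glued description of \ref{gluing:spreadingOut} already coincides verbatim with the definition in the statement.
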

\begin{proof}
	This follows immediately from previous proposition and \ref{gluing:mainresult}.
\end{proof}

\begin{proposition}\label{main:requisitesFor2}
	Fix an irreducible scheme $X$ of dimension $3$ and finite type over $k$. For any $Y\subset X$ a subscheme, define $D_Y := D^{coh}_{2}(Y)$ if $\bar Y\ne X$ and $D_Y=DM^{coh}_{3, dom}(Y)$ for $Y$ open. 
	
	For any $x=\Spec K\in X$ and $x$ not the generic point of $X$, if $n_K$ is the transcendence degree of $K/k$, define :
%
%
	\begin{align*}
		D(K)&:=\begin{cases}
			{DM_0^{coh}}(K^{perf})	&n_K = 3\\
			{DM_1^{coh}}(K^{perf})	&n_K = 2,1\\
			{DM_2^{coh}}(K^{perf})	&n_K = 0\\
		\end{cases}	& D^{\le}(K)&:= \begin{cases}
				{^wDM^{\le 0}}(K^{perf})	&n_K = 3\\
				{^wDM^{\le 1}}(K^{perf})	&n_K = 2, 1\\
				{^wDM^{\le 2}_2}(K^{perf})	&n_K = 0\\
				\end{cases} 										\\
		D^{>}(K) &:= D^{\le}(K)^\perp := \{b\in D(K) \text{ such that } \hom(a,b)=0, \forall a\in D^{\le}(K)\}\hspace{-40em}
	\end{align*}
	Then this setup satisfies extended formalism of gluing \ref{gluing:extended4functors}, continuity \ref{gluing:continuity}, and continuity of $t$-structures \ref{gluing:continuityForT}. 
\end{proposition}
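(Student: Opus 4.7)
The plan is to verify the three conditions in turn, with the main work concentrated on continuity of the $t$-structure at closed points. For conditions \ref{gluing:extended4functors} (extended formalism of gluing) and \ref{gluing:continuity} (continuity), the proof follows the pattern of \ref{coh:continuity}. Stability of the categories $D_Y$ under the four functors (immersions) follows from \ref{t-Structure:stabilityOfCohomologicalMotives}: closed immersions take $DM^{coh}_{3,dom}$ to $DM^{coh}_2$, and open immersions preserve $DM^{coh}_{3,dom}$ on opens of $X$. For the extended pullbacks $\epsilon^*: D_Y \to D(K)$, the dimension bookkeeping is the key point: a generator $p_* 1_Z(-r)$ from $S^{dom, coh}_3(Y)$ (for $Y \subset X$ open) has dominant $p$ of relative dimension $0$, so the fiber over a point of transcendence degree $n_K$ has dimension $\leq 3 - n_K$; a generator from $S^{sm, coh}_2(Y)$ has fiber dimension $\leq \min(2, 2 - n_K + n_K) = 2$ but only $\leq 2 - n_K$ at generic transcendence degrees of $Y$. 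Matching these against the assignments of $D(K)$ confirms the pullbacks land correctly (with essential surjectivity handled by spreading out as in \ref{coh:continuity}, and fullness/faithfulness following from continuity of $DM(-)$).

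For continuity of the $t$-structure, the cases $n_K \in \{1, 2, 3\}$ all involve the $t$-structures $({^wDM^{\leq 0}}, {^wDM^{>0}})$ or $({^wDM^{\leq 1}}, {^wDM^{>1}})$ — these were shown continuous in \ref{main:requisitesFor01} on all of $DM^{coh}(K^{perf})$, and the same constructions with the same generators (taken from $S^{\leq i}(K^{perf})$, $S^{>i}(K^{perf})$) restrict to the subcategories $DM^{coh}_1(K^{perf})$ or $DM^{coh}_0(K^{perf})$ appearing here. So there is nothing new to prove in these cases beyond checking that the extensions $\bar{a}$ constructed in \ref{main:requisitesFor01} live in the target subcategories $D_Y$ — this follows because those constructions use $\bar p_* 1_{\bar X}$ and $\bar q_* 1_{\bar Z}(-1)[-2]$ on schemes of dimension bounded by the relative dimension plus $\dim Y$, matching $DM^{coh}_2$ or $DM^{coh}_{3,dom}$ constraints.

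The new content is continuity of $({^wDM_2^{\leq 2}}, {^wDM_2^{>2}})$ at closed points. For the negative part, generators are in $S_a^{\leq 2}(K^{perf}) \cup S_b^{\leq 2}(K^{perf})$. For $a = h^{\leq 2}(X')$ with $X'/K^{perf}$ smooth projective of dimension $\leq 2$, we enlarge the diagram in \ref{complicatedDiagram} and consider the decomposition triangle $a \to h(X') \to h^{>2}(X') \to$ where $h^{>2}$ corresponds to the projector $\pi_{>2}(X')$. By \ref{chow:pushforwards} and \ref{chow:pullbacks} this projector, viewed as a Chow cycle, can be spread out to a Chow cycle on the spread $\bar X \to U$ (after shrinking $U$ using continuity of Chow groups and noting that the construction of $\pi_2$ only involves intersection-theoretic data which spreads out); we define $\bar{a}$ as the kernel of the corresponding projector on $\bar{p}_* 1_{\bar X}$. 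The isomorphisms $\alpha(\bar g), \beta(\bar g)$ of \ref{complicatedDiagram:lemma} and compatibility of projector pullbacks with \ref{chow:basechange} then confirm $r_1^* \delta^* \bar{a}$ is the analogous motive at nearby points. The generator $h^{\leq 0}(X')(-1)$ is handled by the same method as the $i = 0$ case of \ref{main:requisitesFor01}, twisted by $(-1)$. For the positive part, generators are retracts of $h(X')(-r)$ with $\dim X' \leq 2, r \geq 0$; spreading out gives $\bar{b} = \bar{p}_* 1_{\bar X}(-r)$, and by \ref{complicatedDiagram:lemma}, $\delta^! \bar{b}$ acquires an extra Tate twist $(-d)[-2d]$ with $d > 0$ at non-generic $\delta$, so by \ref{tateTwistsIncreaseWeight} and \ref{field:tateTwistsIncreaseWeights} (plus the case analysis of the weight profile: closed points allow weight up to $2$, higher-transcendence points only up to $0$ or $1$) it lands in the positive part.

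The main obstacle is verifying that the Chow-Künneth projectors $\pi_{\leq 2}(X')$ — whose construction involves choices of hyperplane sections, zero cycles, and maps between Picard and Albanese varieties — can be spread out compatibly with the diagram in \ref{complicatedDiagram}, and that the resulting lifted projector agrees (up to specialization) with the projector on the fiber at $\delta$. This compatibility is where \ref{chow:basechange}, \ref{chow:pushforwards}, \ref{chow:pullbacks} are invoked systematically; the underlying geometric statement is that the constructions of \S\ref{sec:murre} are functorial in families, which follows from the fact that all constructions reduce to rational equivalence classes of cycles after inverting integers, and cycles spread out from generic fibers by the continuity of Chow groups (up to shrinking the base).
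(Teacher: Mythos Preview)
Your treatment of closed points is an unnecessary detour. When $n_K = 0$, the closure of $\Spec K$ in $X$ is $\Spec K$ itself, so the required neighborhood $U$ is just the point and one may take $\bar a = a$. The paper does exactly this in one line. Your proposal to spread out the projector $\pi_{\le 2}$ over a family, which you flag as ``the main obstacle'', is solving a problem that does not arise.

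The actual gap is in your reduction to \ref{main:requisitesFor01} for the positive part at points with $n_K \in \{1,2,3\}$. That proposition produces $\bar a$ as a retract of $\bar p_* 1_{\bar X}$, so at a non-generic $\delta$ one only knows that $\delta^! \bar a$ is \emph{some} retract of $h(X'_1)(-d)[-2d]$. Here the positive part $D^{>}(L)$ varies with $n_L$: at a closed point $L$ (so $n_L = 0$) it is ${^wDM_2^{>2}}(L^{perf})$, and a retract of $h(X'_1)(-1)$ need not lie there, because $h^0(X'_1)(-1)$ belongs to $S_b^{\le 2}$, i.e.\ to the \emph{negative} part. This bites precisely when $n_K = 1$ and $\delta$ is a closed point of the curve $\overline{\Spec K}$, where $d = 1$. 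Your appeal to \ref{tateTwistsIncreaseWeight} and \ref{field:tateTwistsIncreaseWeights} does not help: those results say a single Tate twist pushes objects past weight $1$, not past weight $2$.

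The paper's fix is to spread out $h^{>0}(X')$ rather than $h(X')$, using the triangle $q'_*1_{Z'} \to p'_*1_{X'} \to h^{>0}(X') \to$ coming from the Stein factorization (identifying $h^0(X')$ via \ref{chow:h0ispullback}). One defines $\bar b$ by the analogous triangle over $U$, and then \ref{complicatedDiagram:lemma} gives $\delta^! \bar b \cong h^{>0}(X'_1)(-d)[-2d]$, which for $d = 1$ lands in $S_b^{>2}$ and for $d \ge 2$ in $S_c^{>2}$. Since every generator $h^{>i}(X')$ of $D^{>}(K)$ is a retract of $h^{>0}(X')$, this handles all transcendence degrees uniformly.
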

\begin{proof}
In below, let $x=\Spec K\in X$ and let $r:\Spec K^{perf}\rightarrow \Spec K$ denote the natural map. Then, for $\epsilon:\Spec K\rightarrow Y$, the pullback map $D_Y\rightarrow D(K)$ is by definition the  $r^*\epsilon^*:DM^{coh}(Y)\rightarrow DM^{coh}(K^{perf})$. For clarity of notation we will also denote $D^{>}(K)$ as $^wDM_2^{>i}(K^{perf})$ (thereby bringing $i$, apriori computable from $n_K$, in the notation).

	\emph{Extended formalism of gluing and continuity:} That this satisfies formalism of gluing is in \ref{coh:continuity}. It satisfies extended formalism of gluing along with continuity also from \ref{coh:continuity}, noting that the pullback $r^*:DM(K)\rightarrow DM(K^{perf})$ is an equivalence of categories preserving preserving $DM_d^{coh}(-)$. 
	
	\emph{$(D^{\le}(K), D^{>}(K))$ form a $t$-structure on $D(K)$:} Note that ${^wDM^{\le i}}(K^{perf})$ is the same as the category ${^wDM^{\le i}_i}(K^{perf})$ for $i=0,1$ and hence $D^{\le}(K)\subset D(K)$. Orthogonality and invariance is a given, one only needs to verify that every object $z$ fits in a triangle $a\rightarrow z\rightarrow b\rightarrow $ in $D(K)$, $a\in D^{\le}(K)$, $b\in D^{>}(K)$, and it is enough to show this for generators, that is motives of the form $h(X)$, with $X$ smooth projective over $K^{perf}$. Then the triangle defining the decomposition for the corresponding $t$-structure in $DM^{coh}(K^{perf})$ (resp. in $DM_2^{coh}(K^{perf})$) is a split triangle with $a\in D^{\le}(K)$. Hence, $b$ being a summand of $z$, also lies in $D(K)$ and being orthogonal to $D^{\le}(K)$ lies in $D^{>}(K)$. 
	
	\emph{Continuity for $D^{\le}$:} Let $a\in D^{\le}(K)$. 
	
	If transcendence degree of $K/k$ is $0$, then $\Spec K$ is a closed point in $X$, and let $i:\Spec K\rightarrow X$ denote the immersion. Then we let $\bar a = i_*\bar a$. 
	
	If the transcendence degree of $K/k$ is $1, 2$, $a\in {^w}DM^{\le 1}(K^{perf})$, then by \ref{main:requisitesFor01}, we can find an $\bar a$ on $U$ such that for any point  $\epsilon:\Spec L\rightarrow U$, $\epsilon^*a\in {^w}DM^{\le 1}(L^{perf})$. The claim follows noting that ${^w}DM^{\le 1}(L^{perf})\subset {^w}DM_2^{\le 2}(L^{perf})$.
	
	If the transcendence degree of $K/k$ is $3$, the same argument holds, noting in addition that ${^w}DM^{\le 0}(L^{perf})\subset {^w}DM^{\le 1}(L^{perf})$.
	
	\emph{Continuity for $D^{>}$:} We have that $\delta^!= r_1^*\delta^*i^!$. 

	Let $b\in {^wDM^{> 0}_2}(K^{perf})$ be given by $h^{> 0}(X')$. Therefore, by \ref{chow:h0ispullback} we have a triangle:
		\[
			q'_*1_{Z'} \rightarrow p'_*1_{X'}\rightarrow b
		\]
	Let the situation be extended as in \ref{complicatedDiagram}. Define $\bar b$ by the triangle:
		\[
			 \bar s_*\bar q_*1_{\bar Z}	\rightarrow \bar s_* \bar p_*1_{\bar X}\rightarrow \bar b\rightarrow
		\]
	Therefore, applying $\delta^!=r_1^*\delta^*i^!$ to the defining triangle for $\bar b$, and using \ref{complicatedDiagram:lemma}, we get:
	\[
		q'_{1*}1_{Z'_1}(-d)[-2d] \rightarrow p'_{1*}1_{X'_1}(-d)[-2d]\rightarrow \delta^!(\bar b)\rightarrow
	\]
	where $d=\dim U-\dim Y$. Hence $\delta^!(\bar b) = h^{>0}(X'_1)(-d)[-2d]$ by \ref{chow:h0ispullback}. It follows that if $\delta=\epsilon$, then $\delta^!(\bar b) = h^{>0}(X'_1)=b$ as $d=0$. In other cases $d\ge 1$ and $\delta^!(\bar b)[2d] = h^{>0}(X'_1)(-d)\in S_b^{>2}\cup S_c^{>2}[-2d]$, and hence, it is in $^wDM_2^{>2}(L^{perf})\subset {^wDM_2^{>i}(L^{perf})}$ for $i\in\{0,1,2\}$. Hence we conclude that $\bar b$ satisfies the requisite properties for $b$. 
	
	More generally, a generator $a=h^{>i}(X')\in {^wDM^{>i}_2}(K^{perf})$, $i\in\{0,1,2\}$, is a retract of such a $b=h^{>0}(X')$, and hence, restricting $U$ if needed, we can find $\bar a$ a retract of $\bar b$ as above and $\epsilon^!(\bar a) = r^*\epsilon^*(\bar a) = a$. Thus $\delta^!(\bar a)$ is a retract of $\delta^!(\bar b)$, and we are done.
	\end{proof}	

\begin{remark}\label{base:twistOfG1IsG3}
	Proof for continuity of ${^wDM^{> 0}_2}(K^{perf})$ actually shows that for any object $a\in {^wDM^{> 0}}(K^{perf})$ (no restriction of dimensions), we can find an $\bar a$ defined on some neighborhood $U$ such that $\epsilon^!(\bar a) =a$ for $\epsilon:\Spec K\rightarrow U$ generic, and $DM_2^{\le 2}(L^{perf}) \perp \delta^!\bar a$ for any $\delta:\Spec L\rightarrow U$ for $\delta$ not generic (use \ref{field:twistOfG1IsG3}). 
	
	In particular if $b=\pi_*1_{Z}$ with $\pi:Z\rightarrow \Spec L^{perf}$ a proper curve then $b\in DM_2^{\le 2}(L^{perf})$, and hence $b\perp \delta^!\bar a$. This is useful in constructing the relative Picard motive of any morphism.
\end{remark}

\begin{notation}
	We borrow the notations from \cite{arvindVaish}. For any monotone step function on $F$ on non-negative integers ($\le \dim X$), one expects a $t$-structure $({^wDM^{\le F}}(X), {^wDM^{> F}}(X))$ on $DM^{coh}(X)$. These are lifts of corresponding $t$-structures on mixed sheaves as constructed in \cite[3.1.7]{arvindVaish}. The $t$-structures in \ref{tStructure:01} correspond to the functions $\id=n\mapsto n$ and $\id+1=n\mapsto n+1$. 
	
	Let $F$ denote the monotone step function on non-negative integers:
	\[
		F(n)=	\begin{cases}
					2		&n=0,1\\
					3		&n=2,3
				\end{cases}
	\]
\end{notation}
	The previous proposition gives an analogue of $t$-structure for this particular $F$ on an appropriate sub-category of $DM^{coh}(X)$ for a $3$-fold $X$.
\begin{corollary}\label{base:tStructureF}
	Let $X$ be irreducible of dimension $3$. Let $F$ be as above. For any point $\Spec K\in  X$, define $D(K)$, resp. $D^{\le }(K)$, resp. $D^>(K)$ as in previous proposition. In particular $D^{\le}(K) = DM^{\le F(t_K)-t_K}_2(K^{perf})$ where $t_K$ is the transcendence degree of $K/k$ and $D^{>}(K) = D^{\le }(K)^\perp $ inside $D(K)$. Define:
	\begin{align*}
		^wDM^{\le F}(X) &= \{M\in DM_{3,dom}^{coh}(X)| s^*(M) \in D^{\le}(K)\text{ for all }s:\Spec K\in  X\} \\
		^wDM^{> F}(X) &= \{M\in DM_{3,dom}^{coh}(X)| s^!(M) \in D^{>}(K)\text{ for all }s:\Spec K\in X\}.
	\end{align*}
	Then $({^wDM^{\le F}(X)}, {^wDM^{> F}(X)})$ gives a $t$ structure on $DM_{3,dom}^{coh}(X)$.
\end{corollary}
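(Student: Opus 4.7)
The plan is to deduce the corollary directly from the punctual gluing theorem \ref{gluing:mainresult} using the setup established in Proposition \ref{main:requisitesFor2}. The proposition already verifies the three technical requirements for punctual gluing: the extended formalism of gluing \ref{gluing:extended4functors}, continuity of the underlying categories \ref{gluing:continuity}, and continuity of $t$-structures \ref{gluing:continuityForT}. Once these are in place, \ref{gluing:mainresult} hands us the $t$-structure on the glued category.

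The only bookkeeping required is to confirm that the definitions of $({^wDM^{\le F}}(X), {^wDM^{> F}}(X))$ in the statement of the corollary agree with the definitions $(D^{\le}(X), D^{>}(X))$ produced by the punctual gluing recipe \ref{gluing:spreadingOut}. Unwinding the definitions, $D^{\le}(X)$ consists of $M \in D_X = DM^{coh}_{3,dom}(X)$ with $\epsilon^*(M) \in D^{\le}(K)$ for all Zariski points $\epsilon: \Spec K \hookrightarrow X$, and dually for $D^{>}(X)$ via $\epsilon^!$. This matches the definition of $^wDM^{\le F}(X)$ and $^wDM^{> F}(X)$ verbatim once one observes that the pointwise $t$-structures $(D^{\le}(K), D^{>}(K))$ at a point of transcendence degree $t_K$ correspond to the profile $F(t_K) - t_K$ (i.e., to $w_{\le F(t_K)-t_K}$ on $DM(K^{perf})$), which is exactly the prescription obtained from $F$.

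The step where all the real content lives — and which has already been shouldered by Proposition \ref{main:requisitesFor2} — is verifying continuity for the $t$-structure, in particular for the positive part at closed points where one must control a generator of $^wDM^{>2}_2(K^{perf})$ via a global spreading-out whose exceptional pullback at nearby points of larger transcendence degree lands in the appropriately shifted negative category. This is handled using the diagram of \ref{complicatedDiagram} and the natural isomorphisms of \ref{complicatedDiagram:lemma}, combined with the observation (see \ref{field:twistOfG1IsG3} and \ref{base:twistOfG1IsG3}) that negative Tate twists on a one-dimension-higher fiber force the exceptional pullback to have increased weight. Given that all this machinery is already in place, the proof of the corollary itself reduces to a one-line invocation of \ref{gluing:mainresult}.
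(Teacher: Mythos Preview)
Your proposal is correct and matches the paper's own proof exactly: the paper simply says the corollary follows immediately from the previous proposition (\ref{main:requisitesFor2}) and \ref{gluing:mainresult}. Your additional commentary on the bookkeeping and on where the real content lives is accurate but not strictly needed, since all of that is absorbed into Proposition \ref{main:requisitesFor2}.
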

\begin{proof}
	This follows immediately from previous proposition and \ref{gluing:mainresult}.
\end{proof}

\section{Applications}\label{sec:applications}
\subsection{Invariants of singularity}\label{sec:invariants} In \ref{tStructure:01} we constructed the analogue of the $t$-structures due to S.\@ Morel \cite{morelThesis} (or rather the limiting version as exposited in \cite{arvindVaish}), which were denoted there as  $({^wDM^{\le\id+i}(X)},{^wDM^{>\id+i}(X)})$ on $DM^{coh}(X)$ for $i\in\{0,1\}$. One of the motivations of constructing these $t$-structures is that they can be used to construct weight truncated cohomology groups \cite{arvindVaish} which capture certain invariants of singularities on one hand, and relate to the intersection complex on the other. We explore the corresponding relations in motivic world below.

Let $w_{\le\id}$ resp. $w_{\le\id+1}$ denote the corresponding truncation to the negative part of the $t$-structure when $i=0$ resp. $i=1$.

For $i=0$, this can be identified with the $t$-structure $({^wDM^{\le\id}(X)},{^wDM^{>\id}(X)})$ of \cite{vaish2016motivic}. In particular we have the identities \cite[3.2.13, 3.3.4, 4.1.2]{vaish2016motivic}:
\[
	EM_X := w_{\le\id}j_*1_U \cong w_{\le\id}\pi_*1_{\tilde X} \cong w_{\le\id}IM_X
\]
is dependent only on $X$, where $X$ is any scheme, $j:U\hookrightarrow X$ is is an immersion onto an open dense regular subset, $\pi:\tilde X\rightarrow X$ is any resolution of singularities, and $IM_X$ is the motivic intersection complex of $X$ (if it exists) in the sense of \cite{wildeshaus_ic} or the weaker sense of \cite{wildeshaus_shimura_2012}. In particular, $EM_X$ should be thought of as an invariant of singularity. We show below that the same continues to hold for $i=1$ (see \ref{application:invariants}). In fact, we work in a more general framework allowing us to recover several other invariants of singularities motivically. 

Consider any scheme $X$ and consider the subcategories
	\begin{align*}
		DM^{coh}(X)(-1) &:= \{ A(-1)\big| A\in DM^{coh}(X)\} \subset DM^{coh}(X) \\DM^{bir}(X) &:= DM^{coh}(X)/DM^{coh}(X)(-1)
	\end{align*}
	where the quotient is the Verdier quotient of triangulated categories. The category $DM^{bir}(X)$ will be called here as the category of \emph{birational motives} over $X$ (motivation being \cite[4.2.5]{Kahn06102016}) due to the following:
	
	\begin{lemma}\label{application:lemma:birationalIsBirational}
		Let $Q_X:DM^{coh}(X) \rightarrow DM^{bir}(X)$ denote the quotient map. Let $p_j:Y_j\rightarrow X$ for $j\in \{1, 2\}$ be schemes such that $Y_j$ are regular, and there is a birational morphism $q:Y_1\dashrightarrow Y_2$. Then there is an isomorphism $Q_X(p_{1*}1_{Y_1})\cong Q_X(p_{2*}1_{Y_2})$.
%
	\end{lemma}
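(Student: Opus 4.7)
My plan is to reduce the lemma to the following more basic observation: for any irreducible regular scheme $Y$ with a morphism $p : Y \to X$ and any open dense immersion $j : U \hookrightarrow Y$, the natural map $p_* 1_Y \to (p \circ j)_* 1_U$ has cone in $DM^{coh}(X)(-1)$, and hence becomes an isomorphism after applying $Q_X$. Granting this, the lemma follows quickly: since the $Y_j$ are regular we may decompose into connected components and assume each $Y_j$ is irreducible. The birational morphism $q$ then restricts to an isomorphism between some open dense $U_1 \subset Y_1$ and $U_2 \subset Y_2$, and the compatibility of $q$ with the maps $p_j$ (implicit in its status as a birational morphism of $X$-schemes, and necessary for the conclusion to make sense) ensures that $p_1|_{U_1}$ and $p_2|_{U_2}$ are identified under this isomorphism. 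Two applications of the observation then give
\[ Q_X(p_{1*} 1_{Y_1}) \;\cong\; Q_X((p_1|_{U_1})_* 1_{U_1}) \;=\; Q_X((p_2|_{U_2})_* 1_{U_2}) \;\cong\; Q_X(p_{2*} 1_{Y_2}). \]

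The observation itself is a short argument via the localization triangle. Let $i : Z = Y \setminus U \hookrightarrow Y$ be the closed complement; if $Z$ is empty the statement is trivial, so assume $Z \ne \emptyset$. The localization triangle
\[ i_* i^! 1_Y \longrightarrow 1_Y \longrightarrow j_* 1_U \longrightarrow \]
shows that the cone of $1_Y \to j_* 1_U$ is $i_* i^! 1_Y[1]$, and it suffices to check that $p_* i_* i^! 1_Y \in DM^{coh}(X)(-1)$. Since $Y$ is irreducible and regular of some dimension $d$, applying motivic Chow's lemma \ref{motivicChowLemma} to the identity $Y \to Y$ exhibits $1_Y$ as a retract of $q_* 1_{Y'}$ for some projective dominant $q : Y' \to Y$ with $Y'$ regular, essentially smooth over $k$, of dimension $d$; in particular $1_Y \in DM^{coh}_{d, dom}(Y)$. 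Then the stability property \ref{t-Structure:stabilityOfCohomologicalMotives}(iv), applied to the proper closed immersion $i$ (with $Z \ne Y$ and $Y$ irreducible), gives $i^! 1_Y \in DM^{coh}_{d-1}(Z)(-1)$. Applying $i_*$ and then $p_*$ and invoking \ref{t-Structure:stabilityOfCohomologicalMotives}(i) together with the fact that Tate twists commute with pushforwards, we conclude that $p_* i_* i^! 1_Y \in DM^{coh}(X)(-1)$, as required.

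I expect no substantial obstacle. The only point requiring some attention is establishing $1_Y \in DM^{coh}_{d, dom}(Y)$ without assuming $Y$ is essentially smooth over $k$; this is handled by the alteration argument packaged into motivic Chow's lemma, and once this is in hand the argument reduces entirely to the dimension/dominance bookkeeping encoded in \ref{t-Structure:stabilityOfCohomologicalMotives}. The heart of the proof is the standard principle that for regular $Y$ the boundary piece $i^! 1_Y$ acquires a negative Tate twist (morally via absolute purity), which is exactly the kind of term killed in passing from $DM^{coh}(X)$ to $DM^{bir}(X)$.
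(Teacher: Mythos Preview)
Your proof is correct and follows essentially the same approach as the paper's: reduce to showing that for an open dense $j:U\hookrightarrow Y$ with $Y$ regular the map $p_*1_Y\to p_*j_*1_U$ becomes an isomorphism in $DM^{bir}(X)$, then use the localization triangle together with \ref{t-Structure:stabilityOfCohomologicalMotives}(iv) and (i) to see that $p_*i_*i^!1_Y\in DM^{coh}(X)(-1)$. The only difference is that you spell out the justification that $1_Y\in DM^{coh}_{d,dom}(Y)$ (needed to invoke the first clause of (iv)), whereas the paper leaves this implicit; note that \ref{arbitraryY} applied to $\mathrm{id}:Y\to Y$ gives this directly, so your detour through motivic Chow's lemma \ref{motivicChowLemma} is not needed.
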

	\begin{proof}
		It is enough to consider $j:U\subset Y_j$ regular, and show that the induced map:
		\[
			Q_X(p_{j*}1_{Y_j}) \overset\cong\longrightarrow Q_X(p_{j*}j_*1_U)
		\]
		is an isomorphism. It will be simpler to drop the subscript $_j$ from notation below. Let $i:Z\hookrightarrow Y$ denote the closed complement. Then applying $p_{*}$ to the localization triangle for $(U, Z)$ we have the localization triangle:
		\[
			p_*i_*i^! 1_{Y} \longrightarrow p_*1_{Y} \longrightarrow p_*j_* 1_U \rightarrow 
		\]
		By \ref{t-Structure:stabilityOfCohomologicalMotives}(iv) $i^!1_{Y}\in DM^{coh}(Z)(-1)$, and hence by \ref{t-Structure:stabilityOfCohomologicalMotives}(i) $p_*i_*i^! 1_Y\in DM^{coh}(X)(-1)$. Hence applying $Q_X$ the first term goes to $0$, and therefore the second map becomes an isomorphism, as required.
	\end{proof}

	Then, the functors $w_{\le\id+i}$ are actually birational invariants for $i\in \{0,1\}$:
	\begin{proposition}\label{application:wIsBirational}
		For $i\in\{0,1\}$, the functor $w_{\le\id+i}$ factors through $DM^{bir}(-)$. That is there exists a natural diagram of functors:
		\[\begin{tikzcd}
			DM^{coh}(X)\ar[d, "Q_X" left]\ar[r, "w_{\le\id+i}"]	&	DM^{coh}(X)\\
			DM^{bir}(X)\ar[ru, dashed, "w^{bir}_{\le\id+1}" below]
		\end{tikzcd}\]
	\end{proposition}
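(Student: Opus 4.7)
The plan is to invoke the universal property of the Verdier quotient. Since the subcategory $^wDM^{\le \id+i}(X)$ is a full triangulated subcategory of $DM^{coh}(X)$ (being of the form $\vvspan{-}$, closed under shifts, cones and retracts), the truncation $w_{\le \id+i}$ is the right adjoint to the inclusion and is therefore a triangulated functor $DM^{coh}(X) \to DM^{coh}(X)$. By the universal property of the Verdier quotient by the thick subcategory $DM^{coh}(X)(-1)$, it suffices to show that $w_{\le \id+i}$ vanishes on $DM^{coh}(X)(-1)$, or equivalently (by adjunction) that
\[
	DM^{coh}(X)(-1) \subset {^wDM^{> \id+i}}(X).
\]

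To verify this inclusion I would use the pointwise characterization of $^wDM^{> \id+i}(X)$ from \ref{tStructure:01}: an object $M \in DM^{coh}(X)$ lies in $^wDM^{> \id+i}(X)$ if and only if $s^!(M) \in {^wDM^{>i}}(K^{perf})$ for every point $s:\Spec K \to X$ with $K$ perfect. So, given $A \in DM^{coh}(X)$, I want to show $s^!(A(-1)) \in {^wDM^{>i}}(K^{perf})$ for $i \in \{0,1\}$.

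The key observation is that the Tate twist $(-1)$ commutes with the four functors (by \ref{motives:6functors}), and hence with the extended $s^!$ of \ref{gluing:extended4functors} which is built out of a closed immersion's $i^!$ followed by the residue pullback $\epsilon^*$. Therefore
\[
	s^!(A(-1)) = (s^!A)(-1),
\]
and by \ref{t-Structure:stabilityOfCohomologicalMotives} together with \ref{coh:continuity} the object $s^!A$ lies in $DM^{coh}(K^{perf})$. The conclusion is then immediate from \ref{field:tateTwistsIncreaseWeights}, which says precisely that $DM^{coh}(K^{perf})(-r) \subset {^wDM^{>i}}(K^{perf})$ for $i \in \{0,1\}$ and $r \geq 1$; apply it with $r = 1$.

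There is no real obstacle in this proof: the substantive work has already been done in proving \ref{field:tateTwistsIncreaseWeights} at the level of a perfect base field, and the present statement is essentially a formal consequence combined with the pointwise characterization of the glued $t$-structure. The one small thing to verify cleanly is that $DM^{coh}(X)(-1)$ is indeed a thick subcategory (it is closed under shifts, cones and retracts because Tate twist is a triangulated auto-equivalence of $DM(X)$ and $DM^{coh}(X)$ is pseudo-abelian by construction), so that the Verdier quotient defining $DM^{bir}(X)$ has the expected universal property and the dashed arrow $w^{bir}_{\le \id+i}$ in the diagram is uniquely determined.
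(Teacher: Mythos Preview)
Your proof is correct and follows essentially the same approach as the paper: reduce to showing $DM^{coh}(X)(-1)\subset {^wDM^{>\id+i}}(X)$, check this pointwise via the defining characterization of the glued $t$-structure, use that $\epsilon^!$ commutes with Tate twists, and conclude by \ref{field:tateTwistsIncreaseWeights}. You are simply more explicit than the paper about the universal property of the Verdier quotient and about thickness of $DM^{coh}(X)(-1)$.
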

	\begin{proof}
		Since $w_{\le\id+i}$ preserve triangles, we only need to verify that for any $A\in DM^{coh}(X)(-1)$, we have that $w_{\le\id+i}(A)=0$, equivalently that $A\in {^wDM^{>\id+i}}(X)$. By definition of the glued $t$-structure, it is enough to show that $\epsilon^!(A)\in {^wDM^{>i}}(k)$, for any $\epsilon:\Spec k\hookrightarrow X$. But $\epsilon^!$ commutes with Tate twists and we can then use \ref{field:tateTwistsIncreaseWeights}.
	\end{proof}	

 	The constructions so far can be used to construct various invariants of singularities motivically. But to do so, we will have to understand the relation between realisations and weight truncations:
	
	\begin{para}
		Let $k$ be a field of characteristic zero with a fixed embedding $\sigma:k\hookrightarrow \C$. Then given $X$ a variety over $k$, by the underlying analytic space we mean the analytic space underlying the $\C$-variety $X\times_{k,\sigma}\Spec \C$ and we denote it by $X^{an}$. Then the cohomology of $X^{an}$ carries a mixed Hodge structure. We let $MHS(k)$ denote the category of mixed Hodge Structures over $k$. 
		For such a $k$ we also have realization functors:
		\[
			real: DM_{gm}(k)\rightarrow D^{b}(MHS(k))
		\]
		since we are working with $\Q$-coefficients, due to \cite{ivorra2016perverse}. Then for any variety $p:X\rightarrow \Spec k$ the cohomology $H^{j}(X^{an})$ can be identified with the underlying vector space of $H^{j}(real(p_{*}1_{X}))$.
		
		Also recall that, as observed by S.Morel \cite[\S 3]{morelThesis} (she works with $k$ finite, but the same argument works in this context), the full subcategories
		\begin{align*}
			{^{w}}D^{\le i}(k)&:=\{A\in D^{b}(MHS(k))\mid Gr^{r}_{W}(H^{j}(A))=0, \forall r>i, \forall j\in \Z\}\\
			{^{w}}D^{> i}(k)&:=\{B\in D^{b}(MHS(k))\mid Gr^{r}_{W}(H^{j}(B))=0, \forall r\le i, \forall j\in \Z\}
		\end{align*}
		(where $Gr^{r}_{W}$ denotes the graded for the weight filtration) form a $t$-structure on $D^{b}(MHS(k))$. We let the truncation for this $t$-structure be denoted by $w_{\le i}$ and $w_{>i}$ respectively.
	\end{para}
	Then we have the following proposition:
	\begin{proposition}
		We use the notation of the previous paragraph. Let $A\in DM^{coh}(k)$ and let $i\in \{0,1\}$. Then:
		\begin{align*}
			real(w_{\le i}(A))\cong w_{\le i}(real(A))& &real(w_{> i}(A))\cong w_{> i}(real(A))
		\end{align*}
	
	\end{proposition}
	\begin{proof}
		By construction $H^{j}(real({^w}DM^{\le i})(k))$ is weight $\le i$ (resp. $H^{j}(real({^w}DM^{> i})(k))$ has weights $>i$) -- we can test this on generators of $DM^{\le i}(k)$ (resp. $DM^{> i}(k)$) as specified in \ref{fields:defineCats}. But it is easy to see that $H^{j}(real(h^{\le i}(X)))$ (resp. $H^{j}(real(h^{> i}(X)))$) is $H^{j}(X^{an})$ (resp. vanishes) for $j\le i$ while, it vanishes (resp. is $H^{j}(X^{an})$) for $j>i$, while the underlying $MHS$ on $H^{j}(X^{an})$ for $X$ smooth proper is pure of weight $j$.
			
		In other words, it follows that 
		\begin{align*}
			real({^w}DM^{\le i}(k)) \subset {^{w}D^{\le i}(k)}& &real({^w}DM^{> i}(k)) \subset {^{w}D^{> i}(k)}
		\end{align*}
		where $({^{w}D^{\le i}(k)},{^{w}D^{> i}(k)})$ is S. Morel's $t$-structure \cite[\S 3]{morelThesis}. Applying $real(-)$ to the triangle:
		\[
			w_{\le i}A\rightarrow A \rightarrow w_{>i}A \rightarrow 
		\]
		for some $A\in DM^{{coh}}(k)$, we immediately get a triangle:
		\begin{align*}
			real(w_{\le i}A)\rightarrow real(A) \rightarrow real(w_{>i}A) \rightarrow .
		\end{align*}
		Since the first term is in $^{w}D^{\le i}$ while the last term in $^{w}D^{>i}$, the result follows.
	\end{proof}
	
	Now we can give a couple of examples which are related to constructions in literature:
	\begin{example}[Boundary Complex]\label{example:boundary}
		Fix a scheme $p:X\rightarrow \Spec k$ and $i:Z\subset X$ closed such that $X-Z$ is regular. Then for any resolution of singularities $\pi:\tilde X\rightarrow X$ such that $\pi^{-1}(Z)$ is a simple normal crossing variety, one can associate a CW-complex by taking the vertices $I$ to correspond to the set of irreducible components $\{Z_i|i\in I\}$ of $\pi^{-1}(Z)$ and $J\subset I$ has $r$ $\#J$-faces if $\cap_{j\in J}Z_j$ has $r$ components, the so called \emph{boundary complex} of $(X,Z)$. The cohomology of this complex is independent of chosen resolution -- this can be shown using Hodge theory, see e.g. \cite[4.4]{payne2013boundary}. This can be recovered using the birational motives as follows:
		
		\begin{proposition} The motive $w_{\le 0}p_*i_*i^*\pi_*1_{\tilde X}$ is independent of the choice of the resolution $\tilde X$. If $k\hookrightarrow \C$, $real(w_{\le 0}p_*i_*i^*\pi_*1_{\tilde X})$ computes the cohomology of the Boundary complex. 
		\end{proposition}
		
		\begin{proof}  This is independent of chosen resolution since $p_*, i_*$ etc. preserve Tate twists and cohomological motives, and using \ref{application:wIsBirational}. By proper base change $i_{*}i^{*}\pi_{*}1_{\bar X} = \pi_{*}1_{\bar Z}$. Therefore the motive $p_*i_*i^*\pi_*1_{\tilde X} = q_{*}1_{\bar Z}$ where we let $q: \bar Z\rightarrow \Spec k$ denote the structure morphism. Therefore, it follows by the preceding proposition (and the fact that weight gradation is exact):
		\[
			H^{i}(real(w_{\le 0}p_*i_*i^*\pi_*1_{\tilde X})) \cong H^{i}(w_{\le 0}real(q_{*}1_{\bar Z})) \cong W_{0}H^{i}(\bar Z)
		\]
		where $W_{0}$ is the $0$-th piece of the weight filtration. Since $\bar Z$ is a simple normal crossing variety by assumption, the $0$-th weight piece computes the cohomology of the dual complex (see, for example, \cite[3.13]{arapura2013weights}). 
		\end{proof}
	\end{example}
	\begin{remark}
		Even though the invariant here is constructed using a resolution of singularities, it is possible to work with Galois alterations and hence over $k$ finite. The corresponding analogue of Boundary complex in characteristic $p$ was constructed by \cite{thuillier2007geometrie}. 
	\end{remark}
	\begin{example}[First cohomology of a fiber]\label{example:h1}
		Fix a scheme $p:X\rightarrow \Spec k$ and $i:Z\subset X$ closed. Let $\pi:\tilde X\rightarrow X$ be any resolution. Then $H^1(\pi^{-1}(Z)^{an})$ is independent of the chosen resolution -- this can be shown using Hodge theory, see e.g. \cite[6.2]{arapura2013weights} for case of $Z$ being a point, or more generally it follows from \cite[7.1]{payne2013boundary} for $Z$ smooth (since $H^{1}(-)$ has weights $\le 1$, it is enough to show that $Gr^{i}_{W}$ for $i=0,1$ is an invariant, which is the content of  \cite[7.1]{payne2013boundary}). This can be recovered from the birational motives as follows:
		
		\begin{proposition}
			 The motive $\mathbb H^1(w_{\le 1}p_*i_*i^*1_{\tilde X})$ is independent of the choice of the resolution $\tilde X$, where  the cohomology is computed with respect to the motivic $t$-structure on $1$-motives (due to \cite{MR2102056} on compact motives or \cite{ayoub2011nmotivic} more generally -- see \cite[\S 5.3]{vaish2017punctual} for a discussion).  If $k\hookrightarrow \C$, we have identifications:
			 	\[
					real(\mathbb H^1(w_{\le 1}p_*i_*i^*1_{\tilde X})) \cong H^1(\pi^{-1}(Z)^{an})
				\]
		\end{proposition}
		\begin{proof}
			First note that $^{w}DM^{\le 1}(k)\subset DM^{coh}_{1-mot}(k)$ so that we can apply $\mathbb H^{1}(-)$ and left hand side makes sense. As before, the result is independent of chosen resolution since $p_*, i_*$ etc. preserve Tate twists and cohomological motives, and using \ref{application:wIsBirational}.
			
			Recall that the triangulated category of $1$-motives $DA^{coh}_{1-mot}(k)$ is equivalent to the derived category $D^{b}(\mathcal M_{1}(k))$, where we let $\mathcal M_{1}(k)$ denote the Deligne's category of $1$-motives with rational coefficients (\cite{MR2102056}). This equivalence takes the $t$-structure $t^{1}_{MM}$ on $DA^{coh}_{1-mot}(k)$ to the standard $t$-structure on $D^{b}(\mathcal M_{1}(k))$. It follows that the realization $real$ restricted to $DA^{coh}_{1-mot}(k)$ is $t$-exact for the $t^{1}_{MM}$ on one side and the standard $t$-structure on $D^{b}(MHS(k))$ -- this is a special case of \cite[4.1(v)]{lehalleur2017constructible} where the argument is made for Betti realization, but the key ingredient \cite[7.2.2]{ancona2015motive} also works for the Hodge realization.
			
		As in the previous example, we let $\bar Z := \pi^{-1}Z$ and $q:\bar Z\rightarrow \Spec k$ denote the structure morphism. It follows that we have identifications: 
		\begin{align*}
			real(\mathbb H^1(w_{\le 1}p_*i_*i^*1_{\tilde X})) \cong H^{1}w_{\le 1}real(q_{*}1_{\bar Z})\cong w_{\le 1}H^1(\bar Z^{an}) \cong H^1(\bar Z^{an})
		\end{align*}
		since $w_{\le 1}$ on $D^{b}(MHS(k))$ commutes with taking cohomology, and $H^{1}(-)$ has weights $\le 1$. 
		\end{proof}
	\end{example}
	As another application we now generalize the motivic weightless cohomology groups of \cite{vaish2016motivic} to the situation of $w_{\le\id+1}$:
	\begin{proposition}\label{application:EMX:01}
		Let $X$ be any scheme and let $j:U\hookrightarrow X$ be an immersion onto an open dense subset with $U$ regular. Then the motive
		$
			w_{\le\id+i}j_*1_U
		$
		 ($\in \{0,1\}$) is independent of the choice of $U\subset X$ if $U$ is regular. Further, if $p:\tilde X\rightarrow X$ is a resolution of singularity of a reduced scheme $X$ then  
		\[
			w_{\le\id+i}j_*1_U \cong w_{\le\id+i}p_*1_{\tilde X}.
		\]
	\end{proposition}
	\begin{proof}
		Independence from $U$ follows since $w_{\le Id+i}$ factors through $DM^{bir}(X)$. The second claim follows using \ref{application:lemma:birationalIsBirational}. 
	\end{proof}
	\begin{definition}\label{definition:EMX:01}
		We define 
		\[
			EM^{Id+i}_X:=w_{\le\id+i}j_*1_U
		\]
		 for $U\hookrightarrow X$ an immersion onto an open dense subset with $U$ regular. By the result above, it depends only on $X$ and not on $U$.
	\end{definition}

Finally, we prove that the constructions here have the expected relationship with motivic intersection complex whenever it exists:	
\begin{para}
	Let $X$ be an arbitrary variety over $k$, and $j:U\hookrightarrow X$ dense immersion with $U$ regular. For any object $N\in DM(U)$ such that $N$ is of weight $0$ (in the sense of \cite{bondarko_weights}), consider the extension $j_{!*}N\in DM(X)$, as defined in \cite[Definition 2.10]{wildeshaus_shimura_2012} (which is a slightly weaker notion than an unconditional definition in \cite{wildeshaus_ic} but exists unconditionally in more cases, for example, the Baily-Borel compactification of an arbitrary Shimura variety). The main interest in the motivic intersection complex comes from the fact that its realization corresponds to the ordinary intersection complex $j_{!*}\mathcal L$ with a local system $\mathcal L$ whenever $N$ realizes to $\mathcal L$.
 
 	The object $j_{!*}N$ is unique up to isomorphism, and we will denote $j_{!*}1_U$ by $IM_X$. It is shown in \cite[2.12]{wildeshaus_shimura_2012} that we have the following properties:
 	\begin{enumerate}
		\item There is no summand of $j_{!*}N$ of the form $i_*L_Z$ with $L_Z$ of weight $0$ in $DM(Z)$, where $i:Z\hookrightarrow X$ denotes the closed complement of $U$.
		\item There is an isomorphism $j^*j_{!*}N \iso N$. The induced map $End(j_{!*}N)\rightarrow End(N)$ has a nilpotent kernel (in case of \cite{wildeshaus_shimura_2012}, it was assumed to be injective in \cite{wildeshaus_ic}). 
		\item $(1)$ and $(2)$ characterize $j_{!*}N$ up to isomorphism.
		\item The following analogue of the decomposition theorem holds: let $M \in DM(X)$ be of weight $0$ in the sense of \cite{bondarko_weights} such that $j^*M \iso N$ for some smooth open dense $j:U\hookrightarrow X$. Then (if $j_{!*}N$ exists) there is a non-canonical isomorphism:
	\[
		M\overset \iso \longrightarrow j_{!*}N \oplus i_* L_Z
	\]
	where $i:Z\hookrightarrow X$ is a proper closed immersion and $L_Z\in DM(Z)$ is of weight $0$.
	\end{enumerate}
\end{para}

	\begin{theorem}\label{application:invariants}
		Let $X$ be any scheme and assume that the motivic intersection complex exist. Then we have:
		\[
			EM^{\id+i}_X \cong w_{\le\id+i}IM_X
		\]
	\end{theorem}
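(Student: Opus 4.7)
The plan is to construct a canonical map $\alpha:IM_X\to j_*1_U$ and show that its cone lies in the positive part ${^wDM^{>\id+i}}(X)$ of Morel's $t$-structure; since $w_{\le\id+i}$ is triangulated, being the right adjoint to the triangulated inclusion ${^wDM^{\le\id+i}}(X)\hookrightarrow DM^{coh}(X)$ (cf. \ref{tIsAdjoint}), this will immediately force $w_{\le\id+i}IM_X\xrightarrow{\sim}w_{\le\id+i}j_*1_U = EM^{\id+i}_X$. The map $\alpha$ is supplied by property $(2)$ of $IM_X$: the isomorphism $j^*IM_X\cong 1_U$ yields by adjunction $\alpha:IM_X\to j_*j^*IM_X\cong j_*1_U$.

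To control the cone I will reduce to a proper birational model. Using \ref{geometry:djong}, I would choose an alteration $\pi:\tilde X\to X$ with $\tilde X$ smooth projective, restricting to an isomorphism over an open dense $U\subset X$, and with $E:=\pi^{-1}(Z)$ a simple normal crossing divisor in $\tilde X$, where $Z=X\setminus U$. Then $M:=\pi_*1_{\tilde X}$ is of weight $0$ in the sense of \cite{bondarko_weights} and $j^*M\cong 1_U$; property $(4)$ of $IM_X$ furnishes a non-canonical decomposition $M\cong IM_X\oplus L'$ with $L'=i_*L_Z$ supported on $Z$. The adjunction $\beta:M\to j_*1_U$ restricts to $\alpha$ on the $IM_X$ summand and to zero on the $L'$ summand (since $j^*L'=0$), so $A:=\mathrm{cone}(\beta)\cong \mathrm{cone}(\alpha)\oplus L'[1]$. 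On the other hand the localization triangle for $M$ identifies $A\cong i_*i^!M[1]$.

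Next I will show $A\in DM^{coh}(X)(-1)$. Proper base change gives $i^!M\cong \pi_{Z,*}i_E^!1_{\tilde X}$, where $i_E:E\hookrightarrow\tilde X$ and $\pi_Z:E\to Z$ are the base changes. Since $E$ is a simple normal crossing divisor in the regular $\tilde X$, the induction in the proof of \ref{sncIsCoh}---which reduces, via successive localization along the irreducible components, to absolute purity $i_{E_J}^!1_{\tilde X}\cong 1_{E_J}(-\#J)[-2\#J]$ for the non-empty intersections $E_J=\bigcap_{j\in J}E_j$---will show that $i_E^!1_{\tilde X}\in DM^{coh}(E)(-1)$. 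Applying $\pi_{Z,*}$ and $i_*$, which preserve Tate twists and cohomological motives, yields $A\in DM^{coh}(X)(-1)$. By the argument used in the proof of \ref{application:wIsBirational}, $DM^{coh}(X)(-1)\subset{^wDM^{>\id+i}}(X)$ for $i\in\{0,1\}$, so $A$ and hence its direct summand $\mathrm{cone}(\alpha)$ both lie in ${^wDM^{>\id+i}}(X)$, as desired.

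The hard part will be this final Tate-twist computation: everything hinges on the fact that a single negative Tate twist $(-1)$ is enough to force membership in ${^wDM^{>\id+1}}(X)$ (and \emph{a fortiori} in ${^wDM^{>\id}}(X)$) via \ref{field:tateTwistsIncreaseWeights}. For the profile $D=F$ arising in the threefold case, one Tate twist does not suffice to push $A$ into the corresponding positive part, which is precisely why the analogous comparison for the motivic intersection complex of a threefold is not accessible with the present approach, as acknowledged in the introduction.
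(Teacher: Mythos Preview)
Your overall strategy is that of the paper: use the decomposition of $\pi_*1_{\tilde X}$ coming from property $(4)$, the localization triangle for $\pi_*1_{\tilde X}$, and the fact that $i_*i^!\pi_*1_{\tilde X}$ lands in $DM^{coh}(X)(-1)\subset {^wDM^{>\id+i}}(X)$ (via base change and absolute purity on the regular $\tilde X$). The paper argues directly with the truncations rather than through the cone of an explicit map $\alpha$, but this is only a cosmetic reorganization.

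There is, however, a real gap in positive characteristic. The alteration produced by \ref{geometry:djong} is only \emph{generically essentially \'etale}, not birational: over your dense open $U$ the map $q=\pi|_W:W\to U$ (with $W=\pi^{-1}U$) is finite of some degree, so $j^*M=q_*1_W$, not $1_U$. Consequently property $(4)$ yields $\pi_*1_{\tilde X}\cong j_{!*}(q_*1_W)\oplus i_*L_Z$, and there is no reason for $IM_X$ itself to appear as a summand. The paper repairs this by a retract argument: since $q$ is essentially \'etale, $1_U$ is a retract of $q_*1_W$, hence $IM_X=j_{!*}1_U$ is a retract of $j_{!*}(q_*1_W)$ (using that $j_{!*}(A\oplus B)\cong j_{!*}A\oplus j_{!*}B$, which follows from the characterization $(1)$--$(3)$). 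One then runs your argument with $j_{!*}(q_*1_W)$ in place of $IM_X$: the localization triangle shows $w_{\le\id+i}j_{!*}(q_*1_W)\cong w_{\le\id+i}\pi_*1_{\tilde X}\cong w_{\le\id+i}j_*q_*1_W$, and passing to the retract gives $w_{\le\id+i}IM_X\cong w_{\le\id+i}j_*1_U=EM^{\id+i}_X$. In characteristic zero your argument works as written if you replace the alteration by a Hironaka resolution; over an arbitrary base field you need this extra step.
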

	\begin{proof} The proof is a copy of the argument in \cite[4.1.2]{vaish2016motivic}.

	Let $\pi:Y\rightarrow X$ be an alteration \ref{geometry:djong}, in particular $Y$ is regular  and $\pi$ is proper, generically finite. 
	
	Notice that if $A\oplus B = C$, then $j_{!*}A\oplus j_{!*}B$ satisfies $(1)$ and $(2)$ and hence we have by $(3)$ that $j_{!*}C \iso j_{!*}A\oplus j_{!*}B$. Now adjunction induces a morphism, $j_{!*}A \overset{\alpha_A}\longrightarrow j_*A$ and clearly $\alpha_{C}=\alpha_A\oplus\alpha_B$ where $\alpha_B,\alpha_C$ are defined similarly. Therefore if $w_{\le\id+i}\alpha_C$ is an isomorphism, it will follow that $w_{\le\id+i}\alpha_A$ and $w_{\le\id+i}\alpha_B$ are isomorphism. 
	
	By construction \ref{geometry:djong} there is a $U\subset X$ open dense with $U_{red}$ regular, such that if $W=\pi^{-1}U$, $q|_W$ is essentially etale and hence $1_U$ is a retract of $q_*1_W$. Therefore it is enough to show that 
	\[
	 	w_{\le\id+i}j_{!*} 1_W \longrightarrow w_{\le\id+i}j_*q_* 1_W
	\]
	is an isomorphism since $j_{!*} 1_U \iso IM_X$ and $w_{\le\id+i}j_* 1_U \iso EM_X$.
			
	By \cite[1.1]{wildeshaus_ic} $\pi_* 1_Y$ is of weight $0$. Then proper base change tells us that $j^*\pi_* 1_Y \iso q_* 1_W$, and hence by $(4)$ we have a non-canonical isomorphism:
	\[
		\pi_* 1_Y \iso j_{!*}q_* 1_W \oplus i_*L_Z
	\]
	with $i:Z\hookrightarrow X$ a proper closed immersion. 
	
	This implies in particular $j_{!*}q_* 1_W$ and hence also $IM_X$ lie in $DM^{coh}(X)$ (since $1_U$ is a summand of $q_* 1_W$). By contracting $U$ if necessary, we can assume that $Z$ is the complement of $j:U\hookrightarrow X$. Then consider the localization triangle:
	\[
		i_*i^!\pi_* 1_Y \longrightarrow \pi_* 1_Y \longrightarrow j_*q_* 1_W \rightarrow .
	\]
	We apply the functor $w_{\le\id+i}$ to this triangle. Then by \ref{application:invariants} $w_{\le\id+i}i_*i^!\pi_* 1_Y =0$. Since $j^*i_* = 0$, the decomposition  $\pi_* 1_Y \iso j_{!*}q_* 1_W \oplus i_*L_Z$ tells us that $i_*L_Z$ is a summand of the first term in the triangle, and hence $w_{\le\id+i} i_*L_Z = 0$. Hence $w_{\le\id+i} j_{!*}q_* 1_W\iso w_{\le\id+i}\pi_* 1_Y$. Hence also, $w_{\le\id+i}j_{!*}q_* 1_W \iso w_{\le\id}j_*q_* 1_W$ as required.				
	\end{proof}

\subsection{Motivic intersection complex}\label{sec:motivicIC}
We show that if appropriate realization functors exist then the truncations for $t$-structures of \ref{tStructure:01} and \ref{base:tStructureF} indeed realize to the corresponding S.\@ Morel's truncations in the setting of mixed sheaves. We then construct the motivic intersection complex of an arbitrary $3$-fold.

\begin{para}\label{para:421} Assume that $k$ be complex numbers or a finite field and fix a scheme $X$ of finite type over $k$. Under these assumptions there are triangulated categories of mixed sheaves in the sense of \cite{saito_formalisme_2006} enriching the derived category of constructible sheaves over $X$.

Let $D(X)$ be $D^bMHM(X)$ the bounded category of mixed Hodge modules of Saito \cite{saito_introduction_1989,saito_mhm_1990} for $k=\C$ or $D^b_m(X,\Q_l)$ the bounded derived category of mixed complexes of Deligne. We let $\Q_{X}$ denote the constant sheaf on $X$ (that is $\Q_{X} = p^{*}(\Q_{k}$) where $p:X\rightarrow \Spec k$ is the structure map and $\Q_{k}$ is the trivial mixed hodge module for $k=\C$ or the trivial sheaf $\Q_{l}$ over $\Spec k$ for $k$ finite). 

In this setting, given a monotone step function $D$ (e.g. $D=\id := n\mapsto n$, $D=\id+1:= n\mapsto n+1$ or $D=F:=(3\mapsto 3, 2\mapsto 3, 1\mapsto 2, 0\mapsto 2)$) defined on the interval $[0, \dim X]$ we are given a $t$-structure $({^wD}^{\le D}(X), {^wD}^{>D})(X))$ on $D(X)$. We let $w_{\le D}$ denote the truncation to the negative part for this $t$-structure.
	
	For a good monotone step functions $D$, e.g. those satisfying $0\le D\le \dim X$, one defines the weight truncated complex of $X$ \cite[3.1.15]{arvindVaish} as follows:
	\[
		EC^D_X:= w_{\le D}Rj_*\Q_U
	\]
	for any immersion $j:U\hookrightarrow X$ onto an open dense sub-scheme, with $U$ regular. While this is independent of $U$ for any $D$, it was shown \cite[4.1.2]{arvindVaish} that if $D=\id$:
	\[
			EC_X: = EC^{\id}_X = w_{\le \id}R\pi_*\tilde X
	\] 
	for any resolution of singularity $\pi:\tilde X\rightarrow X$. This continues to hold for $D=\id+1$ by similar arguments, and is the motivation for the motivic version \ref{application:wIsBirational} for us. It fails to hold for higher $D$.
\end{para}

	We have constructed the motivic analogues $EM^D_X$ for $D=\id, \id+1$ and $X$ arbitrary. For any arbitrary $3$-fold, it is possible to construct $EC^D_X$ for $D=F$ as well, and we do this next:
\begin{proposition}\label{application:EMX:F}
		Let $X$ be irreducible with $\dim X=3$. For $F=d\mapsto \begin{cases}3 &d\in\{3,2\} \\ 2 &d\in\{1,0\}\end{cases}$, and $j:U\hookrightarrow X$ an immersion onto an open dense subset with $U$ regular. Then we define:
		\[
			EM^F_X := w_{\le F}j_*1_U.
		\]
		where the truncation is with respect to the $t$-structure constructed in \ref{base:tStructureF}. 
		
		Then $EM^F_X$ is independent of the choice of $U$. 
\end{proposition}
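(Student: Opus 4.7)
The argument parallels the proof of independence of $EM_X$ in \cite[3.2.13]{vaish2016motivic}, adapted to the $t$-structure of Corollary~\ref{base:tStructureF}. Given two immersions $j_1: U_1 \hookrightarrow X$ and $j_2: U_2 \hookrightarrow X$ onto open dense regular subsets, take $V := U_1 \cap U_2$, which is open dense in $X$ and regular (as an open subscheme of either regular $U_i$). Writing $k_i: V \hookrightarrow U_i$ and $j_V := j_i \circ k_i$, it suffices to prove $w_{\le F}(j_{i*} 1_{U_i}) \cong w_{\le F}(j_{V*} 1_V)$ for each $i$. Thus the claim reduces to the following: whenever $j: U \hookrightarrow X$ is an open dense immersion with $U$ regular and $k: V \hookrightarrow U$ is an open dense immersion with $V$ regular, then $w_{\le F}(j_* 1_U) \cong w_{\le F}((jk)_* 1_V)$.

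Set $i: Z = U \setminus V \hookrightarrow U$ (so $Z \subsetneq U$ since $V$ is dense). The localization triangle
\[
  i_* i^! 1_U \longrightarrow 1_U \longrightarrow k_* 1_V \longrightarrow
\]
in $DM(U)$, pushed forward by $j_*$, yields a distinguished triangle in $DM(X)$. By the stability results of Proposition~\ref{t-Structure:stabilityOfCohomologicalMotives} (applied to the open dense and closed immersions involved), all three terms lie in $DM^{coh}_{3, dom}(X)$, so the triangle sits inside the ambient category of the $t$-structure~\ref{base:tStructureF}. Since $w_{\le F}$ respects distinguished triangles, the independence of $U$ reduces to showing $j_* i_* i^! 1_U \in {^wDM^{>F}}(X)$.

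By Proposition~\ref{t-Structure:stabilityOfCohomologicalMotives}(iv), since $U$ is irreducible of dimension 3 and $Z \subsetneq U$, we have $i^! 1_U \in DM^{coh}_2(Z)(-1)$; the Tate twist by $(-1)$ is preserved by $j_*$, $i_*$, and by $s^!$ at any point $s: \Spec K \in X$. To check $j_* i_* i^! 1_U \in {^wDM^{>F}}(X)$ we verify $s^!(j_* i_* i^! 1_U) \in D^>(K)$ pointwise. Points $s \notin \bar Z$ contribute nothing since the object is supported on $\bar Z$. For $s \in \bar Z$, the transcendence degree satisfies $n_K \le \dim \bar Z \le 2$. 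For $n_K \in \{1, 2\}$, Corollary~\ref{field:tateTwistsIncreaseWeights} gives that the $(-1)$-twisted image lies in ${^wDM^{>1}}(K^{perf})$, which is orthogonal to $D^{\le}(K) = {^wDM^{\le 1}}(K^{perf})$. For $n_K = 0$, the analogous orthogonality to ${^wDM_2^{\le 2}}(K^{perf})$ is supplied by Remark~\ref{base:twistOfG1IsG3}, which builds on Remark~\ref{field:twistOfG1IsG3}.

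The main obstacle is the case of closed points ($n_K = 0$): the negative part of the $t$-structure is ${^wDM_2^{\le 2}}(K^{perf})$, which includes summands of the form $h^{\le 0}(X)(-1)$, rather than the simpler ${^wDM^{\le 1}}(K^{perf})$. Orthogonality of the $(-1)$-twisted image therefore does not follow directly from Corollary~\ref{field:tateTwistsIncreaseWeights}. Instead, one exploits the additional Tate twists coming from the codimension of the regular strata in $Z$ (absolute purity yields twists of $(-c)$ with $c \ge 1$) together with the further twist gained when taking $s^!$ at a closed point; the requisite orthogonality is the content of the stronger vanishings (with $d_Y$ unrestricted) recorded in Remark~\ref{field:twistOfG1IsG3} and propagated to the relative setting in Remark~\ref{base:twistOfG1IsG3}.
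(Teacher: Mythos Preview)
Your overall strategy---reduce via the localization triangle and show $j_*i_*i^!1_U\in{^wDM^{>F}}(X)$ by a pointwise check---is exactly the paper's. The cases $n_K\in\{1,2\}$ are handled correctly via \ref{field:tateTwistsIncreaseWeights}. The gap is in the closed-point case $n_K=0$.

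There, a single $(-1)$-twist (which is all that Proposition~\ref{t-Structure:stabilityOfCohomologicalMotives}(iv) gives for $i^!1_U$) is \emph{not} enough, as you correctly note: $S_b^{\le 2}$ already contains $(-1)$-twisted objects. But your attempted fix does not go through. Remarks~\ref{field:twistOfG1IsG3} and~\ref{base:twistOfG1IsG3} concern a different problem: they show that a chosen spread-out $\bar a$ of an object $a\in{^wDM^{>0}}(K^{perf})$ has the right orthogonality at non-generic points. They say nothing about the specific object $j_*i_*i^!1_U$, which is not obtained by spreading out anything. Your final paragraph then promises ``additional Tate twists'' from absolute purity and from $s^!$ at a closed point, but never computes them; as written this is not a proof.

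The paper resolves this case by a direct computation that bypasses the global estimate $i^!1_U\in DM^{coh}_2(Z)(-1)$ entirely. For any $\epsilon:\Spec K\to Z$ with closure $Y$ and $U_1\subset Y$ open dense regular, absolute purity applied to the regular immersion $U_1\hookrightarrow U$ gives
\[
\epsilon^!(g^!1_U)\;\cong\;1_{K^{perf}}(-(3-t_K))[-2(3-t_K)].
\]
At a closed point $t_K=0$, so the twist is $(-3)$; this object is in $S_c^{>2}(K^{perf})$ by the very definition of that set in~\ref{fields:defineCats} (which requires only $r\ge 2$), hence in ${^wDM_2^{>2}}(K^{perf})=D^{>}(K)$. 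In other words, the clean route is to compute the exact twist $3-t_K$ at each point rather than to bound it globally by $1$ and then scramble for the missing twists.
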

\begin{proof}
	Note that $j_*1_U\in DM^{coh}_{dom, 3}(X)$ and hence the truncation makes sense.
	
	Let $h:V\hookrightarrow U$ be an open dense immersion and $g:Z\hookrightarrow U$ be the closed complement. Then $j_*$ applied to the localization triangle gives:
	\[
		j_*g_*g^! 1_U \rightarrow j_*1_U \rightarrow j_*h_*1_V \rightarrow 
	\]
	and hence it is enough to show that $j_*g_*g^!1_U\in {^wDM^{>F}}(X)$. By definition of the glued $t$-structure, it is enough to show that $g^!1_U\in {^wDM^{>F}}(Z)$. 

	To see this, let $\epsilon:\Spec K\rightarrow Z$ be any point. Let $Y$ be closure of $\Spec K$ in $Z$ and $U_1\subset Y$ be open dense regular. Let $r:U_1\rightarrow Z$ denote the immersion, and let $\epsilon_1:\Spec K\hookrightarrow U_1$ denote the natural map. Then:
	\[
		\epsilon^!g^!1_U = r_K^*\epsilon_1^*r^!g^! 1_U =r_K^*\epsilon_1^* 1_{U_1}(-i)[-2i]\in DM^{coh}(K^{perf})(-i)
	\]
	where $i = \dim U-\dim U_1 = 3-t_K$ where $t_K=\text{transcendence degree of }K/k$ and we have $r_K:\Spec K^{perf}\rightarrow \Spec K$ denote the natural map. Let $D^{\le}(K)$ and $D^{>}(K)$ be as in \ref{base:tStructureF}.
	
	Therefore, in particular, using \ref{field:tateTwistsIncreaseWeights} for $n=2,1$, and the definition of $S_c^{>2}$ \ref{fields:defineCats} for $n=0$, we see that $D^{\le}(K) \perp \epsilon^!g^!1_U$ and hence we conclude that $g^!1_U \in {^wDM^{>F}(U)}$ as required. 
	\end{proof}

\begin{para}
	Let $X$ be any irreducible scheme. A motivation for the construction of of $EC^D_X$ was that they approximate the intersection complex $IC_X$, and in special cases, are equal to it. For $D$ a monotone step function $\id\le D\le \dim X$, one always has morphisms (\cite[3.2.2]{arvindVaish}):
	\[
		\Q_X \rightarrow EC_X \rightarrow EC^D_X \rightarrow IC_X.
	\]
	For $D$ large enough, the last map is an isomorphism. Define:
	\[
		D_-:=\begin{cases}d &d=\dim X\\ d-1&d<\dim X\end{cases}.
	\] Then we have:
	\begin{lemma}\label{application:lemma:truncatesToIC}
		Let $X$ be irreducible. For any monotone step function $D_-\le D\le \dim X$, $EC^D_X\cong IC_X$.
	\end{lemma}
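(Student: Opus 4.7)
The plan is to reduce the statement to the single inclusion $IC_X \in {^wD^{\le D_-}}(X)$ and then verify it by a stratum-by-stratum weight computation.

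First I would use the identification $EC^{\dim X}_X \cong IC_X$ recalled in the introduction, which says $IC_X \in {^wD^{\le \dim X}}(X)$. For any monotone step function $D$ with $D \le \dim X$ we have ${^wD^{\le D}}(X) \subset {^wD^{\le \dim X}}(X)$, and a routine Yoneda argument on right adjoints to nested inclusions yields $w_{\le D} \circ w_{\le \dim X} = w_{\le D}$. Hence
\[
	EC^D_X \;=\; w_{\le D}(Rj_* \Q_U) \;=\; w_{\le D}\bigl(w_{\le \dim X}(Rj_* \Q_U)\bigr) \;=\; w_{\le D}(IC_X),
\]
so the canonical map $EC^D_X \to IC_X$ is the adjunction counit on $IC_X$, and is an isomorphism exactly when $IC_X \in {^wD^{\le D}}(X)$. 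Since ${^wD^{\le D'}}(X) \subset {^wD^{\le D}}(X)$ whenever $D' \le D$, it suffices to establish $IC_X \in {^wD^{\le D_-}}(X)$.

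Next I would carry out this verification on a sufficiently fine Whitney stratification $X = \sqcup_i S_i$ with open smooth stratum $U$. On $U$, $IC_X|_U = \Q_U$ has weight $0$, matching the bound $D_-(\dim X) - \dim X = 0$ on the nose. On a proper closed stratum $S_i$ of dimension $r < \dim X$, the support condition for the intersection complex confines the cohomology sheaves $\mathcal H^j(IC_X|_{S_i})$ to a finite range of degrees, while the purity of $IC_X$ combined with Deligne's weight estimate for pure complexes bounds the weight of each nonzero $\mathcal H^j(IC_X|_{S_i})$. A direct comparison with $D_-(r) - r$ closes the check.

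The main (and really only non-formal) ingredient is this last stalk weight estimate for $IC_X$ along closed strata: it is the standard Deligne purity statement for intersection complexes, already used implicitly in \cite{morelThesis} and \cite{arvindVaish}, and the precise shape of $D_-$ is tuned so that the estimate matches the required bound on each stratum. Everything else reduces to formal manipulation of $t$-structures.
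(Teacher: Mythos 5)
Your proof is correct, but it reaches the key fact by a genuinely different route than the paper. The paper quotes \cite[3.1.13]{arvindVaish} to identify $EC^D_X$ with $w_{\le D}IC_X$ for every $D\le \dim X$, and then obtains $IC_X\in{^wD^{\le D_-}}(X)$ abstractly from the irreducibility of $IC_X$ together with \cite[2.2.12]{arvindVaish} (an irreducible pure object is either preserved outright or killed by the truncation, and it cannot be killed because of its restriction to the open stratum). You instead get $EC^D_X\cong w_{\le D}IC_X$ formally from $EC^{\dim X}_X\cong IC_X$ and the nesting ${^wD^{\le D}}(X)\subset{^wD^{\le \dim X}}(X)$ (this is the same content as the cited 3.1.13, proved rather than quoted), and then check $IC_X\in{^wD^{\le D_-}}(X)$ by hand on strata: for a stratum $S$ of dimension $r<\dim X$ the IC support conditions give $\mathcal H^j(IC_X|_S)=0$ for $j\ge \dim X-r$, while purity of $IC_X[\dim X]$ (Gabber, resp.\ Saito) bounds the punctual weights of $\mathcal H^j(IC_X|_S)$ by $j\le \dim X-r-1=D_-(r)-r$, and on the open stratum the weight is $0=D_-(\dim X)-\dim X$; this is exactly the needed membership. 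Both arguments are valid: the paper's is shorter and stays entirely inside the formalism of \cite{arvindVaish}, making the mechanism ``irreducible pure objects are preserved or killed'' explicit, whereas yours is more self-contained and shows concretely why $D_-$ has the shape it does, at the cost of invoking purity of the intersection complex and the stratification characterization in each realization category (available in all three settings the paper allows). One caution: your stratum-by-stratum comparison depends on reading $D_-(r)=\dim X-1$ for $r<\dim X$ (and $D_-(\dim X)=\dim X$), which is the intended meaning; the displayed formula defining $D_-$ in the paper is ambiguous as written, and the literal reading $D_-(r)=r-1$ would make both the monotone-step condition and the lemma itself fail, so it is worth stating the intended $D_-$ explicitly before running your estimate.
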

	\begin{proof}
		By \cite[3.1.13]{arvindVaish} $EC^{D_-}_X := w_{\le D_-}IC_X =IC_X$ using irreducibility of $IC_X$ and \cite[2.2.12]{arvindVaish}. In particular, we have that $IC_X \in {^wD^{\le D_-}}\subset {^wD^{\le D}}$ (the inclusion since $D_-\le D$). Since by \cite[3.1.13]{arvindVaish} we also have $EC^D_X = w_{\le D}IC_X$ for any $D\le \dim X$, we conclude that $EC^D_X=IC_X$ as required.
	\end{proof}
\end{para}

	We have the following compatibility result between constructions in the motivic setting and that in mixed sheaves \cite{arvindVaish}:	
\begin{proposition}\label{application:weightTruncationsArePreserved}
	Let $k=\C$ or $k$ finite and we assume the notation of \ref{para:421}. Assume that for all schemes $X$ over $k$ we have a triangulated realization functor ${real}:DM(X) \rightarrow D(X)$ commuting with the Grothendieck's four functors (including pullbacks of not necessarily finite type) such that $real( 1_X)=\Q_X$. Then, for $D\in \{\id,\id+1\}$ and $X$ arbitrary or $D=F$ and $\dim X\le 3$, $X$ irreducible, we have:
		\[
			real(EM^{D}_X)=EC^{D}_X.
		\]
\end{proposition}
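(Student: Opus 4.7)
The plan is to show that the realization functor $real_X$ is $t$-exact with respect to the motivic weight $t$-structure constructed in \ref{tStructure:01} (respectively \ref{base:tStructureF}) and the mixed-sheaf weight $t$-structure of S.~Morel on $D(X)$, and then to apply this to $A = j_*1_U$ for $j: U \hookrightarrow X$ an open dense immersion with $U$ regular. Granting $t$-exactness, since $real_X$ commutes with the four Grothendieck functors (in particular $j_*$), the canonical decomposition triangle $w_{\le D}j_*1_U \to j_*1_U \to w_{>D}j_*1_U \to$ realizes to a triangle $a \to Rj_*\Q_U \to b \to$ with $a \in {^wD^{\le D}}(X)$ and $b \in {^wD^{>D}}(X)$; by uniqueness of the weight-truncation decomposition, $a \cong w_{\le D}Rj_*\Q_U = EC^D_X$, which establishes the claim.

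To prove $t$-exactness of $real_X$, both $t$-structures in question are defined by punctual gluing from $t$-structures at the Zariski points of $X$, and $real$ commutes with the extended pullbacks $s^*$ and $s^!$ of \ref{gluing:extended4functors} (the former by hypothesis, the latter by compatibility with $f^!$ for immersions and composition with pullbacks to residue fields). Hence it suffices to verify $t$-exactness at each field: the induced functor $real_K: DM^{coh}(K^{perf}) \to D(K^{perf})$ should send $^wDM^{\le i}(K^{perf})$ into objects of weight $\le i$ and $^wDM^{>i}(K^{perf})$ into objects of weight $>i$, for $i \in \{0,1\}$, and analogously for the generators of \ref{fields:defineCats} when $i=2$.

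Since the $t$-structures at fields are finitely generated (see \ref{field:isTstructure}), it is enough to check $t$-exactness on the explicit generators $h^{\le i}(Y)$ and $h^{>i}(Y)$ (together with the Tate-twisted generators in $S_b^{\le 2}, S_b^{>2}, S_c^{>2}$ for $i=2$). The key observation is that under $real_K$, the Chow--K\"unneth decomposition of $h(Y) = \pi_*1_Y$ for $Y$ smooth projective matches the topological K\"unneth decomposition $R\pi_*\Q_Y \cong \bigoplus_j H^j(Y,\Q)[-j]$: the cycles defining $\pi_0, \pi_1, \pi_2$ in \ref{chow:murreConstruction} realize under the cycle class map to the topological K\"unneth projectors. (For instance, $\pi_0 = \frac{1}{d}g^*\zeta^*$ realizes to $1 \otimes [\mathrm{pt}] \in H^{2d}(Y \times Y)$, which projects onto $H^0$; $\pi_1$ realizes to the projector onto $H^1$ via the Picard--Albanese correspondence; $\pi_2$ on surfaces is then fixed by complementarity.) Since $H^j(Y,\Q)$ is pure of weight $j$ for $Y$ smooth projective, $real_K(h^{\le i}(Y))$ lives in weights $\le i$ and $real_K(h^{>i}(Y))$ lives in weights $>i$. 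A Tate twist by $(-r)$ shifts weights by $+2r$, so generators $h(Y)(-r)$ in $S_c^{>2}$ (with $r \ge 2$, $\dim Y \le 2$) realize to objects of weight $\ge 2r \ge 4 > 2$, and similarly the generators in $S_b^{\le 2}$ and $S_b^{>2}$ fall in the correct part. A case-by-case check, parallel to the proofs of \ref{field:isTstructure} and \ref{main:requisitesFor2}, matches these weight bounds against the bound $D(t_K) - t_K$ defining $^wD^{\le D}(K^{perf})$ at a Zariski point of transcendence degree $t_K$.

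The main obstacle is the identification of Chow--K\"unneth projectors with topological K\"unneth projectors under realization; this is implicit in the cycle-theoretic construction of \ref{chow:murreConstruction} and the standard theory of cycle classes, but needs to be assembled carefully, especially for $\pi_1$ where one must trace the Picard--Albanese isogeny through the realization. Once this identification is in place, the remaining bookkeeping --- matching transcendence degree, Tate twist, and weight shift at each point type --- is routine, so the proof reduces a global statement to a point-level statement via punctual gluing, and then to an explicit cohomological calculation on Chow--K\"unneth generators.
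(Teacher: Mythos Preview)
Your overall architecture matches the paper's: apply $real$ to the defining triangle for $EM^D_X$ and show that $real$ takes ${^wDM^{\le D}}(X)$ into ${^wD^{\le D}}(X)$ and ${^wDM^{>D}}(X)$ into ${^wD^{>D}}(X)$. The core computational input is also the same --- that the Chow--K\"unneth projectors on $h(Y)$ realize to the topological K\"unneth projectors on $R\pi_*\Q_Y$, so $real(h^{>i}(Y))$ sits in weights $>i$.

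Where your argument diverges is in the reduction step. You claim both $t$-structures are obtained by punctual gluing, so that $t$-exactness of $real$ can be checked at each residue field $K^{perf}$. This is not part of the hypotheses: the mixed-sheaf categories $D^b_m(-,\Q_l)$ or $D^b\mathrm{MHM}(-)$ and the Morel weight $t$-structure on them are only set up over schemes of finite type over the base $k$; there is no given weight theory on $D(\Spec K^{perf})$ for a general residue field, and the paper's description of ${^wD^{\le D}}(X)$ is via stratifications, not via pullback to points. So your sentence ``it suffices to verify $t$-exactness at each field'' is the unjustified step.

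The paper handles this asymmetrically. Punctual gluing is invoked only on the \emph{motivic} side: for $A\in {^wDM^{>D}}(X)$ one knows $q^*\epsilon^*A\in {^wDM^{>i}}(K^{perf})$ at the generic point, hence $q^*\epsilon^*A$ is built from finitely many $h^{>i}(Y)$. One then \emph{spreads these out} to objects $\bar a_j$ over an open $U\subset X$ of finite type over $k$, realizes there, and uses the decomposition theorem together with continuity (the cone vanishes at the generic point, hence on a smaller $U$) to identify $real(\bar a_j)\cong\bigoplus_{r>i}R^r(\bar s\bar p)_*\Q_{\bar Y}[-r]$ as sheaves on $U$. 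The weight check is then the standard fact that $R^r\bar p_*\Q_{\bar Y}$ is pure of weight $r$ for $\bar p$ proper with regular source --- a statement over $U$, not over a point. Noetherian induction on the closed complement finishes the argument. Your identification of the ``main obstacle'' is thus misplaced: the Chow--K\"unneth/topological K\"unneth match is dispatched in one line in the paper; the real work is the spreading-out maneuver that lets one avoid ever speaking of weights over a non--finite-type point.
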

\begin{remark*}
		For the result to hold, it is important to have realization functors to a mixed category (that is with a notion of weights). For example, Betti realization of Ayoub--Zucker \cite{ayoub2010note} is not good enough for our purpose. 
		
		Such realizations are known for $k$ finite due to \cite{ivorra2007realisation}. Even though mixed realizations for $k\hookrightarrow \C$ are also known due to \cite{ivorra2016perverse} but in that case the compatibility with the four functors of Grothendieck is not known.

\end{remark*}
\begin{proof}
	Consider the defining triangle for $EM_X^D$:
	\[	
		EM^D_X	\rightarrow j_* 1_U	\rightarrow w_{>D} j_*1_U \rightarrow .
	\]
	Then applying realization, using commutativity with $j_*$, we get:
	\[
		real(EM^D_X)	\rightarrow Rj_*\Q_U \rightarrow real(w_{>D}j_*1_U) \rightarrow
	\]
	and it will be enough to show that the first term lies in ${^wD^{\le D}}(X)$ and the last term in ${^wD^{>D}}(X)$, whence $real(EM^D_X) = w_{\le D}Rj_*\Q_U$ which is $EC^D_X$ by definition. 
	
	But $real(w_{>D}j_*1_U)\in real({^wDM^{> D}})$, and it will be enough to show that we have an inclusion $real({^wDM^{> D}}(X))\subset {^wD^{> D}}(X)$. The case for $real(EM^D_X)$ is similar.
	
	Let $A\in {^wDM^{> D}}(X)$. It is enough to show that $j:U\hookrightarrow X$ open such that $real(j^*A)\in {^wD^{>D}}(U)$ -- then it follows by Noetherian induction that $real(i^!A)\in {^wD^{>D}}(Z)$ where $i:Z\hookrightarrow X$ is the closed complement (and the base case also holds for then, $U=X$). The result would then follow since $real$ commutes with $j^*$ and $i^!$ and $({^wD^{\le D}}(X), {^wD^{>D}}(X))$ is a glued $t$-structure (use \cite[2.1.20]{vaish2016motivic}).
	
	 Let $\dim X=d$. Let $\epsilon:\Spec K\rightarrow X$ be a generic point and let $q:\Spec K^{perf}\rightarrow \Spec K$ be the natural map. Any object $A\in {^wDM^{> D}}(X)$, then 
	\begin{align*}
		q^*\epsilon^*(A) \in \begin{cases}
				{^wDM^{> 0}}(K^{perf}) &(d=3, D=F)\text{ or }D=\id \\
				{^wDM^{> 1}}(K^{perf}) &(d=2,1, D=F)\text{ or }D=\id+1 \\
				{^wDM^{> 2}}(K^{perf}) &(d=0, D=F)
			\end{cases}
	\end{align*}
		by definition of the glued $t$-structure. Generators of ${^wDM^{> i}}(K^{perf})$ are of the form $h^{>i}(Y)$ for $Y$ proper smooth over $K^{perf}$ (by definition \ref{fields:defineCats}). Note that $\mathbb H^j(real(h^{>i}(Y))) = H^j(Y)$ for $j> i$ and $0$ otherwise by definition of Chow-K\"unneth projectors.
		
		Now $Y$ must be defined over a finite purely inseparable extension $L$ of $K$, say $Y_L/L$ smooth proper, and the situation descends to the following:
		\[\begin{tikzcd}
			Y\ar[d, "p'"]\ar[rd]						&\\
			\Spec K^{perf} \ar[rd]\ar[rdd, "q" left] 	&Y_L\ar[d, "p"]\ar[r, hookrightarrow]		&\bar Y\ar[d, "\bar p"] \\
														&\Spec L\ar[d, "s"]\ar[r, hookrightarrow]	&U'\ar[d, "\bar s"] \\
														&\Spec K\ar[r, hookrightarrow, "\epsilon_U"]		&U\ar[r, hookrightarrow, "\circ" description]	& X
		\end{tikzcd}\]
		where $\bar Y$ and $U'$ are obtained by spreading out $s$ and $p$. In particular we can choose $U$ small enough that $U, U'$ are regular, and hence so is $\bar Y$. Since $a_j:=h^{>j}(Y)$ is a summand of $a:=h(Y)$, by continuity, shrinking $U$ if needed, there is $\bar a_j$, a summand of $\bar a:=(\bar s\bar p)_*1_{\bar Y}$, such that $q^*\epsilon_U^*\bar a_j = a_j$. Now, by decomposition theorem \FIXME{Reference?}:
		\begin{align*}
			real(\bar a_j)\hookrightarrow real(\bar a) = R(\bar s\bar p)_*\Q_{\bar Y} = \oplus_iR^i(\bar s\bar p)_*\Q_{\bar Y}[-i]\twoheadrightarrow \oplus_{j>i}R^{i}(\bar s\bar p)_*\Q_{\bar Y}[-i]
		\end{align*}
		where the last map is the truncation $x\rightarrow \tau_{>j}x$ for the standard $t$-structure on $D(X)$.
		The composite becomes an isomorphism after applying $q^*\epsilon^*$ by the way $a_j$ is defined. Consider the triangle in $D(U)$, defining $C$:
		\[
			real(\bar a_j)\rightarrow \oplus_{i>j}R^{i}(\bar s\bar p)_*\Q_{\bar Y}[-i] \rightarrow C \rightarrow .
		\]
		Then $q^*\epsilon^* C = 0$. Hence $\epsilon^*C=0$, and hence $C$ must be trivial on a neighborhood of $\Spec K$. It follows that, by shrinking $U$ if needed, we have:
		\[
			real(\bar a_j) \cong \oplus_{j>i}R^{i}(\bar s\bar p)_*\Q_{\bar Y}[-i]
		\]
		
		Now $q^*\epsilon^*A$ is obtained by applying finitely many shifts, and extensions to objects of the form $a_j$. In particular, using continuity, shrinking $U$ if necessary, $j^*A$ is obtained by applying finitely many shifts and extensions to objects of the form $\bar a_j$. Hence it is enough to show that $real(\bar a_j) \in {^wD^{>D}}(U)$, in particular enough to show that $R^{i}(\bar s\bar p)_*\Q_{\bar Y}\in {^wD^{>D}}(U)$ for $j>i$. By definition, this is the case if the perverse sheaf $R^i(\bar s\bar p)_*\Q_{\bar Y}[d]$ is of weights greater than $D(d)$, that is the local system $R^i\bar p_*\Q_{\bar Y}$ is of weights greater than $D(d)-d$.
		
		A direct computation shows that $i = D(d) - d$ in all the three cases (i.e. $D=\id, \id+1$ or $F$) and the claim follows since for a proper morphism $\bar p$ with $\bar Y$ regular, $R^j\bar p_* \Q_{\bar Y}$ is pure of weight $j$, but $j>i=D(d)-d$ as required.
		
		A ditto calculation shows that $real({^wDM^{\le D}}(X))\subset {^wD^{\le D}}(X)$, and we are done.
\end{proof}
\begin{remark}
	Although it simplified notations for us, the use of decomposition theorem in the above proof is not necessary, it is enough to observe that $Rf_*\Q_Y$ is obtained by finitely many extensions and shifts of $R^jf_*\Q_Y$ for any $f:Y\rightarrow U$ of finite type.
\end{remark}
It is now immediate that $EM^F_X$ is the motivic intersection complex for an arbitrary $F$:
\begin{corollary}\label{application:ICfor3folds}
	Assume that for all schemes $X$ we have triangulated realization functors $real:DM(X) \rightarrow D(X)$ commuting with the Grothendieck's four functors (including pullbacks of not necessary finite type) such that $real( 1_X)=\Q_X$. Assume that $X$ is any irreducible $3$-fold. Then we have:
	\[
		real(EM^F_X) \cong IC_X
	\]
	where right hand side denotes the intersection complex of $X$.
\end{corollary}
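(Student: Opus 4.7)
The proof is essentially an immediate combination of two results that have already been established. The plan is to factor the claim as $real(EM^F_X) \cong EC^F_X \cong IC_X$, handling each isomorphism separately.

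For the first isomorphism, I would invoke Proposition \ref{application:weightTruncationsArePreserved} directly: since $\dim X \le 3$ and $D = F$, the proposition gives $real(EM^F_X) \cong EC^F_X$. Note that the construction of $EM^F_X$ only requires $X$ to be an irreducible $3$-fold (which is part of our hypothesis), and the realization functor assumption matches the hypothesis of the proposition exactly.

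For the second isomorphism, I would apply Lemma \ref{application:lemma:truncatesToIC}, which asserts $EC^D_X \cong IC_X$ whenever $D_- \le D \le \dim X$. Here one must check this inequality for $D = F$ with $\dim X = 3$. Since $F = (3\mapsto 3, 2\mapsto 3, 1\mapsto 2, 0\mapsto 2)$ and $D_- = (3\mapsto 3, 2\mapsto 1, 1\mapsto 0, 0\mapsto -1)$ for $\dim X = 3$, the inequality $D_- \le F \le 3$ is immediate from a direct pointwise comparison. Hence $EC^F_X \cong IC_X$ as desired.

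Combining both gives the claim. There is no real obstacle here since the heavy lifting --- constructing the $t$-structure corresponding to $F$ on $DM^{coh}_{3,dom}(X)$, verifying that its truncation functor is compatible with realization, and checking that $F$ is large enough to collapse $EC^F_X$ to $IC_X$ --- has already been done in the cited results. The only care needed is to ensure the hypothesis $D_- \le F$ is verified, which is trivial, and to confirm that $j_*1_U$ lies in $DM^{coh}_{3,dom}(X)$ so that the truncation $w_{\le F}j_*1_U$ defining $EM^F_X$ is well-defined (as noted in the proof of \ref{application:EMX:F}).
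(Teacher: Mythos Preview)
Your proposal is correct and follows exactly the paper's approach: the paper's proof simply says the result ``follows immediately from \ref{application:lemma:truncatesToIC} and \ref{application:weightTruncationsArePreserved}.'' Your additional verification that $D_- \le F \le 3$ (needed to apply \ref{application:lemma:truncatesToIC}) is a helpful elaboration of what the paper leaves implicit.
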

\begin{proof}
	Follows immediately from \ref{application:lemma:truncatesToIC} and \ref{application:weightTruncationsArePreserved}.
\end{proof}
\bibliographystyle{alpha}
\bibliography{ehp1}{}	
\end{document}